\documentclass[12pt,reqno]{amsart}

\usepackage[normalem]{ulem}
\usepackage{cancel}

\usepackage{amsfonts,amsmath,amsthm,mathrsfs,mathtools,esint,bbm}
\usepackage{amssymb,epsfig}
\usepackage{enumerate,comment} 

\usepackage{color} 
\definecolor{vert}{rgb}{0,0.6,0}

\usepackage[text={425pt,650pt},centering]{geometry}
\usepackage{graphicx}
\usepackage{epsfig}
\usepackage{tikz,esvect}
\usetikzlibrary{3d,calc,shapes}
\usepackage{pgfplots}
\pgfplotsset{compat=1.18}

\usepackage{caption}
\usepackage{color} 
\definecolor{vert}{rgb}{0,0.6,0}

\usepackage[pdfstartview=FitH, bookmarksnumbered=true,bookmarksopen=true, colorlinks=true, pdfborder={0 0 1}, citecolor=blue, linkcolor=blue,urlcolor=blue]{hyperref}

\numberwithin{figure}{section}

\theoremstyle{plain}
\newtheorem{thm}{Theorem}[section]

\newtheorem{defn}{Definition}

\newtheorem{conj}{Conjecture}

\newtheorem{lem}[thm]{Lemma}
\newtheorem{cor}[thm]{Corollary}
\newtheorem{prop}[thm]{Proposition}
\theoremstyle{remark}
\newtheorem{rem}{\bf{Remark}}
\numberwithin{equation}{section}

\newcommand{\cone}{%
  \begin{tikzpicture}[baseline=0pt, scale=0.23]
\draw (0.3,0) -- (0.7,0) -- (1,1) -- (0,1) -- cycle;
  \end{tikzpicture}%
}

\newcommand{\E}{\mathbb{E}}

\newcommand{\N}{\mathbb{N}}
\newcommand{\bP}{\mathbb{P}}
\newcommand{\R}{\mathbb{R}}

\newcommand{\Z}{\mathbb{Z}}
\newcommand{\dsc}{\text{\tiny{dis}}}

\newcommand{\cB}{\mathcal{B}}

\newcommand{\frakC}{\mathfrak{C}}

\newcommand{\AC}{{\rm AC\,}}

\newcommand{\BUC}{{\rm BUC\,}}

\newcommand{\Lip}{{\rm Lip\,}}

\newcommand{\gam}{\gamma}

\newcommand{\ep}{\varepsilon}

\newcommand{\ol}{\overline}

\newcommand{\beq}{\begin{equation}}
\newcommand{\eeq}{\end{equation}}

\newcommand{\bbN}{{\mathbb{N}}}
\newcommand{\bbR}{{\mathbb{R}}}

\newcommand{\bbP}{{\mathbb{P}}}
\newcommand{\bbE}{{\mathbb{E}}}

\newcommand{\eps}{\varepsilon}

\newcommand{\lb}{\label}

\def\XXint#1#2#3{{\setbox0=\hbox{$#1{#2#3}{\int}$ }
\vcenter{\hbox{$#2#3$ }}\kern-.6\wd0}}
\def\avint{\fint} 

\newcommand{\dd}{\mathrm{d}}

\newcommand*{\rom}[1]{\text{\expandafter\@slowromancap\romannumeral #1@}}

\DeclarePairedDelimiter{\abs}{\lvert}{\rvert}
\DeclarePairedDelimiter{\norm}{\lVert}{\rVert}
\providecommand{\Abs}[1]{\Bigr\lvert#1\Bigl\rvert}
\providecommand{\floor}[1]{\lfloor#1\rfloor}
\providecommand{\ceil}[1]{\lceil#1\rceil}
\providecommand{\mb}[1]{\mathbb{#1}}
\providecommand{\mc}[1]{\mathcal{#1}}
\providecommand{\ms}[1]{\mathscr{#1}}

\begin{document}

\title[Quantification of ergodicity for HJ equations]
{Quantification of ergodicity for Hamilton--Jacobi equations in a dynamic random environment}

\author[X. Guo, W. Jing, H. V. Tran, Y. P. Zhang]{Xiaoqin Guo, Wenjia Jing, Hung Vinh Tran, Yuming Paul Zhang}

\thanks{
X. Guo is supported by Simons Foundation through Collaboration Grant for Mathematicians \#852943. 
W. Jing is partially supported by the NSF of China under grant No.\,12571220 and by the New Cornerstone Investigator Program 100001127.
H. V. Tran is partially supported by NSF grant DMS-2348305. 
Y. P. Zhang is partially supported by NSF CAREER grant DMS-2440215 and Simons Foundation Travel Support MPS-TSM-00007305.
}

\address[X. Guo]
{
Department of Mathematical Sciences, 
University of Cincinnati,
2815 Commons Way, Cincinnati OH 45221-0025}
\email{guoxq@ucmail.uc.edu}

\address[W. Jing]
{
Yau Mathematical Sciences Center, Tsinghua University, No.1 Tsinghua Yuan, Beijing 100084, and Beijing Institute of Mathematical Sciences and Applications, Beijing 101408, China}
\email{wjjing@tsinghua.edu.cn}

\address[H. V. Tran]
{
Department of Mathematics, 
University of Wisconsin-Madison, Van Vleck Hall, 480 Lincoln Drive, Madison, WI 53706}
\email{hung@math.wisc.edu}

\address[Y. P. Zhang]
{
Department of Mathematics and Statistics, Auburn University, Parker Hall, 
221 Roosevelt Concourse, Auburn, AL 36849}
\email{yzhangpaul@auburn.edu}

\keywords{first-order Hamilton--Jacobi equations; tensionless KPZ equation; large-time averages; quantitative homogenization; viscosity solutions}
\subjclass[2010]{
35B27
35B40 
35F21 
49L12
49L25 
}

\begin{abstract}
We study quantitative large-time averages for Hamilton--Jacobi equations in a dynamic random environment that is stationary ergodic and has unit-range dependence in time. Our motivation comes from stochastic growth models related to the tensionless (inviscid) KPZ equation, which can be formulated as Hamilton--Jacobi equations with random forcing. Understanding the large-time behavior of solutions is closely connected to fundamental questions concerning fluctuations and scaling in such growth processes. In this article, we establish, up to slowly varying factors, convergence rates with exponent $1/2$ for the large-time averages of both the solutions and the associated metric problem toward their ergodic limits. Our proof relies crucially on a new almost-Lipschitz regularity theory for the metric problem, which is of independent interest.
\end{abstract}

\maketitle

\tableofcontents

\section{Introduction}

In this paper, we are interested in the following first-order stochastic Hamilton--Jacobi equation, which is motivated by a growth model corresponding to a tensionless (inviscid) KPZ equation
\begin{equation}\label{eq:HJ}
    \begin{cases}
        u_t + H(x,t,Du,\omega)=0 \qquad &\text{ in } \R^d \times (0,\infty),\\
        u(x,0)=0 \qquad &\text{ on } \R^d.
    \end{cases}
\end{equation}
Our main goal is to study the quantitative large-time averages for $u$, the solution of \eqref{eq:HJ}.

\subsection{Motivations}
We are interested in a growth model with a random forcing that is stationary ergodic in space and has unit-range dependence in time. 
A prototypical equation of interest is a tensionless KPZ equation
\begin{equation}\label{eq:KPZ}
u_t + K(Du) + f(x,t,\omega)=0.
\end{equation}
Here, $K:\R^d \to \R$ is convex and superlinear, and 
\[
f(x,t,\omega)=\sum_{i=1}^N f_i(x,\omega)\dot B_i(t),
\]
where $f_i$, $1\leq i \leq N$, are stationary ergodic in $x$, and $(B_i)_{i=1}^N$ is a standard Brownian motions on $\R^N$ (so that $\dot B_i$ denotes white noise in time) which is independent of $(f_i)_{i=1}^N$.
The unknown $u=u(x,t)$ denotes the height of the interface at location $x\in \R^d$ at given time $t\in [0,\infty)$.

\medskip

It is important to study the scaling limits of the solution 
$u$ and of the associated minimizing curves arising from the optimal control representation. 
A series of conjectures and open problems in this direction was formulated in \cite{BK}.

First, \cite{BK} conjectured the existence of a unique time-stationary (up to time-dependent additive constants) global solution of \eqref{eq:KPZ} with sublinear growth. Moreover, it was predicted that global solutions are preserved under the zero-viscosity limit. The uniqueness of such global solutions to \eqref{eq:KPZ} is closely related to the existence and uniqueness of one-sided minimizers (or geodesics).
See \cite[Conjectures 1-2]{BK}.

The next important question is to understand the fluctuation of $u(x,t)$ around its mean and its correct fluctuation exponent, which remains widely open in this tensionless case.
See \cite[Conjecture 3]{BK}, \cite{MR2096046, PhysRevE.106.024802} and the references therein.
In one dimension, \cite{PhysRevE.106.024802} asserted that its scaling behavior is intrinsically anomalous, with different roughness exponents controlling height fluctuations at local and global length scales.
At the global length scales, the scaling seems ballistic, which gives exponents very far from the standard one-dimensional viscous KPZ values.
These have not been verified rigorously yet.
In multi-dimensions, there is no agreed exponent in the literature except for the fact that it is positive.

\smallskip

The aforementioned questions are closely related to questions on quantitative homogenizations of Hamilton--Jacobi equations.
There is now an extensive literature on qualitative stochastic homogenization for Hamilton--Jacobi equations with smooth Hamiltonians, and we will only list a few representative works.
Qualitative homogenization for first-order equations with convex Hamiltonians was first established in 
\cite{rezakhanlou2000homogenization, souganidis1999stochastic}, and that for vanishing second-order (viscous) equation with convex Hamiltonian in stationary ergodic media was later established in \cite{lions2005homogenization, KRV}, while nonuniformly coercive Hamiltonians were treated in \cite{SH2015}.

The investigation of non-convex  Hamiltonians, more specifically non-quasi-convex ones, has revealed more complex behavior. 
Stochastic homogenization for Hamiltonians of separation form $H(p,x,\omega) = G(p)-V(x,\omega)$ was proved for certain classes in multiple space dimensions in \cite{armstrong2015stochastic,MR3856807}, and for general coercive Hamiltonians in one space dimension in \cite{armstrong2016stochastic}.
Variational solutions in the non-convex setting were settled in \cite{viterbo2025stochastic}. For space dimensions larger than one, it has been shown through a definitive counterexample in \cite{ziliotto2017stochastic,feldman2017homogenization} that homogenization can fail in stationary ergodic environments in the absence of convexity. In particular, this happens for Hamiltonians of separation form where $G$ has a strict saddle point. However, for one space dimension, stochastic homogenization for more general forms of Hamiltonians was proved in \cite{MR3466903} for first-order equations and in \cite{MR4920350,davini2024stochastichomogenizationnondegenerateviscous} for non-degenerate second-order equations.

Parallel to these qualitative developments, a quantitative theory has emerged. Algebraic rates of convergence for a general class of equations and geometric motions were obtained in \cite{armstrong2015,armstrong2014error}, under the assumption that the environment satisfies a finite range of dependence.
The obtained algebraic rates are not known to be optimal.

While those qualitative and quantitative stochastic homogenization results were all proved for random environments that oscillate only in space, this line of inquiry has been expanded to the setting of dynamically varying random environments. 
This setting is rather natural in the context of the growth models, including the tensionless KPZ equation.
Qualitative homogenization for space-time stationary ergodic super-linear Hamiltonians in the presence of diffusion was established in \cite{MR2400607}, and in \cite{MR3602941} for super-quadratic Hamiltonians with potentially degenerate diffusions. Stochastic homogenization in the first-order case without diffusion was addressed in \cite{MR2514380}.
For the differentiability of the effective Lagrangian, see \cite{BD25}.
Superlinear growth of the Hamiltonian in the momentum variable yields uniform H\"older regularity of solutions of first-order Hamilton-Jacobi equations; see \cite{MR2969493}. This plays an important role in stochastic homogenization in spatio-temporal random environments. 
For related works on Burgers' equation, we refer the reader to \cite{BCK14,Bakhtin16,Bakhtin-Li19}.
The homogenization of moving interface in a spatio-temporal random environment, where the Hamiltonian only grows linearly, was treated in \cite{MR3817561} under the assumption that the medium is periodic in one variable (time or space) and stationary-ergodic in the other. Another important model with a linearly growing Hamiltonian is the G-equation, which possesses a further essential difficulty: the Hamiltonian may not be coercive. The homogenization was proved in \cite{MR2744920, MR2763033} in the periodic setting, and in \cite{cardaliaguet2013homogenization} for the spatially stationary ergodic case for divergence-free drift fields. For space-time varying stationary divergence-free drift fields that are not too large on average over large scales, homogenization was established in \cite{burago2020feeble, zhang2023homogenization}.

Recent work has also pivoted toward pathwise equations involving very rough coefficients. We refer the reader to \cite{souganidis2019pathwise} for a review of pathwise solutions for fully nonlinear first-and second-order partial differential equations with rough time dependence. For first-order equations, the stochastic homogenization for pathwise Hamilton–Jacobi equations was established in \cite{seeger2018homogenization, seeger2021scaling}. Additionally, the homogenization of \eqref{eq:KPZ}, where the equation is forced by rapidly oscillating noise colored in space and white in time, was addressed in \cite{Seeger}. Finally, the long-time convergence of a class of second-order stochastic Hamilton–Jacobi equations was analyzed in \cite{gassiat2024long}.

\medskip

\subsection{Settings}
Let $(\Omega_0,\mathscr F_0)$ be a measurable space, and write $\Omega:=\Omega_0^{\R^{d+1}},\mathscr F:=\mathscr F_0^{\otimes\R^{d+1}}$. An element $\omega=(\omega_{x,t})_{x\in\R^d,\,t\in \R}\in\Omega$ is called an environment. 
Consider a random field $(\Omega,\mathscr F, \mb P)$ on $\R^d\times\R$ that satisfies the following conditions.

\begin{enumerate}[(i)]
  \item $\mb P$ is stationary ergodic under space-time shifts. That is, the shifts $\{T_{y,s}:(y,s)\in\R^d\times\R\}$ defined by  
  \[
  T_{y,s}\omega:=(\omega_{x+y,t+s})_{x\in\R^d,\,t\in \R}
  \] 
  are measure-preserving and commutative; and for $\mathscr U \in \mathscr F$,
  \[
  \text{ if $T_{y,s}(\mathscr U)=\mathscr U$ for all $(y,s)\in\R^d\times\R$, then either $\mb P(\mathscr U)=0$ or $\mb P(\mathscr U)=1$.}
  \]
   \item $\mb P$ has a unit-range of dependence in time, that is,  $\{\omega_{x,t}:(x,t)\in\R^d\times(-\infty,a]\}$ and $\{\omega_{x,t}:(x,t)\in\R^d\times[a+1,\infty)\}$ are independent (under law $\mb P$) for all $a\in \R$.
\end{enumerate}

\smallskip

The Hamiltonian $H:\R^d\times\Omega\to\R$ is a function whose randomness depends only on the environment at $(0,0)$, that is,  
\[
H(p,\omega)=H(p,\omega_{0,0}).
\]
We assume throughout the paper that
\begin{enumerate}
    \item[(A1)] $p \mapsto H(p,\omega)$ is convex for each $\omega\in \Omega$.
\end{enumerate}
The Lagrangian $L:\R^d\times\Omega\to\R$, the Legendre transform of $H$, is denoted by, for $(v,\omega)\in \R^d\times \Omega$,
\[
L(v,\omega)=\sup_{p\in \R^d} \left(p\cdot v - H(p,\omega) \right).
\]
We write, with abuse of notations, 
\[
H(x,t,p,\omega)=H_\omega(x,t,p):=H(p,\omega_{x,t}),\qquad
  L(x,t,v,\omega)=L_\omega(x,t,v):=L(v,\omega_{x,t}).
\]
We assume that, for $\mathbb P$-almost surely $\omega$, $L_\omega\in \BUC(\R^d\times \R\times B(0,R))$ for every $R>0$ and the following condition holds.

\begin{enumerate}
  \item[(A2)] There exist $q>1$, $N_1>1$, and $\nu:\Omega_0^{\R^{d+1}}\to[{1},\infty)$ such that the random variables $\nu_t(\omega):=\nu(\{\omega_{x,t}:x\in\R^d\})$
    satisfy
 \begin{equation}
  \label{eq:L-bd}
    \frac{1}{N_1}|v|^q-\nu_0\le L(v,\omega)\le N_1|v|^q+\nu_0 \quad \text{ for all }v\in\R^d;
    \end{equation}
  and for all $(x,t,v),(y,s,w)\in \R^d\times \R\times\R^d$,
\begin{equation}
  \label{eq:L-regularity}
  |L(x,t,v,\omega)-L(y,s,w,\omega)|
  \le C|v-w|\left(|v|^{q-1}+|w|^{q-1}+1\right)+\nu_{s}+\nu_{t}.
\end{equation}
\end{enumerate}
Notice that $L_\omega(x,t,v)=L_{T_{x,t}\omega}(0,0,v)$ and $\nu_t(\omega)=\nu_0(T_{x,t}\omega)$ for all $x,t$ and a.s.\ $\omega$.  
\medskip

We sometimes omit the explicit dependence of random variables on the environment $\omega$ when this does not cause any ambiguity.

\subsection{Main results}
For any $(x_i,t_i)\in \R^{d+1}$ with $i=1,2$ and $t_2>t_1$, let 
\begin{equation*}
\begin{aligned}
m(x_1,t_1;x_2,t_2,\omega)&=
\inf \Big\{\int_{t_1}^{t_2} L(\gam(s),s,\dot\gam(s),\omega)\,\dd s: \gam\in\AC([t_1,t_2],\R^d), \\
&\qquad\quad\qquad\quad\qquad\quad\qquad\quad\qquad  \gam(t_1)=x_1,\gam(t_2)=x_2 \Big\}.
\end{aligned}
\end{equation*}
See Figure \ref{fig:space-time curves}.
\begin{figure}[htp!]
\begin{center}
\begin{tikzpicture}[scale=2.5, line cap=round, line join=round]
  \draw[->] (0.6,0.6) -- (2.4,0.6) node[right] {$x$};
  \draw[->] (0.6,0.6) -- (0.6,3.4) node[above] {$t$};

  \fill (1,1) circle (0.03);
  \fill (2,3) circle (0.03);

  \draw[densely dotted] (1,1) rectangle (2,3);


  \draw[very thick, blue]
    (1,1) -- (2,3);

  \draw[very thick, red]
    plot[domain=0:1, samples=160]
    ({1+\x+0.45*\x*(1-\x)}, {1+2*\x});

  \draw[very thick, teal]
    plot[domain=0:1, samples=160]
    ({1+\x-0.45*\x*(1-\x)}, {1+2*\x});

  \draw[very thick, orange]
    plot[domain=0:1, samples=220]
    ({1+\x+0.22*sin(360*\x)}, {1+2*\x});

  \draw[very thick, brown]
    (1,1) .. controls (1.8,0.9) and (1.1,3.1) .. (2,3);

\end{tikzpicture}
\end{center}
\caption{Some admissible curves connecting $(x_1,t_1)$ to $(x_2,t_2)$}\label{fig:space-time curves}
\end{figure}
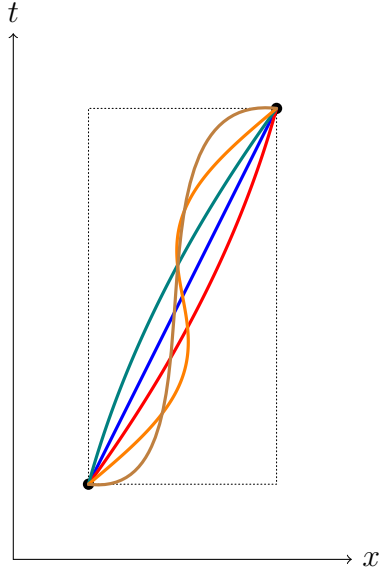

We define, for $(x,t)\in \R^{d+1}$,
\[
m(x,t;x,t,\omega)\equiv 0.
\]
For $(x,t)\in \R^d\times [0,\infty)$, we write
\[
m(x,t,\omega)=m(0,0;x,t,\omega)
\]
for simplicity.

Thanks to \cite{MR2400607,MR2514380,MR3602941,MR3817561, Seeger},
the effective Lagrangian is defined as, for $v\in \R^d$,
\begin{equation}
\label{eq:olLdef}
  \ol{L}(v) := \ol{m}(v,1) = \lim_{t\to \infty} \frac{\mathbb{E} m(tv,t)}{t} = \lim_{t\to \infty} \frac{m(tv,t,\omega)}{t},
\end{equation}
where the last inequality holds for almost surely $\omega \in \Omega$. The effective Hamiltonian is defined via the Legendre transform
\begin{equation}
  \label{eq:olHdef}
  \ol{H}(p) := \sup_{v\in \R^d} \left(p\cdot v- \ol{L}(v)\right), \qquad p\in \R^d.
\end{equation}
It is well known that the large time average $t^{-1}u(x,t)$ is given by the solution to the effective equation
\begin{equation}
\label{eq:effHJ}
  \ol{u}_t + \ol{H}(D\ol{u}) = 0,
\end{equation}
with the same initial condition.
Here, $\ol{u}(\cdot,0) \equiv 0$.
\medskip

The following is our first main result.
\begin{thm}\label{thm:main-1} Assume that, for some $c,C_0>0$,
\begin{equation*}
  \E \exp \Bigl[c\bigl( \int_{0}^{1} \nu_{r} \, \dd r\bigr) \Bigr] <C_0< \infty.
\end{equation*}
Let $u$ be the solution \eqref{eq:HJ}. 
For $R\ge 2$ and $t\ge C$, there exist two positive random variables $\mathcal{X}_1, \mathcal{X}_2$ with $\E[\exp(c\mathcal{X}_1^{2/3})+\exp(c\mathcal{X}_2^{2/5})]<C$ such that, almost surely,
\begin{itemize}
  \item[(i)] $\sup_{x\in B_R}\frac{u(x,t)}{t} + \ol{H}(0)\le  (\log R)^{3/2} t^{-1/2} \psi(t)\mathcal X_1$;
  \item[(ii)] $\inf_{x\in B_R}\frac{u(x,t)}{t} + \ol{H}(0)\ge -(\log R)^{5/2}t^{-1/2} \psi(t)\mathcal X_2$.
\end{itemize}
Here $c, C$ are generic constants depending only on $(c,d,q,C_0, N_1)$, and 
\[
\psi(t):=\exp\left(4c_1(\log\log t)(\log t)^{1/(2\wedge q)}\,\right)
\]
is a slowly varying function defined for $t>e$.
The constant $c_1>0$, depending only on $(c,d,q,N_1)$, is given in Theorem \ref{thm:path-reg}.
\end{thm}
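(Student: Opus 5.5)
The starting point is the optimal control formula. Since $u(\cdot,0)\equiv 0$,
\[
u(x,t)=\inf_{y\in\R^d} m(y,0;x,t,\omega).
\]
We also have $\ol{H}(0)=-\inf_v \ol L(v)=-\inf_v \ol m(v,1)$. Hence
\[
\frac{u(x,t)}{t}+\ol H(0)=\frac{1}{t}\Big(\inf_y m(y,0;x,t)-t\inf_v\ol L(v)\Big).
\]
So both bounds reduce to quantitative estimates for $m(y,0;x,t)-t\,\ol L\big(\tfrac{x-y}{t}\big)$, taken uniformly over the relevant velocities. The plan therefore has two stages. First I prove a fluctuation bound and a bias bound for the metric problem at a fixed velocity, with rate $t^{1/2}$ up to $\psi(t)$. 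Then I pass to the infimum and supremum over $y$ and $x$ using the almost-Lipschitz regularity of Theorem \ref{thm:path-reg}, together with a union bound.

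\textbf{Fluctuations.} For a fixed $v$ in a compact set, I would prove concentration of $m(0,0;tv,t)$ around its mean using a martingale (Azuma/bounded-differences type) decomposition in time. Split $[0,t]$ into unit blocks and let $\mathcal F_k$ be generated by the environment on $\R^d\times(-\infty,k]$. Unit-range dependence makes the blocks separated by a gap independent. The martingale increment $\E[m\mid\mathcal F_{k}]-\E[m\mid\mathcal F_{k-1}]$ is controlled by the effect of resampling the environment on a window of time length $O(1)$ around $k$.

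To bound that effect, splice the minimizer: keep it outside the window and reroute it inside. The cost of rerouting is controlled by $\int_{k-1}^{k+1}\nu_r\,dr$ plus a local Lipschitz constant of $m$ coming from Theorem \ref{thm:path-reg}. Theorem \ref{thm:path-reg} supplies velocity bounds on minimizers of size $(\log t)^{1/(2\wedge q)}$, and accounting for these gives the factor $\psi(t)$. The increments then have exponential moments by the hypothesis on $\int_0^1\nu_r\,dr$. An Azuma inequality with stretched-exponential tails gives
\[
|m-\E m|\le t^{1/2}\psi(t)\,\mathcal Y,\qquad \E\exp\big(c\,\mathcal Y^{\alpha}\big)<\infty .
\]
The tail exponent $\alpha$ degrades to $2/3$ or $2/5$ after the union bounds and products of variables below.

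\textbf{Bias.} Superadditivity gives $\E m(tv,t)\ge t\ol L(v)$ directly, via $\ol L(v)=\inf_t \E m(tv,t)/t$. For the reverse direction, $\E m(tv,t)\le t\ol L(v)+Ct^{1/2}\psi(t)$, I would use the standard Alexander-type argument: combine concentration with approximate subadditivity, i.e. doubling $2\E m(tv,t)\le \E m(2tv,2t)+C t^{1/2}\psi(t)$. This doubling inequality is obtained by concatenating near-optimal paths, again using the fluctuation bound and the regularity to glue them.

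\textbf{Assembling (i) and (ii).} For (i), take $v^*$ minimizing $\ol L$. Then $u(x,t)\le m(x-tv^*,0;x,t)$, so only one velocity is needed, with a supremum over $x\in B_R$. I discretize $B_R$ at unit scale and use regularity between grid points. The union bound over the $R^d$ grid points costs powers of $\log R$, which gives $(\log R)^{3/2}$. For (ii) we need a lower bound holding simultaneously over all $y$. Superlinearity \eqref{eq:L-bd} confines the optimal $y$ to $|y-x|\le Ct$ up to random corrections. On that set the lower deviation bound $m(y,0;x,t)\ge t\ol L((x-y)/t)-\dots\ge t\inf\ol L-\dots$ must hold uniformly over a grid in $(x,y)$ of polynomial size in $t$ and $R$. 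This costs an extra logarithm, giving $(\log R)^{5/2}$ and the weaker tail exponent $2/5$.

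The main obstacle is the martingale increment bound, which requires showing that local resampling changes $m$ by an amount with good tails. The difficulty is that $L$ is unbounded and minimizers can move fast, so the local control cannot come from $L$ alone. This is exactly where the almost-Lipschitz estimate of Theorem \ref{thm:path-reg} is needed, and I would invoke it there first.
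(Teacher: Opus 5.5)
Your fluctuation step (a time martingale with a window excised, increments controlled by Theorem~\ref{thm:path-reg}) and your discretization and union-bound assembly follow the paper in outline. The gap is in the bias step, and part (i) rests entirely on it; part (ii) only needs the trivial direction $\ol m\le \E m$.

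\textbf{Why the doubling inequality cannot be proved this way.} Your inequality $2\E m(tv,t)\le \E m(2tv,2t)+Ct^{1/2}\psi(t)$ is an approximate \emph{super}additivity. Concatenating near-optimal paths only produces upper bounds for $m(0,0;2tv,2t)$. That gives the trivial inequality $\E m(2tv,2t)\le 2\E m(tv,t)$, which is the subadditivity behind $\E m(tv,t)\ge t\ol L(v)$ (it is not superadditivity).

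To go the other way you must cut a minimizer of $m(0,0;2tv,2t)$ at time $t$. Its position $y=\gamma(t)$ is random, so even after concentration and a union bound over $y$ you would still need
\[
\inf_{y}\bigl[\E m(y,t)+\E m(2tv-y,t)\bigr]\ge 2\E m(tv,t)-Ct^{1/2}\psi(t).
\]
This is an approximate midpoint convexity of $\E m(\cdot,t)$ at scale $t^{1/2}$, and nothing available gives it. Only the limit $\ol m$ is convex, and $\E m\ge \ol m$ together with convexity of $\ol m$ yields only $\E m(2tv,2t)\ge 2\ol m(tv,t)$, which is useless. In fact, iterating shows the doubling inequality is equivalent, up to constants, to the bias bound itself, so it cannot serve as the input. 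Alexander's argument is not a doubling argument.

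\textbf{The missing idea.} The paper uses Alexander's convex-hull approximation argument (Section~\ref{sec:deterministic-error-metric}).
\begin{itemize}
\item Take the supporting hyperplane $\ell_{x,t}$ of $\ol m$ at $(x,t)$. Let $Q_{x,t}$ be the set of increments in a cone with $\ell_{x,t}\le \ol m(x,t)$ and $\E m\le \ell_{x,t}+t^{1/2}\varphi(t)^3\Lambda_{x,t}$.
\item The crux is that some $Q_{x,t}$-skeleton from $(0,0)$ to $(nx,nt)$ has at most $10n$ segments (Proposition~\ref{prop:skeleton-10n-points}).
\item Given that, Carath\'eodory's theorem and subadditivity improve the exponent along $\beta_j=1/(2-2^{-j})$ (Lemma~\ref{lem:iteration-GAP}, Proposition~\ref{prop:CHAP-GAP}).
\end{itemize}
The skeleton is extracted from an actual optimal path. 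Since there is no deterministic Lipschitz bound, this requires three inputs. First, Theorem~\ref{thm:path-reg} at the $O(1)$ scale, to keep the path in a cone with high probability (Lemma~\ref{lem:nice-segments}). Second, concentration holding uniformly over all lattice skeletons (Proposition~\ref{prop:cone-walk}). Third, the ergodic limit of $m(nx,nt)/n$. The iteration runs about $\log\log t$ steps, each costing $\varphi(t)^3$. This is the source of $\psi(t)=\varphi(t)^{4\log\log t}$; the fluctuation bound itself carries only $\varphi(t)$.

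\textbf{Smaller points on the tail exponents.}
\begin{itemize}
\item The exponent $2/3$ comes directly from the martingale inequality for increments with exponential moments (Lemma~\ref{thm:LV32}), not from union bounds.
\item The exponent $2/5$ in (ii) does not come from the grid size. It comes from the random velocity radius $\widetilde{\mathfrak C}_t$: the fluctuation scale of $m(v,t)$ grows like $|v|^q/t^q$. The paper splits on $\{\widetilde{\mathfrak C}_t\ge \lambda^{\theta}\varphi(t)\}$ and balances $\exp(-c\lambda^{(1-\theta q)2/3})$ against $\exp(-c\lambda^{\theta q})$, which forces $\theta q=2/5$. Your sketch of (ii) asserts this exponent without supplying that mechanism.
\end{itemize}
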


Let us mention that, regarding the convergence of $m$, a similar quantitative version of \eqref{eq:olLdef} is obtained in Corollary \ref{C.4.10}.
As an application, we also obtain the following quantitative homogenization result.

\begin{thm}\label{thm:main-2} Assume that, for some $c, C_0>0$,
\begin{equation*}
  \E \exp \Bigl[c\bigl( \int_{0}^{1} \nu_{r} \, \dd r\bigr) \Bigr] <C_0< \infty.
\end{equation*}
Let $g\in \Lip(\R^d)$.
For $\ep\in (0,1)$, let $u^\ep$ be the solution to
\begin{equation}\label{eq:HJ-ep}
    \begin{cases}
        u_t^\ep + H\left(\tfrac{x}{\ep},\tfrac{t}{\ep},Du^\ep,\omega\right)=0 \qquad &\text{ in } \R^d \times (0,\infty),\\
        u^\ep(x,0)=g(x) \qquad &\text{ on } \R^d.
    \end{cases}
\end{equation}
Let $\ol u$ be the solution to \eqref{eq:effHJ} with initial data $\ol u(x,0)=g(x)$ for $x\in \R^d$.

For any $R\ge 2, \ep\in(0,1), t\ge C\ep$,  there exist two positive random variables $\mathcal{X}_1, \mathcal{X}_2$ with $\E[\exp(c\mathcal{X}_1^{2/3})+\exp(c\mathcal{X}_2^{2/5})]<C$ such that, almost surely, 
\begin{itemize}
  \item[(i)] $\sup_{x\in B_R}\left(u^\ep(x,t)-\ol u(x,t)\right)\le  (\log R)^{3/2}(\ep t)^{1/2} \psi(\tfrac t\ep)\mathcal X_1$;
  \item[(ii)] $\inf_{x\in B_R}\left(u^\ep(x,t)-\ol u(x,t)\right)\ge  -(\log R)^{5/2}(\ep t)^{1/2} \psi(\tfrac t\ep)\mathcal X_2$.
\end{itemize}
And for $\ep\in(0,1), t\in[0, C\ep]$,
\[
\sup_{x\in \R^d}|u^\ep(x,t)-\ol u(x,t)| \leq \bar C t +\ep\int_0^{t/\ep} \nu_0(s)\,\dd s \leq \bar C C \ep+\ep\int_0^{C} \nu_0(s)\,\dd s.
\]
Here $c, C, \bar C$ are constants depending only on $(c,d,q,C_0, N_1, \norm{g}_{\rm Lip}).$
\end{thm}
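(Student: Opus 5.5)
The plan is to deduce Theorem \ref{thm:main-2} from the quantitative convergence of the metric problem (Corollary \ref{C.4.10}, the quantitative version of \eqref{eq:olLdef}) through the optimal control representations of $u^\ep$ and $\ol u$, together with a Hopf--Lax type formula for $\ol u$.

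\textbf{Step 1: rescale to $\ep=1$.} By scaling, $u^\ep(x,t)=\ep\, w(x/\ep,t/\ep)$, where $w$ solves $w_t+H(x,t,Dw,\omega)=0$ with initial data $g_\ep(y):=\ep^{-1}g(\ep y)$. The functions $g_\ep$ have the same Lipschitz constant as $g$. The control formula gives
\[
u^\ep(x,t)=\inf_{y}\Big\{ g(y)+\ep\, m\big(\tfrac{y}{\ep},0;\tfrac{x}{\ep},\tfrac{t}{\ep},\omega\big)\Big\}.
\]
On the effective side, since $\ol L$ is convex, $\ol u(x,t)=\inf_y\{g(y)+t\ol L(\tfrac{x-y}{t})\}$.

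\textbf{Step 2: localize.} By the lower bound in \eqref{eq:L-bd}, $m(y,0;x,s)\ge \tfrac{1}{N_1}s^{1-q}|x-y|^q-\int_0^s\nu_r\,\dd r$. Since $g$ is Lipschitz, minimizers in both formulas then satisfy $|x-y|\le C t$. Hence $x\in B_R$ and $|x-y|\le Ct$, so after rescaling only velocities $v=(x-y)/t$ with $|v|\le C$ and starting points in a ball of radius $(R+Ct)/\ep$ matter. The exceptional event where $\int_0^{t/\ep}\nu_r$ is much larger than $t/\ep$ is handled by the exponential moment assumption.

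\textbf{Step 3: compare the two formulas.} For the upper bound (i), plug the optimal $y^*$ from the $\ol u$ formula into the $u^\ep$ formula. This gives
\[
u^\ep-\ol u\le \ep\, m\big(\tfrac{y^*}{\ep},0;\tfrac{x}{\ep},\tfrac{t}{\ep}\big)-t\ol L(v^*),
\]
which is bounded by the upper deviation estimate of Corollary \ref{C.4.10} at time scale $T=t/\ep$. That estimate should be of order $T^{1/2}\psi(T)$ times a random variable, so multiplying by $\ep$ gives $(\ep t)^{1/2}\psi(t/\ep)$. For the lower bound (ii), plug the optimal $y$ for $u^\ep$ into the $\ol u$ formula. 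This needs the lower deviation of $m$, uniformly in the starting point $y/\ep$ and the velocity $v$.

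The main obstacle is this uniformity: the bounds must hold simultaneously over a continuum of starting points and velocities. I would handle it as follows.
\begin{itemize}
\item Discretize $y$ on a unit lattice and $v$ on a fine net.
\item Control the discretization error with the almost-Lipschitz regularity of $m$ (Theorem \ref{thm:path-reg}), which costs the slowly varying factor $\psi$.
\item Use stationarity to shift each lattice point to the origin, and take a union bound over roughly $(R+t)^{d}/\ep^{d}$ points using the stretched-exponential tails $\exp(c\mathcal X^{2/3})$ and $\exp(c\mathcal X^{2/5})$.
\end{itemize}
The union bound over a count polynomial in $R/\ep$ should produce exactly the factors $(\log R)^{3/2}$ and $(\log R)^{5/2}$, the exponents $3/2$ and $5/2$ being the inverses of the tail exponents $2/3$ and $2/5$. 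The $\log(1/\ep)$ part should be absorbed into $\psi(t/\ep)$ once $t\ge C\ep$.

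\textbf{Step 4: short times $t\le C\ep$.} Here homogenization is not needed. Take the straight-line competitor $\gamma\equiv x$ in the control formula and use \eqref{eq:L-bd}. This gives the upper bound $u^\ep(x,t)-g(x)\le \ep\int_0^{t/\ep}(N_1\cdot 0+\nu_s)\,\dd s$. For the lower bound, $g(y)\ge g(x)-\|g\|_{\rm Lip}|x-y|$ and $L\ge |v|^q/N_1-\nu$, and a Young inequality yields $u^\ep-g\ge -\bar C t-\ep\int_0^{t/\ep}\nu_s\,\dd s$. The same estimate with $\nu\equiv 0$ gives $|\ol u-g|\le \bar C t$. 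Combining these gives the stated bound.
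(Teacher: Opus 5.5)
\textbf{The gap: the $\log(1/\ep)$ from the union bound over $B_R$ is not absorbed.} Your architecture matches the paper's:
\begin{itemize}
\item the control and Hopf--Lax representations;
\item the pointwise upper bound, obtained by inserting the deterministic minimizer of the formula for $\ol u$ and using concentration plus the deterministic error, which is what Corollary~\ref{C.4.10} packages;
\item the pointwise lower bound, via a union bound over discretized starting points;
\item the short-time estimate, where your control-formula argument is equivalent to the paper's sub/supersolution comparison.
\end{itemize}
The gap is the last sentence of your Step~3. After rescaling, $\sup_{x\in B_R}$ is a supremum over $\asymp (R/\ep)^d$ microscopic sites. The union bound therefore produces $(\log(R/\ep))^{3/2}$ and $(\log(R/\ep))^{5/2}$, and you claim the $\log(1/\ep)$ is absorbed into $\psi(t/\ep)$ once $t\ge C\ep$. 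That is false. $\psi(t/\ep)$ depends only on $T=t/\ep$, so along $t=\ep T$ with $T$ fixed and $\ep\to0$ it stays bounded while $\log(1/\ep)\to\infty$.

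Absorption works only when $\psi(T)$ dominates powers of $\log(1/\ep)$, for instance when $t\ge\ep^{1-\delta}$, with $\delta$-dependent constants. For bounded $T$, the short-time bound suffices, because it is uniform in $x$ ($\nu$ depends only on time). The regime in between is not covered. This is a different issue from the count of starting points for a fixed endpoint: that count is only $\asymp (T\widetilde{\frakC})^d$ and is absorbed by the extra factor $\varphi(T)$ available in the exponent.

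\textbf{The estimate itself fails in that regime, so no argument can close the gap.} Here is an example.
\begin{itemize}
\item Take $q=2$, $g\equiv 0$, and $L=\tfrac12|v|^2+V(x,t)$ with $V(x,t)=\max_i \phi(x-y_i)\chi(t-s_i)$, the maximum taken over a low-intensity space--time Poisson process $\{(y_i,s_i)\}$. Here $\phi,\chi$ are Lipschitz $[0,1]$-valued bumps and $\chi$ is supported in $[0,1]$.
\item Then (A1)--(A2) hold with $\nu\equiv 1$, the field has unit range of dependence in time, it is spatially mixing, and $\E V\le \tfrac14$.
\item Comparing with constant paths and using the ergodic theorem in time gives $\ol u(x,t)=t\min \ol L\le t\,\ol L(0)\le t\,\E V\le t/4$.
\item Let $t=\ep T$. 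On the event, of probability $p_T>0$, that $V\ge\tfrac34$ on $\{|y-z|\le 2T\}\times[0,T]$, every path ending at $(z,T)$ costs at least $\tfrac34 T$, since leaving the cylinder costs kinetic energy $\ge 2T$. Hence $u^\ep(\ep z,t)\ge \tfrac34 t$.
\item $B_{R/\ep}$ contains $\asymp (R/(\ep T))^d$ disjoint such cylinders with independent environments. So with probability tending to one as $\ep\to 0$, $\sup_{B_R}(u^\ep-\ol u)\ge t/2$.
\item This forces $\mathcal X_1\gtrsim T^{1/2}/\bigl((\log R)^{3/2}\psi(T)\bigr)$, which contradicts $\E\exp(c\mathcal X_1^{2/3})<C$ as $T\to\infty$ with $R$ fixed.
\end{itemize}

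\textbf{The paper's proof has the same problem in another form.} It avoids the $\ep^{-d}$ count by discretizing with $B_R\cap\Z^d$ and $B_{t\widetilde{\frakC}_t}\cap\Z^d$, i.e.\ unit mesh in macroscopic variables. But the resulting discretization error, bounded via Theorems~\ref{thm:sp-reg-t} and~\ref{thm:reg-uep-nearly-Lip}, is of order $\varphi(t)\sup_{w\ge 1}\fint_0^{(w\wedge t)/\ep}\nu_r\,\dd r\ge\varphi(t)$. That is not $O\bigl((\ep t)^{1/2}\psi(t/\ep)\bigr)$ when $\ep t$ is small, so the paper does not supply the missing step either. A correct statement needs something like $\log(R/\ep)$ in place of $\log R$, or a restriction such as $t\ge \ep^{1-\delta}$.
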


\medskip

To the best of our knowledge, the results of Theorems \ref{thm:main-1}--\ref{thm:main-2} are new.
A notable feature of our framework is that it requires substantially weaker assumptions on the random environment than previous quantitative results.
We assume only that the environment is stationary ergodic in space, with no mixing condition.
The quantitative convergence rates are obtained solely from the unit-range dependence in time.
By contrast, earlier quantitative results typically rely on finite-range dependence in all variables.
Moreover, we only require $L$ and hence $H$ to satisfy the random bound in (A2) with $\E \exp \Bigl[c\bigl( \int_{0}^{1} \nu_{r} \, \dd r\bigr) \Bigr] <C_0$, and we do not impose any uniform boundedness assumption on them in $t$.

\medskip

To prove the main results, we introduce two key new ingredients.
The first is a regularity theory for optimal curves and the metric distance: we establish H\"older regularity for every exponent in $(0,1)$, as well as an almost Lipschitz estimate (Theorems \ref{thm:path-reg}--\ref{thm:time-reg} and Corollary \ref{cor:Holder}).
These estimates play a central role in the analysis.
For Hamilton--Jacobi equations with superlinear time-dependent Hamiltonians, H\"older regularity was previously obtained in \cite{MR2583307,MR2969493,Seeger,cardaliaguet2021holder}, although with unspecified H\"older exponents.
Such estimates were crucial for the qualitative homogenization results in \cite{MR2514380,MR3602941,MR3817561,Seeger}, but they are not sufficient for quantitative results.
Here, we first prove a rough H\"older estimate in Lemma \ref{lem:rough-sp-reg}, with an explicit exponent, inspired by an idea of \cite{Mitake-Ni-Tran}.
We then refine this estimate through an iterative procedure, with Proposition \ref{prop:m-reg-induc} serving as the key inductive step.
The almost Lipschitz regularity of the solution to \eqref{eq:HJ} is given in Theorem \ref{thm:reg-u-nearly-Lip}.
As far as we know, Theorems \ref{thm:path-reg}--\ref{thm:time-reg} and Theorem \ref{thm:reg-u-nearly-Lip} are new and may be of independent interest.
The regularity-improving iteration itself also appears to be new in the literature of Hamilton--Jacobi equations.

\medskip

The second new ingredient is a quantitative estimate on the deterministic error of the large-time average of the metric distance (Theorem \ref{thm:error-deterministic}), inspired by the approach of \cite{Alexander-97} for first-passage percolation (FPP).
In the classical FPP setting, one can always extract a skeleton from any optimal path since optimal paths are Lipschitz in space.
In our dynamic random setting, however, the absence of a deterministic Lipschitz bound prevents such a direct construction: optimal paths are not a priori restricted to a fixed space-time cone.
While Lipschitz control fails at the microscopic scale, Theorem~\ref{thm:path-reg} provides a (random) Lipschitz regularity at the $O(1)$ scale for the optimal curves.
This allows us, with high probability, to construct a suitable skeleton along an optimal path.
The existence of such a good skeleton (Proposition \ref{prop:skeleton-10n-points}), together with another iterative argument (Lemma \ref{lem:iteration-GAP}), leads to the proof of Theorem \ref{thm:error-deterministic}.
Lemma \ref{lem:iteration-GAP} and Proposition \ref{prop:skeleton-10n-points} extend the corresponding results of \cite{Alexander-97} to the dynamic random setting, where no deterministic Lipschitz bound is available.

Finally, the convergence rates in Theorems \ref{thm:main-1}--\ref{thm:main-2} are of order $1/2$, up to slowly varying factors.
We expect that this rate is essentially optimal, and we formulate this expectation as a conjecture.

\begin{conj}
In the setting of Theorems \ref{thm:main-1}--\ref{thm:main-2}, the convergence rates obtained therein are optimal up to slowly varying factors. In particular, the optimal convergence rate is of exponent $1/2$. 
\end{conj}

We note that assumption (A2) excludes the tensionless KPZ equation from the present analysis. We plan to address this case in future work.

\subsection*{Notations}
We write $A\lesssim B$ if $A\le CB$ where $C$ depends only on $d,q, N_1$, and $A\asymp B$ if $A\lesssim B$ and $A\gtrsim B$. 
We also use the notations $A\lesssim_j B$ and $A\asymp_j B$ to indicate that the multiplicative constant depends on the variable $j$ other than $d,q, N_1$.
We sometimes omit the explicit dependence of random variables on the environment $\omega$ when this does not cause any ambiguity.

\subsection*{Organization of the paper}
The paper is organized as follows.
In Section \ref{sec:metric}, we study the H\"older and almost Lipschitz regularities of the metric distance and the optimal curves.
Sections \ref{sec:large-time-metric} and \ref{sec:deterministic-error-metric} are devoted to the quantitative large-time averages of the metric distance.
The main results are proved in Section \ref{sec:proof of main results}.
Some basic estimates are proved in Appendix \ref{appendix:basic estimates} and some concentration inequalities are given in Appendix \ref{appendix:concentration inequalities}.

\subsection*{Acknowledgment}
The authors would like to thank Konstantin Khanin for various useful discussions concerning the tensionless KPZ equation and related questions at the conference ``Recent progress in Hamilton--Jacobi equations and related topics" at Nanjing University in June 2025.

\section{Regularity of the metric distance and the optimal curves}\label{sec:metric}

In this section, we fix $\omega\in \Omega$ and study the regularity of $m$.
Recall that, for any $(x_i,t_i)\in \R^{d+1}$ with $i=1,2$ and $t_2>t_1$, 
\begin{equation*}
\begin{aligned}
m(x_1,t_1;x_2,t_2,\omega)&=
\inf \Big\{\int_{t_1}^{t_2} L(\gam(s),s,\dot\gam(s),\omega)\,\dd s: \gam\in\AC([t_1,t_2],\R^d), \\
&\qquad\quad\qquad\quad\qquad\quad\qquad\quad\qquad  \gam(t_1)=x_1,\gam(t_2)=x_2 \Big\}.
\end{aligned}
\end{equation*}
For $(x,t)\in \R^d\times [0,\infty)$, we write
\[
m(x,t,\omega)=m(0,0;x,t,\omega).
\]

Here are the main results in this section.
\begin{thm}\label{thm:path-reg}
	For any $x\in\R^d$ and $0<s<t$, let $\gamma=(\gamma(r))_{r\in[0,t]}\in\AC([0,t],\R^d)$ be an optimal path for $m(x,t)=m(0,0;x,t)$. 
    Then, there exists a constant $c_1>0$ such that
\begin{align}
  |\gamma(s)-\gamma(t)|&\lesssim (t-s)\exp\left(c_1(\log(\tfrac{t}{t-s}))^{1/(2\wedge q)}/q\right)A_{x,t,s}^{1/q},\label{eq:best-path-reg}\\
  -\int_s^t\nu_r\,\dd r\le m(\gamma(s),s;x,t)&\lesssim (t-s)\exp\left(c_1(\log(\tfrac{t}{t-s}))^{1/(2\wedge q)}\,\right)A_{x,t,s},\label{eq:best-metric-reg}
\end{align}
where 
\beq\lb{Axts}
 A_{x,t,s}:=\frac{|x|^q}{t^q}+\sup_{w\le s}\avint_{w\vee 0}^t\nu_r\,\dd r.
\eeq
Moreover, if $s>c_0t$ for some $c_0\in(0,1)$, then
\begin{equation}
\label{eq:change-end-path-same-s}
    m(\gamma(t),s)-m(\gamma(s),s)\lesssim_{c_0} (t-s)\exp\left(c_1(\log(\tfrac{t}{t-s}))^{1/(2\wedge q)}\,\right)A_{x,t,s}.
\end{equation}
\end{thm}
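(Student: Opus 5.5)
The plan is a multiscale iteration along the optimal path. I start from a crude estimate and improve it dyadically as the window $[s,t]$ shrinks relative to $t$. Throughout I write $\tau=t-s$ and $\lambda=t/\tau\ge 1$.

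\textbf{Base estimate (scale $\tau\asymp t$).} Compare $\gamma$ with the straight line from $(0,0)$ to $(x,t)$. The upper bound in \eqref{eq:L-bd} gives
\[
m(x,t)\le N_1|x|^q t^{1-q}+\int_0^t\nu_r\,\dd r .
\]
Sub-optimality of the pieces and the lower bound in \eqref{eq:L-bd} then yield
\[
\frac{1}{N_1}\int_s^t|\dot\gamma|^q\,\dd r\;\lesssim\; t\,A_{x,t,s}.
\]
By H\"older's inequality,
\[
|\gamma(t)-\gamma(s)|\le \tau^{1-1/q}\Bigl(\int_s^t|\dot\gamma|^q\Bigr)^{1/q}\lesssim \tau\,\lambda^{1/q}A_{x,t,s}^{1/q}.
\]
This is the rough H\"older bound, with a loss $\lambda^{1/q}$; the corresponding metric bound loses a factor $\lambda$. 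The lower bound $m(\gamma(s),s;x,t)\ge-\int_s^t\nu_r\,\dd r$ follows at once from $L\ge -\nu$.

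\textbf{Inductive step.} Suppose that on every subwindow of length $\tau'\ge 2\tau$ ending at $t$ we already have
\[
m(\gamma(t-\tau'),t-\tau';x,t)\le \tau'\,F(t/\tau')\,A_{x,t,s}.
\]
I pass from $\tau'=2\tau$ to $\tau$ using the dynamic programming principle: $\gamma|_{[s,t]}$ is optimal for $m(\gamma(s),s;x,t)$. I build a competitor from $(\gamma(s),s)$ to $(x,t)$ that moves linearly from $\gamma(s)$ to $x$ over $[s,t]$. Its cost is at most
\[
N_1|\gamma(s)-x|^q\tau^{1-q}+\int_s^t\nu_r\,\dd r,
\]
and I control $|\gamma(s)-x|$ through the already-established estimate at scale $2\tau$, via the H\"older step above.

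This naive recursion only reproduces the same power of $\lambda$. The gain has to come from a sharper use of the structure. I would split $[t-2\tau,t]$ into the two halves and use that the kinetic energy $\int|\dot\gamma|^q$ on the two halves adds up to at most the total. One half must then carry at most half the energy, and this improves the exponent by a factor slightly less than one at each dyadic step. This is the content I expect Proposition~\ref{prop:m-reg-induc} to formalize.

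Iterating over $k=\log_2\lambda$ steps gives a multiplicative loss $\prod_j(1+C j^{-\theta})$. The exponent $\theta$ is tied to the $2\wedge q$ via a convexity (uniform convexity of $|v|^q$) gain, and the product produces the factor $\exp\bigl(c_1(\log\lambda)^{1/(2\wedge q)}\bigr)$. The sup over $w\le s$ in $A_{x,t,s}$ is needed because intermediate windows start at $w\in[s,t]$ measured from earlier times; the averages $\avint_{w\vee0}^t\nu$ at all intermediate scales must be dominated.

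\textbf{Closing the three estimates.} From the metric bound, Hölder's inequality gives \eqref{eq:best-path-reg} with the $1/q$ power, since the kinetic energy on $[s,t]$ is bounded by $m(\gamma(s),s;x,t)+\int_s^t\nu$.

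For \eqref{eq:change-end-path-same-s}, I take an optimal path for $m(\gamma(s),s)$. I modify it on $[s-\tau,s]$ by a linear correction steering its endpoint to $\gamma(t)$, and this requires $s-\tau>0$ at comparable scale, hence $s>c_0t$. The extra cost is $\lesssim |\gamma(t)-\gamma(s)|^q\tau^{1-q}$ plus the cost perturbation, controlled through \eqref{eq:L-regularity}. That perturbation is $\lesssim \tau\,(\text{speed})^{q-1}\cdot(\text{shift}/\tau)$, and the speeds on $[s-\tau,s]$ are bounded by applying \eqref{eq:best-path-reg} to that path, with constants depending on $c_0$.

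The main obstacle is the inductive gain: extracting a genuine improvement at each dyadic scale that yields the $(\log\lambda)^{1/(2\wedge q)}$ exponent rather than a power of $\lambda$. I would first try the energy-halving/convexity argument described above.
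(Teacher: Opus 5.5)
There is a genuine gap. Your base step and your closing deductions are sound. The cost bound gives \eqref{eq:best-path-reg} through \eqref{eq:m-lower-bd}, and once \eqref{eq:best-path-reg} is known, your splice on $[s-\tau,s]$ yields \eqref{eq:change-end-path-same-s} for $s>c_0t$.

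The problem is the gain at each scale, which is the step that carries the theorem. It is missing, and the mechanism you suggest cannot supply it.
\begin{itemize}
\item Pigeonholing the kinetic energy of $[t-2\tau,t]$ only says that \emph{some} half carries at most half of it. It does not say this of the half $[t-\tau,t]$ adjacent to the endpoint, which is the only one that matters. Without that you get $E(\tau)\le E(2\tau)$, which reproduces exactly the loss $\lambda$ of the base bound.
\item (A1)--(A2) provide no uniform convexity: only convexity of $L$, two-sided $|v|^q$ growth, and regularity up to additive errors $\nu$. The $2\wedge q$ in the exponent does not come from a convexity modulus.
\item Any competitor that alters $\gamma$ only on $[s,t]$ is charged $|\gamma(s)-x|^q/(t-s)^{q-1}$. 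This is why your straight-line recursion is stuck.
\item Your product $\prod_j(1+Cj^{-\theta})$ has no derivation behind it.
\end{itemize}

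The missing idea is to perturb over a window \emph{longer} than $t-s$, so that the cost becomes linear in the displacement. By optimality, $m^\gamma(s,t)=m(x,t)-m(\gamma(s),s)$, so
\[
m^\gamma(s,t)=\bigl[m(\gamma(t),t)-m(\gamma(t),s)\bigr]+\bigl[m(\gamma(t),s)-m(\gamma(s),s)\bigr].
\]
The first bracket is at most $\int_s^t\nu_r\,\dd r$, by waiting at $\gamma(t)$ during $[s,t]$. The second bracket is the same endpoint change you use for \eqref{eq:change-end-path-same-s}. Now do it via Lemma~\ref{lem:rough-sp-reg}, splicing on $[t-\tau,s]$ with $\tau\gg t-s$. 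With $\delta=|\gamma(t)-\gamma(s)|$, the cost is
\[
\lesssim\delta\,\bigl(|\gamma(t)-\gamma(t-\tau)|/\tau\bigr)^{q-1}+\delta^q/\tau^{q-1}+\delta+\tau\fint_{t-\tau}^t\nu_r\,\dd r.
\]
This is $\delta$ times the velocity at the longer scale, where the current H\"older bound is better.

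If the path is H\"older-$a$ near $t$, choose $\tau=t\bigl(\tfrac{t-s}{t}\bigr)^{g(a)}$ with $g(a)=\tfrac{a}{(1-a)(q-1)+1}$. This balances the terms and gives $m^\gamma(s,t)\lesssim K\bigl(\tfrac{t-s}{t}\bigr)^{g(a)}tA_{t,s}$. Via \eqref{eq:m-lower-bd} it upgrades the exponent to $f(a)=g(a)/q+1/q'$; this is Proposition~\ref{prop:m-reg-induc}, not an energy-halving statement.

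Iterating gives $1-a_n=\tfrac{1}{n(q-1)+q}$. However, each step costs a constant factor $C_1$, and step $n$ is valid only when $\tfrac{t-s}{t}<c_0^{l_n}$ with $l_n\asymp n^{q'}$. Minimize $C_1^J(\tfrac{t}{t-s})^{qb_J}$, where $b_J=1-a_J$ and $qb_J\le q'/(J+1)$, under the constraint $J\lesssim(\log\tfrac{t}{t-s})^{1/q'}$. The minimum is $\exp\bigl(c_1(\log\tfrac{t}{t-s})^{1/(2\wedge q)}\bigr)$. So the $2\wedge q$ arises from comparing the unconstrained optimum $J\sim(\log\tfrac{t}{t-s})^{1/2}$ with that range constraint.
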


The ``almost Lipschitz" regularity of the path implies the following almost Lipschitz continuity of $m$.
Recall that 
\beq\lb{varphi}
\varphi(s)= \begin{cases} \exp\left(c_1(\log s)^{1/(2\wedge q)}\,\right)\quad &\text{ for }s\geq 1,\\
1 \quad &\text{ for }s \in (0,1),
\end{cases}
\eeq
and $A_{x,t,s}$ is given in Theorem~\ref{thm:path-reg}. Note that $\nu_r\geq 1$,  $A_{x,t,s}\geq 1$.

\medskip 

For the regularity of $m$ in space, we have: 
\begin{thm}\label{thm:sp-reg-t}
    For any $x,y\in\R^d$ and $t>0$, we have
\begin{equation}
\label{eq:best-sp-reg-metric}
|m(y,t)-m(x,t)|\lesssim
|x-y|\varphi\bigl(\frac{t}{|x-y|}\bigr)\left(\frac{|x-y|^q}{t^q} + A_{x,t,t-|x-y|}\right).
\end{equation}
\end{thm}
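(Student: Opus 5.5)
By symmetry it suffices to bound $m(y,t)-m(x,t)$ from above; the reverse inequality follows by exchanging $x$ and $y$. One point to watch is that $A_{x,t,s}$ depends on $x$ only through $|x|^q/t^q$, and $|y|^q/t^q\lesssim |x|^q/t^q+|x-y|^q/t^q$. This is why the extra term $|x-y|^q/t^q$ appears on the right of \eqref{eq:best-sp-reg-metric}.

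Write $h:=|x-y|$. If $h\ge t$, we have $\varphi(t/h)=1$ and $t-h\le 0$. In this case we estimate directly. Take the straight line from $0$ to $y$ over $[0,t]$. By \eqref{eq:L-bd} its cost is at most $N_1 |y|^q t^{1-q}+\int_0^t\nu_r\,\dd r$. For the lower bound, $m(x,t)\ge -\int_0^t\nu_r\,\dd r$. Since $|y|\le |x|+h$ and $t\le h$, the difference is $\lesssim h\bigl(|x|^q/t^q+h^q/t^q+\avint_0^t\nu_r\,\dd r\bigr)$, which is of the claimed form.

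Now assume $h<t$ and set $s:=t-h$. Let $\gamma$ be an optimal path for $m(x,t)$. Use the dynamic programming (subadditivity) inequality
\[
m(y,t)\le m(\gamma(s),s)+m(\gamma(s),s;y,t),\qquad m(x,t)=m(\gamma(s),s)+m(\gamma(s),s;x,t).
\]
Subtracting gives
\[
m(y,t)-m(x,t)\le m(\gamma(s),s;y,t)-m(\gamma(s),s;x,t).
\]
The second term is bounded below by $-\int_s^t\nu_r\,\dd r$, using \eqref{eq:best-metric-reg} or directly \eqref{eq:L-bd}. This is at most $h\,\sup_{w\le s}\avint_{w\vee0}^t\nu_r\,\dd r\le hA_{x,t,s}$, because the choice $w=s$ gives the average over $[s,t]$.

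For the first term, use the straight segment from $\gamma(s)$ to $y$ over the time interval of length $h$. By \eqref{eq:L-bd} its cost is at most $N_1|y-\gamma(s)|^q h^{1-q}+\int_s^t\nu_r\,\dd r$. Next, $|y-\gamma(s)|\le h+|\gamma(t)-\gamma(s)|$ with $\gamma(t)=x$. By \eqref{eq:best-path-reg} with $t-s=h$,
\[
|\gamma(s)-x|\lesssim h\,\varphi(t/h)^{1/q}A_{x,t,s}^{1/q}.
\]
Hence $|y-\gamma(s)|^q h^{1-q}\lesssim h\bigl(1+\varphi(t/h)A_{x,t,s}\bigr)\lesssim h\varphi(t/h)A_{x,t,s}$, using $A\ge1$ and $\varphi\ge1$. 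Collecting terms gives
\[
m(y,t)-m(x,t)\lesssim h\varphi(t/h)A_{x,t,t-h}.
\]

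For the reverse direction we swap the roles of $x$ and $y$, taking an optimal path for $y$. This produces $A_{y,t,t-h}$, which is $\lesssim h^q/t^q+A_{x,t,t-h}$. That accounts for the additional $|x-y|^q/t^q$ term.

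The main obstacle is already handled by Theorem~\ref{thm:path-reg}, namely the almost-Lipschitz control of $|\gamma(s)-\gamma(t)|$ at scale $t-s=h$. The only remaining care is the bookkeeping: matching the exponent of $\varphi$, since $\varphi(t/h)$ equals the factor $\exp(c_1(\log(t/(t-s)))^{1/(2\wedge q)})$ appearing there, and handling the boundary case $h$ comparable to $t$.
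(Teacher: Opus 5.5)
Your proof is correct. It follows the paper's overall strategy: reroute an optimal path $\gamma$ for $m(x,t)$ on a terminal time window and control the displacement with Theorem~\ref{thm:path-reg}. The execution differs in a way worth noting.

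\textbf{What the paper does.} It goes through Lemma~\ref{lem:rough-sp-reg}, comparing the rerouted segment with $\gamma|_{[t-\tau,t]}$ via Jensen \eqref{eq:jensen} and the velocity-Lipschitz bound \eqref{eq:L-regularity}. The resulting cost difference is first order in $|x-y|$, as in \eqref{eq:pfthm2.2-1}. It then takes the larger window $\tau=|x-y|\varphi(t/|x-y|)$, with a separate crude case when $\tau\ge t$.

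\textbf{What you do.} You take the window to be exactly $h=|x-y|$ and bound the two segment costs separately and crudely. The new straight segment is bounded above by \eqref{eq:m-upper-bd}, and the old one below by $-\int_{t-h}^t\nu_r\,\dd r$. So you need only the growth bounds \eqref{eq:L-bd}, never \eqref{eq:L-regularity}. This suffices because at window size $h$, Theorem~\ref{thm:path-reg} already gives $|y-\gamma(t-h)|\lesssim h\,\varphi(t/h)^{1/q}A_{x,t,t-h}^{1/q}$. The whole new segment therefore costs $O\bigl(h\varphi(t/h)A_{x,t,t-h}\bigr)$, which is the target scale.

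\textbf{What the paper's comparison buys.} It gives freedom in the window. With your crude bound, a window $\tau$ makes the new segment cost roughly $\tau\varphi(t/\tau)A$, so it only reaches the target when $\tau$ is of order $h$. The cancellation in Lemma~\ref{lem:rough-sp-reg} allows $\tau=h\varphi(t/h)>h$. This yields the slightly sharper factor $A_{x,t,t-|x-y|\varphi(t/|x-y|)}$ mentioned after Theorem~\ref{thm:time-reg}. Taking $\tau=h$ in \eqref{eq:pfthm2.2-1} instead, the leading term would even carry $\varphi(t/h)^{1/q'}$ rather than $\varphi(t/h)$.

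For the estimate exactly as stated, your more elementary route is fully adequate. Your two cases ($h\ge t$, and $h<t$ with the symmetrization $A_{y,t,t-h}\lesssim h^q/t^q+A_{x,t,t-h}$) are handled correctly.
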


For the regularity of $m$ in time, we have: 
\begin{thm}\label{thm:time-reg}
	For $x\in\R^d$ and $0<s<t$, we have
\begin{equation}
\label{eq:best-metric-reg-t}
-\int_s^t\nu_r\,\dd r\le  m(x,s)-m(x,t)\lesssim (t-s)\varphi\bigl(\frac{t}{t-s}\bigr) \bigl(\frac{t}{s}\bigr)^{q-1} A_{x,t,s},
\end{equation}
\end{thm}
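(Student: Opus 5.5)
The statement has two inequalities, and I will treat them separately. In both I use the dynamic programming identity $m(0,0;x,t)=\min_{z}\bigl(m(0,0;z,s)+m(z,s;x,t)\bigr)$, which holds for every intermediate time $s\in(0,t)$.

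\emph{Lower bound.} Choose $z=x$ in the dynamic programming identity. This gives $m(x,t)\le m(x,s)+m(x,s;x,t)$. Now bound $m(x,s;x,t)$ from above using the constant path $\gamma\equiv x$ on $[s,t]$. By (A2), and in particular the upper bound in \eqref{eq:L-bd} with $v=0$, its cost is at most $\int_s^t\nu_r\,\dd r$. Hence $m(x,s)-m(x,t)\ge -\int_s^t\nu_r\,\dd r$, which is the left inequality.

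\emph{Upper bound.} Let $\gamma$ be an optimal path for $m(x,t)$. Then
\[
m(x,t)=m(\gamma(s),s)+m(\gamma(s),s;x,t),
\]
and so
\[
m(x,s)-m(x,t)=\bigl[m(x,s)-m(\gamma(s),s)\bigr]-m(\gamma(s),s;x,t).
\]
I control the two terms on the right as follows.
\begin{itemize}
\item The term $-m(\gamma(s),s;x,t)$ is at most $\int_s^t\nu_r\,\dd r$ by the lower bound in \eqref{eq:best-metric-reg}. This is at most $(t-s)A_{x,t,s}$, because $\sup_{w\le s}\avint_{w\vee0}^t\nu_r\,\dd r\ge \avint_s^t\nu_r\,\dd r$ (take $w=s$).
\item The bracket $m(x,s)-m(\gamma(s),s)$ is a spatial increment at the fixed time $s$. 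I bound it with Theorem~\ref{thm:sp-reg-t}, applied at time $s$ with $h:=|x-\gamma(s)|=|\gamma(t)-\gamma(s)|$.
\end{itemize}

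\emph{The spatial increment.} By \eqref{eq:best-path-reg}, $h\lesssim (t-s)\varphi(\tfrac{t}{t-s})^{1/q}A_{x,t,s}^{1/q}$. Theorem~\ref{thm:sp-reg-t} then gives a bound of order
\[
h\,\varphi\bigl(\tfrac{s}{h}\bigr)\Bigl(\tfrac{h^q}{s^q}+A_{\gamma(s),s,s-h}\Bigr).
\]
Two points need checking.
\begin{enumerate}
\item Compare $A_{\gamma(s),s,s-h}$ with $A_{x,t,s}$. The spatial part is $|\gamma(s)|^q/s^q$. Since $|\gamma(s)|\le|x|+h$, it is at most $2^{q}\bigl(|x|^q/s^q+h^q/s^q\bigr)$, and $|x|^q/s^q=(t/s)^q|x|^q/t^q$. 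This is where a power of $t/s$ enters. The averaged-$\nu$ part is $\sup_{w\le s-h}\avint_{w\vee0}^s\nu_r\,\dd r$. I compare it with averages over $[w\vee0,t]$ by using $\int_{w}^s\le\int_w^t$ and $(t-w)/(s-w)\le t/s$ when $w\ge0$. The result is $A_{\gamma(s),s,s-h}\lesssim (t/s)^{q}A_{x,t,s}+(h/s)^q$.
\item Handle the size of $h$. If $h\gtrsim t-s$, the estimate still carries the factor $A^{1/q}$ coming from $h$. For that reason I would rather avoid inserting \eqref{eq:best-path-reg} crudely, and I will instead use \eqref{eq:change-end-path-same-s} directly. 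That estimate bounds exactly $m(\gamma(t),s)-m(\gamma(s),s)$ by $(t-s)\varphi(\tfrac{t}{t-s})A_{x,t,s}$, with a constant depending on $c_0$.
\end{enumerate}

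\emph{Case split in $s/t$.} This suggests the following split.
\begin{itemize}
\item When $s\ge t/2$, apply \eqref{eq:change-end-path-same-s} with $c_0=1/2$. Combined with the first term, this gives the claimed bound, since $(t/s)^{q-1}\ge1$.
\item When $s<t/2$, we have $t-s\asymp t$, so $\varphi(\tfrac{t}{t-s})\asymp1$ and the target is $\asymp t(t/s)^{q-1}A_{x,t,s}$. Here I bound $m(x,s)$ from above using the straight line from $0$ to $x$ on $[0,s]$. By \eqref{eq:L-bd} its cost is at most $N_1|x|^q s^{1-q}+\int_0^s\nu_r\,\dd r$. For $m(x,t)$ I use the lower bound $m(x,t)\ge-\int_0^t\nu_r\,\dd r$. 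Together these give
\[
m(x,s)-m(x,t)\lesssim s\,(t/s)^{q}\,|x|^q/t^q+2\int_0^t\nu_r\,\dd r .
\]
Since $s(t/s)^q=t(t/s)^{q-1}$ and $\int_0^t\nu_r\,\dd r\le tA_{x,t,s}$ (take $w=0$), this is $\lesssim t(t/s)^{q-1}A_{x,t,s}$, as required.
\end{itemize}

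The main obstacle I expect is the regime $s$ close to $t$, where the correct factor $\varphi(\tfrac{t}{t-s})$ is needed. There the reduction to \eqref{eq:change-end-path-same-s} is essential, and I must make sure that its constant depends only on $c_0=1/2$. If \eqref{eq:change-end-path-same-s} were unavailable, I would have to go through Theorem~\ref{thm:sp-reg-t}, and the bookkeeping of $\varphi(s/h)$ against $\varphi(t/(t-s))$ via $h\lesssim(t-s)\varphi^{1/q}A^{1/q}$ would be delicate.
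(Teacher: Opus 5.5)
Your proof is correct. In the main regime it takes a different route from the paper's.

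The lower bound and the case $s\le t/2$ match the paper: concatenation with the constant path at $x$ for the first, and the crude bounds \eqref{eq:m-upper-bd} together with $L\ge-\nu$ for the second.

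For $s>t/2$ you use dynamic programming along an optimal path $\gamma$ for $m(x,t)$ to write $m(x,s)-m(x,t)=[m(x,s)-m(\gamma(s),s)]-m^\gamma(s,t)$. You bound the bracket by \eqref{eq:change-end-path-same-s}, using $\gamma(t)=x$ and $\exp(c_1(\log\frac{t}{t-s})^{1/(2\wedge q)})=\varphi(\frac{t}{t-s})$. You bound the second term by $-m^\gamma(s,t)\le\int_s^t\nu_r\,\dd r\le(t-s)A_{x,t,s}$.

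The paper never passes through the point $(\gamma(s),s)$. It builds a competitor for $m(x,s)$ that follows $\gamma$ up to time $t-\tau$, with $\tau=(t-s)\varphi(\frac{t}{t-s})^\theta$, and then goes straight to $(x,s)$. It compares this competitor, via Jensen and \eqref{eq:L-regularity}, with the cost of $\gamma$ over the whole interval $[t-\tau,t]$. So its only input from Theorem~\ref{thm:path-reg} is the path estimate \eqref{eq:best-path-reg} at time $t-\tau$.

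If you unwrap \eqref{eq:change-end-path-same-s} (Lemma~\ref{lem:rough-sp-reg} inside Proposition~\ref{prop:m-reg-induc}), your route uses the same kind of competitor: cut $\gamma$ at an earlier time and go straight to $(x,s)$. The difference is that it is compared only against $\gamma$ on $[t-\tau,s]$. That is why you need the separate lower bound on $m^\gamma(s,t)$.

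What each version buys:
\begin{itemize}
\item Yours is shorter and needs no choice of $\tau$.
\item The paper's depends only on \eqref{eq:best-path-reg}, which is the estimate the proof of Theorem~\ref{thm:path-reg} actually writes out. By contrast, \eqref{eq:change-end-path-same-s} is left implicit there, coming from the first line of \eqref{eq:indu-results} plus crude bounds for $s$ away from $t$.
\item The paper's also produces $A_{x,t,t-\tau}$, with $t-\tau<s$, in place of $A_{x,t,s}$. This is a slight sharpening that your black-box use of \eqref{eq:change-end-path-same-s} does not give.
\end{itemize}

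Two minor points. First, at $s=t/2$ the hypothesis $s>c_0t$ fails for $c_0=1/2$. Take $c_0=1/4$, or put $s=t/2$ into the other case. Second, the comparison in your discarded point~1 is backwards: for $0\le w<s<t$ one has $(t-w)/(s-w)\ge t/s$, and the ratio blows up as $w\uparrow s$. So the route through Theorem~\ref{thm:sp-reg-t} would not have closed as written, and your final argument rightly avoids it.
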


The proof of those regularity results is given later in this section. From the proof, we see that the $A_{x,t,\cdot}$ term in \eqref{eq:best-sp-reg-metric} can be replaced by $A_{x,t,t-|x-y|\varphi(t/|x-y|)}$, and the one in \eqref{eq:best-metric-reg-t} can be replaced by $A_{x,t,t-(t-s)\varphi(t/(t-s))}$, respectively. Since $A_{x,t,\tau}$ is non-decreasing in $\tau$, the estimates are slightly better after those replacements.

Notice that, for any $\varepsilon>0$, there exists $C_{\varepsilon,2\wedge q}>0$ such that
\begin{align*}
   \qquad (t-s)\exp\left(c_1(\log(\tfrac{t}{t-s}))^{1/(2\wedge q)}\,\right)&\le C_{\varepsilon,2\wedge q}(t-s)^{1-\varepsilon}t^\varepsilon \quad\text{ for all }0<s<t,\\
   \text{and }\quad (t-s)\exp\left(c_1(\log(\tfrac{t}{t-s}))^{1/(2\wedge q)}\,\right)&\le (t-s)^{1-\varepsilon}t^\varepsilon\quad\text{ whenever }C_{\varepsilon,2\wedge q}\le \tfrac{t}{t-s} .
\end{align*}
\begin{cor}\label{cor:Holder}
	Let $x,y\in\R^d$ and $t>0$. Then, for any $\varepsilon\in(0,1)$,
  \begin{equation*}
  |m(y,t)-m(x,t)|\lesssim_{\varepsilon}t^\varepsilon|x-y|^{1-\varepsilon} B_{x,y,t},
\end{equation*}
where 
\[
  B_{x,y,t}:=\frac{(|x|^q+|y|^q)}{t^q}+\sup_{w\in\bigl[\big(\tfrac{|x-y|}{|x|+|y|+t}\big)^{1-\varepsilon}t,t\bigr]}\avint_{t-w}^t\nu_r\,\dd r.
\]

Next, for $0<s<t$ and any $\varepsilon\in(0,1)$,
	\[
 -\int_s^t\nu_r\,\dd r\le  m(x,s)-m(x,t)\lesssim_{\varepsilon}t^{\varepsilon q}(t-s)^{1-\varepsilon q}\bigl(\frac{t}{s}\bigr)^{q-1}A_{x,t,s}.
\]
\end{cor}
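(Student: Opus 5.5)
The corollary should follow from Theorems \ref{thm:sp-reg-t} and \ref{thm:time-reg} once the slowly varying factor $\varphi$ is absorbed into a power loss. The two displayed inequalities just before the statement give exactly this: for $\lambda:=t/(t-s)\ge 1$ we have $(t-s)\varphi(\lambda)\le C_\varepsilon (t-s)^{1-\varepsilon}t^\varepsilon$. The reason is that $\exp(c_1(\log\lambda)^{1/(2\wedge q)})\le C_\varepsilon\lambda^\varepsilon$, because $(\log\lambda)^{1/(2\wedge q)}=o(\log\lambda)$ when $2\wedge q>1$. The plan is to apply this with $\lambda=t/|x-y|$ in space and $\lambda=t/(t-s)$ in time, and then to control the remaining random factors $A$ by $B_{x,y,t}$.

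\emph{Spatial estimate.} Set $h=|x-y|$.

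First suppose $h\le t$. Theorem \ref{thm:sp-reg-t} gives
\[
|m(y,t)-m(x,t)|\lesssim h\varphi(t/h)\bigl(h^q/t^q+A_{x,t,t-h}\bigr),
\]
and $h\varphi(t/h)\lesssim_\varepsilon h^{1-\varepsilon}t^\varepsilon$. It remains to bound $h^q/t^q+A_{x,t,t-h}$ by $B_{x,y,t}$.
\begin{itemize}
\item Since $h\le |x|+|y|$, we have $h^q\lesssim |x|^q+|y|^q$, so $h^q/t^q$ is absorbed.
\item Likewise $|x|^q/t^q$ is absorbed.
\item The supremum in $A_{x,t,t-h}$ is over $\avint_{w\vee0}^t\nu_r\,\dd r$ with $w\le t-h$. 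Writing $w'=t-(w\vee 0)\in[h,t]$, this is $\avint_{t-w'}^t\nu_r\,\dd r$.
\end{itemize}
The window in $B$ only allows $w'\ge (h/(|x|+|y|+t))^{1-\varepsilon}t$, which may exceed $h$, so short windows $w'\in[h,w_0)$ with $w_0:=(h/(|x|+|y|+t))^{1-\varepsilon}t$ need separate treatment.

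I expect this to be the main obstacle. My first attempt is to use the stated refinement that $A_{x,t,t-h}$ may be replaced by $A_{x,t,\,t-h\varphi(t/h)}$, which restricts the windows to $w'\ge h\varphi(t/h)$. If that is still shorter than $w_0$, I would trade the $\varepsilon$-slack: for $w'<w_0$, bound $\int_{t-w'}^t\nu\le\int_{t-w_0}^t\nu$, so
\[
\avint_{t-w'}^t\nu_r\,\dd r\le \frac{w_0}{w'}\avint_{t-w_0}^t\nu_r\,\dd r .
\]
The factor $w_0/w'\le w_0/(h\varphi)$ should then be compensated by using a slightly smaller exponent $\varepsilon'<\varepsilon$ in the $\varphi$ absorption. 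Concretely, this requires
\[
h^{1-\varepsilon'}t^{\varepsilon'}\cdot\frac{w_0}{h}\lesssim h^{1-\varepsilon}t^{\varepsilon},
\]
which I will verify after adjusting the constants; an alternative is to rerun the Theorem \ref{thm:sp-reg-t} argument with the intermediate time chosen as $t-w_0$.

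If instead $h>t$, the claim should follow from cruder bounds. Using Theorem \ref{thm:path-reg} and \eqref{eq:L-bd} with a straight-line comparison path, one gets $|m(\cdot,t)|\lesssim t(|z|^q/t^q+\avint_0^t\nu_r\,\dd r)$. Here $t\le t^\varepsilon h^{1-\varepsilon}$, and since $w_0\le t$ the window $w=t$ lies in the range defining $B$, so the average $\avint_0^t\nu_r\,\dd r$ is included in $B_{x,y,t}$.

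\emph{Time estimate.} Apply Theorem \ref{thm:time-reg}; the lower bound carries over unchanged. For the upper bound, use $(t-s)\varphi(t/(t-s))\lesssim_\varepsilon (t-s)^{1-\varepsilon}t^\varepsilon$, then weaken it to $(t-s)^{1-\varepsilon q}t^{\varepsilon q}$ (valid since $t-s\le t$, with $\varepsilon q<1$ assumed or else the claim is trivial). The factors $(t/s)^{q-1}A_{x,t,s}$ are kept as they are.
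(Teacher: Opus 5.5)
The reduction for the time estimate is fine. The weakening $(t-s)^{1-\varepsilon}t^{\varepsilon}\le (t-s)^{1-\varepsilon q}t^{\varepsilon q}$ holds for every $\varepsilon$, since $(t-s)/t\le 1$ and $\varepsilon q\ge\varepsilon$, so no restriction $\varepsilon q<1$ is needed. The case $h>t$ via the crude bounds \eqref{eq:m-upper-bd}, \eqref{eq:m-lower-bd} is also fine.

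You correctly located the real issue in the spatial part. Write $S:=|x|+|y|+t$. The windows in $A_{x,t,t-h}$, or in the refined $A_{x,t,t-h\varphi(t/h)}$, go down to length $h$, resp.\ $h\varphi(t/h)$. By contrast, $B_{x,y,t}$ only contains windows of length at least $w_0=(h/S)^{1-\varepsilon}t$.

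However, the way you propose to close this gap fails. The inequality $h^{1-\varepsilon'}t^{\varepsilon'}\cdot w_0/h\lesssim h^{1-\varepsilon}t^{\varepsilon}$ that you plan to verify is false for every $\varepsilon'>0$. Take $x=0$, $t=1$, $|y|=h\to0$. Then $w_0\asymp h^{1-\varepsilon}$, the left side is $\asymp h^{1-\varepsilon-\varepsilon'}$, and its ratio to the right side is $h^{-\varepsilon'}\to\infty$. The error is that you first absorb $\varphi(t/h)$ into $(t/h)^{\varepsilon'}$ and then bound $w_0/(h\varphi(t/h))$ by $w_0/h$, discarding exactly the factor that should cancel. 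No choice of $\varepsilon'$ recovers this.

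The repair is to let the factors cancel rather than trade exponents. When $\tau_1:=h\varphi(t/h)<t$, for any window length $w'\in[\tau_1,w_0)$ you have
\[
\tau_1\fint_{t-w'}^t\nu_r\,\dd r=\frac{\tau_1}{w'}\int_{t-w'}^t\nu_r\,\dd r\le\int_{t-w_0}^t\nu_r\,\dd r=w_0\fint_{t-w_0}^t\nu_r\,\dd r\le w_0B_{x,y,t},
\]
and $w_0=h^{1-\varepsilon}t^{\varepsilon}(t/S)^{1-\varepsilon}\le h^{1-\varepsilon}t^{\varepsilon}$. Windows of length at least $w_0$ are handled by $\tau_1\lesssim_\varepsilon h^{1-\varepsilon}t^{\varepsilon}$, as you said.

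The refined form of Theorem~\ref{thm:sp-reg-t} is only available when $\tau_1<t$. If $h\le t\le \tau_1$, you also need the crude bound, together with $t\le\tau_1\lesssim_\varepsilon h^{1-\varepsilon}t^{\varepsilon}$.

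Your ``alternative'' also works, and it is what the threshold in $B_{x,y,t}$ is built for: take $\tau=w_0$ directly in \eqref{eq:pfthm2.2-1}. Since $h\le|x|+|y|<S$, one always has $w_0\in(0,t)$. Then $A_{x,t,t-w_0}\le B_{x,y,t}$ and $w_0A_{x,t,t-w_0}\le h^{1-\varepsilon}t^{\varepsilon}B_{x,y,t}$. Using $B_{x,y,t}\gtrsim (S/t)^q$ (recall $\nu\ge 1$), the remaining terms are controlled as follows:
\[
\frac{h^q}{w_0^{q-1}}=h^{1-\varepsilon}t^{\varepsilon}\Bigl(\frac St\Bigr)^q\Bigl(\frac hS\Bigr)^{\varepsilon q}\Bigl(\frac tS\Bigr)^{1-\varepsilon},\qquad h\le h^{1-\varepsilon}t^{\varepsilon}\Bigl(\frac St\Bigr)^{\varepsilon},\qquad h\varphi(t/w_0)^{1/q'}\lesssim_\varepsilon h^{1-\varepsilon}t^{\varepsilon}\,\frac St .
\]
The last bound uses $t/w_0=(S/h)^{1-\varepsilon}$ and that $\varphi$ grows slower than any power; it is then combined with $A^{1/q'}\le B^{1/q'}$ and $S/t\lesssim B^{1/q}$. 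The reverse difference $m(x,t)-m(y,t)$ follows by symmetry of $B_{x,y,t}$.
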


\subsection{Basic bounds}
First, by the upper bound in \eqref{eq:L-bd}, computing the cost along a straight line segment from $(y,s)$ to $(x,t)$,
\begin{equation}\label{eq:m-upper-bd}
  m(y,s;x,t)\le \frac{N_1|x-y|^q}{|t-s|^{q-1}}+\int_s^t\nu_r\,\dd r.
\end{equation}

Next, for any $\gamma\in\AC([s,t],\R^d)$,
 \begin{equation}\label{eq:jensen}
  \int_s^tL(\gamma(r),r,\dot\gamma(r))\,\dd r\ge \int_s^tL\left(\gamma(r),r,\tfrac{\gamma(t)-\gamma(s)}{t-s}\right)\,\dd r-2\int_s^t\nu_r\,\dd r. 
\end{equation}
Indeed, by \eqref{eq:L-regularity}, $|L(\gamma(r),r,\dot\gamma(r))-L(\gamma(\tau),\tau,\dot\gamma(r))|\le \nu_r+\nu_\tau$ for any $r,\tau\in [s,t]$. Thus, for any $\tau\in(s,t), $
\begin{align*}
\avint_s^t L(\gamma(r),r,\dot\gamma(r))\,\dd r&\ge \avint_s^t \left(L(\gamma(\tau),\tau,\dot\gamma(r))-\nu_r-\nu_\tau\right)\,\dd r\nonumber\\
&\ge L\left(\gamma(\tau),\tau,\tfrac{\gamma(t)-\gamma(s)}{t-s}\right)-\avint_s^t\nu_r\,\dd r-\nu_\tau,
\end{align*}
where we used Jensen's inequality in the last inequality. Display \eqref{eq:jensen} follows by integrating the above inequality over all $\tau\in(s,t)$.

By \eqref{eq:jensen} and \eqref{eq:L-bd}, we have
\begin{equation}\label{eq:m-lower-bd}
  m(y,s;x,t)\ge \frac{|x-y|^q}{N_1|t-s|^{q-1}}-3\int_s^t\nu_r\,\dd r.
\end{equation}

\subsection{Rough regularity of the metric and the curve}
Fix $(x_0,t_0)\in \R^d\times (0,\infty)$.
Let $\gamma=(\gamma(r))_{0\le r\le t_0}$ be an optimal path for $m(x_0,t_0)$. 

\medskip

Let $[s,t]\subset[0,t_0]$.
We claim that 
\begin{equation}\label{eq:m-seg-rough-ub}
    m(\gamma(s),s;\gamma(t),t)\le m(x_0,t_0)+\int_{[0,t_0]\setminus[s,t]}\nu_r\,\dd r.
\end{equation}
Consequently, letting $q'=\tfrac{q}{q-1}$ denote the H\"older conjugate of $q>1$,
\begin{equation}\label{eq:path-rough-reg}
  |\gamma(s)-\gamma(t)|\le |s-t|^{1/q'}N_1^{1/q}\left(m(x_0,t_0)+4\int_0^{t_0}\nu_r\,\dd r\right)^{1/q}.
\end{equation}

For the ease of notation, we write 
\begin{equation}\label{notation:m-gamma}
  m^\gamma(s,t):=\int_s^tL(\gamma(r),r,\dot\gamma(r))\,\dd r.
\end{equation}
 
By the lower bound in \eqref{eq:L-bd}, we have
\begin{align*}
m^\gamma(0,t_0)&=m^\gamma(0,s)+m^\gamma(s,t)+m^\gamma(t,t_0)\\
&\ge -\int_{[0,s]\cup[t,t_0]}\nu_r\,\dd r+m^\gamma(s,t).
\end{align*}
Display \eqref{eq:m-seg-rough-ub} is proved. 
 Display \eqref{eq:path-rough-reg} follows from \eqref{eq:m-seg-rough-ub} and inequality \eqref{eq:m-lower-bd}.

\begin{rem} 
  The right-hand side of \eqref{eq:m-seg-rough-ub} does not go to $0$ as $s\to t$, so inequality \eqref{eq:m-seg-rough-ub} is not desirable since continuity of the cost is expected along the path. Thus, inequality 	\eqref{eq:path-rough-reg} is not optimal as well, since it is deduced from \eqref{eq:m-seg-rough-ub}.
\end{rem}

\begin{lem}\label{lem:rough-sp-reg}
Let $x,y\in\R^d$ and let $\gamma=(\gamma(r))_{r\in[0,t]}$ be an optimal path for $m(x,t)=m(0,0;x,t)$. 
Then, for any $0<\tau<t$,
\begin{align}\label{eq:spa-reg-pre-optim}
 \MoveEqLeft m(y,t)-m(x,t)\\
 &\le C_q|x-y|\Bigl(|\frac{x-\gamma({t-\tau})}{\tau}|^{q-1}+|\frac{x-y}{\tau}|^{q-1}+1\Bigr)+4\int_{t-\tau}^t\nu_r\,\dd r.\nonumber
\end{align}
\end{lem}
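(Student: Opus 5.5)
We compare $m(y,t)$ with the cost of a competitor path that agrees with the optimal path $\gamma$ for $m(x,t)$ on $[0,t-\tau]$ and is then redirected to end at $y$. Concretely, for $r\in[t-\tau,t]$ we set
\[
\tilde\gamma(r):=\gamma(r)+\frac{r-(t-\tau)}{\tau}\,(y-x),
\]
so that $\tilde\gamma=\gamma$ on $[0,t-\tau]$, $\tilde\gamma(t)=y$, and $\dot{\tilde\gamma}(r)=\dot\gamma(r)+\frac{y-x}{\tau}$ on $(t-\tau,t)$. Since $\tilde\gamma$ is admissible for $m(y,t)$,
\[
m(y,t)-m(x,t)\le \int_{t-\tau}^t\Bigl(L\bigl(\tilde\gamma(r),r,\dot\gamma(r)+\tfrac{y-x}{\tau}\bigr)-L\bigl(\gamma(r),r,\dot\gamma(r)\bigr)\Bigr)\,\dd r .
\]

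The key step is to bound this integral by \eqref{eq:L-regularity}. Applied pointwise, it gives an integrand at most
\[
C\,\tfrac{|x-y|}{\tau}\Bigl(|\dot\gamma|^{q-1}+|\dot\gamma+\tfrac{y-x}{\tau}|^{q-1}+1\Bigr)+2\nu_r .
\]
This is not good enough, because $\int|\dot\gamma|^{q-1}$ is not controlled by the displacement $|x-\gamma(t-\tau)|$. The main obstacle is therefore to remove the dependence on $\dot\gamma$ itself and keep only the average velocity $\frac{x-\gamma(t-\tau)}{\tau}$.

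To handle this, we do not compare pointwise. We use the optimality of $\gamma$ on $[t-\tau,t]$ together with the Jensen-type reduction behind \eqref{eq:jensen}. On one hand, $m^\gamma(t-\tau,t)=m(\gamma(t-\tau),t-\tau;x,t)$. On the other hand, we may replace $\tilde\gamma$ on $[t-\tau,t]$ by the straight segment from $\gamma(t-\tau)$ to $y$, which has constant velocity $w:=\frac{y-\gamma(t-\tau)}{\tau}$. This gives
\[
m(y,t)-m(x,t)\le \int_{t-\tau}^t L(\ell(r),r,w)\,\dd r-m^\gamma(t-\tau,t),
\]
where $\ell$ is that segment. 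By \eqref{eq:jensen}, with $v:=\frac{x-\gamma(t-\tau)}{\tau}$, we have $m^\gamma(t-\tau,t)\ge\int_{t-\tau}^t L(\gamma(r),r,v)\,\dd r-2\int_{t-\tau}^t\nu_r\,\dd r$. Hence the integrand is a difference $L(\ell(r),r,w)-L(\gamma(r),r,v)$ of Lagrangians at two constant velocities with $w-v=\frac{y-x}{\tau}$.

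By \eqref{eq:L-regularity}, this difference is at most
\[
C\,\tfrac{|x-y|}{\tau}\bigl(|v|^{q-1}+|w|^{q-1}+1\bigr)+2\nu_r .
\]
Using $|w|^{q-1}\le C_q(|v|^{q-1}+|\tfrac{x-y}{\tau}|^{q-1})$ and integrating over an interval of length $\tau$ yields
\[
C_q|x-y|\Bigl(|v|^{q-1}+|\tfrac{x-y}{\tau}|^{q-1}+1\Bigr)+2\int_{t-\tau}^t\nu_r\,\dd r .
\]
Adding the $2\int_{t-\tau}^t\nu_r\,\dd r$ from the Jensen step gives exactly the constant $4$ in \eqref{eq:spa-reg-pre-optim}. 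So the construction really used is the straight-segment competitor from $(\gamma(t-\tau),t-\tau)$, and the only delicate point is the Jensen reduction that avoids the pointwise $|\dot\gamma|^{q-1}$ term.
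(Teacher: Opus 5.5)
Your final argument is correct and is exactly the paper's proof. You take the competitor that follows $\gamma$ on $[0,t-\tau]$ and then the straight segment to $(y,t)$, apply \eqref{eq:jensen} to $m^\gamma(t-\tau,t)$, and use \eqref{eq:L-regularity} at the two constant velocities, giving $2+2=4$ for the $\nu$-terms. The opening detour through the shifted path $\tilde\gamma$, and the identity $m^\gamma(t-\tau,t)=m(\gamma(t-\tau),t-\tau;x,t)$, are true but play no role in the argument, so you can simply drop them.
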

\begin{proof}
  Let $\beta\in \AC([0,t],\R^d)$ be such that $\beta$ coincides with $\gamma$ up to time $t-\tau$ and is a straight line segment connecting $(\gamma(t-\tau),t-\tau)$ to $(y,t)$ on the time interval $[t-\tau,t]$.
  See Figure \ref{fig:gamma-beta}.
  Then,
\begin{align*}
\MoveEqLeft  m(y,t)-m(x,t)\\&\le m^\beta(0,t)-m^\gamma(0,t)=m^\beta(t-\tau,t)-m^\gamma(t-\tau,t)\\
  &=\int_{t-\tau}^tL\left(\beta(r),r,\tfrac{y-\gamma({t-\tau})}{\tau}\right)\,\dd r-m^\gamma(t-\tau,t)\\
  &\stackrel{\eqref{eq:jensen}}\le \int_{t-\tau}^tL\left(\beta(r),r,\tfrac{y-\gamma({t-\tau})}{\tau}\right)\,\dd r
  -\int_{t-\tau}^tL\left(\gamma(r),r,\tfrac{x-\gamma({t-\tau})}{\tau}\right)\,\dd r+2\int_{t-\tau}^t\nu_r\,\dd r\\
  &\stackrel{\eqref{eq:L-regularity}}\le 
  C\tau\bigl|\frac{x-y}{\tau}\bigr|\Bigl(\bigl|\frac{x-\gamma({t-\tau})}{\tau}\bigr|^{q-1}+\bigl|\frac{y-\gamma({t-\tau})}{\tau}\bigr|^{q-1}+1\Bigr)+4\int_{t-\tau}^t\nu_r\,\dd r\\
  &\le C_q|x-y|\Bigl(\bigl|\frac{x-\gamma({t-\tau})}{\tau}\bigr|^{q-1}+\bigl|\frac{x-y}{\tau}\bigr|^{q-1}+1\Bigr)+4\int_{t-\tau}^t\nu_r\,\dd r.
\end{align*}
The proof is complete.
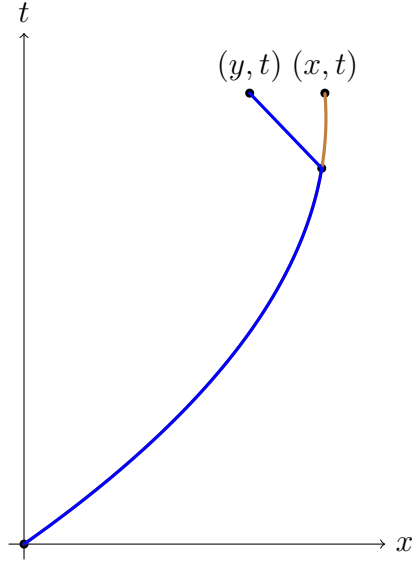
\begin{figure}[htp!]
\begin{center}
\begin{tikzpicture}[scale=2, line cap=round, line join=round]
  \draw[->] (-0.1,0) -- (2.4,0) node[right] {$x$};
  \draw[->] (0,-0.1) -- (0,3.4) node[above] {$t$};

  \coordinate (A) at (0,0);        
  \coordinate (B) at (2,3);        
  \coordinate (C) at (1.5,3);      

  %
  \coordinate (M) at (1.9792,2.5);

  \fill (A) circle (0.03);
  \fill (B) circle (0.03);
  \fill (C) circle (0.03);
  \fill (M) circle (0.03);

  \draw[very thick, brown]
    plot[domain=0:3, samples=200]
      ({(2/3)*\x + 0.25*\x*(3-\x)}, {\x});

  \draw[very thick, blue]
    plot[domain=0:2.5, samples=140]
      ({(2/3)*\x + 0.25*\x*(3-\x)}, {\x});

  \draw[very thick, blue] (M) -- (C);

  \node[above] at (B) {$(x,t)$};
  \node[above] at (C) {$(y,t)$};

\end{tikzpicture}
\end{center}
\caption{Curves $\gamma$ and $\beta$}\label{fig:gamma-beta}
\end{figure}
\end{proof}

\subsection{Almost-Lipschitz regularity via iterations}
For $a\in (0,1]$, set
 \[
  f(a):=\frac{a}{(1-a)(q-1)q+q}+\frac{q-1}{q}, \qquad g(a):=\frac{a}{(1-a)(q-1)+1}.
\] 
Notice that 
\[
f(a) = \frac{1}{q}g(a) + \frac{1}{q'} \quad \text{ and } \quad 1-f(a)=\frac{1}{q}(1-g(a)).
\]

The following proposition is crucial in the proof of Theorem~\ref{thm:path-reg}. 
It states that a rough regularity estimate for an optimal path can be bootstrapped to obtain better regularity. 
\begin{prop}\label{prop:m-reg-induc}
	Let $x\in\R^d$, and let $\gamma$ be an optimal path for $m(x,t)$. 
	Define a quantity $A_{t,s}(x)$ (and, simply denoted by $A_{t,s}$ in the sequel) by
\begin{equation}\label{def:ats}
 A_{t,s}=\frac{m(x,t)}{t}+4\sup_{w\in(0,s)}\avint_w^t\nu_r\,\dd r.
\end{equation}
    Fix $c_0\in (0,1)$.
	Suppose for some constants $K\ge 1$, $a\in[1/q',1)$, and $l\geq a/(1-a)$, it holds  that, for all $s\in((1-c_0^l)t, t]$ (that is, $(t-s)/t\le c_0^l$),
	\begin{equation}\label{eq:induction}
  |\gamma(t)-\gamma(s)|\le K^{1/q}\bigl(\frac{t-s}{t}\bigr)^atA_{t,s}^{1/q} .
\end{equation}
Then, there exists a constant $C_1=C_1(N_1,d,q,c_0)$ such that, for all $s\in((1-c_0^{l/g(a)})t,t]$ (that is, $(t-s)/s < c_0^{l/g(a)}$),
\begin{align}
  m(\gamma(t),s)-m(\gamma(s),s)&\le C_1K \bigl(\frac{t-s}{t}\bigr)^{g(a)}tA_{t,s},\label{eq:251207-2}\\
m^\gamma(s,t)&\le C_1K \bigl(\frac{t-s}{t}\bigr)^{g(a)}tA_{t,s},
\label{eq:m-seq-better}\\
  |\gamma(t)-\gamma(s)|&\le (C_1K)^{1/q}\bigl(\frac{t-s}{t}\bigr)^{f(a)}tA_{t,s}^{1/q}.\label{eq:induction-path-reg-next}
\end{align}
\end{prop}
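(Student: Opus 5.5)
The plan is to first prove \eqref{eq:251207-2} by a one-step perturbation argument in the spirit of Lemma~\ref{lem:rough-sp-reg}, then deduce \eqref{eq:m-seq-better} from it, and finally obtain \eqref{eq:induction-path-reg-next} from the lower bound \eqref{eq:m-lower-bd}. Throughout, write $h:=t-s$ and $D:=|\gamma(t)-\gamma(s)|=|x-\gamma(s)|$. By \eqref{eq:induction}, $D\le K^{1/q}(h/t)^a tA_{t,s}^{1/q}$.

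\textbf{Step 1: bound for \eqref{eq:251207-2}.} The restriction $\gamma|_{[0,s]}$ is optimal for $m(\gamma(s),s)$. I will apply Lemma~\ref{lem:rough-sp-reg} at terminal time $s$, with endpoints $\gamma(s)$ (the one with the optimal path) and $y=x=\gamma(t)$, and a free parameter $\tau\in(0,s)$. This gives
\begin{equation*}
m(\gamma(t),s)-m(\gamma(s),s)\le C_qD\Bigl(\bigl|\tfrac{\gamma(s)-\gamma(s-\tau)}{\tau}\bigr|^{q-1}+\tfrac{D^{q-1}}{\tau^{q-1}}+1\Bigr)+4\int_{s-\tau}^s\nu_r\,\dd r .
\end{equation*}
The terms are handled as follows.
\begin{itemize}
\item For the increment, I use $|\gamma(s)-\gamma(s-\tau)|\le D+|\gamma(t)-\gamma(s-\tau)|$. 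Both pieces are controlled by \eqref{eq:induction}, applied at $s$ and at $s-\tau$, as long as $h+\tau\le c_0^l t$. Taking $\tau\ge h$, the increment is at most $2K^{1/q}(2\tau/t)^a tA_{t,s}^{1/q}$. Here I use that $A_{t,s-\tau}\le A_{t,s}$, since the supremum in \eqref{def:ats} is over a smaller set.
\item For the $\nu$-integral, $\int_{s-\tau}^s\nu_r\,\dd r\le\int_{s-\tau}^t\nu_r\,\dd r\le(h+\tau)A_{t,s}\le 2\tau A_{t,s}$.
\item The term $C_qD$ is harmless.
\end{itemize}

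\textbf{Step 2: choice of $\tau$.} Set $\tau=t(h/t)^{b}$. Then the main cross term is
\begin{equation*}
D\,(\tau/t)^{a(q-1)}(t/\tau)^{q-1}\lesssim K\,(h/t)^{a+b(a-1)(q-1)}\,tA_{t,s}.
\end{equation*}
Balancing against $\tau A_{t,s}=(h/t)^b tA_{t,s}$ means solving $a+b(a-1)(q-1)=b$. This gives exactly $b=g(a)$, which is where $g$ comes from.

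With this choice, the remaining terms are of the same or smaller order:
\begin{itemize}
\item $D^q/\tau^{q-1}\lesssim K(h/t)^{aq-g(a)(q-1)}tA_{t,s}$, and $aq-g(a)(q-1)\ge g(a)$ because $a\ge g(a)$.
\item $D\lesssim K(h/t)^a tA_{t,s}$ is smaller, using $K,A_{t,s}\ge1$.
\end{itemize}

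The admissibility conditions are the bookkeeping I expect to be the main obstacle. Since $g(a)\le 1$ and $h\le t$, we get $\tau\ge h$. The hypothesis $(h/t)<c_0^{l/g(a)}$ gives $\tau/t<c_0^l$. The requirement $h+\tau\le c_0^lt$ is then off only by a factor $2$. I would absorb this by replacing $\tau$ by a constant multiple (depending on $c_0$), or by using the slack $l\ge a/(1-a)$. In either case the loss goes into $C_1(N_1,d,q,c_0)$.

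\textbf{Step 3: bound for \eqref{eq:m-seq-better}.} Optimality gives $m^\gamma(s,t)=m(x,t)-m(\gamma(s),s)$. Using the straight (vertical) path from $(x,s)$ to $(x,t)$ and \eqref{eq:m-upper-bd}, we have $m(x,t)\le m(x,s)+\int_s^t\nu_r\,\dd r$. Hence
\begin{equation*}
m^\gamma(s,t)\le \bigl[m(\gamma(t),s)-m(\gamma(s),s)\bigr]+hA_{t,s}.
\end{equation*}
Since $h\le t(h/t)^{g(a)}$, Step 2 yields \eqref{eq:m-seq-better}.

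\textbf{Step 4: bound for \eqref{eq:induction-path-reg-next}.} Applying \eqref{eq:m-lower-bd} to the segment $\gamma|_{[s,t]}$ gives
\begin{equation*}
D^q\le N_1h^{q-1}\Bigl(m^\gamma(s,t)+3\int_s^t\nu_r\,\dd r\Bigr)\lesssim K(h/t)^{g(a)+q-1}t^qA_{t,s}.
\end{equation*}
Taking $q$-th roots gives the exponent $(g(a)+q-1)/q=f(a)$, as claimed.
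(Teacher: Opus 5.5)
Your proposal is correct and follows the paper's argument: Lemma~\ref{lem:rough-sp-reg} at terminal time $s$ for the endpoints $\gamma(s),\gamma(t)$, then the triangle inequality with \eqref{eq:induction} applied at two times, then the balancing choice $\tau=t\bigl(\frac{t-s}{t}\bigr)^{g(a)}$, and finally the stationary-path bound combined with \eqref{eq:m-lower-bd}. The only difference is the factor-$2$ bookkeeping: the paper feeds $\tau_0=\tau-(t-s)$ into the lemma, so that $s-\tau_0=t-\tau$ exactly, and uses $l\ge a/(1-a)$ to get $\tau_0\ge(1-c_0^q)\tau$; your constant-multiple remedy also needs this slack (it works with the factor $1-c_0^q$), so the two fixes you list are really one fix rather than alternatives.
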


\begin{proof} 
We divide the proof into several steps.

\medskip

\noindent {\bf Step 1.} Recall notation $m^\gamma$ in \eqref{notation:m-gamma}.
First, we claim that, for $x\in \R^d$,
\begin{equation}\label{eq:251207-3}
  m(x,t)-m(x,s)\le \int_s^t\nu_r\,\dd r.
\end{equation}
To see this, let $\alpha\in\AC([0,s],\R^d)$ be an optimal path for $m(x,s)$, and
let $\beta\in\AC([0,t],\R^d)$ be a curve with $\beta|_{[0,s]}=\alpha|_{[0,s]}$ and $\beta|_{[s,t]}\equiv x$. 
Then,
\begin{align*}
m(x,t)-m(x,s)&\le m^\beta(0,t)-m^\alpha(0,s)\nonumber\\
&=m^\beta(s,t)=\int_s^tL(x,r,0)\,\dd r\nonumber\\
&\stackrel{\eqref{eq:L-bd}}\le \int_s^t\nu_r\,\dd r.
\end{align*}
Inequality \eqref{eq:251207-3} is proved.

\medskip

\noindent {\bf Step 2.}	
Next, we will prove \eqref{eq:251207-2} for $s\in (0,t)$ such that $(\tfrac{t-s}{t})< c_0^{l/g(a)}$.  
To this end,  let $\theta := g(a)$ and set
	\[
  \tau=t\bigl(\frac{t-s}{t}\bigr)^\theta \in (0,t), \qquad \tau_0=\tau-(t-s)\in(0,s).
\]
Note also that $s-\tau_0=t-\tau$. We check that, 
\begin{equation}
\label{eq:timeratios}
\frac{t-s}{t} < c_0^{l/g(a)} < c_0^l, \qquad \frac{t-(t-\tau)}{t} = \bigl(\frac{t-s}{t}\bigr)^{\theta} < (c_0^{l/g(a)})^\theta = c_0^l.
\end{equation}
Moreover,
\begin{equation}
  \label{eq:tauratios}
1>\frac{\tau_0}{\tau} = 1-\bigl(\frac{t-s}{t}\bigr)^{1-g(a)} \ge 1-c_0^{l(1-g(a))/g(a)} \ge 1-c_0^q.
\end{equation}
Here we used the fact that $\tfrac{1-g(a)}{g(a)} = \tfrac{q(1-a)}{a}$, and $l(1-a)/a \ge 1$. 

\medskip

By Lemma~\ref{lem:rough-sp-reg}, we have
\begin{align*}
&\qquad m(\gamma(t),s)-m(\gamma(s),s)\\
&\lesssim  \ |\gamma(t)-\gamma(s)||\frac{\gamma(s)-\gamma({s-\tau_0})}{\tau_0}|^{q-1}+\frac{|\gamma(t)-\gamma(s)|^{q}}{\tau_0^{q-1}}+|\gamma(s)-\gamma(t)|+\int_{s-\tau_0}^s\nu_r\,\dd r\\
&\lesssim  \ \frac{|\gamma(t)-\gamma(s)|}{\tau_0^{q-1}}\Bigl[|\gamma(t)-\gamma({s})|^{q-1}+|\gamma(t)-\gamma({s-\tau_0})|^{q-1}\Bigr]+\frac{|\gamma(t)-\gamma(s)|^q}{\tau_0^{q-1}}\\
&\qquad\qquad\qquad\qquad\qquad\qquad\qquad\qquad \ \ +|\gamma(s)-\gamma(t)|+\int_{t-\tau}^t\nu_r\,\dd r\\
&\asymp \ \frac{|\gamma(t)-\gamma(s)|^q}{\tau_0^{q-1}}+\frac{|\gamma(t)-\gamma(s)||\gamma(t)-\gamma({t-\tau})|^{q-1}}{\tau_0^{q-1}}+|\gamma(t)-\gamma(s)|+\int_{t-\tau}^t\nu_r\,\dd r\\
&\lesssim_{c_0} \frac{|\gamma(t)-\gamma(s)|^q}{\tau^{q-1}}+\frac{|\gamma(t)-\gamma(s)||\gamma(t)-\gamma({t-\tau})|^{q-1}}{\tau^{q-1}}+|\gamma(t)-\gamma(s)|+\tau \fint_{t-\tau}^t \nu_r \,\dd r.
\end{align*}
In the last line, we used the bounds in \eqref{eq:tauratios} which says $\tau$ and $\tau_0$ are comparable. 

\smallskip 

In view of the inequalities \eqref{eq:timeratios}, for both $|\gamma(t)-\gamma(s)|$ and $|\gamma(t)-\gamma(t-\tau)|$, we can apply \eqref{eq:induction} to estimate those terms. Since $t-\tau < s$, $\fint_{t-\tau}^t \nu_r\,\dd r\le A_{t,s}$. Then from the above estimate we deduce (recall also that $K,A_{t,s}\ge 1$)
 \begin{align*}
\MoveEqLeft m(\gamma(t),s)-m(\gamma(s),s)\\
 &\lesssim_{c_0} 
 KtA_{t,s}\Bigl[\bigl(\frac{t-s}{t}\bigr)^{aq-\theta(q-1)}
 +\bigl(\frac{t-s}{t}\bigr)^{a-\theta(1-a)(q-1)}+\bigl(\frac{t-s}{t}\bigr)^{a}+\bigl(\frac{t-s}{t}\bigl)^\theta
 \Bigr].
\end{align*}
Recalling that $\theta=\tfrac{a}{(1-a)(q-1)+1}$ and noticing that $ a\in[1/q',1)$, we check that the smallest exponent in $(\tfrac{t-s}{t})$ in the above estimate is precisely $\theta$, and, hence, get
\[
m(\gamma(t),s)-m(\gamma(s),s)\lesssim_{c_0} Kt\bigl(\frac{t-s}{t}\bigr)^\theta A_{t,s}
\]
Display \eqref{eq:251207-2} is proved for $s \in (0,t)$ such that $(s-t)/t < c_0^{l/g(a)}$.

\medskip

\noindent {\bf Step 3.} 
Combining \eqref{def:ats}, \eqref{eq:251207-3},
and \eqref{eq:251207-2}, we obtain, for any $s\in(0,t)$ such that $(t-s)/t < c_0^{l/g(a)}$,
\begin{align*}
 m^\gamma(s,t)&=m(\gamma(t),t)-m(\gamma(t),s)+m(\gamma(t),s)-m(\gamma(s),s)\\
 &\lesssim_{c_0}Kt \bigl(\frac{t-s}{t}\bigr)^{g(a)}A_{t,s}+\int_s^t\nu_r\,\dd r\lesssim_{c_0} Kt\bigl(\frac{t-s}{t}\bigr)^{g(a)}A_{t,s}
\end{align*}
which proves \eqref{eq:m-seq-better}.
Furthermore, by \eqref{eq:m-lower-bd}, 
\begin{equation*}
m^\gamma(s,t)\ge \frac{|\gamma(t)-\gamma(s)|^q}{N_1(t-s)^{q-1}}-3\bigl(\frac{t-s}{t}\bigr)t\fint_s^t \nu_r\,\dd r.
\end{equation*}
This inequality and \eqref{eq:m-seq-better} imply, for any $s\in((1-e^{-c_0l/g(a)})t,t)$,
\[
  \frac{|\gamma(t)-\gamma(s)|^q}{N_1(t-s)^{q-1}}\lesssim_{c_0} Kt\bigl(\frac{t-s}{t}\bigr)^{g(a)} A_{t,s}
\]
and so
\[
  |\gamma(t)-\gamma(s)|\lesssim_{c_0} (N_1K)^{1/q}\bigl(\frac{t-s}{t}\bigr)^{(g(a)+q-1)/q}tA_{t,s}^{1/q}.
\]
In other words, \eqref{eq:induction} still holds for $s\in((1-c_0^{l/g(a)})t,t)$, with $a$ replaced by 
\[
  f(a) = \frac{g(a) + q-1}{q} = \frac{1}{q}g(a) + \frac{1}{q'}
\]
and $K^{1/q}$ replaced by  $(C_1K)^{1/q}$ for some $C_1=C_1(q,c_0)N_1$. Display \eqref{eq:induction-path-reg-next} is proved.
The proof is complete.
\end{proof}

We are now ready to prove Theorem~\ref{thm:path-reg}. 

\begin{proof}[Proof of Theorem~\ref{thm:path-reg}]Notice that the lower bound in \eqref{eq:best-metric-reg} follows immediately from \eqref{eq:m-lower-bd}. We will focus on the upper bounds.

	Recall that \eqref{eq:path-rough-reg} implies 
\begin{equation}\label{eq:initial-induc}
    |\gamma(s)-\gamma(t)|\lesssim \bigl(\frac{t-s}{t}\bigr)^{1/q'}tA_{t,s}^{1/q} \qquad\forall \ s\in(0,t).
\end{equation}
Fix any $c_0 \in (0,1)$. When $s\le (1-c_0^{2q})t$, we have $(t-s)/t\asymp 1$, and \eqref{eq:initial-induc} implies \eqref{eq:best-path-reg}. Then, the upper bound of \eqref{eq:best-metric-reg} follows from \eqref{eq:m-upper-bd} and \eqref{eq:best-path-reg}. 

It remains to consider the case $s>(1-c_0^{2q})t.$
By \eqref{eq:initial-induc}, inequality \eqref{eq:induction} holds for $a$ being $a_0:=1/q'$. 
For $n\in \N$, set
\[
  a_n:=f^{\circ n}(a_0),
\]
where $f^{\circ n}$ denotes the composition of $f$ for $n$ times. 
Notice that the function $f$ is strictly increasing on interval $[a_0,1)$ with $f(1)=1$. Hence $a_n\uparrow 1$ as $n\to\infty$. 
In fact, in light of Lemma \ref{lem:b_n},
\begin{align}
  b_n=:1-a_n&=\frac{1}{n(q-1)+q}.\label{eq:bn-formula}
\end{align}
Set $l_0=a_0/(1-a_0)=q-1$, and, for $n\in \N$,
\[
l_n=\frac{l_0}{g(a_0)\cdots g(a_{n-1})}.
\]
Noticing that $1-g(a)=q(1-f(a))$, we have
\[
g(a_k)=1-q(1-f(a_k))=1-q(1-a_{k+1})=\frac{(k+1)(q-1)}{(k+1)(q-1)+q}=\frac{k+1}{k+1+q'}<\frac{k+1}{k+2}.
\]
In particular,
\[
l_n>(n+1)(q-1)=\frac{a_n}{1-a_n}.
\]
We also note that
\[
l_n=\frac{(q-1)(1+q')\cdots(n+q')}{n!}\asymp n^{q'} \quad \text{ and } \quad l_n \leq (q-1)e^{q'}n^{q'}.
\]
See Lemma \ref{lem:Gamma function}.
 By Proposition~\ref{prop:m-reg-induc} and induction, as long as
 \[
  s\in((1-c_0^{l_n})t, t],
\]
 we have (by an appropriate updated value of $C_1$)
 \begin{align}\label{eq:indu-results}
\begin{cases}
	&m(\gamma(t),s)-m(\gamma(s),s)\le C_1^{n} (\tfrac{t}{t-s})^{qb_n}(t-s)A_{t,s},\\
&m^\gamma(s,t)\le C_1^n (\tfrac{t}{t-s})^{qb_n}(t-s)A_{t,s},\\
  &|\gamma(t)-\gamma(s)|\le C_1^{n/q}(\tfrac{t}{t-s})^{b_{n}}(t-s)A_{t,s}^{1/q} .
\end{cases}
\end{align}

We let $N=N(c_0,q,t,s)$ be the smallest integer such that $s\le (1-c_0^{l_{N+1}})t$, which is equivalent to
\[
  \log\bigl(\frac{t}{t-s}\bigr)\le |\log c_0| l_{N+1}\leq |\log c_0| (q-1)e^{q'}(N+1)^{q'}.
\]
Then, inequalities in \eqref{eq:indu-results} hold for all $n\le N$. 
Let $J$ be the smallest integer such that 
\[
\bigl(\log\bigl(\frac{t}{t-s}\bigr)\bigr)^{1/(2\vee q')}\le
\bigl((q-1)e^{q'}|\log c_0|\bigr)^{1/(2\vee q')}(J+1),
\]
then $J\le N$. Moreover,
\begin{align*}
    C_1^J\bigl(\frac{t}{t-s}\bigr)^{qb_J}\le \exp\bigl(J\log C_1 + q'(J+1)^{-1}\log\bigl(\frac{t}{t-s}\bigr)\bigr)\le \exp\bigl(c_1(\log(\tfrac{t}{t-s}))^{1/(2\wedge q)}\bigr).
\end{align*}
We deduce
\begin{align*}
  |\gamma(s)-\gamma(t)|&\lesssim(t-s)\exp\left(c_1(\log(\tfrac{t}{t-s}))^{1/(2\wedge q)}/q\right)A_{x,t,s}^{1/q},\\  m(\gamma(s),s;x,t)&\lesssim(t-s)\exp\left(c_1(\log(\tfrac{t}{t-s}))^{1/(2\wedge q)}\,\right)A_{x,t,s}.
\end{align*}
Theorem~\ref{thm:path-reg} follows.
\end{proof}

Next, we prove that $m$ is almost Lipschitz in the spatial variable.

\begin{proof}[Proof of Theorem~\ref{thm:sp-reg-t}] 
Take $\tau\in(0,t)$, which is to be determined later. 
	Let $\gamma\in{\rm AC}([0,t],\R^d)$ be an optimal path for $m(x,t)$. 
	By \eqref{eq:best-path-reg} of Theorem~\ref{thm:path-reg}, 
	\[
  \left|\frac{x-\gamma({t-\tau})}{\tau}\right|\lesssim \varphi\bigl(\frac{t}{\tau}\bigr)^{1/q}A_{x,t,t-\tau}^{1/q},
\]
where $\varphi(s):=\exp(c_1(\log s)^{1/(2\wedge q)})$ for $s\geq 1$. We extend the function so that $\varphi(s)=1$ for $s\leq 1$.
This inequality, together with Lemma~\ref{lem:rough-sp-reg}, yields
\begin{equation}
\label{eq:pfthm2.2-1}
m(y,t)-m(x,t)\lesssim |x-y| \varphi\bigl(\frac{t}{\tau}\bigr)^{1/q'}A_{x,t,t-\tau}^{1/q'}+\frac{|x-y|^q}{\tau^{q-1}}+|x-y|+\tau A_{x,t,t-\tau}.
\end{equation}
Now, let $\tau:=|x-y|\varphi(\tfrac{t}{|x-y|})$. If $\tau < t$ is satisfied, we have the inequality above. Since $\varphi(s)\geq 1$ and is non-decreasing, $q'\geq 1$, and $A_{x,t,\cdot} \ge 1$, we have
\begin{equation*}
\begin{aligned}
  &\varphi(\tfrac{t}{\tau}) \le \varphi(\tfrac{t}{|x-y|}), \quad \varphi(\tfrac{t}{\tau})^{\frac1{q'}}A_{x,t,t-\tau}^{\frac1{q'}} \le \varphi(\tfrac{t}{\tau})A_{x,t,t-\tau}, \\
  &\frac{|x-y|^q}{\tau^{q-1}} \le \tau \le |x-y|\varphi(\tfrac{t}{|x-y|})A_{x,t,t-\tau}.
  \end{aligned}
\end{equation*}
Each of the terms on the the right hand side of \eqref{eq:pfthm2.2-1} is hence bounded by
\begin{equation*}
|x-y|\varphi(t/|x-y|)A_{x,t,t-\tau}.
\end{equation*}
Since in the current case $|x-y|/t \le \tau/t <1$ and $A_{x,t,t-\tau}\ge 1$, we see that the desired estimate \eqref{eq:best-sp-reg-metric} holds.

\medskip 

Finally, suppose that $\tau$ defined above satisfies $\tau \ge t$, then using the basic bounds, we get
\begin{equation}
\label{eq:best-sp-reg-metric-0}
\begin{aligned}
 |m(y,t)-m(x,t)| 
&\lesssim t\left(\frac{|x|^q+|y|^q}{t^q} + \fint_0^t \nu_r\,\dd r\right) \\
 &\lesssim
|x-y|\varphi\bigl(\frac{t}{|x-y|}\bigr)\left(\frac{|x|^q+|x-y|^q}{t^q} + \fint_0^t \nu_r\,\dd r\right).
\end{aligned}
\end{equation}
This still yields the desired estimate \eqref{eq:best-sp-reg-metric} and completes the proof.
\end{proof}

We now prove that $m$ is almost Lipschitz in time.
\begin{proof}[Proof of Theorem~\ref{thm:time-reg}]
The lower bound was in fact proved in \eqref{eq:251207-3}. 
For $s\in(0,\tfrac12 t]$, the upper bound follows from the fact that $t\asymp(t-s)$ and 
\[
  |m(x,s)|+|m(x,t)|\stackrel{\eqref{eq:m-upper-bd},\eqref{eq:m-lower-bd}}\lesssim \left(\frac{t}{s}\right)
^{q-1}\left(\frac{|x|^q}{|t|^{q-1}}+\int_0^t\nu_r\,\dd r\right).\]

It remains to prove the upper bound for $s\in(\tfrac{1}{2}t,t)$.
To this end,  let $\theta\in (0, 1)$ to be determined and set
	\[
  \tau:=(t-s)\varphi\bigl(\frac{t}{t-s}\bigr)^\theta \quad \text{and}\quad \tau_0:=\tau-(t-s).
\]
The definition of $\varphi$ yields
\[
\log \bigl(\varphi\bigl(\frac{t}{t-s}\bigr)^\theta \bigr)=c_1\theta \bigl(\log \frac{t}{t-s}\bigr)^{1/(2\wedge q)}.
\]
By fixing $\theta$ to be sufficiently small, we have $c_1\theta (\log z)^{1/(2\wedge q)}\leq \log z$ for all $z\geq 2$. This implies that for $s\in (\tfrac{1}{2}t,t)$, we have 
\[
\tau\in (t-s,t),\quad \tau_0\in (0,s)\quad\text{and}\quad \tau (1-\varphi(2)^{-\theta})\leq \tau_0\leq\tau.
\]

Let $\gamma\in{\rm AC}([0,t],\R^d)$ be an optimal path for $m(x,t)$. 
Let $\beta\in{\rm AC}([0,s],\R^d)$ be such that $\beta$ agrees with $\gamma$ on $[0,s-\tau_0]$, and $\beta$ is a straight line segment on $[s-\tau_0,s]$ connecting $(\gamma(s-\tau_0),s-\tau_0)$ to $(\beta(s),s)=(x,s)$. 
Then
\begin{align*}
\MoveEqLeft m(x,s)-m(x,t)\\&\le m^\beta(0,s)-m^\gamma(0,t)\\
&= m^\beta(s-\tau_0,s)-m^\gamma(s-\tau_0,t)\\
&\stackrel{\eqref{eq:jensen}}\le \int_{s-\tau_0}^s L(\beta(r),r,\tfrac{x-\gamma({s-\tau_0})}{\tau_0})\,\dd r-\int_{s-\tau_0}^tL(\gamma(r),r,\tfrac{x-\gamma({s-\tau_0})}{t-s+\tau_0})\,\dd r+2\int_{s-\tau_0}^t
\nu_r\,\dd r\\
&\stackrel{(A2)}\lesssim 
\int_{s-\tau_0}^s\Abs{\tfrac{x-\gamma({s-\tau_0})}{\tau_0}-\tfrac{x-\gamma({s-\tau_0})}{t-s+\tau_0}}\Bigl[\Abs{\tfrac{x-\gamma({s-\tau_0})}{\tau_0}}^{q-1}+\Abs{\tfrac{x-\gamma({s-\tau_0})}{t-s+\tau_0}}^{q-1}+1\Bigr]\,\dd r\\
&\qquad+\int_{s-\tau_0}^t\nu_r\,\dd r
\end{align*}
where in the last inequality we also used that
\[
  -\int_s^tL(\gamma(r),r,\tfrac{x-\gamma({s-\tau_0})}{t-s+\tau_0})\,\dd r\stackrel{(A2)}{\le}\int_s^t\nu_r\,\dd r.
\]

Since  $s-\tau_0=t-\tau$ and $\tau_0\asymp\tau$, this inequality becomes
\begin{align*}
\MoveEqLeft m(x,s)-m(x,t)\\
&\lesssim \tau_0\Abs{\frac{x-\gamma({s-\tau_0})}{\tau_0}-\frac{x-\gamma({s-\tau_0})}{t-s+\tau_0}}\Bigl[\Abs{\frac{x-\gamma({s-\tau_0})}{\tau_0}}^{q-1}+1\Bigr]+\int_{s-\tau_0}^t\nu_r\,\dd r\\
&\asymp \frac{t-s}{\tau}\Bigl[\frac{|x-\gamma({t-\tau})|^q}{\tau^{q-1}}+\abs{x-\gamma({t-\tau})}\Bigr]+\int_{t-\tau}^t\nu_r\,\dd r\\
&\stackrel{\eqref{eq:best-path-reg}}\lesssim
(t-s)\varphi\bigl(\frac{t}{\tau}\bigr) A_{x,t,t-\tau}+(t-s)\varphi\bigl(\frac{t}{\tau}\bigr)^{1/q} A_{x,t,t-\tau}^{1/q}+\tau\avint_{t-\tau}^t\nu_r\,\dd r\\
&\lesssim
(t-s)\varphi\bigl(\frac{t}{\tau}\bigr) A_{x,t,t-\tau}+ \tau A_{x,t,t-\tau}.
\end{align*}	
Since $\tau>t-s$, we have $
\varphi(\tfrac{t}{\tau})\leq \varphi(\tfrac{t}{t-s})$. 
Also using that $\tau\leq (t-s)\varphi(\tfrac{t}{t-s})$ and that $A_{x,t,s}$ is non-decreasing in $s$, we get
\[
  m(x,s)-m(x,t)\lesssim (t-s)\varphi\bigl(\frac{t}{t-s}\bigr) A_{x,t,t-(t-s)\varphi(\tfrac{t}{t-s})} \quad\forall \ s\in(\tfrac12t,t).
\]
Our proof is complete.
\end{proof}


\section{Concentration estimates of the metric distance}\label{sec:large-time-metric}

Here is the main result of this section.

\begin{thm}\label{thm:fluctuation-mxt}
Assume {\rm(A1)--(A2)}, and 
    let $x_0\in\R^d, t_0\ge 2$, $c>0$, and $C_0\geq 1$. 
	\begin{enumerate}[(a)]
  \item\label{item:algeb-integ-nu} If $\norm{\int_0^1\nu_r\,\dd r}_{L^p(\mb P)}<C_0$ for some $p\ge 2$, then there exist $C_1,c_1>0$ such that for any $\lambda>0$,
  \[
  \bP\left(|m(x_0,t_0)-\E m(x_0,t_0)|\ge \lambda t_0^{1/2}\varphi(t_0)(\tfrac{|x_0|^q}{t_0^q}+{C_0}\log t_0)\right)\leq \tfrac{C_1}{\lambda^p}.
\]

\item\label{item:exp-integ-nu} If $\E[\exp(c\int_0^1\nu_r\,\dd r)]<C_0$, then there exist $C_1,c_1>0$ such that for any $\lambda>0$,
\[
\bP\left(|m(x_0,t_0)-\E m(x_0,t_0)|\geq  \lambda t_0^{1/2}\varphi(t_0)(\tfrac{|x_0|^q}{t_0^q}+c^{-1}\log t_0) \right)
{\leq C_1C_0\exp\!\left(-c_1 \lambda^{2/3}\right)}.
\]

\item\label{item:bdd-nu} If $\int_0^1\nu_r\,\dd r<C_0$, $\bP$-a.s., then there exists $c_1>0$ such that for any $\lambda>0$,
\[
\bP\left(|m(x_0,t_0)-\E m(x_0,t_0)|\ge \lambda t_0^{1/2}\varphi(t_0)(\tfrac{|x_0|^q}{t_0^q}+{C_0}\log t_0)\right)\le {2}\exp(-c_1\lambda^2).
\]

\end{enumerate}
In the above statements, $\varphi$ is given in \eqref{varphi}, the constants $C_1,c_1$ are independent of $t_0,x_0$, $c$ and $C_0$. 
\end{thm}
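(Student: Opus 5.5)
We plan to prove Theorem~\ref{thm:fluctuation-mxt} with a martingale decomposition in time, exploiting the unit-range dependence in time. Set $n:=\lceil t_0\rceil$ and let $\mathscr G_k$ be the $\sigma$-algebra generated by $\{\omega_{x,r}: x\in\R^d, r\le k\}$, $k=0,\dots,n$. Write
\[
m(x_0,t_0)-\E m(x_0,t_0)=\sum_{k=1}^{n}\Delta_k,\qquad \Delta_k:=\E[m(x_0,t_0)\mid\mathscr G_k]-\E[m(x_0,t_0)\mid\mathscr G_{k-1}].
\]
Each of the three statements will follow from a martingale concentration inequality once we bound the increments $\Delta_k$. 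The bounded case (c) uses Azuma--Hoeffding. The $L^p$ case (a) uses Burkholder's inequality, $\E|\sum\Delta_k|^p\lesssim n^{p/2-1}\sum\E|\Delta_k|^p$. The exponential case (b) uses a martingale inequality for increments with stretched-exponential (or conditionally sub-exponential) tails; the exponent $2/3$ should come from combining a Gaussian martingale tail with the exponential-moment tail of $\nu$, as in the concentration inequalities of Appendix~\ref{appendix:concentration inequalities}.

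\textbf{Controlling $\Delta_k$.} We resample the environment on the slab $\R^d\times[k-1,k]$. Because the dependence range in time is one, it is cleaner to resample a slab of width two, say $[k-2,k+1]$, so that the resampled part is independent of $\mathscr G_{k-2}$ and of the future beyond $k+1$. It is standard (an Efron--Stein type argument) that $|\Delta_k|$ is bounded by the conditional expectation of $|m(\omega)-m(\omega^{(k)})|$, where $\omega^{(k)}$ agrees with $\omega$ outside the slab.

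To bound $m(\omega^{(k)})-m(\omega)$ we take an optimal path $\gamma$ for $m(x_0,t_0,\omega)$ and keep it unchanged. The costs differ only on the time window $[k-2,k+1]$. There, \eqref{eq:L-bd} bounds both Lagrangians by $N_1|\dot\gamma|^q+\nu$. Hence the difference is at most a constant multiple of
\[
m^\gamma(k-2,k+1)+\int_{k-2}^{k+1}(\nu_r+\nu_r^{(k)})\,\dd r .
\]
The first term is the crucial one. By Theorem~\ref{thm:path-reg}, via \eqref{eq:best-metric-reg} applied along sub-intervals of the optimal path (extended to interior windows by the same argument, or by splitting at $k+1$ and using the sub-optimal-path property), the cost of $\gamma$ on a unit window is at most
\[
\varphi(t_0)\Bigl(\tfrac{|x_0|^q}{t_0^q}+\sup_{w}\avint_{w}^{t_0}\nu_r\,\dd r\Bigr).
\]
The almost Lipschitz regularity is exactly what makes this an $O(\varphi(t_0))$ bound per unit time rather than a bound growing with $t_0$. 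Symmetrizing, via the optimal path for $\omega^{(k)}$, gives the reverse inequality.

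\textbf{Main obstacle.} The main difficulty is that the random factor $\sup_w\avint_w^{t_0}\nu_r\,\dd r$ is neither bounded nor $\mathscr G_k$-measurable, and the maximal average over all windows ending at $t_0$ may be large. We will handle it by a maximal inequality: under the exponential-moment hypothesis, a union bound over dyadic window lengths, together with independence of unit blocks at distance two, shows that the maximal average exceeds $C c^{-1}\log t_0$ with probability at most $t_0^{-10}$. In cases (a) and (c) the same argument gives the threshold $C_0\log t_0$. This is the source of the $\log t_0$ factor in the statement. On the complementary bad event we use the crude bounds \eqref{eq:m-upper-bd} and \eqref{eq:m-lower-bd}, whose contribution is negligible after taking moments. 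We then apply the martingale inequality to a truncated martingale whose increments are bounded by
\[
\varphi(t_0)\bigl(\tfrac{|x_0|^q}{t_0^q}+C\log t_0\bigr)+\int_{k-2}^{k+1}\nu_r\,\dd r .
\]
Summing $n\asymp t_0$ squared increments yields the scale $t_0^{1/2}\varphi(t_0)(\cdots)$ and hence the three tail bounds. Making the truncation compatible with the martingale structure, since the good event is not adapted, is the technical point we expect to require the most care.
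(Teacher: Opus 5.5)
Your overall architecture (a Doob martingale in time, with increments controlled by comparing $m$ to a functional that ignores an $O(1)$ time window, plus the almost-Lipschitz cost bound along the optimal path) is essentially the paper's. However, the step you single out as the main obstacle does not work as written.

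\textbf{The truncation step.} For (a), you claim the relevant window averages exceed $C_0\log t_0$ with probability at most $t_0^{-10}$. This is false under a mere $L^p$ hypothesis: for a single unit block Markov only gives $\mathbb{P}\bigl(\int_j^{j+1}\nu_r\,\dd r>C_0\log t_0\bigr)\le(\log t_0)^{-p}$, and heavy tails can make this sharp. For (b), truncating at the fixed level $Cc^{-1}\log t_0$ leaves an additive error $t_0^{-10}$ that does not decay in $\lambda$. So it cannot produce $C_1C_0e^{-c_1\lambda^{2/3}}$ with constants independent of $t_0$ once $\lambda^{2/3}\gg\log t_0$. Moreover, restricting to a non-adapted good event does not make the Doob increments bounded (the point you leave open). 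Your increment bound also still contains the unbounded term $\int_{k-2}^{k+1}\nu_r\,\dd r$.

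\textbf{The per-window bound.} The bound you quote, with averages anchored at $t_0$, is not what Theorem~\ref{thm:path-reg} gives; it fails when $\nu$ and $L$ are large only on $[k-2,k+1]$ with $k\ll t_0$. Splitting at $k+1$ produces the term $|\gamma(k+1)|^q/(k+1)^q$, which is only $\lesssim\frac{t_0}{k+1}\bigl(\frac{|x_0|^q}{t_0^q}+\fint_0^{t_0}\nu_r\,\dd r\bigr)$. For early windows you must therefore reverse time. The correct random factor is $D_{x_0,t_0,s,t}$ of Proposition~\ref{prop:path-metric-reg}, built from averages anchored at the two endpoints of the window.

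\textbf{How the paper avoids truncation.} It compares $m$ with $\mu_i=\inf_y[m(0,0;y,i\rho)+m(y,(i+2)\rho;x_0,t_0)]$, which does not see the environment in $(i\rho,(i+2)\rho)$, so $\E[\mu_i\mid\mathscr F_{(0,i\rho]}]=\E[\mu_i\mid\mathscr F_{(0,(i+1)\rho]}]$. The inequality $m\le\mu_i+2\int_{i\rho}^{(i+2)\rho}\nu_r\,\dd r$ is trivial, and the reverse one uses only the $\omega$-optimal path. Your resampling also works if you average over an independent copy on the slab, but the symmetric comparison then needs path regularity in the spliced environment. Hence $|M_{i+1}-M_i|\le\E[|m-\mu_i|\mid\mathscr F_{(0,(i+1)\rho]}]+\E[|m-\mu_i|\mid\mathscr F_{(0,i\rho]}]$ with $|m-\mu_i|\lesssim\varphi(t_0)D_{x_0,t_0,i\rho,(i+2)\rho}$, and only integrability of $D$ matters.
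For (a), $\|D\|_{L^p}\lesssim\frac{|x_0|^q}{t_0^q}+C_0\log t_0$ follows by splitting each supremum into $O(\log t_0)$ dyadic windows and using Minkowski.
For (b), conditional Jensen gives $\E e^{\delta|M_{i+1}-M_i|}\le\E e^{C\delta\varphi(t_0)D}$. The $\nu$-part of $D$ is dominated by $\sum_j\frac{1}{|j-i|+1}\int_{j\rho}^{(j+1)\rho}\nu_r\,\dd r$, whose weights sum to $O(\log t_0)$. Independence of the even (resp.\ odd) blocks together with H\"older then bounds the exponential moment by $\E\exp\bigl(C\delta\varphi(t_0)\log t_0\int_0^1\nu_r\,\dd r\bigr)$. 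Choosing $\delta^{-1}\asymp\varphi(t_0)\bigl(c^{-1}\log t_0+\frac{|x_0|^q}{t_0^q}\bigr)$ and applying Lemma~\ref{thm:LV32} to $(\delta M_i)$ with $\lambda$ replaced by $\lambda n^{-1/2}$ is what produces the exponent $2/3$.

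\textbf{Repairing your route for (b).} Truncate the environment rather than the event: replace $L$ by $\E L(0,0,\cdot)$ on unit blocks where $\int\nu_r\,\dd r>K$, as in the proof of Proposition~\ref{prop:cone-walk}. Apply Azuma to the truncated metric, whose increments are then $\lesssim\varphi(t_0)\bigl(\frac{|x_0|^q}{t_0^q}+K\bigr)$, and let $K\asymp c^{-1}\lambda^{2/3}\log t_0$ grow with $\lambda$. Balancing $e^{-c\lambda^2/\lambda^{4/3}}$ against $t_0C_0e^{-cK}$ again yields $e^{-c\lambda^{2/3}}$.
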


We will use the following lemma.
\begin{lem}\label{lem:conditioning}
Consider the product probability space $(\Omega_1\times\Omega_2, \mathcal F_1\otimes\mathcal F_2,\bP)$.
Let $\mc F_3$ be a sub-$\sigma$-field of $\mc F_2$.  
For $j=2,3$, let $\widetilde{\mathcal{F}}_j := \{\Omega_1\times B\,:\,B\in \mathcal{F}_j\}$; they are  sub-$\sigma$-fields of $\mathcal{F}_1\otimes \mathcal{F}_2$.
Then, for any $\mc F_1\otimes\mc F_3$-measurable integrable random variable $X$, we have 
\begin{equation}\label{eq:filtration}
  \E[X|\widetilde{\mathcal F}_2]=\E[X|\widetilde{\mathcal F}_3].
\end{equation}
\end{lem}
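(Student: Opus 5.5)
The identity \eqref{eq:filtration} follows from the defining property of conditional expectation. It suffices to show two things. First, the right-hand side $Y:=\E[X|\widetilde{\mathcal F}_3]$ is $\widetilde{\mathcal F}_2$-measurable; this is immediate, since $\widetilde{\mathcal F}_3\subset\widetilde{\mathcal F}_2$. Second, $\E[X\mathbbm 1_G]=\E[Y\mathbbm 1_G]$ for every $G\in\widetilde{\mathcal F}_2$. Implicit in the statement is that $\bP$ is a product measure $\bP_1\otimes\bP_2$ on $\Omega_1\times\Omega_2$, so that the two coordinates are independent. I will use this; otherwise the claim is false.

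\textbf{Reduction to product indicators.} I first treat $X=\mathbbm 1_{A\times B}$ with $A\in\mathcal F_1$ and $B\in\mathcal F_3$. For $G=\Omega_1\times D$ with $D\in\mathcal F_2$, independence gives
\[
\E[X\mathbbm 1_G]=\bP_1(A)\,\bP_2(B\cap D).
\]
I then guess $\E[X|\widetilde{\mathcal F}_2]=\bP_1(A)\mathbbm 1_{\Omega_1\times B}$. This function is already $\widetilde{\mathcal F}_3$-measurable, since $B\in\mathcal F_3$. It satisfies $\E[\bP_1(A)\mathbbm 1_{\Omega_1\times B}\mathbbm 1_G]=\bP_1(A)\bP_2(B\cap D)$, so it serves as the conditional expectation with respect to both $\widetilde{\mathcal F}_2$ and $\widetilde{\mathcal F}_3$. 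Hence \eqref{eq:filtration} holds for such $X$.

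\textbf{Extension to all integrable $X$.} Both sides of \eqref{eq:filtration} are linear in $X$ and continuous in $L^1$, and they respect monotone limits by the conditional monotone convergence theorem. The class of sets $E\in\mathcal F_1\otimes\mathcal F_3$ for which \eqref{eq:filtration} holds with $X=\mathbbm 1_E$ is therefore a Dynkin system. Rectangles $A\times B$ form a $\pi$-system generating $\mathcal F_1\otimes\mathcal F_3$, so the $\pi$--$\lambda$ theorem gives \eqref{eq:filtration} for all indicators of sets in $\mathcal F_1\otimes\mathcal F_3$. By linearity the identity extends to simple functions, then by monotone convergence to nonnegative measurable $X$, and finally, writing $X=X^+-X^-$, to all integrable $X$.

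\textbf{Alternative via Fubini.} One can instead write the conditional expectation explicitly as $\E[X|\widetilde{\mathcal F}_2](\omega_1,\omega_2)=\int X(\omega_1',\omega_2)\,\bP_1(\dd\omega_1')$. Because $X$ is $\mathcal F_1\otimes\mathcal F_3$-measurable, Fubini makes this function $\mathcal F_3$-measurable in $\omega_2$, which yields the claim directly.

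The only delicate point is recognizing that the product (independence) structure of $\bP$ is essential. The monotone class extension is routine.
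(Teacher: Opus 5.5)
Your proof is correct and takes essentially the same approach as the paper: verify the identity for rectangles $A\times B$ with $A\in\mathcal F_1$, $B\in\mathcal F_3$, extend to all of $\mathcal F_1\otimes\mathcal F_3$ by Dynkin's $\pi$--$\lambda$ theorem, and then pass from indicators to integrable $X$ in the standard way. Your explicit computation on rectangles also shows that the product structure $\mathbb{P}=\mathbb{P}_1\otimes\mathbb{P}_2$ is exactly what the paper's ``Clearly, $\mathcal H\subset\mathcal J$'' silently relies on.
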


\begin{proof}
	Let $\mc J=\{S\in \mathcal F_1\otimes\mc F_3: \eqref{eq:filtration} \text{ is true for }X=\mathbf 1_{S}\}$. It suffices to show that 
	\begin{equation}\label{eq:251223}
  \mc J=\mathcal F_1\otimes\mc F_3.
\end{equation}
 Let $\mc H=\{A\times B: A\in\mc F_1, B\in\mc F_3\}$ be the collection of cylinders in $\mc F_1\otimes\mc F_3$. Clearly, $\mc H\subset\mc J$. Since $\mc H$ is a $\pi$-system, and $\mc J$ is a $\lambda$-system, by Dynkin's $\pi-\lambda$ theorem, we have
 \[
  \sigma(\mc H)\subset\mc J.
\]
Thus, $\mc F_1\otimes\mc F_3\subset\mc J$. 
Since $\mc J\subset\mc F_1\otimes\mc F_3$, equality \eqref{eq:251223} follows.
\end{proof}

\medskip

We fix $(x_0,t_0)\in\R^d\times[10,\infty)$ and let $\rho:=\tfrac{t_0}{\floor{t_0}}\in[1,\tfrac{11}{10}]$. 

For $A\subset[0,\infty)$, we write 
\[
  \ms F_A=\sigma(\omega_{x,s}:x\in\Z^d,s\in A).
\]
For any $i=0,1,\ldots,\floor{t_0}-2$ , define a $\ms F_{(0,i\rho]}\otimes\ms F_{[(i+2)\rho,t_0)}$-measurable random variable
\begin{equation}\label{def:mui}
  \mu_i:=\inf_{y\in\R^d}\bigl[m(0,0;y,i\rho)+m(y,(i+2)\rho;x_0,t_0)\bigr].
\end{equation}
Notice that, since the field has a unit (time) range of dependence, by Lemma~\ref{lem:conditioning},
\begin{equation}\label{eq:forget-mui}
  \E[\mu_i|\ms F_{(0,i\rho]}]=\E[\mu_i|\ms F_{(0,(i+1)\rho]}], \qquad \forall \ 0\le i\le\floor{t_0}-2.
\end{equation}

Set, for $i\ge 0,$ 
\begin{equation}\label{def:m_i}
  M_i=\E[m(x_0,t_0)|\ms F_{(0,i\rho]}].
\end{equation}
Notice that $(M_i)_{i\ge 0}$ is a martingale with respect to the filtration $(\ms F_{(0,i\rho]})_{i\ge 0}$. Moreover, $M_0=\E[m(x_0,t_0)]$, and $M_n=m(x_0,t_0)$ for all $n\ge \floor{t_0}$.

\begin{prop}
	\label{prop:path-metric-reg}
	Let $x_0\in\R^d, t_0\geq 10$. Let $\gamma=(\gamma(r))_{r\in[0,t_0]}\in{\rm AC}([0,t_0],\R^d)$ be an optimal path for $m(x_0,t_0)=m(0,0;x_0,t_0)$. Then, for any $0\le s<t\le t_0$, 
\begin{align}
  -\int_s^t\nu_r\,\dd r\le m^\gamma(s,t)&\lesssim (t-s) \varphi\bigl(\frac{t_0}{t-s}\bigr)D_{x_0,t_0,s,t}\label{eq:path-reg-appl}
\end{align}
where 
\[
 D_{x_0,t_0,s,t}:=\frac{|x_0|^q}{t_0^q}+\sup_{w\ge t-s} {\Bigl[} \avint_{(t-w)\vee 0}^t\nu_r\,\dd r+\avint_s^{(s+w)\wedge t_0}\nu_r\,\dd r\Bigr].
\]
Moreover, recalling $\mu_i$ in \eqref{def:mui}, if $t_0\ge 2$, then for any $i=0,1,\ldots,\floor{t_0}-2$ ,
\begin{equation}
  \label{eq:m-mui-bd}
  \abs{m(x_0,t_0)-\mu_i}\lesssim\varphi(t_0) D_{x_0,t_0,i\rho,(i+2)\rho}.
\end{equation}
\end{prop}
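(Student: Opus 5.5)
The lower bound in \eqref{eq:path-reg-appl} is immediate from the lower bound in \eqref{eq:L-bd}. For the upper bound we split the path at its midpoint and apply Theorem~\ref{thm:path-reg} twice, once forward and once backward in time. Since $\gamma$ is optimal for $m(x_0,t_0)$, its restriction to $[0,t]$ is optimal for $m(0,0;\gamma(t),t)$, and its restriction to $[s,t_0]$ is optimal for $m(\gamma(s),s;x_0,t_0)$. Set $r=(s+t)/2$, so that $m^\gamma(s,t)=m^\gamma(s,r)+m^\gamma(r,t)$.

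For the piece $m^\gamma(r,t)$, apply \eqref{eq:best-metric-reg} to the optimal path on $[0,t]$ ending at $(\gamma(t),t)$. This gives
\[
m^\gamma(r,t)\lesssim (t-r)\varphi\bigl(\tfrac{t}{t-r}\bigr)\Bigl(\tfrac{|\gamma(t)|^q}{t^q}+\sup_{w\le r}\avint_{w\vee 0}^t\nu\Bigr).
\]
The average term is at most $\sup_{w'\ge t-s}\avint_{(t-w')\vee0}^t\nu$, because $t-w\ge t-r$ and $t-r$ is comparable to $t-s$; a factor of $2$ is harmless. Since $t\le t_0$, we also have $\varphi(t/(t-r))\lesssim\varphi(t_0/(t-s))$.

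The term $|\gamma(t)|^q/t^q$ is the nontrivial one. To bound it, apply \eqref{eq:best-path-reg} to $\gamma$ on $[0,t_0]$ with $s$ replaced by $t$:
\[
|\gamma(t)-x_0|\lesssim (t_0-t)\varphi(\cdot)^{1/q}A_{x_0,t_0,t}^{1/q}.
\]
This yields $|\gamma(t)|^q/t^q\lesssim \varphi\,(|x_0|^q/t_0^q+\text{averages})$ up to the ratio $t_0/t$. I expect this to be the main obstacle. When $t$ is small compared to $t_0$ the ratio degenerates, and I would instead use the rough bound \eqref{eq:path-rough-reg} together with \eqref{eq:m-upper-bd}. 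Combining the two halves, the loss from $\varphi(\cdot)$ should be absorbed into $\varphi(t_0/(t-s))$, since $\varphi$ is slowly varying and powers of $\varphi$ only change $c_1$.

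The piece $m^\gamma(s,r)$ is handled symmetrically, by applying Theorem~\ref{thm:path-reg} to the time-reversed problem. The Lagrangian $L(x,-t,-v)$ satisfies (A2) with the same constants. In the reversed problem the starting point is $(x_0,t_0)$ and the endpoint is $(\gamma(s),s)$, so the averages that appear are forward averages $\avint_s^{(s+w)\wedge t_0}\nu$ with $w\ge t-s$. This accounts for the second term in $D$. The displacement term is again controlled by the path regularity of $\gamma$ near $0$ or near $t_0$.

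For \eqref{eq:m-mui-bd}, first take $y=\gamma(i\rho)$ and bridge $[i\rho,(i+2)\rho]$ along $\gamma$. Since we are removing a segment,
\[
\mu_i\le m(x_0,t_0)-m^\gamma(i\rho,(i+2)\rho),
\]
and the first part of the proposition with $t-s=2\rho$ gives $m(x_0,t_0)-\mu_i\ge -\int\nu\gtrsim -D$. Conversely, let $y^*$ be a near-minimizer for $\mu_i$, with optimal paths $\alpha$ on $[0,i\rho]$ and $\beta$ on $[(i+2)\rho,t_0]$. Concatenate them with a straight segment from $(\alpha(i\rho-\rho),i\rho-\rho)$ to $(\beta((i+3)\rho),(i+3)\rho)$. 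The cost of this bridge is controlled by \eqref{eq:m-upper-bd} and the path regularity \eqref{eq:best-path-reg} applied to $\alpha$ and $\beta$ near their endpoints, which equal $y^*$. This gives $m(x_0,t_0)\le\mu_i+C\varphi(t_0)D$. At the boundaries, where $i=0$ or $(i+2)\rho$ is close to $t_0$, the bridge is taken one-sided.
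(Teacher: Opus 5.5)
The midpoint split is where your argument for \eqref{eq:path-reg-appl} breaks. Wherever $[s,t]$ lies, it forces a forward estimate on $[r,t]$, whose $A_{\gamma(t),t,r}$ contains $|\gamma(t)|^q/t^q$, and a backward estimate on $[s,r]$, which contains $|\gamma(s)-x_0|^q/(t_0-s)^q$.

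When $t\ll t_0$, your fallback does not control the first term. \eqref{eq:path-rough-reg} with \eqref{eq:m-upper-bd} gives only $|\gamma(t)|^q/t^q\lesssim \frac{t_0}{t}\bigl(\frac{|x_0|^q}{t_0^q}+\avint_0^{t_0}\nu_r\,\dd r\bigr)$, which is exactly the factor $t_0/t$ you wanted to avoid. Going through $x_0$ via \eqref{eq:best-path-reg} is worse, since $|x_0|^q/t^q=(t_0/t)^q|x_0|^q/t_0^q$. Even the time-reversed \eqref{eq:best-path-reg} only yields a factor $\varphi(t_0/t)$, and its product with $\varphi(2t/(t-s))$ is not $\lesssim\varphi(t_0/(t-s))$ (take $t-s=1$, $t=\sqrt{t_0}$). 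Symmetrically, the backward half fails when $t_0-s\ll t_0$.

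What you are missing is that $D_{x_0,t_0,s,t}$ is a \emph{sum} of the two averages, so one direction per interval suffices. If $t\ge t_0/2$, apply \eqref{eq:best-metric-reg} to $\gamma|_{[0,t]}$ on the whole of $[s,t]$. Bound the displacement by $|\gamma(t)|^q/t^{q-1}\lesssim |x_0|^q/t_0^{q-1}+\int_0^{t_0}\nu_r\,\dd r$, which follows from \eqref{eq:m-lower-bd} on $[0,t]$, \eqref{eq:m-seg-rough-ub} and \eqref{eq:m-upper-bd}; now $t_0/t\le 2$. If $t<t_0/2$, then $t_0-s>t_0/2$ and the same argument applies to the time-reversed problem. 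This gives a single factor $\varphi(t_0/(t-s))$. Your remark that extra powers of $\varphi$ only change $c_1$ is not available, since $c_1$ is fixed in Theorem~\ref{thm:path-reg}.

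For \eqref{eq:m-mui-bd}, the inequality $\mu_i\le m(x_0,t_0)-m^\gamma(i\rho,(i+2)\rho)$ is false. With $y=\gamma(i\rho)$, the second piece of $\mu_i$ is $m(\gamma(i\rho),(i+2)\rho;x_0,t_0)$, not $m^\gamma((i+2)\rho,t_0)$: deleting the time segment leaves a spatial jump from $\gamma(i\rho)$ to $\gamma((i+2)\rho)$. For instance, for $L(v)=|v|^2$ one has $\mu_i=|x_0|^2/(t_0-2\rho)$, which exceeds $m(x_0,t_0)-m^\gamma(i\rho,(i+2)\rho)=|x_0|^2(t_0-2\rho)/t_0^2$ whenever $x_0\neq 0$.

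This lower bound on $m(x_0,t_0)-\mu_i$ is the hard direction, and it needs the endpoint regularity \eqref{eq:change-end-path-same-s}. Write $m(x_0,t_0)=m(\gamma(i\rho),i\rho)+m^\gamma(i\rho,(i+2)\rho)+m(\gamma((i+2)\rho),(i+2)\rho;x_0,t_0)$ and take $y=\gamma((i+2)\rho)$ in $\mu_i$. Then bound $m(\gamma((i+2)\rho),i\rho)-m(\gamma(i\rho),i\rho)$ by \eqref{eq:change-end-path-same-s} applied to $\gamma|_{[0,(i+2)\rho]}$. This is legitimate when $(i+2)\rho\ge t_0/2$, because then $i\rho>\frac12(i+2)\rho$ (as $t_0\ge 10$). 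Otherwise take $y=\gamma(i\rho)$ and use the time-reversed version. This step is where the factor $\varphi(t_0)$ enters.

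The opposite direction, which you handled with a bridge, is trivial. The two pieces in $\mu_i$ end and start at the same spatial point $y$. Staying at $y$ on $[i\rho,(i+2)\rho]$ and using \eqref{eq:m-upper-bd}, then taking the infimum over $y$, gives $m(x_0,t_0)\le\mu_i+\int_{i\rho}^{(i+2)\rho}\nu_r\,\dd r$. There is no need to control a random near-minimizer $y^*$.
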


\begin{proof} 
Note that, for $0<t<t_0$,
\begin{equation}\label{eq:251228}
  \frac{|\gamma(t)|^q}{t^{q-1}}\stackrel{\eqref{eq:m-lower-bd}}{\leq} m^\gamma(0,t)+3\int_s^t\nu_r\,\dd r\stackrel{\eqref{eq:m-seg-rough-ub}}{\leq}m(x_0,t_0)+4\int_0^{t_0}\nu_r\,\dd r\stackrel{\eqref{eq:m-upper-bd}}{\lesssim}\frac{|x_0|^q}{t_0^{q-1}}+\int_0^{t_0}\nu_r\,\dd r.
\end{equation}

The lower bound of \eqref{eq:path-reg-appl} was proved in \eqref{eq:best-metric-reg}.
We will prove the upper bound of \eqref{eq:path-reg-appl} for two cases: $t\ge\tfrac{t_0}{2}$ and $t<\tfrac{t_0}{2}$. 

When $t\ge\tfrac{t_0}{2}$, it follows from \eqref{eq:best-metric-reg}, \eqref{eq:251228}, and the fact that $t\asymp t_0$ that
\begin{equation}\label{eq:path-reg-appl-pf1}
\begin{aligned}
m^\gamma (s,t)&\lesssim (t-s)\exp\left(c_1(\log(\tfrac{t}{t-s}))^{1/(2\wedge q)}\,\right)A_{\gamma(t),t,s}\\
&\lesssim (t-s)\varphi\bigl(\frac{t_0}{t-s}\bigl)\left(\frac{|x_0|^q}{t_0^q}+\sup_{w\ge t-s}\avint_{(t-w)\vee 0}^t\nu_r\,\dd r\right),    
\end{aligned}
\end{equation}
where $A_{x,t,s}$ is given in \eqref{Axts} and $\varphi(s)=\exp(c_1(\log s)^{1/(2\wedge q)})$.

When $s<t<\tfrac{t_0}{2}$, we can `reverse the time' by considering the Lagrangian
\[
  \tilde L(x,t,v):=L(x_0+x,t_0-t,-v)
\]
which satisfies \eqref{eq:L-bd} and \eqref{eq:L-regularity} with $\nu$ replaced by
\[
  \tilde\nu(t):=\nu(t_0-t).
\]
We consider 
\beq\lb{revrese}
\tilde m(x,r)=\inf_{\substack{\alpha\in\AC([0,r],\R^d)\\\alpha(0)=0,\,\alpha(r)=x}}\int_0^r\tilde L(\alpha(\tau),\tau,\dot\alpha(\tau))\,\dd \tau.
\eeq
Then, $\gamma\in{\rm AC}([0,t_0],\R^d)$ being an optimal path for $m(x_0,t_0)$ is equivalent to $\tilde\gamma\in{\rm AC}([0,t_0],\R^d)$ defined by $\tilde\gamma(r):=\gamma(t_0-r)-x_0$ being an optimal path for $\tilde m(-x_0,t_0)$, and 
\[
  m^\gamma(s,t)=\tilde m^{\tilde\gamma}(t_0-t,t_0-s).
\]
Then, applying inequality \eqref{eq:path-reg-appl-pf1} to $(\tilde m,\tilde\gamma,\tilde\nu)$, we get, for $0<s<t<\tfrac{t_0}{2}$,
\begin{align*}
\MoveEqLeft \tilde m^{\tilde\gamma}(t_0-t,t_0-s)\\
&\lesssim (t-s)\exp\left(c_1(\log(\tfrac{t_0}{t-s}))^{1/(2\wedge q)}\,\right)\left(\frac{|x_0|^q}{t_0^q}+\sup_{w\ge t-s}\avint_{(t_0-s-w)\vee 0}^{t_0-s}\tilde\nu_r\,\dd r\right)\\
&=(t-s)\varphi\bigl(\frac{t_0}{t-s}\bigl)\left(\frac{|x_0|^q}{t_0^q}+\sup_{w\ge t-s}\avint_{s}^{(s+w)\wedge t_0}\nu_r\,\dd r\right).
\end{align*}	
This inequality, together with \eqref{eq:path-reg-appl-pf1}, yields the upper bound of \eqref{eq:path-reg-appl}.

It remains to show \eqref{eq:m-mui-bd}. 
To this end,  consider first the case $(i+2)\rho\ge\tfrac{t_0}{2}\geq 5.$ 
Then, it is clear that $i\rho \geq \tfrac{1}{2}(i+2)\rho$.
We have
\begin{align*}
m(x_0,t_0)
&=\bigl[m(0,0;\gamma({(i+2)\rho}),i\rho)+m(\gamma({(i+2)\rho}),(i+2)\rho;x_0,t_0)\bigr]\\
&\qquad+m^\gamma(i\rho,(i+2)\rho)
+[m(0,0;\gamma({i\rho}),i\rho)-m(0,0;\gamma({(i+2)\rho}),i\rho)] \\
&\stackrel{\eqref{eq:change-end-path-same-s}}\ge\mu_i-\int_{i\rho}^{(i+2)\rho}\nu_r\,\dd r-2C_q \rho \varphi\bigl(\frac{(i+2)}{2}\bigr)A_{\gamma({(i+2)\rho}),(i+2)\rho,i\rho}\\
&\stackrel{\eqref{eq:251228}}\ge \mu_i-\int_{i\rho}^{(i+2)\rho}\nu_r\,\dd r-C_{q}\varphi(t_0) D_{x_0,t_0,i\rho,(i+2)\rho}.
\end{align*}
When $(i+2)\rho<\tfrac{t_0}{2}$, using the time reversibility of $m$ as above, 
\begin{align*}
m(x_0,t_0)&=\bigl[m(0,0;\gamma({i\rho}),i\rho)+m(\gamma({i\rho}),(i+2)\rho;x_0,t_0)\bigr]+m^\gamma(i\rho,(i+2)\rho)\\
&\qquad+\bigl[m(\gamma({(i+2)\rho}),(i+2)\rho;x_0,t_0)-m(\gamma({i\rho}),(i+2)\rho;x_0,t_0)\bigr]\\
&\ge \mu_i-\int_{i\rho}^{(i+2)\rho}\nu_r\,\dd r\\
&\qquad+\bigl[\tilde m(\tilde\gamma({t_0-(i+2)\rho}),t_0-(i+2)\rho)-\tilde m(\tilde\gamma({t_0-i\rho}),t_0-(i+2)\rho)\bigr]\\
&\stackrel{\eqref{eq:change-end-path-same-s}}\ge \mu_i-\int_{i\rho}^{(i+2)\rho}\nu_r\,\dd r-C_{q}\varphi(t_0)\left(\frac{|x_0|^q}{t_0^q}+\sup_{w\ge 2\rho}\avint_{(t_0-i\rho-w)\vee 0}^{t_0-i\rho}\tilde\nu_r\,\dd r\right).
\end{align*}
These two inequalities imply, for any $i=0,\ldots, \floor{t_0}-2$,
\begin{equation}\label{eq:m-mui-lb}
    m(x_0,t_0)-\mu_i\gtrsim -\varphi(t_0) D_{x_0,t_0,i\rho,(i+2)\rho}.
\end{equation}
Furthermore, for any $y\in\R^d$, 
\begin{align*}
m(x_0,t_0)&\le m(y,i\rho)+m(y,i\rho;y,(i+2)\rho)+m(y,(i+2)\rho;x_0,t_0)\\
&\stackrel{\eqref{eq:m-upper-bd}}\le m(y,i\rho)+m(y,(i+2)\rho;x_0,t_0)+2\int_{i\rho}^{(i+2)\rho}\nu_r\,\dd r.
\end{align*}
Taking infimum over all $y\in\R^d$, we get
\begin{equation}\label{eq:m-mui-ub}
  m(x_0,t_0)\le \mu_i+2\int_{i\rho}^{(i+2)\rho}\nu_r\,\dd r.
\end{equation}
Combining \eqref{eq:m-mui-lb} and \eqref{eq:m-mui-ub}, inequality \eqref{eq:m-mui-bd} is proved.
\end{proof}

We are ready to prove Theorem~\ref{thm:fluctuation-mxt}.
Since it has three separate parts, we divide the proof into three parts for readability.
\begin{proof}[Proof of Theorem~\ref{thm:fluctuation-mxt}\eqref{item:algeb-integ-nu}]
Recall the definition of $M_i$ in \eqref{def:m_i}.
By Burkholder's inequality, for $n= \floor{t_0}$, we have
\[
\E|M_n-M_0|^p \;\le\; B_p \,\E \Bigl[\sum_{j=1}^n (M_j - M_{j-1})^2\Bigr]^{p/2}.
\]
Since $M_0=\E[m(x_0,t_0)]$, and $M_n=m(x_0,t_0)$ as $n= \floor{t_0}$, H\"{o}lder's inequality yields
\begin{align*}
\E|m(x_0,t_0)-\E m(x_0,t_0)|^p =\E|M_n-M_0|^p\lesssim t_0^{p/2-1}E\Bigl[\sum_{j=1}^n (M_j - M_{j-1})^p\Bigr].
\end{align*}
If assuming that
\begin{equation}\label{eq:delta-m-all-moment}
  \norm{M_{i+1}-M_i}_{L^p(\mb P)}\lesssim  \varphi(t_0)\bigl(\frac{|x_0|^q}{t_0^q}+C_0\log t_0\bigr), \quad i=0,1,\ldots, \floor{t_0}-1,
\end{equation}
then
\begin{align*}
&\bP\Bigl(|m(x_0,t_0)-\E m(x_0,t_0)|\ge \lambda \sqrt{t_0}\varphi(t_0)\bigl(\frac{|x_0|^q}{t_0^q}+C_0\log t_0 \bigr)\Bigr)\\
&\qquad\quad \leq \frac{\E|m(x_0,t_0)-\E m(x_0,t_0)|^p}{\lambda^p t_0^{p/2}\varphi(t_0)^p(\tfrac{|x_0|^q}{t_0^q}+C_0\log t_0 )^p}\lesssim \frac{1}{\lambda^p}.
\end{align*}
Thus, to show \eqref{item:algeb-integ-nu}, it suffices to prove \eqref{eq:delta-m-all-moment}.

\medskip

By \eqref{eq:forget-mui} and by the definition of $M_i$ in \eqref{def:m_i}, for $0\le i\le\floor{t_0}-2$,
\begin{align*}
M_{i+1}-M_i
&=\E[m(x_0,t_0)-\mu_i|\ms F_{(0,(i+1)\rho]}]-\E[m(x_0,t_0)-\mu_i|\ms F_{(0,i\rho]}].
\end{align*}
Hence, by Jensen's inequality, for $0\le i\le\floor{t_0}-2$,
\begin{align}\label{eq:delta-m-lp}
  \norm{M_{i+1}-M_i}_{L^p}\lesssim \norm{m(x_0,t_0)-\mu_i}_{L^p}
  \stackrel{\eqref{eq:m-mui-bd}}\lesssim \varphi(t_0)\norm{D_{x_0,t_0,i\rho,(i+2)\rho}}_{L^p}
\end{align}
\begin{align*}
\sup_{w\ge a}\avint_{(t-w)\vee 0}^t\nu_r\,\dd r\lesssim\sum_{i=0}^{\ceil{\log_2(t/a)}}\avint_{(t-2^ia)\vee 0}^t\nu_r\,\dd r.
\end{align*}
For any positive increasing convex function $h$ on $(0,\infty)$, by Jensen's inequality,
\begin{align}\label{eq:expec-ave}
\E h\Bigl(\frac{1}{\log_2(t/a)}\sup_{w\ge a}\avint_{(t-w)\vee 0}^t\nu_r\,\dd r\Bigr)
&\le 
\E h\Bigl(\frac{C}{\ceil{\log_2(t/a)}}\sum_{i=0}^{\ceil{\log_2(t/a)}}\avint_{(t-2^ia)\vee 0}^t\nu_r\,\dd r\Bigr)\nonumber\\
&\le \frac{1}{\ceil{\log_2(t/a)}}\sum_{i=0}^{\ceil{\log_2(t/a)}}\frac{1}{2^ia}\sum_{k=1}^{2^ia}\E h\Bigl(C\int_{(t-k)\vee 0}^{t-k+1}\nu_r\,\dd r\Bigr)\nonumber\\
&\le \E h\Bigl(C\int_{0}^{1}\nu_r\,\dd r\Bigr).
\end{align}
Applying this inequality to $h(x)=x^p$, we get, for any $a\ge 1$ and $a\le t\le t_0$,
\[
  \norm{\sup_{w\ge a}\avint_{(t-w)\vee 0}^t\nu_r\,\dd r}_{L^p}\lesssim 
  \log (t/a)\norm{\int_0^1\nu_r\,\dd r}_{L^p}\lesssim C_0\log t_0.
\]
Similarly, $\norm{\sup_{w\ge a}\avint_s^{(s+w)\wedge t_0}\nu_r\,\dd r}\lesssim {C_0}\log t_0$ for $0<s\le t_0-a$. We conclude that
\begin{equation}\label{eq:D-mmt-bd}
    \norm{D_{x_0,t_0,s,t}}_{L^p}\lesssim\frac{|x_0|^q}{t_0^q}+{C_0}\log t_0 
    \quad\text{ if }0\le s\le t-1<t\le t_0.
\end{equation}
This inequality, together with \eqref{eq:delta-m-lp}, yields, for $0\le i\le \floor{t_0}-2,$
\begin{equation*}
  \norm{M_{i+1}-M_i}_{L^p}\lesssim \varphi(t_0)\bigl(\frac{|x_0|^q}{t_0^q}+{C_0}\log t_0\bigr).
\end{equation*}

For $i=\floor{t_0}-1$, we have $M_{i+1}-M_i=m^\gamma(t_0-\rho, t_0)-\E[m^\gamma(t_0-\rho, t_0)|\ms F_{(0,t_0-\rho]}]$
and so
\begin{equation}\label{39}
  \norm{M_{i+1}-M_i}_{L^p}\lesssim \norm{m^\gamma(t_0-\rho, t_0)}_{L^p}
  \stackrel{\eqref{eq:path-reg-appl}}\lesssim \varphi(t_0) \norm{D_{x_0,t_0,t_0-\rho,t_0}}_{L^p}
\end{equation}
This inequality, together with \eqref{eq:delta-m-lp} and \eqref{eq:D-mmt-bd}, implies
\eqref{eq:delta-m-all-moment}. 
\end{proof}

We next prove Theorem~\ref{thm:fluctuation-mxt}\eqref{item:bdd-nu} as it follows more or less directly the above proof.

\begin{proof}[Proof of Theorem~\ref{thm:fluctuation-mxt}\eqref{item:bdd-nu}]
It follows from 
\eqref{eq:delta-m-lp} and \eqref{eq:D-mmt-bd} (with $p=1$) and the assumption $\int_0^1\nu_r\,\dd r<C_0$ that for $0\le i\le\floor{t_0}-2$,
\begin{align*}
\left|M_{i+1}-M_i\right|
&\lesssim \varphi(t_0) D_{x_0,t_0,i\rho,(i+2)\rho} \lesssim \varphi(t_0) \bigl(\frac{|x_0|^q}{t_0^q}+C_0 \log t_0\bigr).
\end{align*}
The inequality holds the same for $i=\floor{t_0}-1$ by \eqref{eq:path-reg-appl} and \eqref{39}. Then
Azuma's inequality yields
\begin{align*}
&\bP\Bigl(|m(x_0,t_0)-\E m(x_0,t_0)|\ge \lambda \sqrt{t_0}\varphi(t_0)\bigl(\frac{|x_0|^q}{t_0^q}+C_0 \log t_0\bigr)\Bigr)\\
&\qquad \le 2\exp \left(-{\frac {\lambda^{2}{t_0}\varphi(t_0)^2 (|x_0|^q/t_0^q+C_0\log t_0)^2}{2C\floor{t_0}  \varphi(t_0)^2 (|x_0|^q/t_0^q+C_0\log t_0)^2}}\right) \le 2\exp(-c\lambda^2).
\end{align*}
This yields the conclusion.
\end{proof}

Now, we prove Theorem~\ref{thm:fluctuation-mxt}\eqref{item:exp-integ-nu}, which is more involved.
\begin{proof}[Proof of Theorem~\ref{thm:fluctuation-mxt}\eqref{item:exp-integ-nu}]
 It follows from \eqref{eq:forget-mui}, \eqref{def:m_i}, and \eqref{eq:m-mui-bd} that for some $C_1$ and for all $0\le i\le\floor{t_0}-2$,
\begin{align*}
\left|M_{i+1}-M_i\right|
&=\left|\E[m(x_0,t_0)-\mu_i|\ms F_{(0,(i+1)\rho]}]-\E[m(x_0,t_0)-\mu_i|\ms F_{(0,i\rho]}]\right|\\
&\lesssim \E [ \varphi(t_0) D_{x_0,t_0,i\rho,(i+2)\rho}|\ms F_{(0,(i+1)\rho]}]+\E[ \varphi(t_0) D_{x_0,t_0,i\rho,(i+2)\rho}|\ms F_{(0,i\rho]}] .
\end{align*}
The same inequality holds for $i=\floor{t_0}-1$ by \eqref{eq:path-reg-appl} and \eqref{39}. 
By Jensen's inequality, it follows that
\beq\lb{3.16}
\E\exp (\delta |M_{i+1}-M_i|)\leq \E\left[\exp (C_1\delta \varphi(t_0) D_{x_0,t_0,i\rho,(i+2)\rho})\right].
\eeq

In the following, we estimate the right-hand side of the above. 
Recall that $\rho\in [1,2]$.
We compute
\begin{align*}
\sup_{w\ge 2\rho}  \avint_{((i+2)\rho-w)\vee 0}^{(i+2)\rho}\nu_r\,\dd r&\lesssim \sum_{k=0}^{\ceil{\log_2( (i+2)/2)}}\frac{1}{2^{k+1}}\int_{(i+2-2^{k+1})\rho\vee 0}^{(i+2)\rho}\nu_r\,\dd r\\
  &\lesssim  \sum_{j=0}^{i+1}\frac1{j+1}\int_{(i+1-j)\rho}^{(i+2-j)\rho}\nu_r\,\dd r.
\end{align*}
Similarly, 
\begin{align*}
\sup_{w\ge 2\rho}  \avint_{i\rho}^{(i\rho+w)\wedge t_0}\nu_r\,\dd r&\lesssim  \sum_{j\geq 0,\, (i+j)\rho\leq t_0}\frac1{j+1}\int_{(i+j)\rho}^{(i+1+j)\rho}\nu_r\,\dd r.
\end{align*}
Thus, for any $\delta>0$, we obtain
\begin{align*}
A_{i,\delta}    &:=\exp\left[C_1 \varphi(t_0)\delta \sup_{w\ge 2\rho} {\Bigl[} \avint_{((i+2)\rho-w)\vee 0}^{(i+2)\rho}\nu_r\,\dd r+\avint_{i\rho}^{(i\rho+w)\wedge t_0}\nu_r\,\dd r\Bigr]\right]\\
    &\leq \exp\left[CC_1 \varphi(t_0)\delta \left(\sum_{j=0}^{i+1}\tfrac1{j+1}\int_{(i+1-j)\rho}^{(i+2-j)\rho}\nu_r\,\dd r+\sum_{j\geq 0,\, (i+j)\rho\leq t_0}\tfrac1{j+1}\int_{(i+j)\rho}^{(i+1+j)\rho}\nu_r\,\dd r\right) \right]\\
    &\leq \frac12\exp\left[2CC_1 \varphi(t_0)\delta \left(\sum_{j=0}^{\ceil{t_0/2}}\tfrac1{|2j-i|+1}\int_{2j\rho}^{(2j+1)\rho}\nu_r\,\dd r\right) \right]\\
    &\qquad +\frac12\exp\left[2CC_1 \varphi(t_0)\delta \left(\sum_{j=0}^{\ceil{t_0/2}}\tfrac1{|2j-i|+1}\int_{(2j+1)\rho}^{(2j+2)\rho}\nu_r\,\dd r\right) \right].
\end{align*}

Next, note that the set of
$
\int_{i_k\rho}^{(i_k+1)\rho}\nu_r\,\dd r$ with $i_k\in\mathbb N$
are independent as long as $|i_k-i_{k'}|\geq \rho$ for all $k\neq k'$. Therefore, 
\begin{align*}
\E A_{i,\delta}    &\leq\frac12 \prod_{j=0}^{\ceil{t_0/2}} \E\exp\left[\tfrac{2CC_1 \varphi(t_0)\delta}{|2j-i|+1}\int_{2j\rho}^{(2j+1)\rho}\nu_r\,\dd r\right]\\
    &\qquad\qquad +\frac12\prod_{j=0}^{\ceil{t_0/2}} \E \exp\left[ \tfrac{2CC_1 \varphi(t_0)\delta}{|2j-i|+1}\int_{(2j+1)\rho}^{(2j+2)\rho}\nu_r\,\dd r \right]\\
    &\leq \E\exp \left[4CC_1 \varphi(t_0) (\log t_0)\delta\int_{0}^{\rho}\nu_r\,\dd r\right]\\
    &\leq \frac12\E\exp \left[8CC_1 \varphi(t_0) (\log t_0)\delta\int_{0}^{1}\nu_r\,\dd r\right]\\
    &\qquad\qquad+\frac12\E\exp \left[8CC_1 \varphi(t_0) (\log t_0)\delta\int_{1}^{2}\nu_r\,\dd r\right]\\
    &\leq \E\exp \left[8C C_1 \varphi(t_0) (\log t_0)\delta\int_{0}^{1}\nu_r\,\dd r\right].
\end{align*}
After taking $\delta$ to be sufficiently small depending on $t_0$ and $\tfrac{|x_0|^q}{t_0^q}$ such that 
\[
8C C_1 \varphi(t_0) (\log t_0)\delta\leq c\quad\text{ and }\quad C_1 \delta \varphi(t_0) \frac{|x_0|^q}{t_0^q}\leq 1,
\]
we obtain from \eqref{3.16} that
\[
\E\exp (\delta |M_{i+1}-M_i|)\leq \exp\left[  C_1 \delta \varphi(t_0) \tfrac{|x_0|^q}{t_0^q}\right]\E\exp\left[c\int_{0}^{1}\nu_r\,\dd r\right]\leq C_0e.
\]
It follows from Lemma \ref{thm:LV32} that for all $\lambda>0$
\begin{equation*}
 \bP\left(\delta|M_n-M_0|\geq \lambda n \right)
< C_*(\lambda) \exp\!\left(-4^{-2/3} \lambda^{2/3}n^{1/3}\right),
\end{equation*}
where $C_*(\lambda):=C_d(1+\frac{C_0}{\lambda^{2/3} \wedge \lambda^2})$ and $C_d$ is a dimensional constant.
Replacing $\lambda$ by $\lambda n^{-1/2}$ and taking 
\[
\delta=(C(c^{-1}\log t_0+\tfrac{|x_0|^q}{t_0^q})\varphi(t_0))^{-1}
\]
for some $C>0$ sufficiently large give  for $\lambda>0$,
\begin{equation*}
 \bP\left(|M_n-M_0|\geq C(c^{-1}\log t_0+\tfrac{|x_0|^q}{t_0^q})\varphi(t_0)\lambda n^{1/2}\right)
< C_*(\lambda n^{-1/2})\exp\!\left(-4^{-2/3} \lambda^{2/3}\right).
\end{equation*}
Suppose $n\geq 1$ and $C$ is sufficiently large.  If $\lambda\geq 1$, we have
\[
C_*(\lambda n^{-1/2})\exp\!\left(-4^{-2/3} \lambda^{2/3}\right)\leq C(1+C_0n)\exp\!\left(-4^{-2/3} \lambda^{2/3}\right)
\]
and if $\lambda\leq 1$, $C(1+C_0n)\exp\!\left(-4^{-2/3} \lambda^{2/3}\right)\geq 1$.

Now, using that $M_0=\E[m(x_0,t_0)]$ and $M_{\floor{t_0}}=m(x_0,t_0)$, and $n=\floor{t_0}$, we find that there exists $c_1>0$ such that for all $t_0\geq 2$ and $\lambda>0$,
\begin{align*}
&\bP\left(|m(x_0,t_0)-\E m(x_0,t_0)|\geq  \lambda t_0^{1/2}\varphi(t_0)(\tfrac{|x_0|^q}{t_0^q}+c^{-1}\log t_0) \right)\\
&\qquad\qquad
< C(1+C_0t_0)\exp\!\left(-c_1 \lambda^{2/3}\right)  .  
\end{align*}
Recall that $\varphi(s)= \exp\left(c(\log s)^{1/(2\wedge q)}\,\right)$ for $s\geq 1$. By replacing $c$ by $2c$ (then we can replace $\varphi(t_0)$ by $\varphi(t_0)^{1/2})$, and by using $\lambda$ in place of $\lambda\varphi(t_0)^{-1/2}$, we get
\begin{align*}
&\bP\left(|m(x_0,t_0)-\E m(x_0,t_0)|\geq  \lambda t_0^{1/2}\varphi(t_0)(\tfrac{|x_0|^q}{t_0^q}+c^{-1}\log t_0) \right)\\
&\qquad\qquad
< C(1+C_0t_0)\exp\!\left(-c_1 \lambda^{2/3}\varphi(t_0)^{1/3}\right) .
\end{align*}
Note that there exists $c>0$ such that $\varphi(t_0)\geq c(\log t_0)^4$ for all $t_0\geq 2$. So, if $\lambda\geq1$, we get for some $C=C(c_1)$,
\[
t_0\exp\!\left(-c_1 \lambda^{2/3}\varphi(t_0)^{1/3}\right)\leq C\exp\!\left(-c_1 \lambda^{2/3}\right).
\]
Therefore, for $\lambda\geq 1$,
\[
\bP\left(|m(x_0,t_0)-\E m(x_0,t_0)|\geq  \lambda t_0^{1/2}\varphi(t_0)(\tfrac{|x_0|^q}{t_0^q}+c^{-1}\log t_0) \right)
< C(1+C_0)\exp\!\left(-c_1 \lambda^{2/3}\right) .
\]
The same inequality certainly holds if $\lambda\leq 1$ if $C$ is large. Overall, we conclude the proof.
\end{proof}

\section{Error between the mean  and the ergodic average of the metric}\label{sec:deterministic-error-metric}

By the subadditive ergodic theorem, for a.s. $\omega\in \Omega$, we have
\begin{equation}\label{eq:subadd-ergodic}
  \ol m(x,t)=\lim_{r\to \infty}\frac{m(rx,rt,\omega)}{r}=\lim_{r\to \infty}\frac{m(0,0;rx,rt,\omega)}{r}.
\end{equation}
See \cite{MR2400607,MR2514380,MR3602941,MR3817561, Seeger}.
We have that $\ol m$ is subadditive and positively $1$-homogeneous, that is,
\[
\ol m(rx,rt)=r \ol m(x,t) \quad \text{ for } (x,t)\in \R^d \times (0,\infty), \ r>0.
\]
This implies that $(x,t)\mapsto \ol m(x,t)$ is convex as well.
Geometrically, the graph of $\ol m$ over $\R^d \times (0,\infty)$ forms part of a convex cone.

Our goal is to prove the following theorem.
\begin{thm}\label{thm:error-deterministic} Assume that
\begin{equation}\label{eq:gaussian-integ}
  \E \exp \Bigl[c\bigl( \int_{0}^{1} \nu_{r} \, \dd r\bigr) \Bigr] < \infty.
\end{equation}

For any $(x,t)\in\R^d\times [e,\infty)$,
    \begin{equation}\label{eq:Em-mbar}
    \ol m(x,t) \leq \E m(x,t) \leq \ol m(x,t) + Ct^{1/2}\psi(t)\bigl(\frac{|x|^q}{t^q}+1\bigr)\log\bigl(\frac{|x|}{t}+2\bigr)
    \end{equation}
    where $\psi(t):=\exp\left(4c_1(\log\log t)(\log t)^{1/(2\wedge q)}\,\right)$ is a slowly varying function for $t\ge e$.
\end{thm}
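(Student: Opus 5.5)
The lower bound $\ol m(x,t)\le \E m(x,t)$ comes from subadditivity. Since $m(0,0;x,t)+m(x,t;2x,2t)\ge m(2x,2t)$ and the law is stationary under $T_{x,t}$, we get $\E m(2^kx,2^kt)\le 2^k\E m(x,t)$. Dividing by $2^k$ and letting $k\to\infty$, with uniform integrability from \eqref{eq:m-upper-bd}--\eqref{eq:m-lower-bd}, gives $\ol m(x,t)=\lim_k 2^{-k}\E m(2^kx,2^kt)\le \E m(x,t)$.

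For the upper bound I would follow Alexander's scheme for first-passage percolation. Write $h(x,t):=\E m(x,t)$, which is subadditive, and consider the convex hull approximation property
\[
\mathrm{(GAP)}(M):\quad h(x,t)\le \ol m(x,t)+M\,t^{1/2}\psi(t)\bigl(\tfrac{|x|^q}{t^q}+1\bigr)\log\bigl(\tfrac{|x|}{t}+2\bigr)\qquad\text{for all }t\ge e.
\]
The plan is to show that a weaker error bound self-improves. The starting point is a crude bound with error of order $t$, which follows from the basic bounds together with $\ol m\ge -C t$. The iteration step (Lemma~\ref{lem:iteration-GAP}) should take a bound with error $\Delta(t)$ to one with error roughly $\Delta(t)^{1/2}t^{1/2}$ times a slowly varying factor, while keeping a fixed power of the $|x|^q/t^q$ weight. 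Iterating about $\log\log t$ times gives the exponent $1/2$. The losses from the slowly varying factors compound to $\varphi(t)^{C\log\log t}$, which is exactly the form of $\psi(t)=\exp\bigl(4c_1(\log\log t)(\log t)^{1/(2\wedge q)}\bigr)$. This accounts for the $\log\log t$ in $\psi$.

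The core step is a good skeleton (Proposition~\ref{prop:skeleton-10n-points}). Suppose $h(x,t)>\ol m(x,t)+\Delta$ at scale $t$. Take the supporting hyperplane of the convex function $\ol m$ at $(x,t)$, i.e. $p\cdot y+\alpha s\le\ol m(y,s)$ with equality at $(x,t)$, and split increments $(y,s)$ into "good" ones, where $h(y,s)\le p\cdot y+\alpha s+\Delta'$, and "bad" ones. Consider $m(nx,nt)$ for large $n$. Its optimal path $\gamma$ is cut at times $t_j$ chosen adaptively so that the pieces $(\gamma(t_{j+1})-\gamma(t_j),\,t_{j+1}-t_j)$ have size comparable to $t$. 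Theorem~\ref{thm:path-reg} and Proposition~\ref{prop:path-metric-reg} give a random Lipschitz bound at scale $O(1)$, which is used to control the spatial displacement of each piece. Together with the exponential moment of $\int_0^1\nu$ and \eqref{eq:expec-ave}, this shows that with high probability every piece lies in a cone $|y|\lesssim s(\tfrac{|x|}{t}+\log\text{ factor})$. That cone restriction replaces the deterministic Lipschitz bound used in FPP, and it is where the factor $\log(|x|/t+2)$ enters.

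The counting argument then runs as follows. Discretize skeleton endpoints to a lattice with mesh $O(1)$; Theorem~\ref{thm:sp-reg-t} and Theorem~\ref{thm:time-reg} cost only $\varphi$ factors per endpoint. Apply Theorem~\ref{thm:fluctuation-mxt}\eqref{item:exp-integ-nu} to each candidate segment, using time-independence across disjoint time slabs, and take a union bound over at most $\exp(Cn\log t)$ skeletons. Every skeleton then has $\sum_j m(\text{piece}_j)\ge\sum_j h(\text{piece}_j)-C\sqrt{nt}\,\text{(slow)}$. If too many pieces were bad, $m(nx,nt)$ would exceed $n\ol m(x,t)+o(n)$, contradicting \eqref{eq:subadd-ergodic}. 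So a positive fraction of pieces are good. Good pieces, combined with the gap $h(x,t)-\ol m(x,t)>\Delta$ and subadditivity of $h$, then force $\Delta\lesssim\sqrt{t\,\Delta'}$ up to slowly varying factors, which is the iteration.

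I expect the main obstacle to be the skeleton construction with random Lipschitz control. The pieces must have length about $t$ while their endpoints are not confined to a deterministic cone. The union-bound entropy must also be kept at about $n\log t$, rather than growing with the unbounded spatial excursions. I would handle both by truncating on the event that $\sup\avint\nu\le C\log t$ over all relevant windows, whose complement has stretched-exponentially small probability by \eqref{eq:gaussian-integ}. On that event, \eqref{eq:best-path-reg} bounds $|\gamma(t_{j+1})-\gamma(t_j)|$ by $(t_{j+1}-t_j)\varphi\cdot(|x|/t+\log t)$.
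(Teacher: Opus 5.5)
Your lower bound via $\E m(2^kx,2^kt)\le 2^k\E m(x,t)$ is fine. Your overall architecture matches the paper's: the supporting hyperplane $\ell_{x,t}$ of $\ol m$, skeletons cut from optimal paths of $m(nx,nt)$ using the random $O(1)$-scale Lipschitz control, lattice discretization, a union bound, contradiction with the ergodic theorem, and about $\log\log t$ iterations producing $\psi$. But the two steps that carry the argument do not work as stated.

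\textbf{The iteration.} Your map $\Delta\mapsto\Delta^{1/2}t^{1/2}$ acts on exponents as $\beta\mapsto(1+\beta)/2$, whose fixed point is $1$. From $\Delta=t$ it returns $t$, and for $\beta<1$ it makes the bound worse, so it never reaches $1/2$. The relation $\Delta\lesssim\sqrt{t\Delta'}$ is asserted rather than derived, and it gives nothing when the good-piece threshold $\Delta'$ is tied to the current error. The missing idea is the separation the paper makes:
\begin{itemize}
\item the threshold defining $Q_{x,t}$ is the \emph{fixed} fluctuation scale $t^{1/2}\varphi(t)^3\Lambda_{x,t}$;
\item the probabilistic input, a $Q_{x,t}$-skeleton to $(nx,nt)$ with $k\le an$ increments, is proved once (Proposition~\ref{prop:skeleton-10n-points});
\item the iteration (Lemma~\ref{lem:iteration-GAP}) is purely deterministic.
\end{itemize}
One applies the skeleton at scale $(x/\rho,t/\rho)$ with $\rho=C_3t^\theta$. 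Carath\'eodory gives $(x,t)=(x^*,t^*)+\sum_j\gamma_j(y_j,s_j)$, where $(x^*,t^*)$ is a nonnegative integer combination of $d+2$ points of $Q_{x/\rho,t/\rho}$ and $\sum_j\gamma_j\le d+3$. Then $\E m(x^*,t^*)\le\ell_{x,t}(x^*,t^*)+a(\rho t)^{1/2}\varphi(t)^3\Lambda_{x,t}$; this is where $k\le an$ enters. The remainder, of time length $O(t/\rho)$, is bounded by the inductive hypothesis $\tilde C(t/\rho)^\beta$ up to slowly varying factors. Balancing the two errors gives $\beta\mapsto\beta/(\beta+\tfrac12)$, which does have fixed point $1/2$, with $\beta_j-\tfrac12\le 2^{-j-1}$.

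\textbf{The skeleton.} Showing that ``a positive fraction of pieces are good'' does not feed that step. Carath\'eodory needs $\tfrac{n}{k\rho}(x,t)$ in the convex hull of $Q_{x,t}$, i.e.\ \emph{every} increment in $Q_{x,t}$, together with $k=O(n)$. Cutting at time-length $\approx t$ controls $k$ but not membership in $Q_{x,t}$. The paper reverses the roles. It cuts the optimal path greedily on the unit time grid, extending each piece as long as it stays in $Q_{x,t}$, so all increments lie in $Q_{x,t}$ by construction. It then counts. A non-final maximal piece fails to extend by one unit step for one of three reasons:
\begin{itemize}
\item its time length is at least $t-1$; there are at most $2n$ such pieces;
\item $\ell_{x,t}$ of the extension exceeds $\ol m(x,t)$; then $\ell_{x,t}\ge\tfrac12\ol m(x,t)$ on the piece, and since $\E m\ge\max(\ell_{x,t},0)$ and $\sum_j\E m\le(n+C\iota k)\ol m(x,t)$, there are at most $2(n+C\iota k)$ such pieces;
\item its inefficiency $\E m-\ell_{x,t}$ is at the threshold scale; these number $O(\iota k)$, because the total inefficiency is at most $6\iota kt^{1/2}\varphi(t)^3\Lambda_{x,t}$ with $\iota=(\log M)^{-1}$ small.
\end{itemize}
This yields $k\le 4n+C\iota k$.

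\textbf{Concentration for the count.} Three corrections to your plan are needed for the count above to go through.
\begin{itemize}
\item The concentration must be uniform over all lattice skeletons at level $\iota kt^{1/2}\varphi(t)^3\Lambda_{x,t}$, which is order $kt^{1/2}$, not $\sqrt{nt}$ as you wrote.
\item The factor $\log(|x|/t+2)$ in $\Lambda_{x,t}$ is there to beat the entropy $(Ct^{d+3}(|x|^d/t^d+1))^k$, not the cone restriction.
\item The Chernoff bound for a sum of $k$ pieces needs exponential moments of the normalized fluctuations. The $e^{-c\lambda^{2/3}}$ tails of Theorem~\ref{thm:fluctuation-mxt}\eqref{item:exp-integ-nu} do not give these. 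The paper gets sub-Gaussian tails by modifying the Lagrangian off the event $\{\int_i^{i+1}\nu_r\,\dd r\le(\log t)^2\ \forall i\}$ and applying part \eqref{item:bdd-nu}.
\end{itemize}
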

We note that $\psi(t) = (\varphi(t))^{4\log\log t}$ where $\varphi(t)$ is defined in \eqref{varphi}. 

Recall that, by \eqref{eq:m-upper-bd} and \eqref{eq:m-lower-bd}, for any $(x,t)\in\R^d\times{[1,\infty)}$, 
\[
\frac{|x|^q}{N_1 t^{q-1}}-Ct \le \ol m(x,t)\le  \E m(x,t)\le \frac{N_1 |x|^q}{t^{q-1}}+Ct. 
\]
By a constant translation of the Hamiltonian $H$, that is, by redefining $\tilde H:=H-C$ for some big constant $C>0$, then $\tilde L=L+C$, we have, for $(x,t)\in \R^d\times [1,\infty)$,
\begin{equation}\label{eq:wlog}
  \frac{|x|^q}{N_1 t^{q-1}} +t\leq \ol m(x,t)\le  \E m(x,t) \leq \frac{N_1|x|^q}{t^{q-1}}+ Ct.
\end{equation}
Since a constant translation of the Hamiltonian does not change $\ol m(x,t)-\E m(x,t)$, without loss of generality, we can assume \eqref{eq:wlog} throughout this section.

\medskip

For geometric interpretation, we extend $\ol m(x,t)$ to the whole $\R^{d+1}$ with $\ol m(x,-t)=\ol m(x,t)$ for $(x,t)\in \R^d\times (0,\infty)$.
For each $(x,t)\in \R^d\times (0,\infty)$, we let $\ell_{x,t}$ be the supporting hyperplane of $\ol m$ at $(x,t)$, that is, $\ell_{x,t}$ is linear, and
\[
\begin{cases}
    \ell_{x,t}(rx,rt)=\ol m(rx,rt)=r\ol m(x,t) \quad &\text{ for } r\geq 0,\\
    \ell_{x,t}(y,s) \leq \ol m(y,s) \quad &\text{ for } (y,s)\in \R^{d}\times [0,\infty).
\end{cases}
\]
It is clear that for $|y| \leq C_0 s$, $|\ell_{x,t}(y,s)| \leq (N_1 C_0^q+C)s$.
A fact is 
\[
\ol m(y,s)= \sup_{(x,t)\in \R^d\times (0,\infty)} \ell_{x,t}(y,s).
\]

Throughout this section, we let $M\ge 2$ be a large constant to be determined later. 

Note that for any $t\in [e,M]$, inequality \eqref{eq:Em-mbar} is trivial due to \eqref{eq:wlog}. Hence, it suffices to only consider $t\ge M$.

\subsection{An iterative argument}
We will adapt the iterative argument in \cite{Alexander-97} into our dynamic random Hamilton--Jacobi equation setting.

Recall that $\varphi(t) = \exp \left( c_{1} (\log t)^{\frac{1}{2\wedge q}} \right)$.
Set
\[
  \Lambda_{x,t}:=\bigl(\frac{|x|^q}{t^q}+1\bigr)\log\bigl(\frac{|x|}t+2\bigr).
\]

For $(x,t)\in \R^d\times[1,\infty)$, and $n\ge 1$, we denote the conic \emph{frustum}  with ``height" $[1,t]$ and ``opening" $n(\log t)^2 \, \varphi(t)^{1/q} \bigl( \tfrac{|x|}{|t|} + 1 \bigr)$ as
\begin{equation}\label{eq:def-cone}
  \cone_{x,t}(n)=\Bigl\{(y,s)\in\R^d\times[1,t]: |y| \le  n(\log t)^2 \, \varphi(t)^{1/q} \bigl( \frac{|x|}{t} + 1 \bigr)s\,\Bigr\}.
\end{equation}
In fact, we will only use $\cone_{x,t}(1)$ and $\cone_{x,t}(2)$ in our proof. 
For ease of notation, we simply write $\cone_{x,t}(1)$ as $\cone_{x,t}.$
\begin{defn}
	\label{def:chap}
For $(x,t)\in \R^d\times[M,\infty)$,  define the set $Q_{x,t}$ of increments with nice approximation properties as
\begin{multline}\label{eq:def-Qxt}
Q_{x,t} = \Big\{ (y, s) \in \cone_{x,t} :    
 \ell_{x,t}(y, s) \le \ol{m}(x, t), \\
 \E m(y, s) \le \ell_{x,t}(y, s) + t^{1/2} \varphi(t)^3 \Lambda_{x,t} \Big\}.
\end{multline}
\end{defn}
\begin{rem}
 Notice that for large enough $M\ge 1$ and $t\ge M$,  the sub-frustum
  \begin{equation}\label{eq:sub-frustum}
    F_{x,t}:=\{(y,s)\in\cone_{x,t}:1\le s\le 2\}\text{ is always a subset of }Q_{x,t}.
\end{equation}
  Indeed, for any $(y,s)\in F_{x,t}$, we have 
\begin{align}\label{eq:ok-subfrustum-fits-Q}
  \E m(y,s)\stackrel{\eqref{eq:wlog}}\le C {(|y|^q+1)}\le C|s|^q(\log t)^{2q}\varphi(t)\bigl(\frac{|x|^q}{t^q}+1\bigr)\le t^{1/4}\varphi(t)\bigl(\frac{|x|^q}{t^q}+1\bigr)
\end{align}
and 
\[
\ell_{x,t}(y, s)\le \E m(y,s)\stackrel{\eqref{eq:ok-subfrustum-fits-Q},\eqref{eq:wlog}}\le Ct^{-3/4}\varphi(t)\ol m(x,t)\le \ol m(x,t).
\]
Display \eqref{eq:sub-frustum} follows.
See Figure \ref{fig:cone}.
\end{rem}

\begin{figure}[htp!]
\begin{center}
\begin{tikzpicture}[scale=1.05, thick]
\def\xA{4}      
\def\yA{-1.3}   

\def\yTop{4}    
\def\yBot{0}    

\def\xL{0} \def\yL{4}
\def\xR{8} \def\yR{4}

\def\yRedBot{0.00}
\def\yRedTop{3.10}

\def\yInner{0.85}

\def\yBlue{2.25}

\coordinate (A) at (\xA,\yA);

\pgfmathsetmacro{\tTopL}{(\yTop-\yA)/(\yL-\yA)}
\pgfmathsetmacro{\xTopL}{\xA + \tTopL*(\xL-\xA)}
\pgfmathsetmacro{\tBotL}{(\yBot-\yA)/(\yL-\yA)}
\pgfmathsetmacro{\xBotL}{\xA + \tBotL*(\xL-\xA)}

\pgfmathsetmacro{\tRedBL}{(\yRedBot-\yA)/(\yL-\yA)}
\pgfmathsetmacro{\xRedBL}{\xA + \tRedBL*(\xL-\xA)}
\pgfmathsetmacro{\tRedTL}{(\yRedTop-\yA)/(\yL-\yA)}
\pgfmathsetmacro{\xRedTL}{\xA + \tRedTL*(\xL-\xA)}

\pgfmathsetmacro{\tInnerL}{(\yInner-\yA)/(\yL-\yA)}
\pgfmathsetmacro{\xInnerL}{\xA + \tInnerL*(\xL-\xA)}

\pgfmathsetmacro{\tTopR}{(\yTop-\yA)/(\yR-\yA)}
\pgfmathsetmacro{\xTopR}{\xA + \tTopR*(\xR-\xA)}
\pgfmathsetmacro{\tBotR}{(\yBot-\yA)/(\yR-\yA)}
\pgfmathsetmacro{\xBotR}{\xA + \tBotR*(\xR-\xA)}

\pgfmathsetmacro{\tRedBR}{(\yRedBot-\yA)/(\yR-\yA)}
\pgfmathsetmacro{\xRedBR}{\xA + \tRedBR*(\xR-\xA)}
\pgfmathsetmacro{\tRedTR}{(\yRedTop-\yA)/(\yR-\yA)}
\pgfmathsetmacro{\xRedTR}{\xA + \tRedTR*(\xR-\xA)}

\pgfmathsetmacro{\tInnerR}{(\yInner-\yA)/(\yR-\yA)}
\pgfmathsetmacro{\xInnerR}{\xA + \tInnerR*(\xR-\xA)}

\pgfmathsetmacro{\tBlue}{(\yBlue-\yA)/(\yR-\yA)}
\pgfmathsetmacro{\xBlue}{\xA + \tBlue*(\xR-\xA)}

\coordinate (TopL) at (\xTopL,\yTop);
\coordinate (TopR) at (\xTopR,\yTop);
\coordinate (BotL) at (\xBotL,\yBot);
\coordinate (BotR) at (\xBotR,\yBot);

\coordinate (RedBL) at (\xRedBL,\yRedBot);
\coordinate (RedBR) at (\xRedBR,\yRedBot);
\coordinate (RedTL) at (\xRedTL,\yRedTop);
\coordinate (RedTR) at (\xRedTR,\yRedTop);

\coordinate (InnerL) at (\xInnerL,\yInner);
\coordinate (InnerR) at (\xInnerR,\yInner);

\coordinate (BlueDot) at (\xBlue,\yBlue);

\draw[black, thick] (TopL) -- (BotL);
\draw[black, thick] (TopR) -- (BotR);
\draw[black, thick] (TopL) -- (TopR);
\draw[black, thick] (BotL) -- (BotR);

\draw[dashed] (BotL) -- (A) -- (BotR);

\filldraw[red!18, draw=red, thick]
(RedBL) -- (RedTL)
.. controls ($(RedTL)!0.35!(RedTR) + (0,0.35)$) and ($(RedTL)!0.65!(RedTR) + (0,0.20)$)
.. (RedTR)
-- (RedBR) -- cycle;

\begin{scope}
  \clip (RedBL) -- (RedTL)
  .. controls ($(RedTL)!0.35!(RedTR) + (0,0.35)$) and ($(RedTL)!0.65!(RedTR) + (0,0.20)$)
  .. (RedTR) -- (RedBR) -- cycle;
  \foreach \k in {-2,-1.7,...,12}{
    \draw[red, very thin] (\k,\yRedBot-0.6) -- ++(2.4,6.0);
  }
\end{scope}

\draw[black, thick] (InnerL) -- (InnerR);

\draw[->, red, thick]
($(RedTL)!0.20!(RedBL) + (-0.9,0.2)$)
to[bend left=10]
($(RedTL)!0.45!(RedTR) + (-0.2,-0.7)$);

\node[left, red] at ($(RedTL)!0.35!(RedBL) + (-1.0,0.4)$) {$Q_{x,t}$};
\node[left] at ($(BotL)!0.55!(A) + (0.7,1)$) {$F_{x,t}$};

\fill[blue] (BlueDot) circle (0.12);

\end{tikzpicture}
\end{center}
\caption{An example of $Q_{x,t}$ and $F_{x,t}$}\label{fig:cone}
\end{figure}
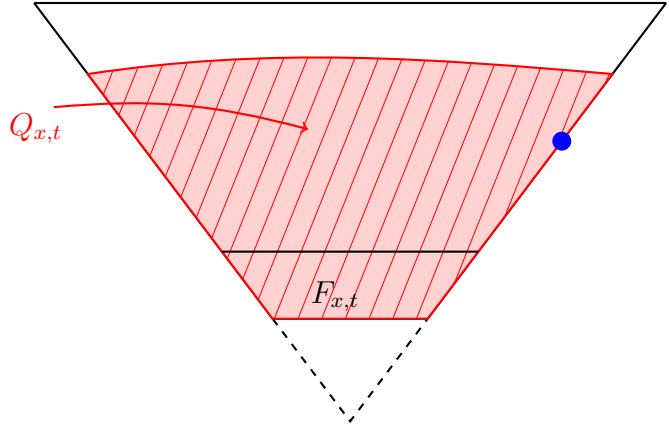

\begin{defn}[Skeleton]\label{def:skeleton}
For any set $S\subset\R^d\times\R_+$ and $(y,s)\in \R^d\times\R^+$, we say that a sequence  $(0,0)=(y_0,t_0),\ldots,(y_k,t_k)=(y,s)$ with $0=t_0<t_1<\cdots<t_k$ is an $S$-\emph{skeleton}  from $(0,0)$ to $(y,s)$ (with $k+1$ vertices) if 
\[
  (y_i-y_{i-1},t_i-t_{i-1})\in S \quad\text{ for all }\quad i=1,\ldots,k.
\] 
\end{defn}

\begin{prop}\label{prop:CHAP-GAP}
Let $M,a>1$.
Assume that the following statement is true:
\begin{equation}
\label{eq:assum-skeleton}
\left.
\begin{aligned}
  &\text{For each $(x,t)\in\R^d\times[M,\infty)$, there exist $n\ge 1$ and a $Q_{x,t}$-skeleton}\\
  &\text{with $k+1$ vertices from $(0,0)$ to $(nx,nt)$ such that $k\le an$.}
  \end{aligned}\ \ 
  \right\}
  \end{equation}
Then, inequality \eqref{eq:Em-mbar} holds for all  $t\geq M$, with $C=C(M,a)>0$.
\end{prop}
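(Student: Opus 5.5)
The plan is to compare $\E m$ along a geometric sequence of scales, in the spirit of the CHAP$\Rightarrow$GAP argument of \cite{Alexander-97}. Set $h(x,t):=\E m(x,t)-\ol m(x,t)\ge 0$. The hypothesis \eqref{eq:assum-skeleton} will give an upper bound for $\E m$ at the large point $(nx,nt)$. A separate "reverse" estimate will then carry this bound back down to $(x,t)$. The final inequality comes from summing a geometric series in the scale parameter.

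\emph{Step 1 (upper bound at scale $n$).} Fix $(x,t)$ with $t\ge M$, and take the skeleton $(y_i,t_i)_{i=0}^k$ from \eqref{eq:assum-skeleton}. By the triangle inequality for $m$ (concatenation of paths) and space-time stationarity of $\mb P$,
\[
\E m(nx,nt)\le \sum_{i=1}^k \E m(y_i-y_{i-1},t_i-t_{i-1}).
\]
Each increment lies in $Q_{x,t}$, so its term is at most $\ell_{x,t}(y_i-y_{i-1},t_i-t_{i-1})+t^{1/2}\varphi(t)^3\Lambda_{x,t}$. Since $\ell_{x,t}$ is linear, the increments telescope:
\[
\sum_{i=1}^k\ell_{x,t}(y_i-y_{i-1},t_i-t_{i-1})=\ell_{x,t}(nx,nt)=n\ol m(x,t).
\]
Using $k\le an$, this gives
\[
\E m(nx,nt)\le n\ol m(x,t)+a\,n\,t^{1/2}\varphi(t)^3\Lambda_{x,t},
\qquad\text{that is,}\qquad
\frac{h(nx,nt)}{n}\le a\,t^{1/2}\varphi(t)^3\Lambda_{x,t}.
\]
This is the only place the skeleton enters.

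\emph{Step 2 (approximate superadditivity; the main obstacle).} I will prove a bound of the form
\begin{equation*}
\E m(nx,nt)\ \ge\ n\,\E m(x,t)-C\,n\,t^{1/2}\varphi(t)^{2}\Lambda_{x,t}\,\log n .
\end{equation*}
Let $\gamma$ be an optimal path for $m(nx,nt)$, and cut it at the times $jt$, $j=0,\dots,n$. This gives $m(nx,nt)=\sum_j m(\gamma((j-1)t),(j-1)t;\gamma(jt),jt)$.

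For each piece I want $m(\text{piece})\ge \E m(\text{increment})-(\text{fluctuation})$. The random endpoints are handled as follows. By \eqref{eq:251228} and Theorem~\ref{thm:path-reg}, with overwhelming probability the increments $\gamma(jt)-\gamma((j-1)t)$ lie in a box of size $\lesssim(\log t)^2\varphi(t)^{1/q}(|x|+t)$. On that box I discretize with mesh $\sim 1$ and use Theorem~\ref{thm:sp-reg-t} to move between grid points. I then take a union bound over the polynomially many grid increments of Theorem~\ref{thm:fluctuation-mxt}\eqref{item:exp-integ-nu}; each deviation costs $t^{1/2}\varphi(t)\Lambda_{x,t}$ times a logarithmic factor.

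This gives $m(nx,nt)\ge\sum_j \E m(z_j,t)-Cn t^{1/2}\varphi(t)^2\Lambda_{x,t}\log(nt)$ with $\sum_j z_j=nx$. This is where it is delicate, because $\E m(z,t)\ge \E m(x,t)$ fails for $z\ne x$. The intended fix is to first apply the construction to the symmetric linear correction $\E m(z,t)-\ell_{x,t}(z,t)$ and then use $\sum_j\ell_{x,t}(z_j,t)=n\ol m(x,t)$. What remains is to show that $\min_z[\E m(z,t)-\ell_{x,t}(z,t)]$ is comparable to $h(x,t)$ up to the same error. I expect this to need the almost-Lipschitz bounds (Theorem~\ref{thm:sp-reg-t}, Corollary~\ref{cor:Holder}), and it is the step I would work out first. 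The $\log n$ from the union bound is affordable provided $n$ is at most polynomial in $t$; if the $n$ supplied by \eqref{eq:assum-skeleton} is larger, I would first reduce it by splitting the skeleton.

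\emph{Step 3 (closing).} Combining Steps 1 and 2 gives $h(x,t)\le C(a)\,t^{1/2}\varphi(t)^3\Lambda_{x,t}\log n$. If $n$ cannot be controlled directly, I instead iterate the pair (Step 1, Step 2) along the scales $(n^jx,n^jt)$, in the form
\[
h(x,t)\le \frac{h(n^{j}x,n^{j}t)}{n^{j}}+\sum_{i<j}(\text{error at scale } n^it),
\]
and let $j\to\infty$ using $\ol m=\lim_N \E m(Nx,Nt)/N$. The errors at scale $n^it$ grow like $(n^it)^{1/2}\varphi(n^it)^3$ while the prefactor decays like $n^{-i}$, so the series converges; its first term gives the bound $Ct^{1/2}\psi(t)\Lambda_{x,t}$. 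The slowly varying factor $\varphi^{O(\log\log t)}=\psi$ absorbs the accumulated powers of $\varphi$ and the logarithms. For $t\in[e,M]$ the statement is trivial by \eqref{eq:wlog}, and the constant depends only on $(M,a)$.
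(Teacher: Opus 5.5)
\textbf{Gap in Step 2.} Your Step 1 is correct, but it only controls $h$ at the far point $(nx,nt)$. Everything then rests on Step 2, the approximate superadditivity $\E m(nx,nt)\ge n\,\E m(x,t)-Cn\,t^{1/2}\varphi(t)^2\Lambda_{x,t}\log n$, and this is a genuine gap.

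After cutting the optimal path and centering with $\ell_{x,t}$, you are left with $\sum_j\varepsilon_{x,t}(z_j,t)$, where $\varepsilon_{x,t}(z,t)=\E m(z,t)-\ell_{x,t}(z,t)$. You then need $\varepsilon_{x,t}(z,t)\ge h(x,t)-O(t^{1/2})$, up to slowly varying factors, for every increment $z$ an optimal path can produce over a time window of length $t$. In other words, you need the inefficiency at time $t$ to be essentially minimized at $z=x$. The almost-Lipschitz estimates give this only for $|z-x|\lesssim t^{1/2}$. The only control you have on the increments is $|z_j|\lesssim(\log t)^2\varphi(t)^{1/q}(|x|+t)$, so $|z_j-x|$ may be of order $t$. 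On that range nothing in the paper bounds $\varepsilon_{x,t}(z,t)-\varepsilon_{x,t}(x,t)$ from below; that would need something like quantitative strict convexity of $\ol m$, which is not known.

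A bootstrapped a priori \emph{upper} bound on $h$ does not help either, since what you need is a \emph{lower} bound on the inefficiency away from $x$. Step 2 also already contains the full theorem. If $h(x,t)\le h(nx,nt)/n+Ct^{1/2}\varphi(t)^2\Lambda_{x,t}\log n$ held, you could iterate it with $n=2$ and use $h(Nx,Nt)/N\to0$. That proves \eqref{eq:Em-mbar} without ever invoking \eqref{eq:assum-skeleton}, so the step you postpone is the whole difficulty. Separately, a $Q_{x,t}$-skeleton to $(nx,nt)$ cannot in general be ``split'' to reduce $n$, because its intermediate vertices need not lie on the ray through $(x,t)$.

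\textbf{The missing idea.} No lower bound is ever needed. The paper's proof uses only subadditivity of $\E m$ and applies the skeleton hypothesis at a \emph{smaller} scale. In Lemma~\ref{lem:iteration-GAP}, one applies \eqref{eq:assum-skeleton} to $(x/\rho,t/\rho)$ with $\rho=C_3t^\theta$. Since $k\le an$, the point $\frac{n}{k\rho}(x,t)$ lies in the convex hull of $Q_{x/\rho,t/\rho}$. Carath\'eodory's theorem then writes $(x,t)$ as a nonnegative combination of $d+2$ elements of $Q_{x/\rho,t/\rho}$ with total weight $k\rho/n\le a\rho$.

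Taking integer parts of the weights splits $(x,t)=(x^*,t^*)+(\text{remainder})$. Subadditivity and the definition of $Q$ give $\E m(x^*,t^*)\le\ell_{x,t}(x^*,t^*)+a(\rho t)^{1/2}\varphi(t)^3\Lambda_{x,t}$. The remainder has time length $O(t/\rho)$ and is handled by the inductive hypothesis $h(y,s)\le\tilde C s^\beta\varphi(s)^r\Lambda_{y,s}$. Balancing the two errors yields the improved exponent $\hat\beta=\beta/(\beta+\tfrac12)$. Iterating from $\beta_0=1$ gives $\beta_j=1/(2-2^{-j})$, and stopping after $J\approx\log_2\log t$ steps produces exactly the factor $\psi(t)=\varphi(t)^{O(\log\log t)}$. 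This proposition is purely deterministic: no fluctuation bounds or path regularity enter its proof.
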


Observe that, for any $Q_{x,t}$-skeleton $\{(y_i,t_i)\}_{i=0}^k$ from $(0,0)$ to $(nx,nt)$, with $n\ge 1$, we must have
\begin{equation}\label{eq:k-ge-n}
  k\ge n.
\end{equation}
Indeed, by the $1$-homogeneity and subadditivity of $\ol m$, we get
    \[
    n\ol m(x,t)= \ol m(nx,nt) =\ell_{x,t}(nx,nt) = \sum_{i=1}^{k}\ell_{x,t}(v_i,t_{i}-t_{i-1})\leq k \ol m(x,t),
    \]
which implies \eqref{eq:k-ge-n}.

\begin{lem}\label{lem:iteration-GAP}
Let $M>1$ be large enough.	Assume that statement \eqref{eq:assum-skeleton} is true for $a>1$.  There exists $\tilde C=\tilde C(M,a)>0$ such that,
if for some $\beta\in(\tfrac{1}{2},1]$ and $r\in\Z_{\ge 0}$,
\begin{equation}\label{eq:m-itera-1}
  \E m(x,t)\le \ol m(x,t)+\tilde Ct^\beta\varphi(t)^r\Lambda_{x,t} \quad \text{ for all }(x,t)\in\R^d\times[M,\infty),
\end{equation}
then, with $\hat \beta:=\tfrac{\beta}{\tfrac{1}{2}+\beta}\in(\tfrac{1}{2},\beta)$, we have
	\begin{equation}\label{eq:m-itera-2}
  \E m(x,t)\le \ol m(x,t)+\tilde Ct^{\hat \beta}\varphi(t)^{r+3}\Lambda_{x,t} \quad \text{ for all }(x,t)\in\R^d\times[M,\infty).
\end{equation}
\end{lem}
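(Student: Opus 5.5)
The plan follows Alexander's GAP argument from \cite{Alexander-97}. Fix $(x,t)\in\R^d\times[M,\infty)$ and use \eqref{eq:assum-skeleton} to get $n\ge1$ and a $Q_{x,t}$-skeleton $(0,0)=(y_0,t_0),\dots,(y_k,t_k)=(nx,nt)$ with $n\le k\le an$; the lower bound $k\ge n$ is \eqref{eq:k-ge-n}. Write $(v_i,s_i):=(y_i-y_{i-1},t_i-t_{i-1})\in Q_{x,t}$. Subadditivity of $m$ along the skeleton, stationarity, and the definition of $Q_{x,t}$ give
\[
\E m(nx,nt)\le\sum_{i=1}^k\E m(v_i,s_i)\le \sum_{i=1}^k\ell_{x,t}(v_i,s_i)+k\,t^{1/2}\varphi(t)^3\Lambda_{x,t}=n\ol m(x,t)+k\,t^{1/2}\varphi(t)^3\Lambda_{x,t},
\]
where linearity of $\ell_{x,t}$ gives $\sum_i\ell_{x,t}(v_i,s_i)=\ell_{x,t}(nx,nt)=n\ol m(x,t)$. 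This bound is too crude on its own. The real role of the skeleton is to show that increments outside $Q_{x,t}$ are costly, which forces the mean to be close to the tangent plane at an intermediate scale.

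\textbf{Scale choice.} I would argue as in Alexander's Lemma: split $[0,t]$ into blocks of length $\tau:=t^{\gamma}$, with $\gamma$ to be optimized, and compare $\E m(x,t)$ with $\E$ of a sum of block metrics along an optimal path. Concretely:
\begin{enumerate}[(1)]
\item By the hypothesis \eqref{eq:m-itera-1} applied at scale $\tau$, each block satisfies $\E m(v,\tau')\le \ol m(v,\tau')+\tilde C\tau^\beta\varphi(\tau)^r\Lambda$.
\item Using Theorem~\ref{thm:fluctuation-mxt}\eqref{item:exp-integ-nu} with a union bound over a discretization of the frustum $\cone_{x,t}(2)$, each block metric is at least its mean minus $\tau^{1/2}\varphi(\tau)\log t\,\Lambda$ with overwhelming probability. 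This is where the $(\log t)$ and $\varphi$ factors enter.
\item Theorems \ref{thm:path-reg}--\ref{thm:sp-reg-t} confine the optimal path to $\cone_{x,t}$ with high probability and let me replace continuous endpoints by lattice points at cost $\varphi(t)$.
\end{enumerate}
Summing over the $t/\tau$ blocks shows that, with high probability, $m(x,t)$ exceeds $\sum_j \E m(\text{block}_j)$ minus $(t/\tau)\tau^{1/2}\varphi\log t\,\Lambda$.

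\textbf{Closing with $Q_{x,t}$.} Next I combine this with the tangent plane. Each block increment either lies in $Q_{x,t}$, where its excess over $\ell_{x,t}$ is at most $t^{1/2}\varphi^3\Lambda$, or does not. For blocks not in $Q_{x,t}$, membership fails either because $\E m>\ell_{x,t}+t^{1/2}\varphi^3\Lambda$, which contributes a large positive excess, or because $\ell_{x,t}>\ol m(x,t)$, which cannot happen for short blocks. Since $\ell_{x,t}\le\ol m$, the sum of $\ell_{x,t}$ over blocks equals $\ol m(x,t)$ exactly. Concentration of $m(x,t)$ around $\E m(x,t)$ (Theorem~\ref{thm:fluctuation-mxt}) then gives
\[
\E m(x,t)-\ol m(x,t)\lesssim \frac{t}{\tau}\,\tau^{\beta}\varphi^{r}\Lambda\cdot(\text{fraction of bad blocks})+t^{1/2}\varphi^3\Lambda .
\]
The skeleton bound $k\le an$ controls how many blocks can be ``bad'', which is exactly the GAP mechanism. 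Optimizing $\gamma$ to balance $t\tau^{\beta-1}$ against $t/\tau^{1/2}$ gives $\tau^{\beta+1/2}=t$, so the error is $t^{\beta/(\beta+1/2)}=t^{\hat\beta}$ and $\varphi$ picks up three extra powers. The constant $\tilde C$ must be reproduced rather than enlarged. To achieve this I would choose $M$ large so that the lower-order terms are absorbed using $\varphi(t)\ge C$ for $t\ge M$.

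\textbf{Main obstacle.} I expect the hardest step to be the bookkeeping that turns the dichotomy ``in $Q_{x,t}$ / not in $Q_{x,t}$'' into a bound that reproduces the same constant $\tilde C$. In particular, bad blocks may have $\E m-\ell_{x,t}$ as large as $\tau^\beta\varphi^r\Lambda$, and this must be charged against the skeleton count $k\le an$ rather than against $t/\tau$. My first attempt would be Alexander's approach: take $n$ copies of $(x,t)$ and compare $\E m(nx,nt)\ge n\E m(x,t)-o(n)$, which follows from superadditivity of the mean up to fluctuations, with the skeleton upper bound above. This should show that $\E m(x,t)-\ol m(x,t)$ is at most $\frac{k}{n}$ times the per-step excess, where the per-step excess is estimated at scale $\tau$ using \eqref{eq:m-itera-1}.
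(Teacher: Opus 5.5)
There is a genuine gap: nothing in your plan produces an \emph{upper} bound on $\E m(x,t)$ for a single $(x,t)$.

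\textbf{The superadditivity step is false.} You rely on $\E m(nx,nt)\ge n\E m(x,t)-o(n)$, but $\E m$ is subadditive and $\E m(nx,nt)/n\to\ol m(x,t)$. So $n\E m(x,t)-\E m(nx,nt)$ equals $n(\E m(x,t)-\ol m(x,t))$ up to $o(n)$, which is linear in $n$ whenever the quantity you are bounding is nonzero. Letting $n\to\infty$ in your inequality would give $\E m(x,t)\le\ol m(x,t)$, i.e.\ the lemma with no error term at all.

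\textbf{The block decomposition runs in the trivial direction.} Cutting an optimal path into blocks and using lower concentration (Theorem~\ref{thm:fluctuation-mxt}) gives $m(x,t)\ge\sum_j\E m(\text{block}_j)-(\text{fluctuations})\ge\ol m(x,t)-(\text{fluctuations})$. Switching to upper concentration does not rescue it either. You would need $\E m(\Delta_j)\le\ell_{x,t}(\Delta_j)+(\text{small})$ for every block. That is unavailable for blocks outside $Q_{x,t}$, and inside $Q_{x,t}$ it costs $t^{1/2}\varphi(t)^3\Lambda_{x,t}$ per block of length $\tau\ll t$, which is far too much.

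\textbf{Other problems.} The bound involving a ``fraction of bad blocks'' is not derived from anything. The balancing is also inconsistent: $t\tau^{\beta-1}=t\tau^{-1/2}$ forces $\tau^{\beta-1/2}=1$. The relation $\tau^{\beta+1/2}=t$ comes instead from balancing \emph{one} remainder costing $\tau^\beta$ against $t/\tau$ pieces costing $\tau^{1/2}$ each. Finally, the lemma is purely deterministic. Concentration and path regularity are only needed to verify \eqref{eq:assum-skeleton} (Proposition~\ref{prop:skeleton-10n-points}), not here.

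\textbf{The missing idea.} Apply \eqref{eq:assum-skeleton} at the rescaled point $(x/\rho,t/\rho)$, with $\rho=C_3t^\theta$ and $\theta=\frac{\beta-1/2}{\beta+1/2}$, and use convexity instead of $n$ copies.
\begin{enumerate}[(1)]
\item The skeleton shows that $\frac{n}{k\rho}(x,t)$ lies in the convex hull of $Q_{x/\rho,t/\rho}$. By Carath\'eodory, $(x,t)=\sum_{j=1}^{d+2}\lambda_j(y_j,s_j)$ with $(y_j,s_j)\in Q_{x/\rho,t/\rho}$ and $\sum_j\lambda_j=k\rho/n\le a\rho$. This is where $k\le an$ enters.
\item Taking integer parts of the $\lambda_j$ splits $(x,t)=(x^*,t^*)+(x-x^*,t-t^*)$.
\item Since $\ell_{x/\rho,t/\rho}=\ell_{x,t}$ and $\Lambda_{x/\rho,t/\rho}=\Lambda_{x,t}$, subadditivity and the definition of $Q_{x/\rho,t/\rho}$ give $\E m(x^*,t^*)\le\ell_{x,t}(x^*,t^*)+a(\rho t)^{1/2}\varphi(t)^3\Lambda_{x,t}$.
\item The remainder has time-length at most $(d+3)t/\rho$, and its slope is controlled by the opening of the frustum. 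So \eqref{eq:m-itera-1} is used just \emph{once}, at cost $\tilde C(t/\rho)^\beta$ times polylogarithmic and $\varphi$ factors.
\item Then $\ol m(x-x^*,t-t^*)$ is compared with $\ell_{x,t}(x-x^*,t-t^*)$ again through $Q$-membership, and $\ell_{x,t}(x,t)=\ol m(x,t)$ closes the estimate.
\end{enumerate}
Both $(\rho t)^{1/2}$ and $(t/\rho)^\beta$ are of order $t^{\hat\beta}$. The remainder term carries the factor $C_3^{-\beta}\le C_3^{-1/2}$, so choosing $\tilde C=C_3^2$ with $C_3$ large reproduces the same constant. Bounded $t$ is handled directly by \eqref{eq:wlog}.
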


\begin{proof}
Recall that $C>0$ denotes a generic constant that depends only on $(d,q,N_1,\nu)$  whose value may differ from line to line.

Let $C_3\ge 4d+4$ be a constant to be determined, and set $\tilde C=C_3^2$.

	We divide the proof into two cases.

\medskip

\noindent {\bf  Case 1: $t>C_3\tilde C M^{\tfrac{1}{2}+\beta}$}.
 Let $\rho=C_3t^\theta$, where 
\[
  \theta:=\frac{\beta-\tfrac{1}{2}}{\beta+\tfrac{1}{2}}\in(0,\tfrac{1}{3}].
\]

 Note that $t/\rho=t^{1-\theta}/C_3>C_3^{-\theta}\tilde C^{1-\theta}M^{(\tfrac{1}{2}+\beta)(1-\theta)}\ge M$. 
Applying \eqref{eq:assum-skeleton} to $(x/\rho,t/\rho)$ and by \eqref{eq:k-ge-n}, there exist $n\ge 1$ and a $Q_{x/\rho,t/\rho}$-skeleton $\{(v_i,t_i)\}_{i=0}^k$ from $(0,0)$ to $(x/\rho,t/\rho)$ such that $n\le k\le an$. We have
\[
\frac{n}{k\rho}(x,t)=\sum_{i=1}^k \frac{1}{k}(v_i,t_{i}-t_{i-1}) 
\]
which lies in the convex hull of $Q_{x/\rho,t/\rho}\subset\R^{d+1}$. Hence, by Carath\'eodory's theorem, there exist $d+2$ points  $\{(y_j,s_j)\}_{j=1}^{d+2}$ within $Q_{x/\rho,t/\rho}$ such that their convex hull contains $\tfrac{n}{k\rho}(x,t)$, that is,
\[
  \frac{n}{k\rho}(x,t)=\sum_{j=1}^{d+2}\alpha_j (y_j,s_j)
\]
for some $\alpha_j\in[0,1]$ with $\sum_{j=1}^{d+2}\alpha_j=1$. Thus $(x,t) = \sum_{j=1}^{d+2} \tfrac{k\rho\alpha_j}{n} (y_j,s_j)$. Note that $\sum_{j=1}^{d+2} \tfrac{k\rho\alpha_j}{n}\ge \rho\ge C_3>2(d+2).$ Hence $\tfrac{k\rho\alpha_j}{n}\ge 2$ for some $j$. Say, $\tfrac{k\rho\alpha_1}{n}\ge 2$,
 \begin{align*}
(x,t) &=\sum_{j=1}^{d+2} \floor{\tfrac{k\rho\alpha_j}{n}-\mathbbm 1_{j=1}} (y_j,s_j)+\sum_{j=1}^{d+2} (\tfrac{k\rho\alpha_j}{n} -\floor{\tfrac{k\rho\alpha_j}{n}}+\mathbbm 1_{j=1}) (y_j,s_j)\\
&=:(x^*,t^*) + \sum_{j=1}^{d+2} \gamma_j (y_j,s_j).
\end{align*}
Clearly, $1\le \sum_{j=1}^{d+2} \gamma_j\le d+3$, and $t-t^*\ge \gamma_1 s_1\ge 1$. 

\medskip

We claim that
\begin{equation}\label{eq:large-t-bound-1}
\E m(x^*,t^*) \leq \ell_{x,t}(x^*,t^*)  + aC_3^{1/2}t^{\hat\beta}\varphi(t)^3\Lambda_{x,t}.
\end{equation}
Indeed, by the subadditivity of $m$ and the definition of $Q_{x/\rho,t/\rho}$, we get
\begin{align*}
\E m(x^*,t^*)&\le \sum_{j=1}^{d+2}\floor{\tfrac{k\rho\alpha_j}{n}-\mathbbm 1_{j=1}} \E m(y_j,s_j)\\
&\le \sum_{j=1}^{d+2}\floor{\tfrac{k\rho\alpha_j}{n}-\mathbbm 1_{j=1}} \left[\ell_{x/\rho,t/\rho}(y_j,s_j)+(\tfrac{t}{\rho})^{1/2} \varphi(\tfrac{t}{\rho})^3 \Lambda_{x,t}\right]\\
&\le \ell_{x,t}(x^*,t^*)+a(\rho t)^{1/2}\varphi(t)^3 \Lambda_{x,t},
\end{align*}
where in the last inequality we used $k\le an$ and the linearity of $\ell_{x,t}=\ell_{x/\rho,t/\rho}$.
Recalling that $\rho=C_3t^\theta$ and $t^{(1+\theta)/2}=t^{\hat\beta}$, we obtain
\[
\E m(x^*,t^*) \leq \ell_{x,t}(x^*,t^*) + aC_3^{1/2}t^{\hat\beta}\varphi(t)^3\Lambda_{x,t},
\]
which is exactly \eqref{eq:large-t-bound-1}.

Next, we claim that 
\begin{equation}\label{eq:m-remainder-control}
  \E m(x-x^*,t-t^*)\le \ol m(x-x^*,t-t^*)+\tilde C C_3^{-1/2}t^{\hat\beta}\varphi(t)^{r+2}\Lambda_{x,t}.
\end{equation}
To this end, note that
\begin{equation}\label{eq:t-t*}
   1\le  t-t^*\ \leq \sum_{j=1}^{d+2} (1+\mathbbm 1_{j=1})s_j\leq \frac{(d+3)t}{\rho}.
\end{equation}
Further, since every $(y_j,s_j)$ belongs to $\cone_{x/\rho,t/\rho}$ which is convex, 
\[
  (x-x^*,t-t^*)=\sum_{j=1}^{d+2}\gamma_j(y_j,s_j)\in (\sum_{j=1}^{d+2}\gamma_j)\cone_{x/\rho,t/\rho}.
\]
Thus, by the definition \eqref{eq:def-cone} of $\cone_{x/\rho,t/\rho}$,
\begin{align}\label{eq:x-x*}
\frac{|x-x^*|^q}{(t-t^*)^q}\le (2\log t)^{2q}\varphi(t)\bigl(\frac{|x|^q}{t^q}+1\bigr).
\end{align}
Notice that \eqref{eq:x-x*} implies (note that $t\ge M$ and $M$ is sufficiently large) 
\begin{equation}\label{eq:big-Lambda}
\begin{aligned}
\Lambda_{x-x^*,t-t^*}&\le [(2\log t)^{2q}\varphi(t)(\tfrac{|x|^q}{t^q}+1)+1]\log[(\log 2t)^{2}\varphi(t)^{1/q}(\tfrac{|x|}{t}+1)+2]\\
&\leq  [(2\log t)^{2q}+1]\frac{\log[(\log 2t)^{2}\varphi(t)^{1/q}(\tfrac{|x|}{t}+1)+2]}{\log(|x|/t+2)}\varphi(t)\Lambda_{x,t}\\
&\leq  (\log t)^{2q+2}\varphi(t)\Lambda_{x,t}.
\end{aligned}
\end{equation}

We will prove claim \eqref{eq:m-remainder-control} by considering two sub-cases.
The first subcase is when $t-t^* \geq M$.
We apply the hypothesis \eqref{eq:m-itera-1} of the lemma to get
\begin{align*}
\E m(x-x^*,t-t^*)-\ol m(x-x^*,t-t^*)\le \tilde C(t-t^*)^\beta\varphi(t)^r\Lambda_{x-x^*,t-t^*}.
\end{align*}
This inequality, together with \eqref{eq:t-t*}, \eqref{eq:x-x*}, \eqref{eq:big-Lambda}, implies
\begin{align*}
 \E m(x-x^*,t-t^*)-\ol m(x-x^*,t-t^*)
  &\le C\tilde C \bigl(\frac{t}{\rho}\bigr)^\beta(\log t)^{2q+2}\varphi(t)^{r+1}\Lambda_{x,t}\\
  &\le \tilde C C_3^{-1/2}t^{\hat\beta}\varphi(t)^{r+2}\Lambda_{x,t}.
\end{align*}
Thus claim \eqref{eq:m-remainder-control} is proved for $t-t^*\ge M$.
It remains to consider the subcase where $t-t^*<M$. Since $t-t^*\ge 1$, by \eqref{eq:wlog} and \eqref{eq:x-x*}, 
\begin{align*}
  \E m(x-x^*,t-t^*)&\le C(t-t^*)(\log t)^{2q}\varphi(t)\bigl(\frac{|x|^q}{t^q}+1\bigr)\nonumber\\
  &\le \tilde CC_3^{-1/2}\varphi(t)^2\bigl(\frac{|x|^q}{t^q}+1\bigr)
\end{align*}
where the last inequality can be achieved by choosing $C_3\ge M^2$. Noting that \eqref{eq:wlog} guarantees $\ol m(x-x^*,t-t^*)\ge 0$, our proof of claim \eqref{eq:m-remainder-control} is complete. 

Furthermore, by the subadditivity and 1-homogeniety of $\ol m$, 
\begin{align*}
\ol m(x-x^*,t-t^*)&\le \sum_{j=1}^{d+2}\gamma_j\ol m(y_j,s_j)\\
&\le \sum_{j=1}^{d+2}\gamma_j \E m(y_j,s_j)\\
&\le \sum_{j=1}^{d+2}\gamma_j [\ell_{x/\rho,t/\rho}(y_j,s_j)+(\tfrac{t}{\rho})^{1/2}\varphi^3(\tfrac t\rho)\Lambda_{x,t}]\\
&\le \ell_{x/\rho,t/\rho}(x-x^*,t-t^*)+(d+3)C_3^{1/2}t^{\hat\beta}\varphi(t)^{3}\Lambda_{x,t},
\end{align*}
where in the last inequality we used the fact that $(y_j,s_j)\in Q_{x/\rho,t/\rho}$ and the definition of $Q_{x/\rho,t/\rho}$. This inequality, together with \eqref{eq:m-remainder-control},
yields
\begin{equation}\label{eq:m-remainder-replace-by-ell}
  \E m(x-x^*,t-t^*)\le \ell_{x,t}(x-x^*,t-t^*)+2\tilde C C_3^{-1/2}t^{\hat\beta}\varphi(t)^{r+3}\Lambda_{x,t}.
\end{equation}

Combining \eqref{eq:large-t-bound-1}, \eqref{eq:m-remainder-replace-by-ell}, and recalling that $\ell_{x,t}\le \ol m$, we have
\[
  \E m(x,t)\le \ol m(x,t)+3C_3^{-1/2}\tilde Ct^{\hat \beta}\varphi(t)^{r+3}\Lambda_{x,t}.
\]
We obtain \eqref{eq:m-itera-2} for $t>C_3\tilde C M^{\tfrac{1}{2}+\beta}$.

\noindent {\bf  Case 2: $M \leq t \leq C_3\tilde C M^{\tfrac{1}{2}+\beta}$.}
This is the case when $t$ is bounded.

For this case, we use the basic upper bound in \eqref{eq:wlog} to get
\begin{align*}
\E m(x,t)&\le Ct\bigl(\frac{|x|^q}{t^q}+1\bigr)\\
&\le C(C_3\tilde C M^{\tfrac{1}{2}+\beta})^{1-\hat\beta}t^{\hat\beta}\bigl(\frac{|x|^q}{t^q}+1\bigr)\\
&\le (CC_3\tilde C^{-2})^{1/(2\beta+1)}\tilde Ct^{\hat\beta}\bigl(\frac{|x|^q}{t^q}+1\bigr).
\end{align*}
Choosing $C_3$ so that $CC_3\le \tilde C^{2}$, inequality \eqref{eq:m-itera-2} follows.
\end{proof}

\begin{proof}[Proof of Proposition~\ref{prop:CHAP-GAP}]
Let $\tilde C$ be as in Lemma~\ref{lem:iteration-GAP}. By \eqref{eq:wlog}, 
\[
  \E m(x,t)\le\tilde Ct\bigl(\frac{|x|^q}{t^q}+1\bigr)\quad \text{ for all }(x,t)\in\R^d\times[M,\infty),
\]
and so \eqref{eq:m-itera-1} holds with $\beta=1$ and $r=0$. Without loss of generality, consider $M\ge 10$. 
Set $\beta_0=1$, and define inductively $ \beta_{j+1}=\tfrac{2\beta_j}{2\beta_j+1}\in(\tfrac{1}{2},\beta_j)$, we have
\[
\beta_j=\frac{1}{2-2^{-j}}\quad\text{ for all }j\in\Z_{\ge 0}.
\]
By Lemma~\ref{lem:iteration-GAP} and induction, for all $j\in\Z_{\ge 0}$ and all $(x,t)\in\R^d\times[M,\infty)$,
\[
    \E m(x,t)\le \ol m(x,t)+\tilde Ct^{\beta_j}\varphi(t)^{3j}\Lambda_{x,t}.
\]
Note that $0<\beta_j-\tfrac{1}{2}\le 2^{-j-1}$ for all $j\in\Z_{\ge 0}$.

Letting $J=J(t)\in\N$	 be such that $2^J\le \log t<2^{J+1}$, we have
\begin{align*}
t^{\beta_J-\tfrac{1}{2}}\varphi(t)^{3J}&\le\exp\left(2^{-J-1}\log t+3c_1J(\log t)^{1/(2\wedge q)}\right)\\
&\le \exp\left(4c_1(\log\log t)(\log t)^{1/(2\wedge q)}\right).
\end{align*}
Therefore, \eqref{eq:Em-mbar} holds.
\end{proof}

\subsection{Good skeletons}
Throughout this subsection, we fix $n=n(t)=t$. Set $t_0=0$ and 
\begin{equation}
  \label{eq:def-ti}
  t_i:=nt-\floor{nt}+i \quad\text{ for }i\ge 1.
\end{equation}
Clearly, $t_i-t_{i-1}\equiv 1$ for all $2\le i\le\floor{nt}$, and $t_1-t_0=nt-\floor{nt}+1\in[1,2)$.

By \eqref{eq:sub-frustum}, it is not hard to construct $Q_{x,t}$-skeletons from $(0,0)$ to $(nx,nt)$ with increments in $F_{x,t}$. However, to prove Theorem~\ref{thm:error-deterministic},  Proposition~\ref{prop:CHAP-GAP} tells us that it is crucial to have a skeleton with at most $Cn$ segments. Motivated by this, we define \emph{good} skeletons below, which have as few segments as possible.

\begin{defn}[good skeleton]\label{def:good-skeleton} Recall the sequence $(t_i)_{i=0}^{\floor{nt}}$ in \eqref{eq:def-ti}. A $Q_{x,t}$-skeleton $\{(v_j,s_j)\}_{j=0}^k$ from $(0,0)$ to $(nx,nt)$ is called a \emph{good $Q_{x,t}$-skeleton} if 
\begin{enumerate}[(i)]
  \item $(s_j)_{j=0}^k$ is a subsequence of $(t_i)_{i=0}^{\floor{nt}}$;
  \item for each $1\le j\le k-1$, there exists $(y,1)\in F_{x,t}$ such that:
\begin{itemize}
  \item  $|v_j-v_{j-1}+y|\le (s_j-s_{j-1}+1)(\log t)^2\varphi(t)^{1/q}(\tfrac{|x|}{t}+1)$;
  \item $(v_j-v_{j-1}+y,s_j-s_{j-1}+1)\notin Q_{x,t}.$
\end{itemize}
\end{enumerate}
	\end{defn}

To find a good skeleton with fewer than $Cn$ segments, we need each segment to be highly ``stretched", in the sense that $\E m-\ol m$ is small. 
The idea in \cite{Alexander-97} is that the skeletons extracted from optimal curves are the most stretched, thanks to the exponential concentration in Theorem~\ref{thm:fluctuation-mxt} together with the ergodic property \eqref{eq:subadd-ergodic}.

\medskip

The goal of this subsection is to present the following corollary.

\begin{cor}\label{cor:good-skeleton} Let $M\ge 1$.
We let $\Gamma_1=\Gamma_1(x,t,n)$ denote the event that every good $Q_{x,t}$-skeleton $\{(v_j,s_j)\}_{j=0}^k$ from $(0,0)$ to $(nx,nt)$, if exists, satisfies
	\[
  \sum_{j=1}^k|\E m(v_{j-1},s_{j-1};v_j,s_j)-m(v_{j-1},s_{j-1};v_j,s_j)|\le {3(\log M)^{-1}} kt^{1/2}\varphi(t)^3\Lambda_{x,t}.
\]
There exists $M\ge 1$ large enough such that for any $(x,t)\in\R^d\times[M,\infty)$, 
\[
  \bbP(\Gamma_1(x,t,n))\ge 0.9.
\]
\end{cor}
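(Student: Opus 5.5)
The plan is a union bound over a discretized family of candidate segments, combined with the concentration estimate of Theorem~\ref{thm:fluctuation-mxt}\eqref{item:exp-integ-nu}. The key point is that the bound in $\Gamma_1$ is \emph{summed} over $k$ segments with a right-hand side linear in $k$. It therefore suffices to show that, with probability at least $0.9$, \emph{every} admissible segment $(v_{j-1},s_{j-1};v_j,s_j)$ with $(v_j-v_{j-1},s_j-s_{j-1})\in Q_{x,t}\subset\cone_{x,t}$ and $s_{j-1},s_j\in\{t_i\}$ satisfies
\[
|\E m(v_{j-1},s_{j-1};v_j,s_j)-m(v_{j-1},s_{j-1};v_j,s_j)|\le 3(\log M)^{-1}t^{1/2}\varphi(t)^3\Lambda_{x,t}.
\]
Summing over $j$ then gives the claim; goodness of the skeleton is not even needed for this uniform bound. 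By stationarity under space-time shifts, $m(v,s;v',s')$ has the law of $m(v'-v,s'-s)$.

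\textbf{Step 1 (discretization).} The time endpoints range over the finite set $\{t_i\}$, which has $\floor{nt}+1\le t^2+1$ elements since $n=t$. For the spatial endpoints, the segments of a skeleton from $(0,0)$ to $(nx,nt)$ all lie in a ball of radius $R\lesssim n t(\log t)^2\varphi(t)^{1/q}(|x|/t+1)$, because each increment lies in $\cone_{x,t}$. I would cover this ball by a lattice $\delta\Z^d$ with $\delta=1$. The number of pairs of lattice points together with time pairs is then polynomial in $t$ and $(|x|/t+1)$: at most $C t^{C}(|x|/t+2)^{C}$.

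\textbf{Step 2 (concentration at lattice points).} For each lattice segment with time length $\tau=s'-s\in[1,nt]$ and displacement $w$, $|w|\le C\tau(\log t)^2\varphi(t)^{1/q}(|x|/t+1)$, apply Theorem~\ref{thm:fluctuation-mxt}\eqref{item:exp-integ-nu}. This gives fluctuations of size $\lambda\tau^{1/2}\varphi(\tau)(|w|^q/\tau^q+c^{-1}\log\tau)$. Using $|w|^q/\tau^q\lesssim(\log t)^{2q}\varphi(t)(|x|^q/t^q+1)$, the fluctuation is at most
\[
\lambda\,\tau^{1/2}(\log t)^{2q+1}\varphi(t)^2\Lambda_{x,t}.
\]
The segments that matter have $\tau\le t$, since increments in $Q_{x,t}$ lie in $\cone_{x,t}$ with height in $[1,t]$. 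Taking $\lambda=(\log t)^{C}$, the spare factor $\varphi(t)$, which grows faster than any power of $\log t$, absorbs $\lambda(\log t)^{2q+1}\cdot 3^{-1}\log M$ once $t\ge M$ is large. The failure probability per segment is then $\lesssim\exp(-c(\log t)^{2C/3})$. This is superpolynomially small, so the union bound over the polynomially many lattice segments gives a total failure probability of at most $0.1$ for $M$ large.

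\textbf{Step 3 (off-lattice endpoints; the main obstacle).} I expect this step to be the hardest, because it needs a continuity estimate uniform over all skeleton vertices. For this I would use Theorem~\ref{thm:sp-reg-t}, in the form of Corollary~\ref{cor:Holder}, applied at both endpoints, using time reversal as in Proposition~\ref{prop:path-metric-reg} to move the starting point. This bounds $|m(v,s;v',s')-m(\hat v,s;\hat v',s')|$, for lattice points at distance at most $1$, by $t^{\varepsilon}B$, where $B$ involves $\sup$-averages of $\nu$ over windows near $s$ and $s'$. These averages are controlled uniformly over the $t^2$ time indices with probability $0.99$: by \eqref{eq:expec-ave} and the exponential moment assumption, they are $\lesssim\log t$ outside a small event. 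This gives an error of $t^{\varepsilon}(\log t)^{C}\varphi(t)\Lambda_{x,t}\ll t^{1/2}\varphi(t)^3\Lambda_{x,t}/\log M$. The same bound holds in expectation for $\E m$. Combining Steps 2 and 3 yields the uniform segment bound, and hence $\bbP(\Gamma_1)\ge0.9$.
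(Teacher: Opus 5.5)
Your reduction is sound: since the threshold in $\Gamma_1$ is linear in $k$, a bound holding simultaneously for every admissible segment suffices, and goodness of the skeleton plays no role. This is a different route from the paper. In Proposition~\ref{prop:cone-walk} the paper sums the fluctuations along each lattice $\cone_{x,t}(2)$-skeleton and bounds $\E\exp(\sum_j\tilde\zeta_j')\le C^k$ using independence of alternate segments. It then takes a union bound over the at most $(Ct^{d+3}(|x|^d/t^d+1))^k$ skeletons with $k+1$ vertices.

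However, your Step~2 has a genuine gap. The corollary requires $\bbP(\Gamma_1)\ge 0.9$ for \emph{all} $x\in\R^d$ with $M$ independent of $x$, and your union bound is not uniform in $x$. The admissible segments start anywhere in a ball of radius $\gtrsim t|x|$, so their number grows at least like $(|x|/t)^d$; you count $t^C(|x|/t+2)^C$ yourself in Step~1. You then use a per-segment failure probability $\exp(-c(\log t)^{2C/3})$ that does not depend on $x$. For fixed $t\ge M$ and $|x|\to\infty$, the product exceeds $0.1$.

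Choosing $\lambda$ depending on $x$ does not rescue this. Consider the steep segments with $\tau\asymp t$ and $|w|/\tau\asymp(\log t)^2\varphi(t)^{1/q}(|x|/t+1)$. For them, the term $|w|^q/\tau^q$ in the scale of Theorem~\ref{thm:fluctuation-mxt}\eqref{item:exp-integ-nu} already uses up the factor $|x|^q/t^q+1$ of $\Lambda_{x,t}$. So the largest admissible $\lambda$ is of order $\iota\varphi(t)(\log t)^{-2q}\log(\tfrac{|x|}{t}+2)$, and the resulting bound satisfies
\[
C\exp\bigl(-c\lambda^{2/3}\bigr)\ \ge\ C\exp\Bigl(-C'\varphi(t)^{2/3}\bigl(\log(\tfrac{|x|}{t}+2)\bigr)^{2/3}\Bigr).
\]
This cannot compensate the entropy $\exp\bigl(d\log(\tfrac{|x|}{t}+2)\bigr)$ once $\log(\tfrac{|x|}{t}+2)\gg\varphi(t)^2$. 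That regime is not excluded, and it is actually used: Proposition~\ref{prop:skeleton-10n-points} invokes the corollary, via Lemma~\ref{lem:em-nbarm-skeleton}, whenever $\varphi(t)^3\log(\tfrac{|x|}{t}+2)<C_5t^{1/2}$. With a stretched-exponential tail of exponent $2/3$, you would need $(\log(\tfrac{|x|}{t}+2))^{3/2}$ in $\Lambda_{x,t}$ rather than a single power.

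What is missing is a per-segment tail that is at least exponential in the threshold, so that the single factor $\log(\tfrac{|x|}{t}+2)$ in $\Lambda_{x,t}$ beats the spatial entropy. The paper obtains this by truncation. Let $A$ be the event that $\int\nu_r\,\dd r\le(\log t)^2$ over every unit time interval of $[0,nt]$; then $\bbP(A^c)\lesssim t^2e^{-c(\log t)^2}$, uniformly in $x$. On $A$ the environment coincides with one whose $\tilde\nu$ is bounded, and for that environment Theorem~\ref{thm:fluctuation-mxt}\eqref{item:bdd-nu} gives Gaussian tails. Meanwhile $\E\tilde m$ and $\E m$ differ negligibly, as in \eqref{eq:em-etildem}.

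Insert this into your Step~2, truncating each unit interval separately so that the modified field keeps the finite-range time dependence that the martingale argument behind part (c) uses. The per-segment exponent then becomes of order $\varphi(t)^2(\log t)^{-C}(\log(\tfrac{|x|}{t}+2))^2$. This dominates $C\log t+2d\log(\tfrac{|x|}{t}+2)$ uniformly in $x$, and your union bound closes. In that form your argument is simpler than the paper's: it needs neither exponential moments of sums along skeletons nor the enumeration of skeletons by length.

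Your Step~3 is fine in substance, and it is automatically uniform in $x$ because $\nu_r$ depends only on $r$. Just take the lattice approximants at $\ell^\infty$-distance between $1$ and $2$, as the paper does, so that the averages of $\nu$ produced by Theorem~\ref{thm:sp-reg-t} run over windows of length at least one.
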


In contrast to the classical FPP model, where one can always extract a skeleton from an arbitrary path, the lack of a deterministic Lipschitz bound means that an optimal path need not remain inside any fixed space-time cone. Consequently, one cannot in general extract a skeleton from an optimal path using a deterministic family of bounded-slope increments.
While no Lipschitz control is available at the microscopic scale, Theorem~\ref{thm:path-reg} provides a (random) Lipschitz regularity at the $O(1)$-scale, which allows us to find, with high probability, a good $Q_{x,t}$-skeleton along an optimal path.

\begin{lem}\label{lem:nice-segments} 
Recall the sequence $(t_i)_{i=0}^{\floor{nt}}$ in \eqref{eq:def-ti}.
Let $\Gamma_2=\Gamma_2(x,t,n)$ denote the event that there exists an $\omega$-optimal path $\gamma$ from $(0,0)$ to $(nx,nt)$ such that, for all $i,j\in\{1,\ldots,\floor{nt}\}$,
\begin{equation}\label{eq:extremely-good-skeleton}
  |\gamma(t_i)-\gamma(t_{j})|\le |t_i-t_j|(\log t)^2\varphi(t)^{1/q}\bigl(\frac{|x|}{t}+1\bigr).
\end{equation}

	There exists a constant $M\ge 1$ such that for any $t\ge M$, 
	\begin{align*}
\bbP(\Gamma_2(x,t,n))\ge 0.9.
\end{align*}
\end{lem}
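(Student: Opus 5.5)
Let $\gamma$ be any optimal path for $m(nx,nt)$, with $n=t$, so the horizon is $T:=nt=t^2$. The plan is to apply Theorem~\ref{thm:path-reg}, in its two-sided form from Proposition~\ref{prop:path-metric-reg}, to every pair of grid times $t_j<t_i$, and then control the random prefactor by a union bound. Set $\sigma:=t_i-t_j\ge 1$. Exactly as in \eqref{eq:path-reg-appl}, using time reversal when $t_i<T/2$, we have
\[
|\gamma(t_i)-\gamma(t_j)|\lesssim \sigma\,\varphi\bigl(\tfrac{T}{\sigma}\bigr)^{1/q}\Bigl(\frac{|nx|^q}{T^q}+\sup_{w\ge \sigma}\Bigl[\avint_{(t_i-w)\vee0}^{t_i}\nu_r\,\dd r+\avint_{t_j}^{(t_j+w)\wedge T}\nu_r\,\dd r\Bigr]\Bigr)^{1/q}.
\]
Since $|nx|/T=|x|/t$, the first term is $\bigl(|x|/t\bigr)^q$. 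The factor $\varphi(T/\sigma)^{1/q}\le\varphi(t^2)^{1/q}$ is at most $C\varphi(t)^{1/q}$ up to a constant power: because $(\log t^2)^{1/(2\wedge q)}\le 2^{1/(2\wedge q)}(\log t)^{1/(2\wedge q)}$, we get $\varphi(t^2)\le\varphi(t)^{2}$. The leftover factor $\varphi(t)^{1/q}$ has to be absorbed. One option is to absorb it into $(\log t)^2$, which fails since $\varphi$ grows faster than any power of $\log t$. The other is to observe that the paper's $c_1$ is generic and may be enlarged. The cleaner route, if the definition allows, is to apply the estimate with the path viewed on a window of length comparable to $t$ rather than $T$. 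In the worst case I would enlarge $c_1$ in the definition of $\varphi$ by a constant factor, which is harmless because $\varphi$ only enters through $\psi$ with an unspecified $c_1$. So the remaining task is to bound the random averages of $\nu$.

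The key probabilistic step is to show that, with probability at least $0.9$,
\[
\sup_{0\le a<b\le T,\ b-a\ge 1}\ \avint_a^b\nu_r\,\dd r\ \le\ C\log t .
\]
It suffices to bound the averages over integer blocks $[k,k+1]$ with $0\le k\le T$: any average over an interval of length at least $1$ is bounded by twice the maximum block integral $Z_k:=\int_k^{k+1}\nu_r\,\dd r$. By the exponential moment assumption \eqref{eq:gaussian-integ} and stationarity, $\bP(Z_k>\lambda)\le C_0e^{-c\lambda}$. A union bound over $T+1\le 2t^2$ blocks with $\lambda=c^{-1}(2\log t+\log(20C_0))+1$ then gives $\bP(\max_k Z_k>\lambda)\le 0.1$ once $t\ge M$. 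No independence is needed at this step.

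On this event, the bracket is at most $(|x|/t)^q+C\log t$, so its $q$-th root is at most $C(|x|/t+1)(\log t)^{1/q}$. For $t\ge M$ large, $C(\log t)^{1/q}\le(\log t)^2$, which absorbs all constants. This yields \eqref{eq:extremely-good-skeleton} for all $i,j$ simultaneously and for every optimal path, so existence follows from the existence of minimizers. The main obstacle is the bookkeeping between $\varphi(T/\sigma)$, with $T=t^2$, and the stated $\varphi(t)^{1/q}$. I would first try to handle it by absorbing constant powers of $\varphi$ through the freedom in $c_1$. Failing that, I would note that Theorem~\ref{thm:path-reg} can be reapplied to the subpath of $\gamma$ on $[t_j,t_i+\sigma]$ or a window of length at most about $t\sigma$, which is itself optimal, so that the ratio inside $\varphi$ is at most $t$.
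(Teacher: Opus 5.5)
Your proof is correct. Its deterministic input is the same as the paper's: Proposition~\ref{prop:path-metric-reg}, combined with \eqref{eq:m-lower-bd} to pass from $m^\gamma$ to the displacement. The probabilistic step, however, is organized differently.

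The paper union-bounds over all $O(t^4)$ pairs of grid times the event that the two-sided supremum of averages in $D$ exceeds $c(\log t)^{2q}$. It estimates each pair through the Jensen-type bound \eqref{eq:expec-ave} with $h(x)=e^{cx}$. Because \eqref{eq:expec-ave} loses a logarithm, each pair only has probability about $\exp(-c(\log t)^{2q-1})$, and the argument closes only because $2q-1>1$.

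You instead dominate every average over an interval of length at least $1$ by $3\max_k\int_k^{k+1}\nu_r\,\dd r$. This applies because every interval appearing in $D$ has length at least $1$: $t_i\ge 1$ and $T-t_j\ge\sigma\ge 1$. A single union bound over $O(t^2)$ unit blocks at level $C\log t$ then suffices. This route is more elementary and avoids \eqref{eq:expec-ave}. It also leaves room: the factor $(\log t)^2$ in the cone could be lowered to a large multiple of $(\log t)^{1/q}$.

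Your concern about $\varphi(T/\sigma)$ is justified, and it applies to the paper's proof as well. Right after noting $\varphi(t^2)\le\varphi(t)^2$, the paper asserts a displacement bound with factor $\varphi(t)^{1/q}$. But Proposition~\ref{prop:path-metric-reg} with horizon $t^2$ and a unit time gap only gives $\varphi(t^2)^{1/q}=\varphi(t)^{2^{1/(2\wedge q)}/q}$. The surplus cannot be absorbed into $(\log t)^2$.

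Your primary fix is the right repair. Apply the regularity estimates with the constant $c_1$ of Theorem~\ref{thm:path-reg}. Let the $\varphi$ in the cone carry the constant $2^{1/(2\wedge q)}c_1$; this then propagates to the lemma and everything downstream, including $\psi$. This is consistent because every estimate in Section~\ref{sec:metric} remains valid with a larger constant. Keep the two constants distinct, though: if you simply rename $c_1$ globally, the mismatch reappears.

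Drop the fallback via subwindows. Applying Theorem~\ref{thm:path-reg} on a window of length $L$ requires controlling the window's endpoint displacement, which costs a factor $\varphi(T/L)$. Since $u\mapsto u^{1/(2\wedge q)}$ is subadditive, $\varphi(L/\sigma)\varphi(T/L)\ge\varphi(T/\sigma)$. So no choice of window improves on the direct bound.
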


Here, if $\gamma$ satisfies \eqref{eq:extremely-good-skeleton}, then $(\gamma(t_j)-\gamma(t_i),t_j-t_i)\in \cone_{x,t}$ for $i<j<i+t$.
In other words, $(\gamma(t_j),t_j)$ stays in the conic frustum $(\gamma(t_i),t_i)+\cone_{x,t}$ of vertex $(\gamma(t_i),t_i)$ for all $i<j<i+t$.

\begin{proof}
	If $\Gamma_2$ does not happen, then, for every optimal path $\gamma$ from $(0,0)$ to $(nx,nt)$, there exists a pair of indices $i<j$ within $\{1,\ldots,\floor{nt}\}$ such that 
\[
 |\gamma(t_i)-\gamma(t_{j})|> (t_j-t_i)(\log t)^2  \varphi(t)^{1/q} \bigl( \frac{|x|}{t} + 1 \bigr).
\]
Notice that $\varphi(nt)=\varphi(t^2)\le\varphi(t)^2$. 
By Proposition~\ref{prop:path-metric-reg}, for any $i<j$ in $\{1,\ldots,\floor{nt}\}$,
\[
  |\gamma(t_i)-\gamma(t_{j})|\lesssim (t_j-t_i)\varphi(t)^{1/q}D_{nx,nt,t_i,t_j}^{1/q}
\]
where $D_{nx,nt,t_i,t_j}$ is from the proposition.
Hence $\omega\notin\Gamma_2(x,t,n)$ implies the existence of a pair $i<j$ such that
\begin{align*}\label{eq:no-gamma}
(\log t)^{2q}\bigl(\frac{|x|^q}{t^q}+1\bigr)\lesssim \frac{|x|^q}{t^q}+\sup_{w\ge 1}\Bigl[\avint_{(t_j-w)\vee 0}^{t_j}\nu_r\,\dd r+\avint_{t_{i}}^{(t_{i}+w)\wedge(nt)}\nu_r\,\dd r\Bigr].
\end{align*}
For $t\ge M$, this implies
\[
c(\log t)^{2q}\le \sup_{w\ge 1}\Bigl[\avint_{(t_j-w)\vee 0}^{t_j}\nu_r\,\dd r+\avint_{t_{i}}^{(t_{i}+w)\wedge(nt)}\nu_r\,\dd r\Bigr],
\]
which we denote by event $B_{i,j}$. Thus,
\begin{align}\label{eq:bad-i}
\bbP(\Gamma_2^c(x,t,n))\le\sum_{i,j=1}^{\floor{nt}}\bbP(B_{i,j}).
\end{align}

For any $i<j$, by Chebyshev's inequality and inequality \eqref{eq:expec-ave} (applied to $h(x)=\exp(cx)$), 
\begin{align*}
\bbP(B_{i,j})&\le 
\bbP\left(\sup_{w\ge 1}\Bigl[\avint_{(t_j-w)\vee 0}^{t_j}\nu_r\,\dd r+\avint_{t_{i}}^{(t_{i}+w)\wedge(nt)}\nu_r\,\dd r\Bigr]\ge \frac{c}2(\log t)^{2q}\right)\\
&\le \E\exp\Bigl[c(\int_0^1\nu_r\,\dd r)\Bigr]\exp[-C(\log t)^{2q-1}],
\end{align*}
where we used the fact $\log(nt)\lesssim\log t$.
Hence, by \eqref{eq:gaussian-integ},
\[
  \bbP(B_{i,j})\lesssim
  \exp(-c(\log t)^{2q-1}). 
\]
This inequality, together with \eqref{eq:bad-i}, yields 
\begin{align*}
\bbP(\Gamma_2^c)\lesssim (nt)^2\exp(-c(\log t)^{2q-1})\lesssim t^4\exp(-c(\log t)^{2q-1}).
\end{align*}
Since $2q-1>1$, choosing $M>1$ sufficiently large, we have $\bbP(\Gamma_2^c)<0.1$ for all $t\ge M$. Lemma~\ref{lem:nice-segments} is proved.   
\end{proof}

\begin{defn}[lattice $\cone(2)$-skeleton]
Recall the sequence $(t_i)_{i=0}^{\floor{nt}}$ in \eqref{eq:def-ti} and the notation $\cone_{x,t}(2)$ in \eqref{eq:def-cone}. A $\cone_{x,t}(2)$-skeleton $\{(z_j,s_j)\}_{j=0}^k$ from $(0,0)$ to $(nx,nt)$ is said to be a \emph{lattice} $\cone_{x,t}(2)$-skeleton if
\begin{itemize}
  \item $z_j\in\Z^d$ for all $0\le j\le k-1$;
 \item $(s_j)_{j=0}^k$ is an increasing subsequence of $(t_i)_{i=0}^{\floor{nt}}$ with $s_0=0, s_k=nt$.
\end{itemize}
\end{defn}

\begin{prop}\label{prop:cone-walk}
	For every lattice $\cone_{x,t}(2)$-skeleton $\mathfrak 
	S=\{(z_j,s_j)\}_{j=0}^k$ from $(0,0)$ to $(nx,nt)$, we define, for $1\le j\le k$, the random variables
	\begin{align*}
	  \MoveEqLeft Y_j=Y_j(\mathfrak S)=\sup_{u,v}\Bigl\{|\E m(u,s_{j-1};v,s_j)-m(u,s_{j-1};v,s_j)|:\\ 
	 &\qquad \qquad 1\le\norm{u-z_{j-1}}_\infty\le 2,1\le \norm{v-z_{j}}_\infty\le 2\Bigr\}.
\end{align*}
There exists $M\ge 2$ such that for any $(x,t)\in\R^d\times[M,\infty)$, with probability at least $0.9$, every lattice $\cone_{x,t}(2)$-skeleton $\mathfrak S$ from $(0,0)$ to $(nx,nt)$ satisfies
\begin{equation}\label{eq:every-walk-nice}
    \sum_{j=1}^{k}Y_j\le 3\iota kt^{1/2}\varphi(t)^3\Lambda_{x,t},
\end{equation}
where $\iota:=(\log M)^{-1}$ and $k=k(\mathfrak S)$ is the number of segments in $\mathfrak S$.
\end{prop}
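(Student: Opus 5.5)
The plan is a union bound over lattice skeletons, in the style of Alexander's argument for first-passage percolation. The exponential concentration of Theorem~\ref{thm:fluctuation-mxt}\eqref{item:exp-integ-nu} supplies the tail estimates. The point is that for a fixed skeleton with $k$ segments, the $Y_j$ are functionals of the environment in disjoint time slabs $[s_{j-1},s_j]$. By the unit-range dependence in time, the odd-indexed ones are independent of each other, and likewise the even-indexed ones. So $\sum_j Y_j$ behaves like a sum of $k$ independent variables. Each of these has a stretched-exponential tail at scale $(s_j-s_{j-1})^{1/2}\varphi(t)(\Lambda+\log t)$, which is much smaller than $t^{1/2}\varphi(t)^3\Lambda_{x,t}$. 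The number of lattice $\cone_{x,t}(2)$-skeletons with $k$ segments is at most $\exp(Ck\log t)$, so the union bound costs only a factor $e^{Ck\log t}$ per $k$, and this is beaten by the tail.

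\textbf{Step 1: single-segment tail.} Fix $j$ and a lattice pair $(z_{j-1},s_{j-1})$, $(z_j,s_j)$ with $\tau:=s_j-s_{j-1}\le nt=t^2$ and $|z_j-z_{j-1}|\le 2(\log t)^2\varphi(t)^{1/q}(\tfrac{|x|}{t}+1)\tau$. The supremum over $u,v$ in the unit annuli is handled by discretizing $u,v$ on a mesh of size $\tau^{-1}$, which has at most $\tau^{2d}$ points. For each mesh pair, Theorem~\ref{thm:fluctuation-mxt}\eqref{item:exp-integ-nu}, applied via stationarity to the shifted problem, gives
\[
\bP\bigl(|m-\E m|\ge\lambda\tau^{1/2}\varphi(\tau)(|v-u|^q\tau^{-q}+c^{-1}\log\tau)\bigr)\le C e^{-c_1\lambda^{2/3}}.
\]
The interpolation error between mesh points is controlled by the almost-Lipschitz estimate Theorem~\ref{thm:sp-reg-t}, applied at both endpoints using time reversal as in Proposition~\ref{prop:path-metric-reg}. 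This error is $O(\varphi(t)\cdot D)$, where $D$ is a $\nu$-average with exponential moments by \eqref{eq:expec-ave}. The slope factor satisfies $|v-u|^q\tau^{-q}\lesssim(\log t)^{2q}\varphi(t)(\tfrac{|x|^q}{t^q}+1)$. Combining these, $Y_j\le \tau^{1/2}\varphi(t)^{2}(\log t)^{2q+1}\Lambda_{x,t}\,\mathcal Z_j$, where $\mathcal Z_j$ is measurable with respect to the environment in the time slab $[s_{j-1},s_j]$ and satisfies $\E e^{c\mathcal Z_j^{2/3}}\le C\tau^{2d}$.

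\textbf{Step 2: sum over a fixed skeleton.} Since $\sum_j\tau_j=nt=t^2$, Cauchy--Schwarz only gives $\sum\tau_j^{1/2}\le (kt^2)^{1/2}$, which is too weak for small $k$. However, $Y_j$ is in any case $\lesssim \tau_j^{1/2}$ times a tail variable, and the required bound is $k t^{1/2}$. For $\tau_j\le t$ this is fine termwise. For segments with $\tau_j>t$, use that there are at most $t^2/\tau_j$ such segments in total and bound each by the trivial estimate. Simplest is to use instead the bound $Y_j\lesssim \tau_j\varphi\,\Lambda\,\mathcal Z'_j$ from \eqref{eq:m-upper-bd}--\eqref{eq:m-lower-bd} when $\tau_j$ is large. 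I expect this bookkeeping, and in particular matching the exact exponent $t^{1/2}$ with the margin $\iota\varphi(t)^3$ versus $\varphi(t)^2(\log t)^{C}$, to be the main obstacle. The extra factor $\varphi(t)$, which dominates any power of $\log t$, is what provides room: take the threshold per segment to be $\iota t^{1/2}\varphi(t)^3\Lambda$. Using independence of alternate segments and a Chernoff bound for stretched-exponential variables (Lemma~\ref{thm:LV32}, applied separately to odd and even $j$), one gets
\[
\bP\Bigl(\sum_j Y_j>3\iota k t^{1/2}\varphi(t)^3\Lambda_{x,t}\Bigr)\le \exp\bigl(-c\,k\,(\iota\varphi(t)/(\log t)^{C})^{2/3}\bigr).
\]

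\textbf{Step 3: union bound.} Count lattice skeletons with $k$ segments. There are at most $\binom{t^2}{k}\le t^{2k}$ choices of time indices, and at most $(Ct^{3}\varphi(t)(\tfrac{|x|}{t}+1))^{dk}$ choices of lattice increments. Summing over $k\ge1$ gives a total failure probability of at most
\[
\sum_k\exp\bigl(Ck\log(t(|x|/t+2))-ck\varphi(t)^{2/3}(\log t)^{-C}\bigr),
\]
which is less than $0.1$ for $t\ge M$ large, since $\varphi(t)\gg(\log t)^{C}$. The $\log(|x|/t+2)$ factor in $\Lambda_{x,t}$ is what absorbs the $|x|$-dependence of the entropy term.
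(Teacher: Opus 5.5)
There is a genuine gap in Step~2, and it breaks the union bound in Step~3.

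\textbf{Why Step~2 fails.} Normalize your $Y_j$ by $t^{1/2}\varphi(t)^2(\log t)^{C}\Lambda_{x,t}$ and call the result $W_j$. Theorem~\ref{thm:fluctuation-mxt}\eqref{item:exp-integ-nu} only gives these stretched-exponential tails, $\bP(W_j>s)\lesssim t^{2d}e^{-cs^{2/3}}$, so the $W_j$ have no exponential moments. For independent variables of this type, the event $\sum_j W_j>kR$ can be produced by a single segment, with probability of order $e^{-c(kR)^{2/3}}$. I.i.d.\ Weibull variables of shape $2/3$ show that independence plus these tails cannot give your bound $\exp(-ckR^{2/3})$. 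Lemma~\ref{thm:LV32} does not rescue this. It requires $\E\exp|X_{n+1}-X_n|\le C_1$, which fails here, and even then it only gives $\exp(-cR^{2/3}k^{1/3})$.

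\textbf{Why this is fatal.} Any bound sublinear in $k$ is useless here. Since $\cone_{x,t}(2)\subset\R^d\times[1,t]$, every time increment is at most $t$; in particular your case $\tau_j>t$ never arises. The total time is $nt=t^2$, so every lattice $\cone_{x,t}(2)$-skeleton has $k\ge t$ segments. The entropy is at least $e^{Ck\log t}$, and $e^{Ck\log t-c(kR)^{2/3}}$ is small only if $R^2\gg k(\log t)^3\ge t(\log t)^3$. But $R\approx\iota\varphi(t)(\log t)^{-C}=t^{o(1)}$. For the same reason, a $2/3$ power of the gain cannot absorb the $d\log(\tfrac{|x|}{t}+2)$ part of the entropy uniformly in $x$. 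So your Step~3 comparison is not uniform in $x$ either.

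\textbf{What is missing.} You need a genuine exponential moment per segment. Its scale must be smaller than $\iota t^{1/2}\varphi(t)^3\Lambda_{x,t}$ by a factor much larger than the per-segment entropy $\log t+\log(\tfrac{|x|}{t}+2)$. Then exponential Chebyshev gives $\exp\bigl(-ck(\log t)^2\log(\tfrac{|x|}{t}+2)\bigr)$, which is linear in $k$. The paper gets this by truncating $\nu$:
\begin{itemize}
\item Let $A$ be the event that $\int\nu_r\,\dd r\le(\log t)^2$ over every unit interval of $[0,nt]$. It fails with probability $\lesssim t^2e^{-c(\log t)^2}$.
\item On $A$, $m=\tilde m$ for a modified Lagrangian whose $\tilde\nu$ has unit-interval integrals at most $2(\log t)^2$ almost surely.
\item Theorem~\ref{thm:fluctuation-mxt}\eqref{item:bdd-nu} (Azuma) then gives sub-Gaussian tails for each $\tilde\zeta_j$ at scale $t^{1/2}\varphi(t)\bigl(\tfrac{|\Delta z_j|^q}{\Delta s_j^q}+2(\log t)^3\bigr)$.
\item Splitting into odd and even $j$ and using Cauchy--Schwarz gives $\E\exp(2\sum_j\tilde\zeta_j')\le C^k$. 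The discrepancy between $\E\tilde m$ and $\E m$ is negligible.
\end{itemize}
The supremum over $u,v$ needs no mesh. Applying Theorem~\ref{thm:sp-reg-t} at both endpoints bounds $Y_j$ by $\zeta_j+C\varphi(t)^2(\log t)^{2q}(\tfrac{|x|^q}{t^q}+1)+C\varphi(t)\eta_j$, and $\eta_j\lesssim(\log t)^2$ on $A$.
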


\begin{proof}
We define an event $A$ where $(\nu_r)_{r\ge 0}$ has good averaging property as
\[
    A=\left\{\int_{(nt-i-1)\vee 0}^{nt-i} \nu_r\,\dd r \leq (\log t)^{2},\, \forall\, i=0,\ldots,\floor{nt}\right\}.
\]
Note that by the union bound and Chebyshev's inequality,
\begin{align}\label{eq:prob-ac}
\MoveEqLeft\bbP(A^c)\le\sum_{i=0}^{\floor{nt}}\bbP\Bigl(\int_{(nt-i-1)\vee 0}^{nt-i} \nu_r\,\dd r > (\log t)^{2}\Bigr)\\
&\lesssim nt\exp[-c(\log t)^2]\ \E\exp\Bigl[c(\int_0^1\nu_r\,\dd r)\Bigr]\stackrel{\eqref{eq:gaussian-integ}}\lesssim \exp[-C(\log t)^2].\nonumber
\end{align}
\noindent {\bf Step 1.} 	For every lattice $\cone_{x,t}(2)$-skeleton $\mathfrak S=(z_j,s_j)_{j=0}^k$,
 let 
\[
  \zeta_j=|\E m(z_{j-1},s_{j-1};z_j,s_j)-m(z_{j-1},s_{j-1};z_j,s_j)|
\]
 and write $\Delta z_j=z_j-z_{j-1}$, $\Delta s_j=s_j-s_{j-1}$. Note that
 \[
m(z_{j-1},s_{j-1};z_j,s_j,\omega) =m(z_j-z_{j-1},s_j-s_{j-1};T_{z_{j-1},s_{j-1}}\omega),
 \] 
 and if $1\le\norm{u-z_{j-1}}_\infty\le 2$ and $M$ is large enough,
 \[
2\varphi(\Delta s_j) \ge |u-z_{j-1}|\varphi\bigl(\frac{\Delta s_j}{|u-z_{j-1}|}\bigr)\geq 1.
 \]
The same holds with $u$ and $z_{j-1}$ replaced by $v$ and $z_j$, respectively.
 Hence, by applying Theorem~\ref{thm:sp-reg-t} twice, we have
 \begin{equation*}
 \begin{aligned}
\MoveEqLeft |m(z_{j-1},s_{j-1};z_j,s_j) - m(u,s_{j-1};v,s_j)| \\
    &\lesssim\varphi(\Delta s_j)  
\left(\frac{|\Delta z_j|^q}{|\Delta s_j|^q}+\sup_{\ell\ge 1}\avint_{(\Delta s_j-\ell)\vee 0}^{\Delta s_j}\nu_{r+s_{j-1}}\,\dd r+\avint_{0}^{\ell\wedge\Delta s_j}\nu_{r+s_{j-1}}\,\dd r\right)\\
&\lesssim\varphi(t)\bigl(\frac{|\Delta z_j|^q}{|\Delta s_j|^q}+\eta_j\bigr), 
 \end{aligned}
\end{equation*}
where 
\beq\lb{4.23}
\begin{aligned}
\eta_j:=\sup_{\ell\ge 1}\avint_{(\Delta s_j-\ell)\vee 0}^{\Delta s_j}\nu_{r+s_{j-1}}\,\dd r+\avint_{0}^{\ell\wedge\Delta s_j}\nu_{r+s_{j-1}}\,\dd r\lesssim \sup_{i\in\bbN} \int_{(s_j-i-1)\vee s_{j-1}}^{ s_j-i}\nu_{r}\,\dd r.
\end{aligned}
\eeq

Since $(\Delta z_j,\Delta s_j)\in\cone_{x,t}(2)$, we have
\begin{equation}\label{eq:ratio-zs-bd}
  \frac{|\Delta z_j|^q}{|\Delta s_j|^q}\lesssim (\log t)^{2q}\varphi(t)\bigl(\frac{|x|^q}{t^q}+1\bigr).
\end{equation}
Note that, by \eqref{eq:expec-ave}, we also have $\E[\eta_j]\lesssim\log(\Delta s_j)\le\log t$. 
Hence
\begin{align*}
Y_j
&\le \zeta_j+C\varphi(t)^2(\log t)^{2q}\bigl(\frac{|x|^q}{t^q}+1\bigr)+C\varphi(t)\eta_j.
\end{align*}
In particular, since $\eta_j\lesssim (\log t)^{2}$ on the event $A$ by \eqref{4.23}, we have
\[
  Y_j\mathbbm 1_A\le \zeta_j+C\varphi(t)^3\bigl(\frac{|x|^q}{t^q}+1\bigr).
\]
Thus, recalling \eqref{eq:prob-ac} that $\bbP(A)>0.5$, we get
\begin{align}\label{eq:two-terms}
 \MoveEqLeft \mb P\left(\text{\eqref{eq:every-walk-nice} fails for $\mathfrak S$}\big|A\right)\nonumber\\
  &\le \mb P\left(\sum_{j=1}^k\bigl[\zeta_j+C\varphi(t)^3\bigl(\frac{|x|^q}{t^q}+1\bigr)\bigr]\ge 2 \iota kt^{1/2}\varphi(t)^3\Lambda_{x,t}\bigg|A\right)\nonumber\\
  &\le \mb P\left(\sum_{j=1}^k\zeta_j\ge  \iota kt^{1/2}\varphi(t)^3\Lambda_{x,t}\bigg|A\right)
\end{align}
 where we used the fact that $t\ge M$ and $M$ is large enough.

\medskip

\noindent {\bf Step 2.} 
To bound the probability in \eqref{eq:two-terms}, we will show a moment bound \eqref{eq:mmt-bd-zetaprime} in a modified environment.
Recall that $(s_j)\subset(t_i)_{i=0}^{\floor{nt}}$ and consider events
\begin{align*}
  A_j&=\left\{ \int_{(nt-i-1)\vee 0}^{nt-i} \nu_r\,\dd r \leq (\log t)^{2},\,\forall\, nt-i\in (s_{j-1},s_j]\right\}.
\end{align*}
Then $A\subset\bigcap_{j=1}^k A_j$. 
Let us define a modified Lagrangian $\tilde L$, for $(y,r,v) \in \R^d\times \R \times \R^d$, as
\[
\tilde L(y,r,v,\omega):=
\begin{cases}
 \E[L(0,0,v,\cdot)] & \text{ if }r\in (s_{j-1},s_j]\text{ and $A_j^c$ happens},1\le j\le k,\\
 L(y,r,v,\omega) & \text{ otherwise.}
\end{cases}
\]
Then $\tilde L$ is convex in $v$. Moreover, letting 
\[
\tilde\nu_r(\omega):=
\begin{cases}
  \E[\nu_r(\cdot)]& \text{ if }r\in (s_{j-1},s_j]\text{ and $A_j^c$ happens}, 1\le j\le k,\\
\nu_r(\omega) & \text{ otherwise},
\end{cases}
\]
then the ensemble $\{\tilde L(y,r,v), \tilde\nu_r:(y,r,v)\in\R^{d}\times \R\times \R^d\}$ satisfies (A2), and its law has a unit-range of dependence in time. Clearly, 
\[
\int_s^{s+1}\tilde\nu_r\,\dd r\le 2(\log t)^{2}
\]
$\bbP$-a.s. for all $s>0$, since $t\ge M$ and $M$ is large enough.

We also define $\tilde m$ correspondingly.

It follows from Theorem \ref{thm:fluctuation-mxt} (c) that there exists $c_1>0$ independent of  $j$ such that for all $1\le j\le k$ and any $\lambda>0$,
\[
\bP\left(\tilde \zeta_j\ge \lambda (\Delta s_j)^{1/2}\varphi(\Delta s_j)\bigl(\frac{|\Delta z_j|^q}{\Delta s_j^q}+2(\log t)^{2}\log \Delta s_j\bigr)\right)\le {2}\exp(-c_1\lambda^2),
\]
where $\tilde \zeta_j:=|\tilde m(z_{j-1},s_{j-1};z_j,s_j)-\E \tilde m(z_{j-1},s_{j-1};z_j,s_j)|$.
Since $\Delta s_j\le t$, setting
\[
\tilde \zeta_j':=\frac{\left|\E \tilde m(z_{j-1},s_{j-1};z_j,s_j)-\tilde m(z_{j-1},s_{j-1};z_j,s_j)\right|}{t^{1/2}\varphi(t)(\tfrac{|\Delta z_j|^q}{\Delta s_j^q}+2(\log t)^{2}\log t)},
\]
it follows that $\E\exp (4\tilde \zeta_j' )\leq C$.
Hence, by the Cauchy-Schwarz inequality,
\begin{equation}\label{eq:mmt-bd-zetaprime}
    \E\exp (2\sum_{j=1}^k\tilde \zeta_j' )\le \E\Bigl[\exp (4\sum_{j\text{ even }}\tilde \zeta_j' )\Bigr]+\E\Bigl[\exp (4\sum_{j\text{ odd }}\tilde \zeta_j' )\Bigr]\le C^k,
\end{equation}
where in the last inequality we used the fact that $(\tilde\zeta_j'   )_{j\text{ is even}}$ and $(\tilde\zeta_j'   )_{j\text{ is odd}}$ are both independent sequences of random variables.

\medskip

\noindent {\bf Step 3.} Next, we will derive from \eqref{eq:mmt-bd-zetaprime} an exponential moment bound \eqref{eq:mmt-bd-zeta-at-a} for $\sum_{j=1}^k\zeta_j$. 
Observe that $\tilde L=L$, $\tilde \nu=\nu$, and $\tilde m=m$ on the event $A$. Hence
\begin{align}\label{eq:em-etildem}
&\bigl|\E\tilde m(z_{j-1},s_{j-1};z_j,s_j)-\E m(z_{j-1},s_{j-1};z_j,s_j)\bigr|\nonumber\\
\le \ &  \E\Bigl[\bigl(|\tilde m(z_{j-1},s_{j-1};z_j,s_j)|+|m(z_{j-1},s_{j-1};z_j,s_j)|\bigr)\mathbbm 1_{A^c}\Bigr]\nonumber\\
\stackrel{\eqref{eq:wlog}}\lesssim \ & 
t\bigl(\frac{|\Delta z_j|^q}{\Delta s_j^q}+1\bigr)\bbP(A^c)\stackrel{\eqref{eq:prob-ac}}\lesssim \bigl(\frac{|\Delta z_j|^q}{\Delta s_j^q}+1\bigr)\exp[-c(\log t)^2].
\end{align}

Thus, setting 
\begin{equation}\label{eq:def-zeta-prime}
  \zeta'_j:=\frac{\zeta_j}{t^{1/2}\varphi(t)(\tfrac{|\Delta z_j|^q}{\Delta s_j^q}+2(\log t)^{2}\log t )},
\end{equation}
and noticing that $\tilde m=m$ on the event $A$, we obtain
\begin{equation}\label{eq:mmt-bd-zeta-at-a}
  \E\bigl[\exp (\sum_{j=1}^k\zeta_j' )\mathbbm 1_{A}\bigr]
  \stackrel{\eqref{eq:em-etildem}}\le \E\bigl[\exp (2\sum_{j=1}^k\tilde \zeta_j')\mathbbm 1_{A}\bigr]
  \stackrel{\eqref{eq:mmt-bd-zetaprime}}\le C^k.
\end{equation}

\noindent {\bf Step 4.} We can use \eqref{eq:mmt-bd-zeta-at-a} to control the first probability in \eqref{eq:two-terms}.
Indeed, notice that, for $t\ge M$ and $M$ is large enough, the denominator in \eqref{eq:def-zeta-prime} is bounded by 
\[
  t^{1/2}\varphi(t)\bigl(\frac{|\Delta z_j|^q}{\Delta s_j^q}+2(\log t)^{2}\log t \bigr)\stackrel{\eqref{eq:ratio-zs-bd}}\lesssim t^{1/2}\varphi(t)^3\bigl(\frac{|x|^q}{t^q}+1\bigr)(\log t)^{-3}.
\]
Display \eqref{eq:mmt-bd-zeta-at-a}, $\iota=(\log M)^{-1}\geq (\log t)^{-1}$, and Chebyshev's inequality imply
\begin{align*}
\bbP\bigl(\sum_{j=1}^k\zeta_j\ge \iota kt^{1/2}\varphi(t)^3\Lambda_{x,t}\big|A\bigr)
&\lesssim C^k\exp\bigl(-c\iota k(\log t)^3\log(\tfrac{|x|}{t}+2)\bigr)\\
&\lesssim \exp\bigl(-ck(\log t)^2\log(\tfrac{|x|}{t}+2)\bigr).
\end{align*}
This inequality, together with \eqref{eq:two-terms}, yields
\begin{align*}
\mb P\left(\text{\eqref{eq:every-walk-nice} fails for $\mathfrak S$}\big|A\right)
\lesssim \exp\bigl(-ck(\log t)^2\log(\tfrac{|x|}{t}+2)\bigr).
\end{align*}

Since for each $k\ge 1$, there are at most $\bigl(Ct^{d+3}(\tfrac{|x|^d}{t^d}+1)\bigr)^k$  different lattice
$\cone_{x,t}(2)$-skeletons with $k+1$ vertices, a union bound further yields
\begin{align*}
 \MoveEqLeft \bbP(\text{\eqref{eq:every-walk-nice} fails for some lattice $\cone_{x,t}(2)$-skeleton with $k+1$ vertices}\big|A)\\
 &\lesssim 
\exp\Bigl(Ck\log t \log(\tfrac{|x|}{t}+2)-ck(\log t)^2\log(\tfrac{|x|}{t}+2)\Bigr)\\
&\lesssim 
\exp(-k\log t\log(\tfrac{|x|}{t}+2)).
\end{align*}
Finally, summing over all $k\ge 1$, 
\begin{align*}
\bbP(\text{\eqref{eq:every-walk-nice} fails for some lattice $\cone_{x,t}(2)$-skeleton}\big|A)\lesssim\exp(-\log t\log(\tfrac{|x|}{t}+2)).
\end{align*}
    Therefore, recalling that $\bbP(A^c)\stackrel{\eqref{eq:prob-ac}}\lesssim\exp[-C(\log t)^2]$, we conclude that
\[
  \bbP(\text{\eqref{eq:every-walk-nice} fails for some lattice $\cone_{x,t}(2)$-skeleton})\le C\exp(-\log t\log2)
\]
which is less than $0.1$ for all $t\ge M$.
\end{proof}

\begin{proof}[Proof of Corollary~\ref{cor:good-skeleton}]
	For every good $Q_{x,t}$-skeleton $\{(v_j,s_j)\}_{j=0}^k$ from $(0,0)$ to $(nx,nt)$, we can find $z_j\in\Z^d$ such that $1\le \norm{v_j-z_{j}}_\infty\le 2$ for all $1\le j\le k-1$. Then, with $z_0=0$, $z_k=nt$, and $M>1$  sufficiently large,  the sequence $(z_j,s_j)_{j=0}^k$ is a lattice $\cone_{x,t}(2)$-skeleton. Indeed, it suffices to verify $(\Delta z_j,\Delta s_j) \in \cone_{x,t}(2)$, i.e., 
    \[
    |\Delta z_j| \le  2(\log t)^2 \, \varphi(t)^{1/q} \bigl( \frac{|x|}{t} + 1 \bigr)\Delta s_j,
    \]
    which is clearly justified by the definition of a good skeleton.
    The Corollary follows immediately from Proposition~\ref{prop:cone-walk}.
	\end{proof}

\subsection{Proof of Theorem~\ref{thm:error-deterministic}}

Finally, we are ready to prove Theorem~\ref{thm:error-deterministic}. The key ingredient is the following Proposition, which guarantees the existence of a skeleton from $(0,0)$ to $(nx,nt)$ with fewer than $Cn$ segments. 

\begin{prop}\label{prop:skeleton-10n-points}
    There exists $M>1$ large enough such that, for $(x,t)\in\R^d\times[M,\infty)$, we can find $n\in[1,t]$ and a $Q_{x,t}$-skeleton from $(0,0)$ to $(nx,nt)$ with at most $
    10n$ segments.
\end{prop}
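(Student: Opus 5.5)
The plan is to follow Alexander's argument: take $n=t$ (as fixed in the subsection on good skeletons) and build a good $Q_{x,t}$-skeleton greedily along an optimal path. We work on the event $\Gamma_1(x,t,n)\cap\Gamma_2(x,t,n)\cap G$, where $G$ is a concentration event for $m(nx,nt)$ coming from Theorem~\ref{thm:fluctuation-mxt}(b). By Corollary~\ref{cor:good-skeleton}, Lemma~\ref{lem:nice-segments} and Theorem~\ref{thm:fluctuation-mxt}(b), for $M$ large this intersection has probability at least $0.7>0$. Since the conclusion is deterministic, it suffices to exhibit one $\omega$ in it for which the greedy skeleton has at most $10n$ segments.

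\emph{Step 1: the greedy construction.} Fix $\gamma$, the optimal path given by $\Gamma_2$. Set $s_0=0$. Given $s_{j-1}$, let $s_j$ be the largest $t_i\le nt$ such that $(\gamma(t_i)-\gamma(s_{j-1}),t_i-s_{j-1})\in Q_{x,t}$. For the last segment we simply end at $(nx,nt)$.

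Such a $t_i$ always exists. By \eqref{eq:extremely-good-skeleton}, every increment with time gap less than $t$ lies in $\cone_{x,t}$. By \eqref{eq:sub-frustum}, the next one or two lattice times give an increment in $F_{x,t}\subset Q_{x,t}$, so the skeleton advances.

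Maximality is exactly condition (ii) of Definition~\ref{def:good-skeleton}. Take $y=\gamma(s_j+1)-\gamma(s_j)$. The extended increment $(v_j-v_{j-1}+y,\,s_j-s_{j-1}+1)$ is not in $Q_{x,t}$, and it satisfies the cone bound by $\Gamma_2$. Hence this is a good skeleton with $k$ segments, and $\Gamma_1$ applies to it.

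\emph{Step 2: classify the failures.} For each $j\le k-1$, the extension leaves $Q_{x,t}$ in one of three ways. Call a failure of type 0 if it leaves $\cone_{x,t}$; this happens only when $s_j-s_{j-1}\ge t-1$. Call it type 1 if $\ell_{x,t}>\ol m(x,t)$. Call it type 2 if
\[
\E m > \ell_{x,t} + t^{1/2}\varphi(t)^3\Lambda_{x,t}.
\]

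The number of type-0 and type-1 segments is at most $Cn$. For type 0 this is because the time spans sum to $nt$. For type 1, linearity gives
\[
\textstyle\sum_j \ell_{x,t}(\Delta v_j,\Delta s_j)=\ell_{x,t}(nx,nt)=n\ol m(x,t),
\]
and each summand is $\ge -C$ by \eqref{eq:wlog}. Strictly speaking the $\ell$ in the failure condition is evaluated at the extended increment. We pass back to the unextended increment using linearity of $\ell$ and the bound $|\ell(y,1)|\lesssim (\log t)^{2q}\varphi(t)\Lambda_{x,t}$. The same one-step correction, applied to $\E m$ via Theorems~\ref{thm:sp-reg-t}--\ref{thm:time-reg}, costs only $O(\varphi(t)^2(\log t)^{2q}\Lambda_{x,t})$, which is negligible compared with $t^{1/2}\varphi(t)^3\Lambda_{x,t}$.

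For type 2 segments, combine the failure bound with $\Gamma_1$:
\[
m(nx,nt)=\sum_j m(\text{seg}_j)\ \ge\ n\ol m(x,t)+\bigl(k_2-3\iota k-Ck\,t^{-1/2}\bigr)\,t^{1/2}\varphi(t)^3\Lambda_{x,t}-Cn ,
\]
where $k_2$ is the number of type-2 segments. On the other hand, on $G$,
\[
m(nx,nt)\le \E m(nx,nt)+\lambda\,t\,\varphi(t)^{2}\Lambda_{x,t}\log t ,
\]
and the fluctuation term here is $o(n\,t^{1/2}\varphi(t)^3\Lambda_{x,t})$ since $n=t$.

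\emph{Step 3 (the main obstacle): bound $\E m(nx,nt)-n\ol m(x,t)$ by $o(n\,t^{1/2}\varphi(t)^3\Lambda_{x,t})$.} Once this is available, Step 2 gives
\[
k_2\le \tfrac12 k + o(n),
\]
hence $k\le Cn$ with an explicit constant. Making that constant at most $10$ requires tuning $\iota$, $M$ and the split between the types.

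The difficulty is that this expectation bound is essentially what we are trying to prove, and the statement restricts us to $n\le t$. I would first try a contradiction argument. Suppose that for every $n\in[1,t]$ every such skeleton has more than $10n$ segments. Then Step 2 yields the reverse inequality
\[
\E m(nx,nt)\ge n\ol m(x,t)+c\,n\,t^{1/2}\varphi(t)^3\Lambda_{x,t}
\]
uniformly in $n\in[1,t]$. Concatenating optimal paths at dyadic scales and using subadditivity then forces $\E m(Nx,Nt)/N-\ol m(x,t)\ge c\,t^{1/2}\varphi(t)^3\Lambda_{x,t}$ along $N\to\infty$. This contradicts \eqref{eq:subadd-ergodic} together with the concentration of Theorem~\ref{thm:fluctuation-mxt}. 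This step, getting a uniform lower excess to propagate beyond scale $t$, is where I expect most of the work, and possibly a different choice of $n$, to be needed.
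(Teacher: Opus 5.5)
Your Steps 1--2 follow the paper's architecture: a greedy good $Q_{x,t}$-skeleton along the $\Gamma_2$-path, with failures sorted as $\cB^E_{x,t},\cB^L_{x,t},\cB^S_{x,t}$. The proposal does not prove the statement, however, because the one quantitative input the whole count rests on is never established. You need, for some good skeleton,
\[
\sum_{j=1}^k\E m(\Delta v_j,\Delta s_j)\le n\,\ol m(x,t)+C\iota\,k\,t^{1/2}\varphi(t)^3\Lambda_{x,t},
\]
or equivalently $m(nx,nt)\le n\ol m(x,t)+o(k\,t^{1/2}\varphi(t)^3\Lambda_{x,t})$ on an event of non-negligible probability.

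Your contradiction argument cannot supply this bound:
\begin{itemize}
\item Subadditivity gives $\E m(2Nx,2Nt)\le 2\E m(Nx,Nt)$, so it only carries \emph{upper} bounds on the normalized excess $\E m(Nx,Nt)/N-\ol m(x,t)$ from small to large scales.
\item A lower bound $\E m(nx,nt)-n\ol m(x,t)\ge c\,n t^{1/2}\varphi(t)^3\Lambda_{x,t}$ for all $n\le t$ is entirely compatible with $\E m(Nx,Nt)/N\to\ol m(x,t)$. In the variables $v=x/t$, $R=nt\le t^2$, it is only an excess $\gtrsim R^{3/4}$.
\item Your hypothesis says nothing for larger $n$. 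Nor can $n$ be taken arbitrarily large relative to $t$, since $\Gamma_2$ and the event $A$ of Proposition \ref{prop:cone-walk} come from union bounds over $nt$ unit time blocks.
\end{itemize}

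Your type-1 count does not escape this either. The bound \eqref{eq:wlog} controls $\ol m$ and $\E m$, not the supporting hyperplane $\ell_{x,t}(y,s)=p\cdot y+\lambda s$. On $\cone_{x,t}$ this hyperplane can be of order $-(\log t)^2\varphi(t)^{1/q}(|x|/t+1)^q s$, so ``each summand is $\ge -C$'' is false. The paper instead uses $\E m\ge 0$ to get $\tfrac12|I^L|\,\ol m(x,t)\le\sum_j\E m(\Delta v_j,\Delta s_j)$, and then needs the same upper bound on $\sum_j\E m$. It also needs the case split \eqref{eq:case-x-not-huge}, with the trivial skeleton $\{(0,0),(x,t)\}$ and $n=1$ in the complementary case; you omit this split.

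The paper supplies the missing input directly in Lemma \ref{lem:em-nbarm-skeleton}, not by contradiction:
\begin{itemize}
\item It takes $\omega\in\Gamma_1\cap\Gamma_2\cap\Gamma_3$ with $\Gamma_3=\{m(nx,nt)\le n\ol m(x,t)+n\}$, whose probability it takes from \eqref{eq:subadd-ergodic}.
\item Along the $\Gamma_2$-path this gives $\sum_j\E m(\Delta v_j,\Delta s_j)\le m(nx,nt)+3\iota k\,t^{1/2}\varphi(t)^3\Lambda_{x,t}\le n\ol m(x,t)+6\iota k\,t^{1/2}\varphi(t)^3\Lambda_{x,t}$ for one deterministic good skeleton. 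No concentration event for $m(nx,nt)$ is needed.
\item Then $|I^E|\le 2n$, $|I^L|\le 2n+C\iota k$ and $|I^S|\le 12\iota k$ give $k\le 4n+1+C\iota k\le 10n$ for $M$ large.
\end{itemize}

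Your remark that this input is ``essentially what we are trying to prove'' points at exactly the delicate spot. With $n=t$ and $v=x/t$, the event $\Gamma_3$ reads $m(t^2v,t^2)\le t^2\ol m(v,1)+t$, a bound at the square-root scale of $t^2$. The argument needs its probability bounded below uniformly over $t\ge M$ and $x$. The qualitative limit \eqref{eq:subadd-ergodic}, which holds for each fixed $(x,t)$, does not by itself deliver that uniformity, so following the paper's route means justifying it separately. As written, your proposal leaves this step open.
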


\begin{proof}[Proof of Theorem~\ref{thm:error-deterministic}:]
	The theorem follows from Propositions~\ref{prop:CHAP-GAP} and \ref{prop:skeleton-10n-points}.
\end{proof}

To prove Proposition~\ref{prop:skeleton-10n-points}, we need two lemmas.

\begin{lem}\label{lem:em-nbarm-skeleton}
	There exists $M\ge 1$ large enough such that for any $(x,t)\in\R^d\times[M,\infty)$,  $n=t$,
	there exists a good $Q_{x,t}$-skeleton $\{(y_i,s_i)\}_{i=0}^k$ from $(0,0)$ to $(nx,nt)$ with the property (recall that $\iota =(\log M)^{-1}$)
\begin{equation}\label{eq:ideal-skeleton-prop}
    \sum_{i=1}^k \E m(v_{j-1},s_{j-1};v_j,s_j)\le n\ol m(x,t)+6\iota kt^{1/2}\varphi(t)^3\Lambda_{x,t}.
\end{equation}
\end{lem}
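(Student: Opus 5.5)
The plan is a probabilistic existence argument in the spirit of \cite{Alexander-97}. We exhibit an environment $\omega$ on which three events hold at once. On the first, an optimal path has the mesoscopic cone property. On the second, every good skeleton has small fluctuations. On the third, $m(nx,nt,\omega)$ is close to $n\ol m(x,t)$. From an optimal path on such an $\omega$ we then extract a good skeleton, and the skeleton inherits the desired bound. Since \eqref{eq:ideal-skeleton-prop} is a deterministic statement about expectations, it is enough to show that this intersection has positive probability.

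\textbf{Step 1: the three events.} Take $\Gamma_1$ from Corollary~\ref{cor:good-skeleton} and $\Gamma_2$ from Lemma~\ref{lem:nice-segments}; each has probability at least $0.9$. Let $\Gamma_3$ be the event
\[
m(nx,nt)\le \E m(nx,nt)+\lambda (nt)^{1/2}\varphi(nt)\Bigl(\tfrac{|x|^q}{t^q}+c^{-1}\log(nt)\Bigr).
\]
By Theorem~\ref{thm:fluctuation-mxt}\eqref{item:exp-integ-nu} with a fixed large $\lambda$, $\bP(\Gamma_3)\ge 0.9$. However, this only compares $m$ with $\E m$, and we need a comparison with $n\ol m(x,t)$. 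Subadditivity gives $\E m\ge \ol m$, which points the wrong way. So we use the sharper input from the ergodic theorem \eqref{eq:subadd-ergodic}, as in \cite{Alexander-97}: $\bP\bigl(m(nx,nt)\le n\ol m(x,t)+o(nt)\bigr)$ is not small. More concretely, $\E m(nx,nt)-n\ol m(x,t)$ could be large, and this case must be handled. The fix is to use the concentration bound together with $m(Nnx,Nnt)/N\to n\ol m(x,t)$ a.s. and subadditivity of $\E m$. This shows that $\bP\bigl(m(nx,nt)\le n\ol m(x,t)+C\iota\, n t^{1/2}\varphi(t)^3\Lambda_{x,t}\bigr)>0.2$, since otherwise concentration at scale $(nt)^{1/2}=t$, far below $n t^{1/2}\varphi(t)^3$, would force $\E m(nx,nt)$ to exceed $n\ol m(x,t)$ uniformly along multiples and contradict the a.s.\ limit. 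We take $\Gamma_3$ to be this event, so $\Gamma_1\cap\Gamma_2\cap\Gamma_3\neq\emptyset$.

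\textbf{Step 2: extracting a good skeleton.} Fix $\omega$ in the intersection and an optimal path $\gamma$ satisfying \eqref{eq:extremely-good-skeleton}. Build $s_0=0<s_1<\cdots$ greedily along $(t_i)$. Given $s_{j-1}=t_{i}$, let $s_j=t_{i'}$, where $i'$ is the largest index such that the increment $(\gamma(t_{i'})-\gamma(t_i),t_{i'}-t_i)$ lies in $Q_{x,t}$; stop once $nt$ is reachable. The sub-frustum property \eqref{eq:sub-frustum} guarantees $i'\ge i+1$, so the greedy step always advances. Condition \eqref{eq:extremely-good-skeleton} places all increments of length below $t$ in $\cone_{x,t}$. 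The condition $\ell_{x,t}\le \ol m(x,t)$ forces each increment to have length at most about $t$, given \eqref{eq:wlog}. By maximality, the next unit step $(y,1)=(\gamma(t_{i'+1})-\gamma(t_{i'}),1)\in F_{x,t}$ yields an increment outside $Q_{x,t}$ that still satisfies the cone bound. This is exactly condition (ii) of Definition~\ref{def:good-skeleton}. Setting $v_j=\gamma(s_j)$ therefore gives a good $Q_{x,t}$-skeleton.

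\textbf{Step 3: the estimate.} Because the $v_j$ lie on an optimal path, $\sum_j m(v_{j-1},s_{j-1};v_j,s_j)=m(nx,nt)$. On $\Gamma_1$, we can replace each $m$ by $\E m$ at total cost $3\iota k t^{1/2}\varphi(t)^3\Lambda_{x,t}$. On $\Gamma_3$, $m(nx,nt)\le n\ol m(x,t)+3\iota k t^{1/2}\varphi(t)^3\Lambda_{x,t}$, where we used $k\ge n$ from \eqref{eq:k-ge-n}. Adding the two bounds gives \eqref{eq:ideal-skeleton-prop}.

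The main obstacle is Step 1: obtaining a lower-tail event comparing $m(nx,nt)$ with $n\ol m(x,t)$, rather than with $\E m$, with error $o(n t^{1/2}\varphi^3\Lambda)$ and positive probability. I would first try to iterate the concentration estimate along $N\to\infty$ multiples, using subadditivity, so that the limit $\ol m$ is reached through expectations and the fluctuations at each scale stay below the budget.
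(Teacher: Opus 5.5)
Your Steps 2 and 3 coincide with the paper's proof. On $\Gamma_1\cap\Gamma_2\cap\Gamma_3$ you extract a good $Q_{x,t}$-skeleton greedily along an optimal path satisfying \eqref{eq:extremely-good-skeleton}: progress comes from $F_{x,t}\subset Q_{x,t}$, and condition (ii) is certified by the next unit step. You then write $m(nx,nt)$ as the sum along the skeleton, trade $m$ for $\E m$ on $\Gamma_1$, and conclude because the set of good skeletons satisfying \eqref{eq:ideal-skeleton-prop} is deterministic.

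\textbf{The gap is in Step 1.} You claim $\bP\bigl(m(nx,nt)\le n\ol m(x,t)+B\bigr)>0.2$, with $B:=C\iota nt^{1/2}\varphi(t)^3\Lambda_{x,t}$, because otherwise \eqref{eq:subadd-ergodic} would be contradicted. No contradiction arises.
\begin{itemize}
\item If that probability were at most $0.2$, Theorem~\ref{thm:fluctuation-mxt} would only give $\E m(nx,nt)\ge n\ol m(x,t)+B/2$ at the single point $(nx,nt)$.
\item To clash with $\E m(Nnx,Nnt)/N\to n\ol m(x,t)$, you would need $\E m(Nnx,Nnt)\ge N\bigl(n\ol m(x,t)+B/2\bigr)$ along multiples. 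That is a superadditivity statement, but subadditivity only gives $\E m(Nnx,Nnt)\le N\,\E m(nx,nt)$, the wrong direction.
\item In fact $\ol m(z)=\inf_N \E m(Nz)/N$ by Fekete's lemma. So an arbitrarily large gap $\E m(z)-\ol m(z)$ at one fixed $z$ is fully compatible with the ergodic theorem.
\end{itemize}
Your event combined with concentration would yield $\E m(tx,t^2)-\ol m(tx,t^2)\lesssim \iota t^{3/2}\varphi(t)^3\Lambda_{x,t}$. That is a quantitative bound on the nonrandom bias at scale $T=t^2$, which is exactly what Theorem~\ref{thm:error-deterministic} is meant to derive from this lemma via Proposition~\ref{prop:CHAP-GAP}. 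Iterating concentration along multiples cannot produce it, since fluctuation bounds carry no information about $\E m-\ol m$ at any scale.

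\textbf{What the paper does.} It sets $\Gamma_3(x,t,r)=\{m(0,0;rx,rt)\le r\ol m(x,t)+r\}$ and takes $\bP(\Gamma_3(x,t,r))\ge 0.9$ for $r\ge M$ directly from \eqref{eq:subadd-ergodic}. It applies this with $r=n$ and absorbs the error $n$ into $\iota kt^{1/2}\varphi(t)^3\Lambda_{x,t}$ via \eqref{eq:k-ge-n}.

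Your sense that this needs more than the qualitative limit is sound. \eqref{eq:subadd-ergodic} supplies such an $M$ only for fixed $(x,t)$, whereas here $r=n=t$. After scaling, $\Gamma_3(x,t,t)$ reads $m(Ty,T)\le T\ol m(y,1)+T^{1/2}$ with $T=t^2$ and $y=x/t$, which is a rate statement. For instance, in an $x$-independent environment with $L=\tfrac12|v|^2+g(\omega_{0,t})$, one has $m(0,T)=\int_0^T g(\omega_{0,r})\,\dd r$. Its CLT fluctuations $\sigma T^{1/2}$ make $\bP(\Gamma_3(0,t,t))$ tend to $\bP(\sigma Z\le 1)$ with $Z$ standard normal, which is below $0.9$ for large $\sigma$.

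The ergodic theorem gives your event only at $(Nnx,Nnt)$ with $N\ge N_0(x,t)$, as in \cite{Alexander-97}. Using that would require $\Gamma_1$ and $\Gamma_2$ to have probability bounded below uniformly in the number of unit steps. However, the union bounds in Lemma~\ref{lem:nice-segments} and for $\bP(A^c)$ in Proposition~\ref{prop:cone-walk} grow with $n$. The missing ingredient is either that uniformity or an independent bound on the bias at scale $(nx,nt)$; concentration plus subadditivity supplies neither.
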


\begin{proof}
We let $\mathbb S(x,t,n)$ denote the collection of all
good $Q_{x,t}$-skeletons $\{(y_i,s_i)\}_{i=0}^k$ from $(0,0)$ to $(nx,nt)$ with  property \eqref{eq:ideal-skeleton-prop}. Set
\[
  Z(\{(y_i,s_i)\}_{i=0}^k):=\sum_{i=1}^k \E m(v_{i-1},s_{i-1};v_i,s_i).
  \]

We need to show that $\mathbb S(x,t,n)\neq\emptyset$ for any $(x,t)\in\R^d\times[M,\infty)$.

To this end, we let $\Gamma_3(x,t,r)$ be the event $\{\omega:m(0,0;rx,rt)\le r\ol m(x,t)+r\}$.
By \eqref{eq:subadd-ergodic}, $\lim_{r\to\infty} \bbP(\Gamma_3(x,t,r))=1$. We take $M>1$ large enough so that \[
   \bbP(\Gamma_3(x,t,r))\ge 0.9 \quad\text{for all } r\ge M.
\]

We let $\omega$ be any environment within $\Gamma_1(x,t,n)\cap \Gamma_2(x,t,n)\cap \Gamma_3(x,t,n)$. Recall that $\Gamma_1 $ and $ \Gamma_2$ are given in Corollary~\ref{cor:good-skeleton} and Lemma \ref{lem:nice-segments}, respectively.

We claim that there exists an optimal path $\gamma$ from $(0,0)$ to $(nx,nt)$ such that $\gamma$ contains a good $Q_{x,t}$-skeleton.

 Indeed, since $\omega\in\Gamma_2(x,t,n)$, we can find
 an optimal path $\gamma$ from $(0,0)$ to $(nx,nt)$ such that $(\gamma(t_i)-\gamma(t_{i-1}),t_i-t_{i-1})\in F_{x,t}$ for all $i=1,\ldots,\floor{nt}$. We define a sequence $\{(v_j,s_j)\}_{j=0}^k$ as follows. Let $(v_0,s_0)=(0,0)$. Given $(v_j,s_j)$, if $s_j<nt$, we let 
	\[
  s_{j+1}=\min\{t_i-1:t_i>s_j\,\text{ and }\,(\gamma(t_i)-v_j,t_i-s_j)\notin Q_{x,t}\}\wedge (nt)
\]
and $v_{j+1}=\gamma(s_{j+1})$.
Here we used the convention $\min\emptyset=\infty$. Thanks to $F_{x,t}\subset Q_{x,t}$ and property \eqref{eq:extremely-good-skeleton}, we have that $\{(v_j,s_j)\}_{j=0}^k$ is a well-defined subsequence of $(\gamma(t_i),t_i)_{i=0}^{\floor{nt}}$ and $\{(v_j,s_j)\}_{j=0}^k$ is a good $Q_{x,t}$-skeleton from $(0,0)$ to $(nx,nt)$. 

Moreover, by the definition of $\Gamma_1$ in Corollary~\ref{cor:good-skeleton}, $\{(v_j,s_j)\}_{j=0}^k$ satisfies
\begin{align*}
  Z(\{(y_i,s_i)\}_{i=0}^k)&\le \sum_{j=1}^km(v_{j-1},s_{j-1};v_j,s_j)+5\iota kt^{1/2}\varphi(t)^3\Lambda_{x,t}\\
  &=m(0,0;nx,nt)+5\iota kt^{1/2}\varphi(t)^3\Lambda_{x,t}\\
  &\le n+n\ol m(x,t)+5\iota kt^{1/2}\varphi(t)^3\Lambda_{x,t}
\end{align*}
where we also used the definition of $\Gamma_3$ in the last inequality. 
Noticing that the sequence $(v_j,s_j)_{j=0}^k\subset\gamma$ is a random sequence (with $k\ge n$ increments), we get
\begin{align*}
\MoveEqLeft  \bbP(\exists\text{ an $\omega$-optimal path who has a subsequence $\{(v_j,s_j)\}_{j=0}^k$ within $\mathbb S(x,t,n)$})\\
&\ge \bbP(\Gamma_1\cap\Gamma_2\cap\Gamma_3)\ge 0.7>0.
\end{align*}
Since $\mathbb S(x,t,n)$ is a deterministic set, we conclude that $\mathbb S(x,t,n)\neq\emptyset$.	
\end{proof}

Define $\varepsilon_{x,t}(y,s){:=}\E m(y,s)-\ell_{x,t}(y,s)$ as the \emph{inefficiency} of $(y,s).$

Recall Definition~\ref{def:good-skeleton}. For every good $Q_{x,t}$-skeleton $\{(v_j,s_j)\}_{j=0}^k$ from $(0,0)$ to $(nx,nt)$, its increments $(\Delta v_j,\Delta s_j)_{j=1}^{k-1}$ can be categorized by three sets:
\begin{align*}
\cB^E_{x,t}&=\{(y,s)\in Q_{x,t}:s\ge t-1\},\\
\cB^L_{x,t}&=\{(y,s)\in Q_{x,t}:\, \exists (z, 1)\in F_{x,t},\,\ell_{x,t}(y+z,s+1) >\ol m(x,t)\},\\
\cB^S_{x,t}&=\{(y,s)\in Q_{x,t}:\exists (z, 1)\in F_{x,t},\varepsilon_{x,t}(y+z,s+1)\ge  t^{1/2} \varphi(t) \Lambda_{x,t}\,\}.
\end{align*}
Note that the last increment $(\Delta v_k,\Delta s_k)$ may not fall into any of these categories.

\begin{lem}\label{lem:short-long-edges}
    There exists $M>1$ large enough such that, for $x\in\R^d$ and $t\geq M$, we have the following properties.
    \begin{enumerate}[(a)]
        \item\label{item:unit-cost} If $(y,s)\in F_{x,t}$, then $\E m(y,s) \leq C(\log t)^{2q}\varphi(t)(\tfrac{|x|^q}{t^q}+1)$.
        \item\label{item:long-edges} If $(y,s)\in \cB^L_{x,t}$, then $\ell_{x,t}(y,s) \geq \tfrac{1}{2} \ol m(x,t)$.
        \item\label{item:short-edges} If $(y,s)\in \cB^S_{x,t}$, then $\varepsilon_{x,t}(y,s) \geq \tfrac{1}{2}t^{1/2}\varphi(t)\Lambda_{x,t}$.
    \end{enumerate}
\end{lem}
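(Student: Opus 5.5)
The three parts are essentially one-step perturbation estimates. The idea is that adding a single unit increment $(z,1)\in F_{x,t}$ changes both $\ell_{x,t}$ and $\E m$ by at most a quantity of order $(\log t)^{2q}\varphi(t)(\tfrac{|x|^q}{t^q}+1)$. This is much smaller than either $\ol m(x,t)$ or $t^{1/2}\varphi(t)\Lambda_{x,t}$ once $t\ge M$ with $M$ large. I will prove (a) first and then deduce (b) and (c) from it via linearity of $\ell_{x,t}$ and subadditivity of $m$.

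For (a), take $(y,s)\in F_{x,t}$, so $1\le s\le 2$ and $|y|\le (\log t)^2\varphi(t)^{1/q}(\tfrac{|x|}{t}+1)s$. The upper bound in \eqref{eq:wlog}, valid since $s\ge 1$, gives
\[
\E m(y,s)\le N_1|y|^q s^{1-q}+Cs\lesssim (\log t)^{2q}\varphi(t)\bigl(\tfrac{|x|}{t}+1\bigr)^q+C .
\]
Using $(a+1)^q\lesssim a^q+1$, this is at most $C(\log t)^{2q}\varphi(t)(\tfrac{|x|^q}{t^q}+1)$. This is exactly the computation in \eqref{eq:ok-subfrustum-fits-Q}.

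For (b), let $(y,s)\in\cB^L_{x,t}$ with witness $(z,1)\in F_{x,t}$, so $\ell_{x,t}(y+z,s+1)>\ol m(x,t)$. By linearity, $\ell_{x,t}(y,s)>\ol m(x,t)-\ell_{x,t}(z,1)$. Since $\ell_{x,t}\le\ol m\le\E m$ on $\R^d\times[0,\infty)$, part (a) gives
\[
\ell_{x,t}(z,1)\le C(\log t)^{2q}\varphi(t)\bigl(\tfrac{|x|^q}{t^q}+1\bigr).
\]
On the other side, \eqref{eq:wlog} gives $\ol m(x,t)\ge \tfrac1{N_1}|x|^qt^{1-q}+t\gtrsim t(\tfrac{|x|^q}{t^q}+1)$. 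Because $(\log t)^{2q}\varphi(t)=t^{o(1)}$, the ratio of the two bounds is at most $Ct^{-1}(\log t)^{2q}\varphi(t)\le\tfrac12$ for $t\ge M$. Hence $\ell_{x,t}(y,s)\ge\tfrac12\ol m(x,t)$.

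For (c), let $(y,s)\in\cB^S_{x,t}$ with witness $(z,1)$. Subadditivity of $m$ and stationarity give $\E m(y+z,s+1)\le \E m(y,s)+\E m(z,1)$. Subtracting the linear function $\ell_{x,t}$ yields
\[
\varepsilon_{x,t}(y+z,s+1)\le\varepsilon_{x,t}(y,s)+\varepsilon_{x,t}(z,1).
\]
Since $\ell_{x,t}\ge -C\ol m$-type lower bounds are not automatic, I bound $\varepsilon_{x,t}(z,1)\le \E m(z,1)+|\ell_{x,t}(z,1)|$. For the second term, I use that $(z,1)$ lies in a cone of slope $K=(\log t)^2\varphi(t)^{1/q}(\tfrac{|x|}t+1)$, together with the remark that $|\ell_{x,t}(y,s)|\le (N_1K^q+C)s$ for $|y|\le Ks$. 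So both terms are $\lesssim(\log t)^{2q}\varphi(t)(\tfrac{|x|^q}{t^q}+1)$. This is at most $\tfrac12 t^{1/2}\varphi(t)\Lambda_{x,t}$ for $t\ge M$, because $\Lambda_{x,t}\ge(\tfrac{|x|^q}{t^q}+1)\log 2$ and $(\log t)^{2q}\ll t^{1/2}$. Combining this with $\varepsilon_{x,t}(y+z,s+1)\ge t^{1/2}\varphi(t)\Lambda_{x,t}$ gives $\varepsilon_{x,t}(y,s)\ge\tfrac12t^{1/2}\varphi(t)\Lambda_{x,t}$.

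The only delicate point is the two-sided control of $\ell_{x,t}$ on $F_{x,t}$, in particular a lower bound for $\ell_{x,t}(z,1)$ in (c). I handle it through the cone bound for supporting hyperplanes quoted before \eqref{eq:def-cone}, with the constant $C_0$ there replaced by the cone slope $K$.
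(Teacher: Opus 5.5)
Your proof is correct and follows the paper's argument in all three parts: perturb by a unit increment $(z,1)\in F_{x,t}$, then use linearity of $\ell_{x,t}$, subadditivity and stationarity of $\E m$, and the bounds \eqref{eq:wlog}. The one deviation, in (c), is in your favor: the paper asserts $\varepsilon_{x,t}(z,1)\le \E m(z,1)$, which tacitly needs $\ell_{x,t}(z,1)\ge 0$ and fails, e.g.\ for $z=0$ when $|x|/t$ is large, whereas your bound $\varepsilon_{x,t}(z,1)\le \E m(z,1)+|\ell_{x,t}(z,1)|$ is sound; the lower bound on $\ell_{x,t}(z,1)$ that you take from the cone remark holds here precisely because your slope satisfies $K\ge |x|/t+1$ (directly, $\ell_{x,t}(z,1)=\tfrac{2}{t}\ol m(x,t)-\ell_{x,t}(\tfrac{2x}{t}-z,1)\ge -\ol m(\tfrac{2x}{t}-z,1)\ge -CK^q$).
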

\begin{proof}\eqref{item:unit-cost} This is a direct consequence of \eqref{eq:wlog} and the definition \eqref{eq:sub-frustum} of $F_{x,t}$.

\noindent\eqref{item:long-edges} If $(y,s)\in \cB^L_{x,t}$, then there exists $(z,1)\in F_{x,t}$ such that 
\begin{align*}
\ell_{x,t}(y,s)&=\ell_{x,t}(y+z,s+1)-\ell_{x,t}(z,1)\\
&\ge \ol m(x,t)-C(\log t)^q\varphi(t)\bigl(\frac{|x|^q}{t^q}+1\bigr)\ge \frac{1}{2}\ol m(x,t)
\end{align*}
where in the first inequality we used $\ell_{x,t}(z,1)\le \ol m(z,1)\le \E m(z,1)$ and \eqref{item:unit-cost}, and the second inequality is due to \eqref{eq:wlog}.
\medskip

\noindent\eqref{item:short-edges} If $(y,s)\in \cB^S_{x,t}$, then there exists $(z,1)\in F_{x,t}$ such that
\begin{align*}
\varepsilon_{x,t}(y,s)&\ge \varepsilon_{x,t}(y+z,s+1)-\varepsilon_{x,t}(z,1)\\
&\ge t^{1/2}\varphi(t)\Lambda_{x,t}-C(\log t)^{2q}\varphi(t)\bigl(\frac{|x|^q}{t^q}+1\bigr)\\
&\ge \frac{1}{2}t^{1/2}\varphi(t)\Lambda_{x,t}.
\end{align*}	
where in the first inequality we applied the subaddivitity of $\bbE m$, and in the second inequality we used \eqref{item:unit-cost} and the fact $\varepsilon_{x,t}(z,1)\le \E m(z,1)$.
\end{proof}

\begin{proof}[Proof of Proposition~\ref{prop:skeleton-10n-points}]
First, consider the case $\varphi(t)^3\log(\tfrac{|x|}{t}+2)\ge C_5t^{1/2}$. In this case, by \eqref{eq:def-Qxt}, we have
\[
 \Bigl\{(y,s)\in\cone_{x,t}:\ell_{x,t}(y,s)\le\ol m(x,t), \E m(y,s)\le\ell_{x,t}(y,s)+C_5t\bigl(\frac{|x|^q}{t^q}+1\bigr)\Bigr\}\subset Q_{x,t}.
\]
By \eqref{eq:wlog}, we can choose $C_5$ appropriately so that $(x,t)\in Q_{x,t}$. Thus, with $n=1$, $\{(0,0), (x,t)\}$ is the desired skeleton of two vertices. The Proposition follows.

\medskip

It remains to consider the case
\begin{equation}\label{eq:case-x-not-huge}
  \varphi(t)^3\log\bigl(\frac{|x|}{t}+2\bigr)<C_5 t^{1/2}.
\end{equation}
To this end, by Lemma~\ref{lem:em-nbarm-skeleton}, we can take $n=t$ and let $\{(y_i,s_i)\}_{i=0}^k$ be a $Q_{x,t}$-skeleton from $(0,0)$ to $(nx,nt)$ with the property \eqref{eq:ideal-skeleton-prop}. 

Note that, by stationarity, 
	\[
  \E m(v_{j-1},s_{j-1};v_j,s_j)=\E m(\Delta v_j,\Delta s_j).
\]
By \eqref{eq:ideal-skeleton-prop}, \eqref{eq:case-x-not-huge}, and \eqref{eq:wlog},
\begin{equation}\label{eq:260207}
  \sum_{j=1}^k\E m(\Delta v_j,\Delta s_j){\le n\ol m(x,t)+C\iota kt\bigl(\frac{|x|^q}{t^q}+1\bigr)}\le (n+C\iota {k})\ol m(x,t).
\end{equation}

Define the total number of increments in $\cB^L_{x,t}$ by
    \[
    I^L=\{1\leq i\le k\,:\, (\Delta v_i,\Delta s_{i})\in \cB^L_{x,t}\}.
    \]
Similarly we define $I^E, I^S$ for $\cB_{x,t}^E$ and $\cB_{x,t}^S$, respectively. Note that 
\begin{equation}\label{eq:e-number}
  |I^E|\le \frac{nt}{t-1}\le 2n.
\end{equation}

By Lemma~\ref{lem:short-long-edges}\eqref{item:long-edges}, 
\begin{align*}
\sum_{j=1}^k\E m(\Delta v_j,\Delta s_j)\ge \sum_{j\in I^L}\ell_{x,t}(\Delta v_j,\Delta s_j)\ge \frac12|I^L|\ol m(x,t).
\end{align*}
This inequality, together with \eqref{eq:260207}, yields
\begin{equation}\label{eq:l-number}
    \frac12|I^L|\le n+C\iota k.
\end{equation}

Moreover, by Lemma~\ref{lem:short-long-edges}\eqref{item:short-edges},
    \begin{align*}
        \sum_{j=1}^k \E m(\Delta v_j,\Delta s_j) &= \sum_{j=1}^k \left(\ell_{x,t}(\Delta v_j,\Delta s_j)+\varepsilon_{x,t}(\Delta v_j,\Delta s_j)\right)\\
        &\geq \ell_{x,t}(nx,nt) +\frac12|I^S|t^{1/2}\varphi(t)\Lambda_{x,t}\\
        &=n\ol m(x,t)+\frac12|I^S|t^{1/2}\varphi(t)\Lambda_{x,t}.
    \end{align*}
This inequality, together with \eqref{eq:ideal-skeleton-prop}, yields
\begin{equation}\label{eq:s-number}
    \frac{1}{2}|I^S|\le 6\iota k.
\end{equation}
Combining \eqref{eq:e-number}, \eqref{eq:l-number}, \eqref{eq:s-number}, we obtain
\[
  k\le 1+|I^E|+|I^L|+|I^S|\le {4}n+C\iota k.
\]
Recall that $\iota =(\log M)^{-1}$. For $M$ big enough, $C\iota\le\tfrac{1}{2}$, and 
this completes our proof of Proposition~\ref{prop:skeleton-10n-points}.    
\end{proof}

As a direct consequence of Theorems \ref{thm:fluctuation-mxt} and \ref{thm:error-deterministic}, we have the convergence of $m(x,t)\to\overline{m}(x,t)$ as  $t\to\infty$, where $\ol m$ is a deterministic constant given in \eqref{eq:subadd-ergodic}.

\begin{cor}\lb{C.4.10}
Assume {\rm(A1)--(A2)} and \eqref{eq:gaussian-integ}. 
Then, for any $(x,t)\in\R^d\times[e,\infty)$ and $\lambda>0$, we have
\[
\bP\left(\left|\frac{m(x,t)-\ol m(x,t)}{t}\right|\geq  \lambda t^{-1/2}\psi(t)(\tfrac{|x|^q}{t^q}+1)\log(\tfrac{|x|}t+2)\right)\le C\exp\!\left(-c \lambda^{2/3}\right).
\]
\end{cor}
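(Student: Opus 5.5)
The plan is to split the error by the triangle inequality,
\[
|m(x,t)-\ol m(x,t)|\le |m(x,t)-\E m(x,t)|+|\E m(x,t)-\ol m(x,t)|,
\]
and bound the two pieces separately: the random fluctuation by Theorem~\ref{thm:fluctuation-mxt}\eqref{item:exp-integ-nu}, and the deterministic bias by Theorem~\ref{thm:error-deterministic}. Write $\Lambda_{x,t}=(\tfrac{|x|^q}{t^q}+1)\log(\tfrac{|x|}{t}+2)$ for the common scale. After dividing by $t$, the claim reads
\[
|m(x,t)-\ol m(x,t)|\ge \lambda t^{1/2}\psi(t)\Lambda_{x,t}.
\]

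The bias is handled first. By \eqref{eq:Em-mbar}, $0\le \E m(x,t)-\ol m(x,t)\le C t^{1/2}\psi(t)\Lambda_{x,t}$. Hence for $\lambda\ge 2C$ the event above is contained in
\[
\{|m(x,t)-\E m(x,t)|\ge \tfrac{\lambda}{2}t^{1/2}\psi(t)\Lambda_{x,t}\}.
\]
For $\lambda<2C$ the claim is trivial once the constant in front is taken so large that $C\exp(-c(2C)^{2/3})\ge 1$.

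For the fluctuation, apply Theorem~\ref{thm:fluctuation-mxt}\eqref{item:exp-integ-nu} with $t_0=t\ge e$. That theorem is stated for $t_0\ge2$, so the range $t\in[e,10]$ is covered directly, or handled by the basic bounds. Its threshold is
\[
\lambda' t^{1/2}\varphi(t)\Bigl(\tfrac{|x|^q}{t^q}+c^{-1}\log t\Bigr).
\]
The key comparison is
\[
\varphi(t)\Bigl(\tfrac{|x|^q}{t^q}+c^{-1}\log t\Bigr)\le C\,\varphi(t)\log t\,\Bigl(\tfrac{|x|^q}{t^q}+1\Bigr)\le C\,\psi(t)\Lambda_{x,t}.
\]
This uses $\log(\tfrac{|x|}{t}+2)\ge\log2$ and the inequality $\varphi(t)\log t\lesssim\psi(t)=\varphi(t)^{4\log\log t}$. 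The latter holds for $t$ large, and for $t$ in a bounded range it is absorbed into constants.

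Choosing $\lambda'=\lambda/(2C)$ then gives probability at most $C_1C_0\exp(-c_1(\lambda/2C)^{2/3})$, which is of the form $C\exp(-c\lambda^{2/3})$.

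The only point needing any care is the comparison between the two normalizations. The factor $\log t$ that multiplies the $\nu$-term in Theorem~\ref{thm:fluctuation-mxt} has to be absorbed into the slowly varying factor $\psi$ relative to $\varphi$; this works because $\psi/\varphi$ grows faster than any power of $\log t$. The remaining small-$t$ and small-$\lambda$ cases are handled by enlarging constants.
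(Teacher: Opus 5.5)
Your proof is correct and follows the paper's route. The paper states this corollary as a direct consequence of Theorem~\ref{thm:fluctuation-mxt}\eqref{item:exp-integ-nu} and Theorem~\ref{thm:error-deterministic}, combined exactly as you do via the triangle inequality, and your explicit check that $\varphi(t)\log t\lesssim\psi(t)$ (with the bounded range of $t$ near $e$, where $\psi<\varphi$, absorbed into constants) is the one detail the paper leaves implicit.
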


\section{Proof of the main results}\label{sec:proof of main results}
Without loss of generality, we assume \eqref{eq:wlog} in this section.
Recall that $\ol m$ is subadditive and positively $1$-homogeneous, and \eqref{eq:olLdef} says
\[
\ol L(v) = \ol m(v,1) \quad \text{ for } v\in \R^d.
\]
We first note that the effective Lagrangian satisfies the following growth property.

\begin{prop} 
Assume \eqref{eq:wlog}.
Then, there exists $C>0$ such that
\begin{equation}\label{eq:bound-L-bar}
  \frac{1}{N_1}|v|^q+1 \le \ol{L}(v) \le N_1 |v|^{q} +C.
\end{equation}
As a direct result, for each $p\in \R^d$, there exists a positive number $\tilde C_p$, so that
\begin{equation}
\label{eq:olHformula}
  \ol{H}(p) = \sup_{|v|\le  \tilde C_p} (p\cdot v - \ol{L}(v)) = \sup_{|v|\le  \tilde C_p} (p\cdot v - \ol{m}(v,1)).
\end{equation}
\end{prop}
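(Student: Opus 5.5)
The bound \eqref{eq:bound-L-bar} will come from the two-sided estimate \eqref{eq:wlog} on $\ol m$, using $\ol L(v)=\ol m(v,1)$ and positive $1$-homogeneity of $\ol m$. Since \eqref{eq:wlog} is only stated for $t\ge 1$, I will rescale: for $r\ge 1$,
\[
\ol L(v)=\ol m(v,1)=\frac{1}{r}\ol m(rv,r)\quad\text{and}\quad \frac{|rv|^q}{N_1 r^{q-1}}+r\le \ol m(rv,r)\le \E m(rv,r)\le \frac{N_1|rv|^q}{r^{q-1}}+Cr.
\]
Dividing by $r$ gives
\[
\frac{1}{N_1}|v|^q+1\le \ol L(v)\le N_1|v|^q+C.
\]
This is exactly \eqref{eq:bound-L-bar}, and $r=1$ already suffices.

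Equivalently, I could derive the bound from the deterministic a priori bounds \eqref{eq:m-upper-bd} and \eqref{eq:m-lower-bd} applied to $m(0,0;tv,t)$. Dividing by $t$ and letting $t\to\infty$ in \eqref{eq:olLdef}, the averages $\frac1t\int_0^t\nu_r\,\dd r$ converge to $\E\int_0^1\nu_r\,\dd r$ by the ergodic theorem, and the shift of $L$ adopted in \eqref{eq:wlog} gives the constant $1$ on the left. Citing \eqref{eq:wlog} directly is cleaner, so I will do that.

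For \eqref{eq:olHformula}, fix $p$. The sup defining $\ol H(p)$ in \eqref{eq:olHdef} is at least its value at $v=0$, namely $-\ol L(0)\ge -C$. For any $v$, the lower bound gives
\[
p\cdot v-\ol L(v)\le |p||v|-\frac{1}{N_1}|v|^q-1.
\]
Since $q>1$, this is $<-C$ once $|v|>\tilde C_p$, where $\tilde C_p$ depends only on $|p|,N_1,q,C$; for example $\tilde C_p=(2N_1|p|+N_1C)^{1/(q-1)}+1$ works after a short check. So such $v$ cannot contribute to the sup, and the sup may be restricted to $|v|\le\tilde C_p$. Finally, the replacement $\ol L(v)=\ol m(v,1)$ is just \eqref{eq:olLdef}.

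There is no real obstacle. The only points needing care are checking that \eqref{eq:wlog} applies at $t=1$, which it does since it is stated for $t\in[1,\infty)$, and choosing $\tilde C_p$ explicitly.
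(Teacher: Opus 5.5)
Your proof is correct and follows the paper's route: the paper also reads \eqref{eq:bound-L-bar} directly off \eqref{eq:wlog} at $t=1$ via $\ol L(v)=\ol m(v,1)$, and restricts the supremum in \eqref{eq:olHformula} using the superlinear lower bound. You simply fill in the details the paper leaves implicit. In particular, comparing with the value $-\ol L(0)\ge -C$ at $v=0$ and choosing $\tilde C_p=(2N_1|p|+N_1C)^{1/(q-1)}+1$ does work.
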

\begin{proof}
We have that \eqref{eq:bound-L-bar} follows directly from \eqref{eq:wlog}.
Equation \eqref{eq:olHformula} follows directly from the bounds in \eqref{eq:bound-L-bar}.
\end{proof}

Besides, in light of the Legendre transform \eqref{eq:olHformula} above, there exists $C>0$ such that
\begin{equation}\label{eq:H bar superlinear}
\frac{1}{C}|p|^{q'} -C \leq \ol H(p) \leq C|p|^{q'} + C \quad \text{ for all } p \in \R^d.
\end{equation}
In particular, $\ol H$ is coercive, which implies that $\ol u$, the solution to \eqref{eq:effHJ}, is globally Lipschitz.

\subsection{Proof of Theorem \ref{thm:main-1}}
We assume the settings of Theorem \ref{thm:main-1} in this subsection.
Now we quantify the rate of convergence in 
\begin{equation*}
  \lim_{t\to \infty} \frac{u(x,t,\omega)}{t} = -\ol{H}(0).
\end{equation*}
The representation formulas for the original equation \eqref{eq:HJ} and its effective equation read:
\begin{equation*}
    u(x,t,\omega) = \inf_{y\in \R^d} \left( g(y) + m(y,0;x,t,\omega) \right)
\end{equation*}
and 
\begin{equation*}
    \ol u(x,t) = \inf_{y\in \R^d} \left( g(y) + t\ol L\bigl(\frac{x-y}{t}\bigr) \right) = \inf_{y\in \R^d} \left( g(y) + \ol m(x-y,t) \right).
\end{equation*}
See \cite{tranbook}.
For $g\equiv 0$, the expression of $\ol u$ becomes
\begin{equation*}
    \ol u(x,t) = -t\sup_{y\in \R^d} \left(0\cdot \frac{x-y}{t} - \ol L\bigl(\frac{x-y}{t}\bigr)\right) = -t\ol H(0).
\end{equation*}
As a result, quantifying the convergence rate of $t^{-1}u(x,t)$ to $-\ol{H}(0)$ is precisely quantifying the averaged discrepancy between $u$ and $\ol{u}$:
\begin{equation*}
    \frac{u(x,t,\omega)}{t}+\ol H(0) = \frac{u(x,t,\omega)-\ol u(x,t)}{t}.
\end{equation*}
We have seen from \eqref{eq:olHformula} that, to represent $\ol{H}(0)$, the range of $v$ can be restricted to a compact set. This is also the case for the control representation of $u(x,t)$. 

\begin{lem} \label{lem:bound-C-t}
Let $\mathfrak{C}_t$ be the random variable defined by
\begin{equation}
\label{eq:frakC_t}
  \mathfrak{C}_t := \left(4N_1 \fint_0^t \nu_r\,\dd r\right)^{1/q}.
\end{equation}
Then the control representation for the solution to \eqref{eq:HJ} can be rewritten as
\begin{equation}
  \label{eq:ucontrol}
    u(x,t,\omega) = \inf_{|v| \le \mathfrak{C}_t} \left(m(x-tv,0;x,t,\omega)\right).
\end{equation}
\end{lem}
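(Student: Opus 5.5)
The proof compares the cost of a fast path with the cost of staying at the endpoint. Since $g\equiv 0$, the control representation reads
\[
u(x,t,\omega)=\inf_{y\in\R^d} m(y,0;x,t,\omega)=\inf_{v\in\R^d} m(x-tv,0;x,t,\omega),
\]
after substituting $y=x-tv$. We must show that velocities $|v|>\mathfrak C_t$ cannot beat the infimum over $|v|\le\mathfrak C_t$. In particular, $v=0$ already lies in the restricted set, so it suffices to show that every $v$ with $|v|>\mathfrak C_t$ has cost strictly larger than the stationary competitor $m(x,0;x,t)$.

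\textbf{Upper bound for $v=0$.} The stationary path $\gamma\equiv x$ gives, by the upper bound in \eqref{eq:L-bd},
\[
m(x,0;x,t)\le \int_0^t L(x,r,0)\,\dd r\le \int_0^t\nu_r\,\dd r .
\]
This is also the case $y=x$ of \eqref{eq:m-upper-bd}.

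\textbf{Lower bound for large $v$.} By the lower bound \eqref{eq:m-lower-bd}, applied with $|x-y|=t|v|$,
\[
m(x-tv,0;x,t)\ge \frac{t|v|^q}{N_1}-3\int_0^t\nu_r\,\dd r .
\]
If $|v|>\mathfrak C_t$, then $|v|^q>4N_1\fint_0^t\nu_r\,\dd r$. Hence the right-hand side exceeds
\[
4\int_0^t\nu_r\,\dd r-3\int_0^t\nu_r\,\dd r=\int_0^t\nu_r\,\dd r\ge m(x,0;x,t).
\]
So such $v$ never lower the infimum, and \eqref{eq:ucontrol} follows.

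\textbf{Main obstacle: the constant in \eqref{eq:m-lower-bd}.} Note that $\mathfrak C_t>0$ because $\nu\ge 1$. The only real check is the constant in front of $\int\nu$ in \eqref{eq:m-lower-bd}, which is $3$ from \eqref{eq:jensen} together with \eqref{eq:L-bd}. With the factor $4N_1$ in \eqref{eq:frakC_t}, this leaves a margin of exactly one copy of $\int_0^t\nu_r\,\dd r$, which is what absorbs the stationary cost.

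If the normalization \eqref{eq:wlog} is in force, $L$ has been shifted by a constant. The shift changes both competitors by the same amount $Ct$, so the comparison is unaffected. Alternatively, one can apply the argument to the original $L$ and $\nu$.
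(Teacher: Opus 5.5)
Your proposal is correct and is essentially the paper's proof. The paper simply invokes the control formula together with \eqref{eq:m-upper-bd} (with $y=x$, which bounds the stationary competitor by $\int_0^t\nu_r\,\dd r$) and \eqref{eq:m-lower-bd} (which shows that any $y$ with $|x-y|>t\mathfrak C_t$ costs more than that); you carry out exactly this comparison explicitly, and your remark that the constant shift in \eqref{eq:wlog} changes all competitors equally is a correct clarification that the paper leaves implicit.
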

\begin{proof} Our goal is to show
\begin{equation*}
    u(x,t,\omega) =\inf_{y\in \R^d} m(y,0;x,t,\omega)= \inf_{|x-y|/t\le \mathfrak{C}_t} m(y,0;x,t,\omega).
\end{equation*}
This follows from the aforementioned control formula for $u$ and the basic estimates \eqref{eq:m-upper-bd} and \eqref{eq:m-lower-bd} of $m$.
\end{proof}

Similarly, by \eqref{eq:bound-L-bar},  the expression of $\ol u(x,t) = -t\ol{H}(0)$ can be further replaced by
\begin{equation}
\label{eq:olHcontrol}
    \ol{H}(0) = \sup_{v\in \R^d} \{-\ol m(v,1)\} = -\frac{1}{t} \inf_{|v| \le \tilde C_0} \ol{m}(tv,t).
\end{equation}
Note that we can enlarge $\frakC_t$ in \eqref{eq:frakC_t} and enlarge $\tilde C_0$ in \eqref{eq:olHcontrol}, and the two representation formulas still hold. 

\medskip

The following estimate for the random variable defined in \eqref{eq:frakC_t} will be useful.
\begin{lem} \label{lem:A1-A2}
Assume $\E[\exp(c\int_0^1 \nu_r\,\dd r)] < \infty$ for some $c>0$. There exist some constants $C>0$ such that 
for any $K>C$ and $t\ge a>0$, 
\begin{equation}
\label{eq:calA_tail}
\bP\left(\sup_{w\in [0,t-a]}\fint_w^t \nu_r \,\dd r > K\right) \le C\exp\bigl(- \frac{cKa}{6} \bigr).
\end{equation}
\end{lem}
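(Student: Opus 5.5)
The bound \eqref{eq:calA_tail} will come from a dyadic (or unit-block) decomposition of the window $[w,t]$, a union bound, and the exponential moment assumption together with the unit-range dependence in time. Since $w\le t-a$, every window $[w,t]$ has length $\ell:=t-w\ge a$. Write $X_j:=\int_{t-j-1}^{t-j}\nu_r\,\dd r$ for integers $j\ge 0$; on a short leftover piece we bound by the adjacent full unit block, using $\nu\ge 0$. Then for $\ell\ge 1$ we have
\[
\fint_w^t\nu_r\,\dd r\le \frac{1}{\ell}\sum_{j=0}^{\lceil \ell\rceil-1}X_j\le \frac{2}{\lceil\ell\rceil}\sum_{j=0}^{\lceil \ell\rceil-1}X_j .
\]
So the event $\{\sup_w \fint_w^t\nu_r\,\dd r>K\}$ is contained in the union over integers $L\ge \lceil a\rceil$ of $E_L:=\{\sum_{j<L}X_j>KL/2\}$. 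If $a<1$, windows of length less than $1$ need separate care. There the average over $[w,t]$ of length $\ell<1$ is not controlled by unit blocks, since $\nu$ may be large on a short interval. I would handle this by noting that the claim only needs a bound of the form $\exp(-cKa/6)$. For $\ell<1$ I would use $\fint_w^t\nu_r\,\dd r\le \ell^{-1}X_0$ and bound $\bP(X_0>K\ell)\le \bP(X_0>Ka)$, which is fine since $\ell\ge a$. This is the step I am least sure of: the supremum over a continuum of $\ell\in[a,1)$ must reduce to the single event $\{X_0>Ka\}$, and that reduction works because $\ell\fint_w^t\nu\le X_0$ and $K\ell\ge Ka$.

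For each $E_L$ I split the sum into even and odd $j$. By the unit-range dependence in time, $\{X_j\}_{j \text{ even}}$ and $\{X_j\}_{j\text{ odd}}$ are each independent families. Stationarity gives $\E e^{cX_j}=\E e^{c\int_0^1\nu}=:C_0<\infty$. The exponential Chebyshev inequality with parameter $c/2$ applied to each half-sum, which exceeds $KL/4$ on the relevant event, then gives
\[
\bP(E_L)\le 2\,C_0^{L}\,e^{-cKL/8}=2\exp\bigl(-L(cK/8-\log C_0)\bigr).
\]
Choose $C$ so that $K>C$ forces $\log C_0\le cK/48$. Then $\bP(E_L)\le 2e^{-cKL/12}$.

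Summing the geometric series over $L\ge\lceil a\rceil\ge a$ gives at most $C e^{-cKa/12}$. This is weaker than the stated $cKa/6$, so the constants need adjusting to reach the exponent $6$. I would sharpen the argument by not doubling in the first display, since $\lceil\ell\rceil\le \ell+1\le 2\ell$ only for $\ell\ge1$, and by using Hölder in place of the even/odd split, so that the exponent becomes $cK L/6$ after absorbing $\log C_0$ for large $K$. Alternatively, I would simply accept that the "$6$" is a bookkeeping constant and check where the factors of $2$ are lost. Adding the $a<1$ term, $\bP(X_0>Ka)\le C_0e^{-cKa}$, finishes the proof. The main obstacle is this constant tracking, together with the continuum supremum near short windows.
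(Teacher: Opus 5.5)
Your scheme (exponential Chebyshev, independence of alternate unit blocks from the unit-range dependence, a union bound over discretized window lengths, a geometric sum) is the same mechanism the paper uses. Your short-window step is correct, despite your hesitation: for $\ell<1$ one has $\int_w^t\nu_r\,\dd r\le X_0$ and $K\ell\ge Ka$, so the whole continuum reduces to $\{X_0>Ka\}$, of probability at most $C_0e^{-cKa}$ (with your $C_0=\E e^{c\int_0^1\nu_r\,\dd r}$).

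The gap is that, as written, you prove the tail bound only with exponent $5cKa/48$, which is weaker than the stated $cKa/6$. The statement ties the exponent to the $c$ of the hypothesis, so you cannot declare the $6$ a bookkeeping constant. The loss does not come from the doubling $\lceil\ell\rceil\le 2\ell$; the paper loses the same factor $2$ by passing to the threshold $K/2$. It comes from using the Chebyshev parameter $c/2$ on the half-sums. Each half-sum $S_{\mathrm e}=\sum_{j<L,\ j\ \mathrm{even}}X_j$ is a sum of independent variables distributed as $\int_0^1\nu_r\,\dd r$, so parameter $c$ is allowed:
\[
\bP\bigl(S_{\mathrm e}>KL/4\bigr)\le e^{-cKL/4}\prod_{j<L,\ j\ \mathrm{even}}\E e^{cX_j}\le C_0^{L}e^{-cKL/4},
\]
and likewise for the odd half. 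Hence $\bP(E_L)\le 2C_0^Le^{-cKL/6}\cdot e^{-cKL/12}\le 2e^{-cKL/6}$ once $\log C_0\le cK/12$. Summing over $L\ge\lceil a\rceil\vee 1\ge a$ gives at most $4e^{-cKa/6}$ for $K$ large. Adding the short-window term $C_0e^{-cKa}$ yields the lemma. Your other suggested repair also works: apply Cauchy--Schwarz across the even/odd split, bound the moment of the full sum at parameter $c/2$ by $C_0^{L/2}$, and get $C_0^{L/2}e^{-cKL/4}$. This is exactly what the paper does.

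The remaining difference from the paper is the discretization. The paper proves a Chernoff bound for windows of arbitrary real length and takes lengths $aj$, $j\ge 1$, with threshold $K/2$. This avoids a separate short-window case, but its geometric series has ratio $e^{-cKa/6}$, which is why the paper first reduces to $Ka\ge 1$. Your unit-length discretization gives ratio $e^{-cK/6}$ independent of $a$, so no such reduction is needed, at the cost of treating $\ell<1$ separately.
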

This is essentially Chernoff's bound for variables with a finite-range of dependence. The proof, which is standard, can be found in Appendix~\ref{asec:proof-chernoff}.

As a consequence of Theorems \ref{thm:sp-reg-t}--\ref{thm:time-reg} and Lemma \ref{lem:bound-C-t}, we immediately have the following regularity estimates, which can be of independent interest.

\begin{thm}\label{thm:reg-u-nearly-Lip}
    Let $u$ be the solution to \eqref{eq:HJ}.
    Then, for any $x,y\in\R^d$ and $0<s<t$, we have
  \begin{equation*}
  \begin{aligned}
  |u(x,t)-u(y,t)|\lesssim
|x-y|\varphi\bigl(\frac{t}{|x-y|}\bigr) &\left( \frac{|x-y|^q}{t^q}+\sup_{w\leq t-|x-y|} \fint_{w\vee 0}^t \nu_r\,\dd r \right.\\
&\qquad \qquad \left.+ \sup_{w\geq |x-y|} \fint_0^{w\wedge t} \nu_r\,\dd r
\right),
\end{aligned}
\end{equation*}
and
\[
-\int_s^t\nu_r\,\dd r\le u(x,s)-u(x,t)\lesssim (t-s)\varphi\bigl(\frac{t}{t-s}\bigr) \bigl(\frac{t}{s}\bigr)^{q-1} \sup_{w\leq s} \fint_{w \vee 0}^t \nu_r\,\dd r.
\]
\end{thm}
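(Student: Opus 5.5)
Since $u(\cdot,0)\equiv 0$, the control formula gives $u(x,t)=\inf_{y\in\R^d} m(y,0;x,t)$. The plan is to transfer the regularity of $m$ (Theorems \ref{thm:sp-reg-t} and \ref{thm:time-reg}) to $u$ by comparing infima. For a fixed starting point $y$, the function $m(y,0;\cdot,\cdot)$ is the metric built from the shifted environment $T_{y,0}\omega$, so both theorems apply to it with the same $\nu$, because $\nu$ depends only on time. The only thing that must be controlled is the $|x-y_*|^q/t^q$ part of $A_{\cdot,t,\cdot}$ coming from the relative displacement of the minimizer, and Lemma \ref{lem:bound-C-t} is what controls it.

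\textbf{Spatial estimate.} Fix $x,y$ and choose $z$ attaining the infimum for $u(x,t)$. Such a $z$ exists by coercivity and continuity, and by Lemma \ref{lem:bound-C-t} it satisfies $|x-z|\le t\,\frak C_t$, so $|x-z|^q/t^q\le 4N_1\fint_0^t\nu_r\,\dd r$. Then
\[
u(y,t)-u(x,t)\le m(z,0;y,t)-m(z,0;x,t).
\]
Apply Theorem \ref{thm:sp-reg-t} in the shifted environment $T_{z,0}\omega$, using the improved form noted after Corollary \ref{cor:Holder}. This bounds the right-hand side by
\[
|x-y|\varphi\bigl(\tfrac{t}{|x-y|}\bigr)\Bigl(\tfrac{|x-y|^q}{t^q}+\tfrac{|x-z|^q}{t^q}+\sup_{w\le t-|x-y|}\fint_{w\vee0}^t\nu_r\,\dd r\Bigr).
\]
The middle term is at most $C\fint_0^t\nu_r\,\dd r$, and this in turn is at most the term $\sup_{w\ge|x-y|}\fint_0^{w\wedge t}\nu_r\,\dd r$ (take $w=t$). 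This explains where the second supremum in the statement comes from. One caveat is that Theorem \ref{thm:sp-reg-t} presupposes $|x-y|<t$. When $|x-y|\ge t$, I would fall back on the crude bounds \eqref{eq:m-upper-bd} and \eqref{eq:m-lower-bd}, exactly as in \eqref{eq:best-sp-reg-metric-0}. Exchanging the roles of $x$ and $y$, with $z$ chosen optimal for $y$ and then $|y-z|^q/t^q\lesssim\fint_0^t\nu_r\,\dd r$, gives the absolute value.

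\textbf{Time estimate.} For the lower bound, take $z$ optimal for $u(x,s)$. Then $u(x,t)\le m(z,0;x,t)\le m(z,0;x,s)+\int_s^t\nu_r\,\dd r$ by \eqref{eq:251207-3}, which gives $u(x,s)-u(x,t)\ge-\int_s^t\nu_r\,\dd r$. For the upper bound, take $z$ optimal for $u(x,t)$. Then $u(x,s)-u(x,t)\le m(z,0;x,s)-m(z,0;x,t)$, and Theorem \ref{thm:time-reg} bounds this by
\[
(t-s)\varphi\bigl(\tfrac{t}{t-s}\bigr)\bigl(\tfrac ts\bigr)^{q-1}\Bigl(\tfrac{|x-z|^q}{t^q}+\sup_{w\le s}\fint_{w\vee0}^t\nu_r\,\dd r\Bigr).
\]
Since $|x-z|\le t\frak C_t$, the first term in the parenthesis is at most $4N_1\fint_0^t\nu_r\,\dd r$. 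That quantity is the $w=0$ case of the supremum, so it is absorbed.

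The main obstacle I expect is the bookkeeping of where the starting point sits. The estimates for $m$ are stated with the origin as the start, so I must verify that translating in space leaves the relevant $\nu$-averages unchanged; this holds because $\nu_t$ depends only on $t$. I must also check that the variant of $A$ in Theorem \ref{thm:sp-reg-t} matches the ranges $w\le t-|x-y|$ and $w\ge|x-y|$ in the statement. Beyond that, the degenerate regimes $|x-y|\ge t$ (spatial estimate) and $s\le t/2$ (time estimate) should be handled by the crude bounds, as the earlier proofs do.
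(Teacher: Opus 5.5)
Your proposal is correct and follows the route the paper indicates ("a consequence of Theorems \ref{thm:sp-reg-t}--\ref{thm:time-reg} and Lemma \ref{lem:bound-C-t}"). You write $u(x,t)=\inf_z m(z,0;x,t)$, fix a (near-)minimizer $z$ with $|x-z|\le t\,\mathfrak{C}_t$, apply the $m$-estimates in the spatially shifted environment, and absorb $|x-z|^q/t^q\lesssim\fint_0^t\nu_r\,\dd r$ into the $\nu$-averages. A few side remarks are inaccurate but harmless. That average is already the $w\le 0$ case of the first supremum, so the second supremum is not really needed. Theorem \ref{thm:sp-reg-t} is stated for all $x,y$, so the separate treatment of $|x-y|\ge t$ is optional. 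Near-minimizers in the ball suffice if you want to avoid proving that an exact minimizer exists.
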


Now we are ready to prove Theorem \ref{thm:main-1}. 

\begin{proof}[Proof of Theorem \ref{thm:main-1}]
  Denote by $B^\dsc_R=\{x\in\Z^d:|x|<R\}$ the discrete ball with $\ell^2$-radius $R$ and let $B^\dsc_R(x)=x+B_R^\dsc$. We write
  \[
 \widetilde{\frakC}_t:=\frakC_t + \tilde C_0. 
  \]
  
 \noindent {\bf Step 1.} (Pointwise upper bound) First, we show that \begin{equation}\label{eq:lowb-error1}
 \mb  P(u(x,t) + t\ol{H}(0)\ge \lambda\psi(t)t^{1/2})
  \le C\exp (-c\lambda^{2/3})
\end{equation}
  for any $\lambda>0$, $t\ge C$, and $x\in\R^d$.
 
 By \eqref{eq:ucontrol}, \eqref{eq:olHformula}, and the homogeneity of $\ol{m}$,
  \begin{align*}
u(x,t) + t\ol{H}(0) &= \inf_{|v|\le \widetilde\frakC_t} m(x-tv,0;x,t) - \inf_{|v|\le \widetilde\frakC_t} \ol{m}(tv,t). 
\end{align*}
We let $v_0\in \R^d$ denote a deterministic point with $|v_0|\le\tilde C_0$ such that $\ol m(v_0,1)=\inf_{v\in\R^d}\ol m(v,1)$.
Hence, 
\begin{align*}
u(0,t) + t\ol{H}(0)
&\le m(-tv_0,0;0,t)-\ol m(tv_0,t)\\
&\le m(-tv_0,0;0,t)-\E m(tv_0,t)+Ct^{1/2}\psi(t).
\end{align*}
where we used \eqref{eq:Em-mbar} in the last inequality. 
Thus, using Theorem~\ref{thm:fluctuation-mxt}\eqref{item:exp-integ-nu}, for any $\lambda\ge 2C$, $t\ge C$,
\begin{align*}
&\mb P(u(0,t) + t\ol{H}(0)\ge \lambda\psi(t)t^{1/2})\\
&\le\mb P(m(-tv_0,0;0,t)-\E m(tv_0,t)\ge (\lambda-C)\psi(t)t^{1/2})\\
&\le \mb P(m(tv_0,t)-\E m(tv_0,t)\ge \tfrac{\lambda}{2}\psi(t)t^{1/2})\\
&\le C\exp (-c\lambda^{2/3}).
\end{align*} 
Estimate \eqref{eq:lowb-error1} follows by the stationarity of the environment.

\medskip 
 \noindent  {\bf Step 2.} (Pointwise lower bound) Recall that $
B_R^\dsc=B_R\cap\Z^d$. We will show that 
\begin{align}\label{eq:discretize-cfrak}
 \MoveEqLeft u(x,t) + t\ol{H}(0)\\
 &\ge 
  -\sup_{v\in B_{t\tilde\frakC_t}^\dsc}\Abs{m(x-v,0;x,t)-\E m(v,t)}-C\varphi(t)\sup_{w\ge 1}\avint_0^{w\wedge t}\nu_r\dd r
  \nonumber
  \end{align}
for any $x\in \R^d$. Indeed,
\begin{align*}
u(x,t) + t\ol{H}(0)&=\inf_{|v|\le \widetilde\frakC_t} m(x-tv,0;x,t) - \inf_{|v|\le \widetilde\frakC_t}\ol m(tv,t)\\
&\ge \inf_{|v|\le \widetilde\frakC_t} m(x-tv,0;x,t) - \inf_{|v|\le \widetilde\frakC_t}\E m(tv,t)\\
&\ge -\sup_{|v|\le t\tilde\frakC_t}\Abs{m(x-v,0;x,t)-\E m(v,t)}
\end{align*}
Let us discretize $B_{t\widetilde\frakC_t}$. Note that for any $v\in B_{t\tilde\frakC_t}$, we can find $z=z(v)\in B_{t\tilde\frakC_t}^\dsc$ with $1\le \|v-z\|_\infty\le 2$, where $\|\cdot\|_\infty$ denotes the maximum distance in $\bbR^d$. Then, by Theorem~\ref{thm:sp-reg-t} and by reversing the time as done in \eqref{revrese}, 
\begin{align*}
\MoveEqLeft\abs{m(x-v,0;x,t)-m(x-z,0;x,t)}\\
&\lesssim \varphi(t)\left(\frakC_t^q+\sup_{w\ge 1}\avint_0^{w\wedge t}\nu_r\dd r\right)\lesssim \varphi(t)\sup_{w\ge 1}\avint_0^{w\wedge t}\nu_r\dd r.
\end{align*}
This bound holds uniformly for all $v\in t\tilde\frakC_t$ and $x\in\R^d$. Taking expectations yields
\[
  \abs{\E m(x-v,0;x,t)-\E m(x-z,0;x,t)}\lesssim \varphi(t).
\]
Display \eqref{eq:discretize-cfrak} follows.

\medskip 
 \noindent{\bf Step 3.} 
We will prove that, for $x\in\R^d$, $\lambda\ge 1, t\ge C$,
\begin{equation}\label{eq:upper-tail}
  \mb P(u(x,t) + t\ol{H}(0)\le -\lambda t^{1/2}\psi(t))\le C\exp(-c\lambda^{2/5}).
\end{equation}
Set $\theta=\frac{2}{5q}$. For any $x\in\R^d$, $\lambda>C, t>C$, 
\begin{align}\label{eq:dsc-sup-m-em}
&\mb P\Bigl(\sup_{v\in B^\dsc_{t\tilde\frakC_t}}\Abs{m(x-v,0;x,t)-\E m(v,t)}	\ge \lambda t^{1/2}\psi(t)
\Bigr)\nonumber\\
&=\mb P\Bigl(\sup_{v\in B^\dsc_{t\tilde\frakC_t}}\Abs{m(v,t)-\E m(v,t)}	\ge \lambda t^{1/2}\psi(t)
\Bigr)\nonumber\\
&\le \mb P\Bigl(\sup_{v\in B^\dsc_{t\lambda^\theta \varphi(t)}}\Abs{m(v,t)-\E m(v,t)}	\ge \lambda t^{1/2}\psi(t)
\Bigr)+\mb P(\tilde\frakC_t\ge\lambda^\theta \varphi(t))\nonumber\\
&\le \sum_{v\in B^\dsc_{t\lambda^\theta \varphi(t)}}\mb P\Bigl(\abs{m(v,t)-\E m(v,t)}	\ge \lambda t^{1/2}\psi(t)
\Bigr)+\exp(-c\lambda^{\theta q}\varphi^q(t))
\end{align}
where in the first equality we used the (spatial-)translation invariance of the environment measure. 
Noting that $(1-\theta q)2/3=\theta q=2/5$, we get
\begin{align*}
\MoveEqLeft \sum_{v\in B^\dsc_{t\lambda^\theta \varphi(t)}}\mb P\Bigl(\abs{m(v,t)-\E m(v,t)}	\ge \lambda t^{1/2}\psi(t)
\Bigr)\\
&\le \sum_{v\in B^\dsc_{t\lambda^\theta \varphi(t)}}\mb P\Bigl(\abs{m(v,t)-\E m(v,t)}	\ge \lambda^{1-\theta q} t^{1/2}\varphi^4(t)(\tfrac{|v|^q}{t^q}+\log t)
\Bigr)\\
&\stackrel{Theorem~\ref{thm:fluctuation-mxt}}\le (t\lambda^\theta \varphi(t))^d\exp (-c\lambda^{(1-\theta q)2/3}\varphi^2(t))\lesssim \exp(-c\lambda^{2/5}\varphi(t)).
\end{align*}
The above inequality, together with \eqref{eq:dsc-sup-m-em}, yields
\begin{align*}
&\mb P\Bigl(\sup_{v\in B^\dsc_{t\tilde\frakC_t}}\abs{m(x-v,0;x,t)-\E m(v,t)}	\ge \lambda t^{1/2}\psi(t)
\Bigr)\\
&\lesssim \exp(-c\lambda^{2/5}\varphi(t))+\exp(-c\lambda^{\theta q}\varphi^q(t))
\asymp\exp(-c\lambda^{2/5}\varphi(t)).
\end{align*}
This inequality and \eqref{eq:discretize-cfrak} imply, for any $x\in\R^d$, $\lambda>C, t>C$,
\begin{align*}
\MoveEqLeft\mb P(u(x,t) + t\ol{H}(0)\le -\lambda t^{1/2}\psi(t))\\
&\le\mb P\Bigl(\sup_{v\in B^\dsc_{t\tilde\frakC_t}}\abs{m(x-v,0;x,t)-\E m(v,t)}	\ge \frac{1}{2}\lambda t^{1/2}\psi(t)
\Bigr)\\
&\qquad+\mb P\left(C\varphi(t)\sup_{w\ge 1}\avint_0^{w\wedge t}\nu_r\dd r\ge \frac{1}{2}\lambda t^{1/2}\psi(t)\right)\\
&\lesssim 
\exp(-c\lambda^{2/5}\varphi(t)).
\end{align*}
We have proved \eqref{eq:upper-tail}.

\medskip 
 \noindent{\bf Step 4.} (Discretization of $B_R$)  
Since 
for any $x\in B_R$, we can find $y=y(x)\in B_R^\dsc$ with $1\le |x-y|\le 2$,
by Theorem~\ref{thm:reg-u-nearly-Lip}, we get, for all such $x,y\in\R^d$,
\begin{align*}
 |u(x,t)-u(y,t)|\lesssim \frac{\varphi(t)}{t^q}+\varphi(t)\left(\sup_{w\leq t-1} \fint_{w\vee 0}^t \nu_r\,\dd r + \sup_{w\geq 1} \fint_0^{w\wedge t} \nu_r\,\dd r\right).
\end{align*}
Hence we have
\begin{align}\label{eq:discretize-BR}
&\Abs{\sup_{x\in B_R}\Bigl(u(x,t) + t\ol{H}(0)\Bigr)-\sup_{x\in B_R^\dsc}\Bigl(u(x,t)+ t\ol{H}(0)\Bigr)
}\nonumber\\
&\le \frac{C\varphi(t)}{t^q}+C\varphi(t)\left(\sup_{w\leq t-1} \fint_{w\vee 0}^t \nu_r\,\dd r + \sup_{w\geq 1} \fint_0^{w\wedge t} \nu_r\,\dd r\right)
\end{align}
and \eqref{eq:discretize-BR} still holds when both $\sup$'s on the left side are replaced by $\inf$'s.

\medskip 
 \noindent{\bf Step 5.}  By \eqref{eq:discretize-BR} and \eqref{eq:lowb-error1}, for any $\lambda\ge 1, t\ge C, R\ge 2$,
\begin{align*}
\MoveEqLeft\mb P\left(\sup_{x\in B_R}\bigl(u(x,t)+ t\ol{H}(0)\bigr)\ge  (\log R)^{3/2}\lambda t^{1/2}\psi(t)\right)\\
&\le \mb P\left(\sup_{x\in B_R^\dsc}\bigl(u(x,t)+ t\ol{H}(0)\bigr)\ge  \tfrac13(\log R)^{3/2}\lambda t^{1/2}\psi(t)\right)\\
&\qquad +\mb P\left(C\varphi(t)\Bigl(\sup_{w\leq t-1} \fint_{w\vee 0}^t \nu_r\,\dd r + \sup_{w\geq 1} \fint_0^{w\wedge t} \nu_r\,\dd r\Bigr) \ge \tfrac13(\log R)^{3/2}\lambda t^{1/2}\psi(t)
\right)\\
&\stackrel{\eqref{eq:calA_tail}}\lesssim \sum_{x\in B_R^\dsc}\mb P\left(u(x,t)+ t\ol{H}(0)\ge  \tfrac13(\log R)^{3/2}\lambda t^{1/2}\psi(t)\right)+\exp(-c\lambda t^{1/2})\\
&\lesssim R^d\exp (-c(\log R)\lambda^{2/3})\lesssim \exp(-c\lambda^{2/3}).
\end{align*}
This tells us that, for any $t\ge C, R\ge 2$, almost surely, 
\[
  \sup_{x\in B_R}\bigl(u(x,t)+ t\ol{H}(0)\bigr)\le (\log R)^{3/2}t^{1/2}\psi(t)\mathcal X_1
\]
 for some random variable $\mathcal X_1$ with $\E[\exp(c\mathcal X_1^{2/3})]\le C$.  
 
\medskip 
 \noindent {\bf Step 6.} Similarly, applying \eqref{eq:discretize-BR} to infimums,
 \begin{align*}
\MoveEqLeft\mb P\left(-\inf_{x\in B_R}\bigl(u(x,t)+ t\ol{H}(0)\bigr)\ge  (\log R)^{5/2}\lambda t^{1/2}\psi(t)\right)\\
&\le \mb P\left(-\inf_{x\in B_R^\dsc}\bigl(u(x,t)+ t\ol{H}(0)\bigr)\ge  \tfrac13(\log R)^{5/2}\lambda t^{1/2}\psi(t)\right)\\
&\qquad +\mb P\left(C\varphi(t)\Bigl(\sup_{w\leq t-1} \fint_{w\vee 0}^t \nu_r\,\dd r + \sup_{w\geq 1} \fint_0^{w\wedge t} \nu_r\,\dd r \Bigr)\ge \tfrac13(\log R)^{5/2}\lambda t^{1/2}\psi(t)
\right)\\
&\stackrel{\eqref{eq:calA_tail}}\lesssim \sum_{x\in B_R^\dsc}\mb P\left(\abs{u(x,t)+ t\ol{H}(0)}\ge  \tfrac13(\log R)^{5/2}\lambda t^{1/2}\psi(t)\right)+\exp(-c\lambda t^{1/2})\\
&\stackrel{\eqref{eq:upper-tail}}\lesssim R^d\exp (-c(\log R)\lambda^{2/5})\lesssim \exp(-c\lambda^{2/5}).
\end{align*}
This tells us that, for any $t\ge C, R\ge 2$, almost surely,
\[
  \inf_{x\in B_R}\bigl(u(x,t)+ t\ol{H}(0)\bigr)\ge -(\log R)^{5/2}t^{1/2}\psi(t)\mathcal X_2
\]
 for some random variable $\mathcal X_2$ with $\E[\exp(c\mathcal X_2^{2/5})]\le C$.
\end{proof}


\subsection{Proof of Theorem \ref{thm:main-2}}

Using the optimal control formula (see \cite{tranbook}) and the fact that $|g(y)-g(0)|\leq C|y|$, and by the same argument as in the proof of Lemma \ref{lem:bound-C-t}, we imply that
\begin{align*}
  u^\ep(x,t)&=\inf_{y\in\R^d}\left(g(y)+\varepsilon m(\frac{y}{\varepsilon},0;\frac{x}{\ep},\frac t\ep) \right)\\
  &=\inf_{|y|\leq (\frakC_{t/\ep} + C)t} \left( g(x-y) + \ep m\bigl(\frac{x-y}{\ep},0;\frac x\ep,\frac{t}{\ep}\bigr)\right).
\end{align*}
The optimal control formula for $u$ also gives
\begin{align*}
    \ol u(x,t)
    &=\inf_{y\in \R^d} \left( g(y) + \ol m(x-y,t)\right)\\
    &=\inf_{|y|\leq \tilde C_0 t} \left( g(x-y) + \ol m(y,t)\right).
\end{align*}
Note that in this proof the generic constants $C$ also depends on $\norm{g}_{\rm Lip}$. We write 
  \[
 \widetilde{\frakC}_t:=\frakC_t + C+ \tilde C_0. 
  \]

Using those representations, and as a consequence of Theorems \ref{thm:sp-reg-t}--\ref{thm:time-reg}, we have the following regularity estimates, analogous to Theorem \ref{thm:reg-u-nearly-Lip}, for the solutions to the oscillatory equations \eqref{eq:HJ-ep}. 

\begin{thm}\label{thm:reg-uep-nearly-Lip}
    For each $\eps \in (0,1)$, let $u^\eps$ be the solution to \eqref{eq:HJ-ep}.
    Then, for any $x,y\in\R^d$ and $0<s<t$, we have
  \begin{equation*}
  \begin{aligned}
  |u^\eps(x,t)-u^\eps(y,t)| &\lesssim [g]_{\rm Lip}|x-y| + 
|x-y|\varphi\bigl(\frac{t}{|x-y|}\bigr)\times \\
&\left(\frac{|x-y|^q}{t^q} +\sup_{w\leq t-|x-y|} \fint_{(w\vee 0)/\ep}^{t/\ep} \nu_r\,\dd r + +\sup_{w\geq |x-y|} \fint_0^{(w\wedge t)/\ep} \nu_r\,\dd r\right)\\
\end{aligned}
\end{equation*}
and
\[
-\int_s^t\nu_r\,\dd r\le u^\eps(x,s)-u^\eps(x,t)\lesssim (t-s)\varphi\bigl(\frac{t}{t-s}\bigr) \bigl(\frac{t}{s}\bigr)^{q-1} \sup_{w\leq s} \fint_{(w \vee 0)/\eps}^{t/\eps} \nu_r\,\dd r.
\]
\end{thm}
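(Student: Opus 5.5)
The plan is to reduce the estimates for $u^\ep$ to the already established regularity of the unscaled metric $m$, namely Theorems~\ref{thm:sp-reg-t}--\ref{thm:time-reg}, using the control representation
\[
u^\ep(x,t)=\inf_{y\in\R^d}\Bigl(g(x-y)+\ep\, m\bigl(\tfrac{x-y}{\ep},0;\tfrac{x}{\ep},\tfrac{t}{\ep}\bigr)\Bigr).
\]
The scaling identity doing the work is that $\ep\, m(\cdot/\ep,\cdot/\ep)$ is the metric for the Lagrangian $L(\tfrac{x}{\ep},\tfrac{t}{\ep},v)$, whose random bound is $\nu_{r/\ep}$. Averages $\fint_a^b\nu_{r/\ep}\,\dd r$ therefore become $\fint_{a/\ep}^{b/\ep}\nu_r\,\dd r$, and ratios such as $t/|x-y|$ are scale invariant. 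This is why $\varphi(t/|x-y|)$ appears unchanged while the $\nu$-averages are over $[\cdot/\ep,\cdot/\ep]$.

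For the spatial estimate, I fix $x,y$ and let $z$ be a minimizer for $u^\ep(x,t)$; a minimizer exists by the coercivity in \eqref{eq:m-lower-bd}. I then use the competitor $z'=z+(y-x)$ for $u^\ep(y,t)$, which gives
\[
u^\ep(y,t)-u^\ep(x,t)\le g(y-z)-g(x-z)+\ep\bigl[m(\tfrac{y-z}{\ep},0;\tfrac{y}{\ep},\tfrac t\ep)-m(\tfrac{x-z}{\ep},0;\tfrac x\ep,\tfrac t\ep)\bigr].
\]
The $g$-difference is zero here, since $y-z'=x-z$. To handle the shifted starting point, I would instead take $z'$ equal to $z$ itself, so that only the endpoint changes; this produces the $[g]_{\rm Lip}$ term through a cleaner split. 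One step moves the endpoint from $x/\ep$ to $y/\ep$ at final time $t/\ep$, and Theorem~\ref{thm:sp-reg-t} applies directly after a spatial shift. If instead the starting point is moved, I reverse time as in \eqref{revrese} and apply Theorem~\ref{thm:sp-reg-t} to $\tilde m$; this produces the average $\sup_{w\ge|x-y|}\fint_0^{(w\wedge t)/\ep}\nu_r\,\dd r$. In either case the $|x|^q/t^q$ part of $A$ is controlled by $|z|\le\widetilde{\frakC}_{t/\ep}t$. I must check that this is absorbed: by \eqref{eq:frakC_t}, $\frakC^q$ is itself bounded by a $\nu$-average over $[0,t/\ep]$, so it is dominated by the displayed suprema. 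Multiplying by $\ep$ then converts $\tfrac{|x-y|}{\ep}\varphi(\cdot)$ back to $|x-y|\varphi(t/|x-y|)$. Exchanging $x$ and $y$ gives the absolute value.

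For the time estimate, the lower bound follows by extending the optimal path for $u^\ep(x,s)$ by staying at $x$ on $[s,t]$, exactly as in \eqref{eq:251207-3}. This costs at most $\int_s^t\nu_{r/\ep}\,\dd r$ in scaled form. For the upper bound, I take a minimizer $z$ for $u^\ep(x,t)$ and use the same $z$ for $u^\ep(x,s)$. This reduces the claim to bounding $m(\tfrac{x-z}{\ep},0;\tfrac x\ep,\tfrac s\ep)-m(\tfrac{x-z}{\ep},0;\tfrac x\ep,\tfrac t\ep)$, which Theorem~\ref{thm:time-reg} gives after a spatial shift. Again $|z|/t\lesssim\widetilde{\frakC}_{t/\ep}$ is absorbed into the $\nu$-average.

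The main obstacle is bookkeeping. I need to check that the $A_{x,t,\cdot}$ terms, evaluated at the random minimizer $z$, are really dominated by the stated suprema of $\nu$-averages. This uses $|z|\lesssim t\widetilde{\frakC}_{t/\ep}$ together with $\frakC^q\lesssim\fint_0^{t/\ep}\nu$. I also need to check that the time reversal yields the second supremum with the correct range of $w$.
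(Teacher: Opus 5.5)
Your proposal is correct and follows the paper's route: the control representation, the scaling $\ep\, m(\cdot/\ep,\cdot/\ep)$, Theorems~\ref{thm:sp-reg-t}--\ref{thm:time-reg} (with the time reversal \eqref{revrese} when the initial point moves), and absorption of the $|z|^q/t^q$ term via $|z|\le t\widetilde{\frakC}_{t/\ep}$ and $\frakC_{t/\ep}^q\lesssim\fint_0^{t/\ep}\nu_r\,\dd r$. Two clean-ups are needed. First, you have the two competitors swapped: $z'=z+(y-x)$ keeps the initial point $x-z$, so $g$ cancels and one application of Theorem~\ref{thm:sp-reg-t} suffices (giving only the first supremum), whereas $z'=z$ moves both ends and is what produces the $[g]_{\rm Lip}|x-y|$ term and the time-reversed second supremum. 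Second, your staying-put argument gives the lower bound $-\ep\int_{s/\ep}^{t/\ep}\nu_r\,\dd r=-\int_s^t\nu_{r/\ep}\,\dd r$, which is the correctly scaled form of the statement's $-\int_s^t\nu_r\,\dd r$ (the latter appears to be a typo).
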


Now we are ready to prove Theorem \ref{thm:main-2}. 

\begin{proof}[Proof of Theorem \ref{thm:main-2}]
We divide the proof into several steps.

\medskip 
 \noindent {\bf Step 1.} (Basic bounds)
Thanks to \eqref{eq:H bar superlinear}, $g(x)-\bar Ct$ is a subsolution to \eqref{eq:effHJ}, and $g(x)+\bar Ct$ is a supersolution to \eqref{eq:effHJ}, respectively,  for $\bar C$ sufficiently large depending on $c,d,q,C_0, N_1, [g]_{\rm Lip}$.
 By the usual comparison principle, for $(x,t)\in \R^d\times [0,\infty)$,
 \[
 g(x)-\bar Ct \leq\ol u(x,t) \leq g(x)+\bar Ct.
 \]

 Similarly, in light of (A2), we have that $g(x)-\bar Ct - \ep\int_0^{t/\ep} \nu_0(s)\,\dd s$ is a subsolution to \eqref{eq:HJ-ep}, and $g(x)+\bar Ct+\ep\int_0^{t/\ep} \nu_0(s)\,\dd s$ is a supersolution to \eqref{eq:HJ-ep}, respectively.
 By the usual comparison principle, for $(x,t)\in \R^d\times [0,\infty)$,
 \[
 g(x)-\bar Ct  - \ep\int_0^{t/\ep} \nu_0(s)\,\dd s\leq u^\ep(x,t) \leq g(x)+\bar Ct + \ep\int_0^{t/\ep} \nu_0(s)\,\dd s.
 \]
Hence, for $(x,t)\in \R^d\times [0,C\ep]$,
 \[
 |u^\ep(x,t)-\ol u(x,t)|\leq 2\bar C t +\ep\int_0^{t/\ep} \nu_0(s)\,\dd s \leq 2\bar C C \ep+\ep\int_0^{C} \nu_0(s)\,\dd s.
 \]
 We are done with $t\in [0,C\ep]$, and we only need to consider the case that $t>C\ep$ below.
 
\medskip 
 \noindent {\bf Step 2.} (Pointwise upper bound)
First, we show that
\begin{equation}\label{eq:lowb-homog}
   \mb  P\Bigl(u^\ep(x,t)-\ol u(x,t)\ge \lambda(\varepsilon t)^{1/2}\psi(\frac{t}{\ep})\Bigr)
  \le C\exp (-c\lambda^{2/3})
\end{equation}
  for any $\ep\in(0,1),\lambda>0$, $t\ge C\ep$, and $x\in\R^d$. It suffices to consider $\lambda>C$.

Since there exists a deterministic $y_{x,t}\in \R^d$ with $|y_{x,t}|\le\tilde C_0 t$ such that 
\[
  \ol u(x,t)=g(x-y_{x,t})+\ol m(y_{x,t},t),
\]
we have
\begin{align*}
u^\ep(x,t)-\ol u(x,t)&\le g(x-y_{x,t}) + \ep m(\frac{x-y_{x,t}}\ep,0;\frac x\ep,\frac t\ep)-[g(x-y_{x,t})+\ol m(y_{x,t},t)]\\
&\stackrel{\eqref{eq:Em-mbar}}\le \ep\left[m(\frac{x-y_{x,t}}\ep,0;\frac x\ep,\frac t\ep)-\E m(\frac{x-y_{x,t}}\ep,0;\frac x\ep,\frac t\ep)\right]+\ep C(\frac{t}{\varepsilon})^{1/2}\psi(\frac{t}{\varepsilon}).
\end{align*}
Thus 
\begin{align*}
\MoveEqLeft\mb P(u^\ep(x,t)-\ol u(x,t)\ge \lambda(\varepsilon t)^{1/2}\psi(\frac{t}{\ep}))\\
&\le \mb P\left(m(\frac{y_{x,t}}\ep,\frac t\ep)-\E m(\frac{y_{x,t}}\ep,\frac t\ep)\ge c \lambda(\frac t\ep)^{1/2}\psi(\frac{t}{\ep})\right).
\end{align*}
Display \eqref{eq:lowb-homog} then follows from Theorem~\ref{thm:fluctuation-mxt}\eqref{item:exp-integ-nu}.

\medskip 
 \noindent{\bf Step 3.} (Pointwise lower bound) Recall that $
B_R^\dsc=B_R\cap\Z^d$. We will prove 
\begin{align}
\label{eq:discretize-upb-homog}
\MoveEqLeft u^\ep(x,t)-\ol u(x,t)\\
&\ge -\ep \sup_{y\in B^\dsc_{t\tilde\frakC_t}}\Abs{m\bigl(\frac{x-y}{\ep},0;\frac x\ep,\frac{t}{\ep}\bigr)-\E m(\frac y\ep,\frac t\ep)}-C\varphi(t)\sup_{w\ge 1}\avint_0^{(w\wedge t)/\ep}\nu_r\dd r\nonumber
\end{align}
for any $x\in \R^d, {t\ge  C\ep}, \ep\in(0,1)$. Indeed, by the representation formulas,
\begin{align*}
u^\ep(x,t)-\ol u(x,t)&\ge
\inf_{|y|\le t\tilde\frakC_t}\ep m\bigl(\frac{x-y}{\ep},0;\frac x\ep,\frac{t}{\ep}\bigr)-\ol m(y,t)\\
&\stackrel{\eqref{eq:Em-mbar}}\ge  \ep \inf_{|y|\le t\tilde\frakC_t}\left(m\bigl(\frac{x-y}{\ep},0;\frac x\ep,\frac{t}{\ep}\bigr)-\E m(\frac y\ep,\frac t\ep)\right).
\end{align*}
Let us discretize $B_{t\widetilde\frakC_t}$. Note that for any $y\in B_{t\tilde\frakC_t}$, we can find $z=z(y)\in B_{t\tilde\frakC_t}^\dsc$ with $1\le |y-z|\le 2$. Then, by Theorem~\ref{thm:sp-reg-t}, \begin{align*}
\MoveEqLeft\ep\Abs{m\bigl(\frac{x-y}{\ep},0;\frac x\ep,\frac{t}{\ep}\bigr)-m\bigl(\frac{x-z}{\ep},0;\frac x\ep,\frac{t}{\ep}\bigr)}\\
&\lesssim \varphi(t)\left(\frakC_t^q+\sup_{w\ge 1}\avint_0^{(w\wedge t)/\ep}\nu_r\dd r\right)\lesssim \varphi(t)\sup_{w\ge 1}\avint_0^{(w\wedge t)/\ep}\nu_r\dd r.
\end{align*}
Taking expectations yields
\[
\ep\Abs{\E m\bigl(\frac{x-y}{\ep},0;\frac x\ep,\frac{t}{\ep}\bigr)-\E m\bigl(\frac{x-z}{\ep},0;\frac x\ep,\frac{t}{\ep}\bigr)}
 \lesssim \varphi(t).
\]
Recalling that $\nu\ge 1$, display \eqref{eq:discretize-upb-homog} follows.

\medskip 
 \noindent{\bf Step 4.} We will prove that, for $x\in\R^d$, $\lambda\ge 1, t\ge C\ep, \ep\in(0,1)$,
\begin{equation}\label{eq:upper-tail-homog}
  \mb P(u^\ep(x,t)-\ol u(x,t)\le -\lambda(\varepsilon t)^{1/2}\psi(\tfrac{t}{\ep}))\le C\exp(-c\lambda^{2/5}).
\end{equation}
Set $\theta=\frac{2}{5q}$. Similar to \eqref{eq:dsc-sup-m-em}, we get
\begin{align*}
\MoveEqLeft  \mb P\left(\ep \sup_{y\in B^\dsc_{t\tilde\frakC_t}}\Abs{m\bigl(\frac{x-y}{\ep},0;\frac x\ep,\frac{t}{\ep}\bigr)-\E m(\frac y\ep,\frac t\ep)}\ge \lambda(\varepsilon t)^{1/2}\psi(\frac{t}{\ep})\right)\\
&\le \mb P\left(\sup_{y\in B^\dsc_{t\lambda^\theta \varphi(t/\ep)}}\Abs{m\bigl(\tfrac y\ep,\tfrac{t}{\ep}\bigr)-\E m(\tfrac y\ep,\tfrac t\ep)}\ge \lambda(\tfrac{t}{\ep})^{1/2}\psi(\tfrac{t}{\ep})\right)+\mb P(\tilde\frakC_t\ge\lambda^\theta \varphi(\tfrac t\ep))\\
&\le\sum_{y\in B^\dsc_{t\lambda^\theta \varphi(t/\ep)}}\mb P\left(\Abs{m\bigl(\tfrac y\ep,\tfrac{t}{\ep}\bigr)-\E m(\tfrac y\ep,\tfrac t\ep)}\ge \lambda(\tfrac{t}{\ep})^{1/2}\psi(\tfrac{t}{\ep})\right)+\exp(-c\lambda^{\theta q}\varphi^q(\tfrac t\ep))
\end{align*}
Furthermore, for each $y\in B_{t\lambda^\theta \varphi(t/\ep)}$, when $t\ge \ep C$,
\begin{align*}
\MoveEqLeft\mb P\left(\Abs{m\bigl(\tfrac y\ep,\tfrac{t}{\ep}\bigr)-\E m(\tfrac y\ep,\tfrac t\ep)}\ge \lambda(\tfrac{t}{\ep})^{1/2}\psi(\tfrac{t}{\ep})\right)\\
&\le \mb P\left(\Abs{m\bigl(\tfrac y\ep,\tfrac{t}{\ep}\bigr)-\E m(\tfrac y\ep,\tfrac t\ep)}
\ge c\lambda^{1-\theta q}(\tfrac{t}{\ep})^{1/2}\varphi^4(\tfrac{t}{\ep})(\tfrac{|y|^q}{t^q}+1)\right)\\
&\stackrel{Theorem~\ref{thm:fluctuation-mxt}}\le 
\exp (-c\lambda^{(1-\theta q)2/3}\varphi^2(\tfrac t\ep))\asymp \exp(-c\lambda^{2/5}\varphi^2(\tfrac t\ep)).
\end{align*}
Hence,  for $x\in\R^d$, $\lambda>0, t\ge C\ep, \ep\in(0,1)$, 
\begin{align*}
\MoveEqLeft  \mb P\left(\ep \sup_{y\in B^\dsc_{t\tilde\frakC_t}}\Abs{m\bigl(\frac{x-y}{\ep},0;\frac x\ep,\frac{t}{\ep}\bigr)-\E m(\frac y\ep,\frac t\ep)}\ge \tfrac12\lambda(\varepsilon t)^{1/2}\psi(\frac{t}{\ep})\right)\\
&\lesssim (t\lambda^\theta \varphi(\tfrac t\ep))^d \exp(-c\lambda^{2/5}\varphi^2(\tfrac t\ep))+\exp(-c\lambda^{\theta q}\varphi^q(\tfrac t\ep))
\lesssim \exp(-c\lambda^{2/5}\varphi(\tfrac t\ep)).
\end{align*}

This inequality, together with \eqref{eq:discretize-upb-homog}, yield
\begin{align*}
\MoveEqLeft\mb P\left(u^\ep(x,t)-\ol u(x,t)\le -\lambda(\varepsilon t)^{1/2}\psi(\tfrac{t}{\ep})\right)\\
&\lesssim \exp(-c\lambda^{2/5}\varphi(\tfrac t\ep))+\mb P\left(C\varphi(t)\sup_{w\ge 1}\avint_0^{(w\wedge t)/\ep}\nu_r\dd r\ge \tfrac12\lambda(\varepsilon t)^{1/2}\psi(\frac{t}{\ep})\right)\\
&\stackrel{\eqref{eq:calA_tail}}\lesssim 
\exp(-c\lambda^{2/5}\varphi(\tfrac t\ep))+\exp(-c\lambda(\tfrac t\ep)^{1/2}).
\end{align*}
Inequality \eqref{eq:upper-tail-homog} follows.

\medskip 
 \noindent{\bf Step 5.} (Discretization of $B_R$)
Since 
for any $x\in B_R$, we can find $y=y(x)\in B_R^\dsc$ with $1\le \|x-y\|_\infty\le 2$,
by Theorem~\ref{thm:reg-uep-nearly-Lip} and noting that $\varphi(t/|x-y|) \le \varphi(t)$ and the boundedness of $|x-y|$,
 we get
\begin{align*}
 |u^\ep(x,t)-u^\ep(y,t)|
 \lesssim \frac{\varphi(t)}{t^q}+\varphi(t)\left(\sup_{w\leq t-1} \fint_{(w\vee 0)/\ep}^{t/\ep} \nu_r\,\dd r + \sup_{w\geq 1} \fint_0^{(w\wedge t)/\ep} \nu_r\,\dd r\right).
\end{align*}
Hence we have
\begin{align}\label{eq:discretize-BR-homog}
&\Abs{\sup_{x\in B_R}\Bigl(u^\ep(x,t)-\ol u(x,t)\Bigr)-\sup_{x\in B_R^\dsc}\Bigl(u^\ep(x,t)-\ol u(x,t)\Bigr)
}\nonumber\\
&\le C\varphi(t)\left(\frac{1}{t^q}+ \sup_{w\leq t-1} \fint_{(w\vee 0)/\ep}^{t/\ep} \nu_r\,\dd r + \sup_{w\geq 1} \fint_0^{(w\wedge t)/\ep} \nu_r\,\dd r\right)
\end{align}
and \eqref{eq:discretize-BR-homog} still holds when both $\sup$'s on the left side are replaced by $\inf$'s.

Finally, with \eqref{eq:lowb-homog}, \eqref{eq:upper-tail-homog}, and \eqref{eq:discretize-BR-homog}, Theorem \ref{thm:main-2} follows verbatim from the arguments as in Steps 5 and 6 in the proof of Theorem \ref{thm:main-1}.
\end{proof}

\appendix
\section{Some basic estimates}\label{appendix:basic estimates}
\begin{lem}\label{lem:Gamma function}
Let $q>1$ and $q'=q/(q-1)$.
For $n\in\N$, we have
\[
l_n=\frac{(q-1)(1+q')\cdots(n+q')}{n!} \leq (q-1)e^{q'}n^{q'}.
\]
\end{lem}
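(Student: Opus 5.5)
The plan is to rewrite $l_n$ as a product and control it through its logarithm. Writing
\[
l_n=(q-1)\prod_{k=1}^n\frac{k+q'}{k}=(q-1)\prod_{k=1}^n\Bigl(1+\frac{q'}{k}\Bigr),
\]
the bound reduces to showing $\prod_{k=1}^n(1+q'/k)\le e^{q'}n^{q'}$. Taking logarithms and using $\log(1+x)\le x$ for $x\ge0$, we get
\[
\log\prod_{k=1}^n\Bigl(1+\frac{q'}{k}\Bigr)\le q'\sum_{k=1}^n\frac1k\le q'(1+\log n),
\]
where the last step is the standard harmonic-sum bound $\sum_{k=1}^n k^{-1}\le 1+\int_1^n r^{-1}\,\dd r=1+\log n$. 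Exponentiating yields $\prod_{k=1}^n(1+q'/k)\le e^{q'}n^{q'}$, and multiplying by $(q-1)$ gives the claim.

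The estimate is elementary and I expect no real obstacle; the only point to check is the harmonic-sum comparison with the integral. If the matching lower bound $l_n\gtrsim n^{q'}$ (the $\asymp$ claimed in the text) is also wanted, I would use the Gamma-function identity
\[
l_n=(q-1)\frac{\Gamma(n+1+q')}{\Gamma(1+q')\,\Gamma(n+1)}
\]
together with Gautschi's or Wendel's inequality, which give $\Gamma(n+1+q')/\Gamma(n+1)\asymp n^{q'}$. Only the upper bound is needed in the proof of Theorem~\ref{thm:path-reg}.
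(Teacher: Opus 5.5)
Your proof is correct and is essentially the paper's own argument: write $l_n=(q-1)\prod_{i=1}^n(1+q'/i)$, bound each factor by $\exp(q'/i)$, and use $\sum_{i=1}^n 1/i\le 1+\log n$ to get $(q-1)e^{q'}n^{q'}$. Your Gamma-function remark on the matching lower bound is an optional extra; the paper uses only the upper bound.
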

\begin{proof}
    \begin{align*}
    l_n&=(q-1)\prod_{i=1}^n\bigl(1+\frac{q'}{i}\bigr)\\
    &\le(q-1)\exp\left(\sum_{i=1}^n\frac{q'}{i}\right)\\
    &\le (q-1)\exp(q'\log n+q')=(q-1)e^{q'}n^{q'}.
\end{align*}
\end{proof}

\begin{lem}\label{lem:b_n}
    Let $a_0=1/q'$, and for $n\in \N$,
\[
  a_n=f^{\circ n}(a_0).
\]
Then, for $n\in \N$,
\begin{align*}
  b_n=1-a_n=\frac{1}{n(q-1)+q}.
\end{align*}
\end{lem}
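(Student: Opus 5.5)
We argue by induction on $n$, working with $b_n=1-a_n$ directly rather than with $a_n$. The base case is immediate: $b_0=1-a_0=1-\tfrac{1}{q'}=\tfrac{1}{q}$, which matches the formula $\tfrac{1}{0\cdot(q-1)+q}$.

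For the inductive step, we rewrite $f$ in terms of $b:=1-a$. We use the identity $1-f(a)=\tfrac1q\bigl(1-g(a)\bigr)$ noted just before Proposition~\ref{prop:m-reg-induc}. Since $g(a)=\tfrac{a}{(1-a)(q-1)+1}$, we have
\[
1-g(a)=\frac{(1-a)(q-1)+1-a}{(1-a)(q-1)+1}=\frac{qb}{b(q-1)+1}.
\]
Therefore
\[
1-f(a)=\frac{b}{b(q-1)+1}.
\]
In other words, $b_{n+1}=h(b_n)$ with $h(b)=\tfrac{b}{(q-1)b+1}$.

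This is a Möbius map whose reciprocal is affine:
\[
\frac{1}{b_{n+1}}=\frac{1}{b_n}+(q-1).
\]
Since $1/b_0=q$, induction gives $1/b_n=q+n(q-1)$, which is exactly the claim.

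The only step needing care is the algebraic simplification of $1-g(a)$. It is routine: multiply numerator and denominator out and collect the terms $(1-a)(q-1)+(1-a)=q(1-a)$. There is no genuine obstacle; the key observation is to pass to reciprocals of $b_n$, which linearizes the recursion.
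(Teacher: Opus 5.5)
Your proof is correct and follows the paper's argument. Both reduce to the recursion $b_{n+1}=\frac{b_n}{(q-1)b_n+1}$ and linearize it by taking reciprocals, $1/b_{n+1}=1/b_n+(q-1)$ with $1/b_0=q$; your detour through the identity $1-f(a)=\frac1q\bigl(1-g(a)\bigr)$ is only a cosmetic difference, and it gives the correct intermediate formula $1-f(a)=\frac{1-a}{(1-a)(q-1)+1}$ (the appendix's displayed formulas for $f$ and $1-f(a)$ contain typos, although its recursion for $b_{n+1}$ is right).
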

\begin{proof}
    Recall that

\[
f(a) = \frac{a}{(1-a)q(q-1)+q} + \frac{1}{q}.
\]
Hence
\[
1 - f(a)
= \frac{1-a}{(1-a)(q-1)+q} .
\]
Therefore,
\begin{align*}
b_{n+1}
&= \frac{b_n }{b_n (q-1)+1}.
\end{align*}
Taking reciprocals on both sides,
\[
\frac{1}{b_{n+1}}
=(q-1)+\frac{1}{b_n},
\]
which implies
\[
\frac{1}{b_n}=n(q-1)+\frac{1}{b_0}=n(q-1)+q.
\]
The lemma follows.
\end{proof}

\section{Concentration inequalities}\label{appendix:concentration inequalities}
We use the following martingale inequalities in the paper.
\begin{lem}[Burkholder's inequality] {\cite[Theorem 11.2.1]{Chow&Teicher-book}}
Let $(X_n)_{n\ge 0}$ be an $L^1$-martingale with $X_0=0$. For $p>1$, 
there exist universal constants $A_p, B_p>0$ depending only on $p$ such that
for every $n\ge 1$,
\[
A_p \,\|S_n(X)\|_{p} \;\le\; \|X_n\|_{p} \;\le\; B_p \,\|S_n(X)\|_{p},
\]
\[
A_p \,\|S(X)\|_{p} \;\le\; \|X\|_{p} \;\le\; B_p \,\|S(X)\|_{p},
\]
where
\[
S_n(X) := \Bigl(\sum_{j=1}^n (X_j - X_{j-1})^2\Bigr)^{1/2},
\qquad
S(X) := \sup_{n\ge 1} S_n(X),
\]
\[
\|X_n\|_p := \bigl(\E|X_n|^p\bigr)^{1/p},
\qquad
\|X\|_p := \sup_{n\ge 1}\|X_n\|_p.
\]
\end{lem}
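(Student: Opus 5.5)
This is a classical result, quoted from \cite[Theorem 11.2.1]{Chow&Teicher-book}, and the paper uses only the upper bound $\|X_n\|_p\le B_p\|S_n(X)\|_p$ for $p\ge 2$. Still, I would prove the full two-sided statement by the standard route: $L^p$-boundedness of martingale transforms, followed by Khintchine's inequality.

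\textbf{Step 1: transforms with signs.} For a sign sequence $\epsilon=(\epsilon_j)\in\{\pm1\}^{\N}$, deterministic or even predictable, define the transform
\[
(\epsilon\cdot X)_n:=\sum_{j=1}^n \epsilon_j (X_j-X_{j-1}).
\]
The key claim is
\[
\|(\epsilon\cdot X)_n\|_p\le C_p\|X_n\|_p
\]
for all $1<p<\infty$, with $C_p$ independent of $n$ and $\epsilon$.

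\textbf{Step 2: weak type $(1,1)$.} I would first prove
\[
\lambda\,\bP\bigl(\max_{k\le n}|(\epsilon\cdot X)_k|>\lambda\bigr)\le C\|X_n\|_1 .
\]
This uses Gundy's decomposition, the martingale analogue of the Calder\'on--Zygmund decomposition. At level $\lambda$, stop at $\tau=\inf\{k:|X_k|>\lambda\}$ and split $X=G+B+H$:
\begin{itemize}
\item $G$ is a good part with differences bounded by $2\lambda$ and $\|G\|_2^2\lesssim\lambda\|X_n\|_1$, so Chebyshev together with $L^2$-orthogonality of differences controls it;
\item $B$ is supported on $\{\tau\le n\}$, whose probability is at most $\|X_n\|_1/\lambda$ by Doob;
\item $H$ has $\sum\|dH_j\|_1\lesssim\|X_n\|_1$.
\end{itemize}
I expect this step to be the main obstacle. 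The jump at the stopping time must be split carefully so that $G$ remains a martingale with bounded increments.

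\textbf{Step 3: interpolation and duality.} The trivial $L^2$ bound (orthogonality) combined with Step 2 and Marcinkiewicz interpolation gives Step 1 for $1<p\le2$. For $p>2$ I would argue by duality: since $\E[(\epsilon\cdot X)_nY_n]=\E[X_n(\epsilon\cdot Y)_n]$ for any $L^{p'}$ martingale $Y$, the case $p'<2$ yields the case $p$.

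\textbf{Step 4: randomize and apply Khintchine.} Take the signs $\epsilon$ to be i.i.d.\ Rademacher, independent of $X$. For fixed $\omega$, Khintchine's inequality gives
\[
\E_\epsilon|(\epsilon\cdot X)_n|^p\asymp_p S_n(X)^p .
\]
Integrating over $\omega$ and using Step 1 gives $\E S_n(X)^p\lesssim_p\E|X_n|^p$, which is the lower bound. For the upper bound, apply Step 1 to the martingale $\epsilon\cdot X$ with the same signs; since $\epsilon_j^2=1$ this transform recovers $X$, so $\|X_n\|_p\le C_p\|(\epsilon\cdot X)_n\|_p$. Averaging over $\epsilon$ and applying Khintchine again yields $\|X_n\|_p\lesssim_p\|S_n(X)\|_p$.

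\textbf{Step 5: the second line.} This follows by letting $n\to\infty$. Both $\|X_n\|_p$ and $S_n(X)$ are nondecreasing in $n$ (the former by Jensen, since $(|X_n|^p)$ is a submartingale), so monotone convergence passes the two-sided bounds to the supremum.
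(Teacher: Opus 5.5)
Your proposal is correct. The paper gives no proof of this lemma; it simply quotes it from the Chow--Teicher textbook. Your route is the classical Burkholder argument: uniform $L^p$ bounds for $\pm1$ martingale transforms via a Gundy-type weak $(1,1)$ estimate, Marcinkiewicz interpolation for $1<p\le 2$ and duality for $p>2$, then Rademacher randomization with Khintchine's inequality in both directions, and monotone limits for the second line. So the only trade-off against the paper is self-containedness versus brevity.

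On the step you flag as delicate, here is how the jump is handled.
\begin{itemize}
\item With the cut $v_k:=dX_k\mathbbm 1_{\{\tau>k\}}$ one has $|v_k|\le 2\lambda$, so the re-centred good increments $v_k-\E[v_k\mid\mathcal F_{k-1}]$ are bounded by $4\lambda$ rather than $2\lambda$. The constant is immaterial.
\item The jump $dX_k\mathbbm 1_{\{\tau=k\}}$, after re-centring, forms $H$.
\item The bound $\E\sum_{k\le n}v_k^2\lesssim\lambda\|X_n\|_1$ follows by telescoping $X_k^2-X_{k-1}^2$ up to time $\tau-1$. Here you use $|X_{k-1}|\le\lambda$ on $\{\tau\ge k\}$ and $\E\bigl[|dX_\tau|\mathbbm 1_{\{\tau\le n\}}\bigr]\le 2\|X_n\|_1$.
\end{itemize}

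Finally, the paper only uses $\|X_n\|_p\le B_p\|S_n(X)\|_p$ with $p\ge 2$, and that half has a much shorter proof. Sum the Taylor bound $|x+d|^p-|x|^p-p|x|^{p-2}xd\lesssim_p|x|^{p-2}d^2+|d|^p$ along the martingale. Using H\"older and Doob's maximal inequality, this gives $\|X_n\|_p^p\lesssim_p\|X_n\|_p^{p-2}\|S_n(X)\|_p^2+\|S_n(X)\|_p^p$, with no transforms or interpolation needed.
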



\begin{lem}[Azuma's inequality]\label{L.3.5} 
{\cite{azuma1967weighted}} Let $\left\{X_{n}\right\}_{n\geq 0}$  be a martingale. If for each $n\geq 0$ there is a constant $c_n\ge 0$ such that $|X_{n+1}-X_{n}|\leq c_n$ almost surely, 
then for all $\lambda>0$ and $n\geq0$,
\[
 \bP(|X_{n}-X_{0}|\geq \lambda)\leq 2\exp \left(-{\frac {\lambda^{2}}{2\sum _{k=0}^{n-1}c_{k}^{2}}}\right).
 \]
\end{lem}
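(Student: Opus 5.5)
This is Azuma's inequality, cited from \cite{azuma1967weighted}; I would reprove it by the standard exponential-moment (Chernoff) argument applied to the martingale increments. Write $D_k:=X_{k+1}-X_k$, so that $\E[D_k\mid\mathcal F_k]=0$ and $|D_k|\le c_k$ almost surely. If some $c_k=0$, then $D_k=0$ a.s. and that index can be dropped; if all $c_k$ vanish, the statement is trivial. So assume $\sigma^2:=\sum_{k=0}^{n-1}c_k^2>0$.

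The key step is Hoeffding's lemma in conditional form:
\[
\E[e^{sD_k}\mid\mathcal F_k]\le e^{s^2c_k^2/2}\qquad\text{for all } s\in\R .
\]
To prove it, use convexity of $y\mapsto e^{sy}$ on $[-c_k,c_k]$:
\[
e^{sy}\le \frac{c_k-y}{2c_k}e^{-sc_k}+\frac{c_k+y}{2c_k}e^{sc_k}.
\]
Taking conditional expectations and using $\E[D_k\mid\mathcal F_k]=0$ gives $\E[e^{sD_k}\mid\mathcal F_k]\le\cosh(sc_k)$. Finally, compare Taylor series termwise, using $(2j)!\ge 2^j j!$, to get $\cosh(x)\le e^{x^2/2}$.

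Next, iterate by the tower property. Since $X_{n-1}-X_0$ is $\mathcal F_{n-1}$-measurable,
\[
\E e^{s(X_n-X_0)}=\E\bigl[e^{s(X_{n-1}-X_0)}\,\E[e^{sD_{n-1}}\mid\mathcal F_{n-1}]\bigr]\le e^{s^2c_{n-1}^2/2}\,\E e^{s(X_{n-1}-X_0)}.
\]
Inducting down to $k=0$ yields $\E e^{s(X_n-X_0)}\le e^{s^2\sigma^2/2}$.

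By Markov's inequality, $\bP(X_n-X_0\ge\lambda)\le e^{-s\lambda+s^2\sigma^2/2}$ for every $s>0$. Choosing $s=\lambda/\sigma^2$ gives the bound $e^{-\lambda^2/(2\sigma^2)}$. Applying the same argument to the martingale $-X$ bounds the lower tail identically, and a union bound produces the factor $2$.

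There is no real obstacle here. The only points needing care are the conditional Hoeffding step and the integrability of the increments, which is automatic because the $D_k$ are bounded.
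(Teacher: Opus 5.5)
Your proof is correct. The conditional Hoeffding bound $\E[e^{sD_k}\mid\mathcal F_k]\le\cosh(sc_k)\le e^{s^2c_k^2/2}$, the tower-property iteration, and the Chernoff choice $s=\lambda/\sigma^2$, together with the symmetric argument for $-X$, give exactly the stated inequality. The paper gives no proof of this lemma and only cites Azuma's 1967 paper, and your exponential-moment argument is the standard proof of that result, so there is nothing to add.
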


\begin{lem}\label{thm:LV32}
\cite[Theorem 3.2]{lesigne-volny-01} and \cite[Theorem 2.1]{fan2012large}
Let $\{X_n\}_{n\geq 0}$ be a martingale satisfying $\sup_n \mathbb{E} \exp \{ |X_{n+1}-X_n| \} \le C_1$ for some constant $C_1>0$. Then, for all $\lambda > 0$ and $n\geq 1$,
\begin{equation*}
\mathbb{P} \left( \max_{1 \le k \le n} |X_n -X_0| \ge \lambda n \right) \le 2C( \lambda) \exp \left\{ - \left( \frac{\lambda}{4} \right)^{2/3} n^{1/3} \right\}, 
\end{equation*}
where
\[
C(\lambda): = 2 + 35C_1 \left( \frac{1}{\lambda^{2/3} 16^{2/3}} + \frac{9}{\lambda^2}\right).
\]
\end{lem}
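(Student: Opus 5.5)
The plan is the classical truncation argument of Lesigne--Voln\'y: split the increments into a bounded martingale part, handled by Azuma--Hoeffding, and an unbounded martingale part, handled by a second-moment maximal inequality. The truncation level is chosen to balance the two exponents.

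\textbf{Setup and decomposition.} Write $D_k:=X_k-X_{k-1}$ and $\mathcal F_k:=\sigma(X_0,\dots,X_k)$. Fix a level $y>0$ and set
\[
D_k':=D_k\mathbbm 1_{\{|D_k|\le y\}}-\E[D_k\mathbbm 1_{\{|D_k|\le y\}}|\mathcal F_{k-1}],\qquad D_k'':=D_k-D_k'.
\]
Both are martingale differences, since $\E[D_k|\mathcal F_{k-1}]=0$, and $|D_k'|\le 2y$. Let $S_k'$ and $S_k''$ be their partial sums, so $X_k-X_0=S_k'+S_k''$. Fix a split parameter $\theta\in(0,1)$, to be chosen below. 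Then
\[
\bP\Bigl(\max_{k\le n}|X_k-X_0|\ge\lambda n\Bigr)\le \bP\Bigl(\max_{k\le n}|S_k'|\ge\theta\lambda n\Bigr)+\bP\Bigl(\max_{k\le n}|S_k''|\ge(1-\theta)\lambda n\Bigr).
\]

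\textbf{Bounded part.} Apply the maximal form of Azuma--Hoeffding (Lemma~\ref{L.3.5} combined with Doob's inequality for the exponential submartingale $e^{sS_k'}$). This gives
\[
\bP\Bigl(\max_{k\le n}|S_k'|\ge \theta\lambda n\Bigr)\le 2\exp\Bigl(-\frac{\theta^2\lambda^2 n}{8y^2}\Bigr).
\]
Take $y:=(\lambda/4)^{2/3}n^{1/3}$, so that $y^3=\lambda^2 n/16$. With $\theta^2=\tfrac12$ the exponent becomes exactly $y=(\lambda/4)^{2/3}n^{1/3}$. This produces the term $2\cdot 2\,e^{-(\lambda/4)^{2/3}n^{1/3}}$, which matches the constant $2$ inside $C(\lambda)$.

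\textbf{Unbounded part.} By Doob's $L^2$ maximal inequality and orthogonality of martingale increments,
\[
\bP\Bigl(\max_{k\le n}|S_k''|\ge(1-\theta)\lambda n\Bigr)\le \frac{4}{(1-\theta)^2\lambda^2n^2}\sum_{k=1}^n\E(D_k'')^2.
\]
Moreover $\E(D_k'')^2\le\E[D_k^2\mathbbm 1_{\{|D_k|>y\}}]$. Using $x^2\mathbbm 1_{\{x>y\}}\le (y^2+2y+2)e^{-y}e^{x}$, which is elementary calculus since $x^2e^{-x}$ decreases for $x>2$ (the case $y\le 2$ is absorbed), together with $\E e^{|D_k|}\le C_1$, we get
\[
\E(D_k'')^2\le C_1(y^2+2y+2)e^{-y}.
\]
Hence this probability is at most
\[
\frac{4C_1(y^2+2y+2)}{(1-\theta)^2\lambda^2 n}\,e^{-y}.
\]
Substituting $y^2/n=(\lambda/4)^{4/3}n^{-1/3}\le 16^{-2/3}\lambda^{4/3}$, the $y^2$ term contributes order $C_1\lambda^{-2/3}16^{-2/3}e^{-y}$. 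The lower-order terms $2y+2$ are absorbed, via $2y\le y^2+1$, into order $C_1\lambda^{-2}e^{-y}$. Summing the two parts gives a bound of the form
\[
2\Bigl(2+C'C_1\bigl(\lambda^{-2/3}16^{-2/3}+\lambda^{-2}\bigr)\Bigr)e^{-(\lambda/4)^{2/3}n^{1/3}},
\]
which is exactly the claimed shape.

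\textbf{Main obstacle.} The only delicate point is bookkeeping the numerical constants so that they come out as $35$ and $9$, as in \cite{lesigne-volny-01,fan2012large}. With $\theta=1/\sqrt2$ the factor $4/(1-\theta)^2$ equals $4(2+\sqrt2)^2\approx 46.6$, which is too large. I would therefore first retune the split: allow $\theta^2$ to differ from $\tfrac12$ and compensate by a slightly different $y$, or replace the crude $L^2$ step by a sharper moment inequality (for instance Burkholder with the conditional-variance form) on the unbounded part. If the exact constants $35$ and $9$ prove elusive, the argument above still yields the statement with some absolute constants in place of $35$ and $9$. That is all the paper uses in the proof of Theorem~\ref{thm:fluctuation-mxt}\eqref{item:exp-integ-nu}.
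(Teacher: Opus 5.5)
Your argument is correct and is the standard truncation proof of Lesigne--Voln\'y. The paper gives no proof of this lemma and simply cites \cite{lesigne-volny-01,fan2012large}. The following steps are all sound:
\begin{itemize}
\item the decomposition $D_k=D_k'+D_k''$;
\item the maximal Azuma bound for $S'$;
\item the estimate $\E(D_k'')^2\le\E[D_k^2\mathbbm 1_{\{|D_k|>y\}}]\le C_1(y^2+2y+2)e^{-y}$, which holds for every $y\ge 0$ because $(x^2+2x+2)e^{-x}=\int_x^\infty s^2e^{-s}\,\dd s$ is decreasing and dominates $x^2e^{-x}$;
\item the choice $y=(\lambda/4)^{2/3}n^{1/3}$.
\end{itemize}
Together they already give the lemma up to absolute constants, which is all that the proof of Theorem~\ref{thm:fluctuation-mxt}(b) uses.

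The only incomplete point is the explicit constants, and your diagnosis of why they fail is off. The target is $2C(\lambda)e^{-y}=\bigl(4+70C_1(16^{-2/3}\lambda^{-2/3}+9\lambda^{-2})\bigr)e^{-y}$. So the budget for the coefficient of $C_1 16^{-2/3}\lambda^{-2/3}$ is $70$, not $35$, and your $4/(1-\theta)^2=4(2+\sqrt2)^2\approx 46.6$ fits within it.

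What actually overshoots is the step $2y\le y^2+1$. It moves half of the $2y$ term into the $\lambda^{-2/3}$ slot, not into the $\lambda^{-2}$ slot as you wrote, and raises that coefficient to about $93$.

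\textbf{Fix 1.} Write $2y/(\lambda^2n)=2\cdot 4^{-2/3}\lambda^{-4/3}n^{-2/3}$ and split $\lambda^{-4/3}\le a\lambda^{-2/3}+\tfrac{1}{4a}\lambda^{-2}$. With $a=1/20$ and $n\ge1$, the unbounded part is at most $C_1\bigl(59\cdot 16^{-2/3}\lambda^{-2/3}+279\lambda^{-2}\bigr)e^{-y}$. This is at most $70C_1\bigl(16^{-2/3}\lambda^{-2/3}+9\lambda^{-2}\bigr)e^{-y}$, and adding the bounded part $2e^{-y}$ gives exactly the stated inequality.

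\textbf{Fix 2.} Alternatively, remove the two avoidable losses in your argument:
\begin{itemize}
\item Conditionally on $\mathcal F_{k-1}$, each $D_k'$ lies in an interval of length $2y$. By Hoeffding's lemma the maximal bound improves to $2\exp(-\theta^2\lambda^2n/(2y^2))$, so $\theta^2=1/8$ suffices.
\item Kolmogorov's inequality $\bP(\max_{k\le n}|S_k''|\ge b)\le \E|S_n''|^2/b^2$ removes the factor $4$ coming from Doob's $L^2$ inequality.
\end{itemize}
The coefficient then drops to $(1-1/(2\sqrt2))^{-2}\approx 2.4$. Either way, no retuning of $y$ and no Burkholder-type refinement is needed.
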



\section{Proof of Lemma~\ref{lem:A1-A2}}\label{asec:proof-chernoff}

\begin{proof}[Proof of Lemma~\ref{lem:A1-A2}] 
First, we will show that for any $K>0$ and $t>0$, 

\begin{equation}
\label{eq:frakC_t_tail}
\bP\left(\fint_0^t \nu_r \,\dd r > K\right) \lesssim \exp\bigl(\bigl(C-\frac12 cK\bigr)t\bigr) .
\end{equation}
Indeed, by Chebyshev's inequality, for any $t>0$ and $K>0$,
	\begin{align*}
\bP\left(\fint_0^t \nu_r \,\dd r > K\right)\le e^{-ctK/2}\E\exp\Bigl(\frac c2\int_0^t\nu_r\dd r\Bigr).
\end{align*}
Further, using the Cauchy-Schwarz inequality and the independence and stationarity of $(\int_j^{j+1}\nu_r\dd r)_{j\ge 0}$, we  have the exponential moment bound
\begin{align*}
2\E\left[\exp\Bigl(\frac c2\int_0^t\nu_r\dd r\Bigr)\right]
&\le \E\left[\exp\Bigl(\sum_{j<t\text{ odd}}c\int_{j}^{j+1} \nu_r\dd r\Bigr)+\exp\Bigl(c\sum_{j<t\text{ even}}\int_{j}^{j+1} \nu_r\dd r\Bigr)\right]\\
&\le 2 \left(\E \Bigl[\exp\bigl({c}\int_{0}^{1} \nu_r \,\dd r\bigr)\Bigr]\right)^{(t+1)/2}
\lesssim \exp(Ct).
\end{align*}
Thus, inequality \eqref{eq:frakC_t_tail} follows.

To prove \eqref{eq:calA_tail}, 
note that it suffices to consider the case when $Ka\ge 1$. For $K\ge C$ large enough, 
by a union bound,
\begin{align*}
\bP\left(\sup_{w\in [0,t-a]}\fint_w^t \nu_r \,\dd r > K\right) &\le 
\sum_{1\le j\le t/a}\bP\left(\fint_{t-aj}^t \nu_r \,\dd r >\frac K2\right)\\
&\stackrel{\eqref{eq:frakC_t_tail}}\lesssim \sum_{j=1}^\infty\exp\bigl( - \frac{cKaj}{6} \bigr)\lesssim \exp\bigl( - \frac{cKa}{6} \bigr).
\end{align*}
Inequality \eqref{eq:calA_tail} is proved.
\end{proof}

\bibliographystyle{abbrv} 
\bibliography{bib} 

\end{document}